\font\toto=cmssbx10 scaled 1300
\newfont{\totoa}{cmss12}
\font\totonch=cmr10 scaled 8000
\def\@makechapterhead#1{{\parindent \z@ \raggedright \normalfont
\interlinepenalty\@M
\ifnum \c@secnumdepth >\m@ne
\hbox{\vtop{\hsize 0.15\hsize
{\toto CHAPITRE}
}
\vtop{
\hsize 0.11\hsize
\vspace {-.3cm}
{\totonch  \thechapter\quad}
}
\vtop{\hsize 0.7\hsize
\vspace*{-0.3cm}\noindent\rule{11.55cm}{0.1cm}\newline
\fi\Huge \bfseries #1\par\nobreak\vskip 160\p@}
}
}}
\def\@schapter#1{\if@twocolumn
\@topnewpage[\@makeschapterhead{#1}]                 \else
\@makeschapterhead{#1}                   \@afterheading
\fi}
\def\@makeschapterhead#1{{\parindent \z@ \raggedright
\normalfont
\interlinepenalty\@M
\hbox{\vtop{ \hsize 0.3\hsize
\noindent\rule{1.05cm}{0cm}\newline
}
\vtop{ \hsize 0.6\hsize
\noindent\rule{11.05cm}{0.1cm}\newline
\Huge \bfseries #1\par\nobreak
\vskip 100\p@
}}
}}
\renewenvironment{proof}[1][Démonstration]{\noindent\textbf{#1.} }{\hfill\rule{0.5em}{0.5em}}
\newtheorem{thm}{Théorème}[chapter]  
\newtheorem{coll}[thm]{Corollaire}
\newtheorem{prop}[thm]{Proposition}
\newtheorem{lemme}[thm]{Lemme}
\newtheorem{con}{Conjecture}
\newtheorem{defi}[thm]{Définition}
\newtheorem{defi+}[thm]{Définition et propriétés}
\newtheorem{exemple}[thm]{Exemple}
\newtheorem{exemples}[thm]{Exemples}
\newtheorem{rmq}[thm]{Remarque}
\newtheorem{rmqs}[thm]{Remarques}
\def\indexname{Index}
\DeclareFontFamily{U}{rsfs}{\skewchar\font127 }
\DeclareFontShape{U}{rsfs}{ub}{sl}{<5> <6> rsfs5
<7> rsfs7
<8> <9> <10> <10.95> <12> <14.4> <17.28> <20.74> <24.88> rsfs10
}{}
\DeclareSymbolFont{rsfs}{U}{rsfs}{ub}{sl}
\DeclareSymbolFontAlphabet{\mathscr}{rsfs}
\DeclareSymbolFontAlphabet{\d}{rsfs}
\DeclareSymbolFontAlphabet{\C}{rsfs}
\DeclareSymbolFontAlphabet{\L}{rsfs}
\def\ppcm{\mathrm{ppcm}}
\def\pgcd{\mathrm{pgcd}}
\def\lcm{\mathrm{lcm}}
\begin{document}

\begin{titlepage}

\enlargethispage{2\baselineskip}

\begin{center}
\textbf{République Algérienne Démocratique et Populaire}

\textbf{Ministère de l'Enseignement Supérieur et de la Recherche Scientifique}

\textbf{Université A. MIRA - BEJAIA}

\bigskip

\begin{tabular}{cc}
$\begin{array}{c}
  \hbox{\includegraphics[width=3.5cm]{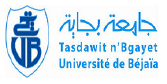}} \\
\end{array}$ & $\begin{array}{c} \hbox{{\bf Faculté des Sciences Exactes~~~~~}}\\

\hbox{{\bf Département de Mathématiques}}
\end{array}$
\end{tabular}

\textbf{Laboratoire de Mathématiques Appliquées (LMA)}

\bigskip

{\bf{\Large {\textbf{THÈSE}}}}

\textbf{EN VUE DE L'OBTENTION DU DIPLÔME DE DOCTORAT}

\bigskip

\textbf{Domaine:} Mathématiques et Informatique \hspace{0.5cm} \textbf{Filière:} Mathématiques\\
\vspace{0.2cm}
\textbf{Spécialité:} Théorie des Nombres\vspace{0.5cm}
\\ {\bf Présentée par} \vspace{0.2cm}
\\ {\bf {\large M. BOUSLA Sid Ali}} \vspace{1cm}
\\ {\Large {\it \textbf{Thème}}}
\rule{1\textwidth}{2pt}
{\Large{\bf{Estimations du plus petit commun multiple de certaines suites d'entiers}}}\vspace{-0.25cm}
\rule{1\textwidth}{2pt}
\vspace{0.7cm}
{\bf Soutenue le 02 décembre 2020 devant le jury composé de:} \vspace{0.2cm}
\begin{tabular}[c]{llllll}
M\textsuperscript{me}. {\sc TAS Saâdia} &  & Professeur & Univ. de Bejaia  & & Présidente \\
M. {\sc FARHI Bakir} &  & M.C.A & Univ. de Bejaia  & & Rapporteur \\
M\textsuperscript{me}. {\sc MOHDEB Nadia} &  & M.C.A & Univ. de  Bejaia & & Examinatrice \\ 
M. {\sc MOUSSAOUI Karim} & & Professeur & Univ. de Bejaia & & Examinateur \\
M. {\sc DAHMANI Abdelnasser} &  & Professeur & C. Univ. de Tamanrasset & & Examinateur \\
M. {\sc HERNANE Mohand Ouamar} &  & Professeur & U.S.T.H.B  & & Examinateur 
\end{tabular}
\vspace{0.7cm}

\textbf{Année Universitaire: 2019/2020}
\bigskip
\end{center}

\end{titlepage}

\frontmatter     
\newpage
\chapter*{Remerciements}~

\lettrine{$\mathscr{J}$}{e} tiens à remercier vivement mon directeur de thèse, le professeur {\sc FARHI Bakir}, de m'avoir introduit à la recherche et patiemment aidé tout au long de ce travail. Je lui suis profondément reconnaissant de m'avoir fait bénéficier de sa grande compétence, de son entière disponibilité, de son orientation depuis ma L2, de sa rigueur dans la rédaction des articles scientifiques et de ses conseils que je n'oublierai jamais. Il a été d'un soutien et d'une attention exceptionnels. Enfin, ses nombreuses relectures et corrections de mes travaux de thèse ont été très appréciables. Cette thèse lui doit beaucoup. Pour tout cela merci.

Je témoigne toute ma gratitude à tous ceux qui ont contribué à ma formation et je
tiens à remercier tous les enseignants du département de mathématiques.

Je suis très honoré de la présence à mon jury de thèse et je tiens à remercier:

Madame {\sc TAS Saâdia} pour m'avoir fait l'honneur de présider le jury de cette thèse.

Madame {\sc MOHDEB Nadia} et Monsieur {\sc MOUSSAOUI Abdelkrim} pour l'honneur qu'ils m'ont fait en acceptant d'être membres de mon jury de thèse. 


Monsieur {\sc DAHMANI Abdelnasser} pour l'honneur qu'il m'a fait par sa présence dans mon jury de soutenance en qualité d'examinateur de mon travail.


Monsieur {\sc HERNANE Mohand Ouamar} pour l'honneur qu'il m'a fait pour avoir accepté d'examiner ce travail.

   

Finalement je remercie ma famille et tous mes amis pour leurs encouragements et leur soutien qui m'ont été bien utiles durant ma thèse.

\begin{flushright}
{\sc BOUSLA Sid Ali}
\end{flushright}

\chapter*{Dédicaces}

\lettrine{$\mathscr{J}$}{e} dédie cette thèse à mes parents, à toute ma famille et mes amis et à tous ceux qui auront la patience de la lire.

%
%
%
%
%
%
%
%
%
%
%

\begin{flushright}
{\sc BOUSLA Sid Ali}
\end{flushright}

\newpage
\tableofcontents

\chapter*{Principales notations et conventions}
On écrit $f=O(g)$ ou d'une façon équivalente $f\ll g$, s'il existe une constante $C>0$ telle que $\left|f(x)\right| \leq C g(x)$, pour tout $x$ dans un voisinage d'une certaine valeur (éventuellement infinie). La première notation est due à Landau et la seconde est due à Vinogradov. Si le rapport $f(x)/g(x)$ tend vers $1$ quand $x$ tend vers $a \in \overline{\mathbb{R}}$, on écrit $f(x) \sim_{a} g(x)$ et on dit que $f$ et $g$ sont équivalentes au voisinage de $a$; de même on écrit $f=o(g)$ si le rapport $f(x)/g(x)$ tend vers zéro. Pour $t \in \mathbb{R}$, on désigne respectivement par $\lfloor t\rfloor$ et $\lceil t\rceil$ la partie entière par défaut et la partie entière par excès du nombre $t$. L'ensemble des nombres premiers est noté $P$, et dans toute la suite la lettre $p$ avec ou sans indice désigne un élément de $P$. Pour deux nombres réels $a$ et $b$, on note $a\mid b$ pour signifier que $a$ divise $b$ (i.e., le rapport $b/a$ est un entier). On désigne par $\pi$ la fonction de comptage des nombres premiers; soit:
\[\pi(x):=\sum_{p \leq x } 1 ~~~~~~(\forall x\in \mathbb{R^+}).\] 
Les fonctions $\psi$ et $\theta$ de Chebyshev sont définies comme suit:
\begin{align*}
\psi(x)&:= \log \ppcm\left(1,2,\dots,\lfloor x\rfloor\right) ~~~~~~(\forall x\geq 1),\\ \theta(x)&:=\sum_{p\leq x}\log p ~~~~~~(\forall x\in \mathbb{R^+}).
\end{align*}
Les fonctions généralisées de Chebyshev sont données par:
\[\begin{array}{c}
 \pi(x;m,k):=\displaystyle\sum_{\begin{subarray}{c} p\leq x \\ p\equiv m\!\!\!\pmod k\end{subarray}}1 \\  ~~~~\displaystyle\theta(x;m,k):=\sum_{\begin{subarray}{c} p\leq x \\ p\equiv m\!\!\!\pmod k\end{subarray}}\log p
\end{array}~~~~~(\forall x\in\mathbb{R^+},~\forall m,k\in\mathbb{N^*}).\]
On désigne par $\varphi$ la fonction indicatrice d'Euler qui à $n\in \mathbb{N^*}$, associe le nombre d'entiers compris entre $1$ et $n$ et premiers avec $n$. La fonction $\mu$ de Möbius est définie par:
\[\mu(n):=\begin{cases} (-1)^{\omega(n)}&\text{si n est sans facteur carré}>1, \\ 0 &\text{dans le cas contraire}.\end{cases}~~~~~~(\forall n\in \mathbb{N^*}),\]
où $\omega(n)$ désigne le nombre de facteurs premiers distincts de $n$. Pour un nombre premier $p$ donné, on désigne par $\vartheta_{p}$ la valuation $p$-adique usuelle (i.e., pour tout $n\in \mathbb{N^*}$, $\vartheta_{p}(n)$ est le plus grand exposant $\alpha\in\mathbb{N}$ tel que $p^{\alpha}$ divise $n$). Le plus petit commun multiple des entiers $a_{1},a_{2},\dots,a_{n}$ est noté $\ppcm(a_{1},a_{2},\dots,a_{n})$ ou bien $\ppcm\lbrace a_{1},a_{2},\dots,a_{n}\rbrace$; leurs plus grand commun diviseur est noté $\pgcd(a_{1},a_{2},\dots,a_{n})$ ou bien $\pgcd\lbrace a_{1},a_{2},\dots,a_{n}\rbrace$. Le cardinal d'un ensemble fini $\mathcal{A}$ est noté $\#\mathcal{A}$. On désigne par $\left(\frac{\cdot}{\cdot}\right)$ le symbole de Legendre. Nous utilisons souvent l'abréviation $\ppcm$ pour alléger l'expression \og plus petit commun multiple\fg{}. Certaines de ces notations sont rappelées localement.

\mainmatter    

\chapter*{Introduction générale}

\addcontentsline{toc}{chapter}{Introduction générale}


\lettrine{$\mathscr{A}$}{voir} une bonne estimation du plus petit commun multiple de termes consécutifs d'une suite d'entiers est un problème difficile et important. Pour la suite usuelle de tous les entiers naturels, la fonction $\psi$ est cruciale à la fois pour l'encadrement de Chebyshev (1852) de la fonction $\pi$ et pour le théorème des nombres premiers d'Hadamard-de la Vallée Poussin (1896) selon lequel on a:
\[\pi(x)\sim_{+\infty}\frac{x}{\log x}.\]
Ce théorème possède plusieurs autres énoncés équivalents, l'un de ces énoncés est celui de Chebyshev (1852), qui est donné par:
\[\log\ppcm(1,2,\dots,n)\sim_{+\infty} n.\]   
Ce qui équivaut à dire que pour tout $\varepsilon>0$, il existe $N=N_{\varepsilon}\in \mathbb{N}$, tel que l'on ait:
\[\left(e-\varepsilon\right)^{n}\leq \ppcm(1,2,\dots,n)\leq \left(e+\varepsilon\right)^{n}~~~~(\forall n\geq N).\]  
Par ailleurs, les résultats les plus significatifs concernant l'estimation effective des nombres $\ppcm(1,2,\dots,n)$ $(n\in\mathbb{N^*})$, sont dus à Chebyshev (1850), Hanson (1972) et Nair (1982). Dans \cite{cheb}, Chebyshev exploite l'idée d'estimer le nombre $\log (n!)$ de deux façons différentes: l'une est analytique et se sert de la formule de Stirling et l'autre est arithmétique et se sert de la formule de Legendre:
\[n!=\prod_{p~\text{premier}}p^{\left\lfloor\frac{n}{p}\right\rfloor+\left\lfloor\frac{n}{p^2}\right\rfloor+\dots}~~~~(\forall n\in\mathbb{N^*}).\]
Cette idée l'avait conduit à l'estimation:
\[ e^{-1}\cdot n^{-\frac{5}{2}}\left(c_{1}\right)^{n}\leq \ppcm(1,2,\dots,n)\leq e\cdot n^{\frac{5}{4}}e^{\frac{5}{4\log 6}\log^{2}n}\left(c_{2}\right)^{n}~~~~(\forall n\in\mathbb{N^*}),\]
avec $c_{1}\simeq 2,51$ et $c_{2}\simeq 3,02$. En utilisant le développement du nombre $1$ en série de Sylvester, Hanson \cite{han} a montré que $\ppcm(1,2,\dots,n)\leq 3^{n}$, pour tout entier $n\geq 1$; quant à Nair \cite{nair}, il exploite l'intégrale $\int_{0}^{1}x^{m-1}\left(1-x\right)^{n-m}\mathrm{d}x$ ($1\leq m\leq n$) pour montrer que $\ppcm(1,2,\dots,n)\geq 2^{n}$, pour tout entier $n\geq 7$. Bien que la minoration de Nair est plus faible que celle de Chebyshev, sa méthode d'obtention est plus brève et permet en outre d'envisager le problème différemment.

Dans le but de donner un analogue à la version explicite du théorème des nombres premiers, plusieurs auteurs se sont intéressés à l'estimation effective du $\ppcm$ de certaines suites d'entiers, comme les suites arithmétiques, les suites polynomiales et les suites à forte divisibilité. On rappelle qu'une suite d'entiers strictement positifs $\boldsymbol{a}=\left(a_n\right)_{n\geq 1}$ est dite \textit{\`a divisibilit\'e} lorsqu'elle v\'erifie la propri\'et\'e:
\[n \mid m \Rightarrow a_n \mid a_m~~~~(\forall n , m \in\mathbb{N^*}).\]
Elle est dite \textit{\`a forte divisibilit\'e} lorsqu'elle v\'erifie la propri\'et\'e plus forte:
\[\pgcd\left(a_n,a_m\right)=a_{\pgcd\left(n,m\right)}~~~~(\forall n,m\in\mathbb{N^*}).\]
La structure g\'en\'erale des suites \`a divisibilit\'e a \'et\'e le sujet d'int\'er\^et de plusieurs auteurs au moins depuis la seconde moiti\'e du $20^{\text{\`eme}}$ si\`ecle. En 1936, Ward \cite{Ward} \'etudie les valuations $p$-adiques de ces suites et découvre certaines de leurs propriétés. En 1990, B\'ezivin et al. \cite{bezi} ont \'etabli une caract\'erisation compl\`ete des suites \`a divisibilit\'e qui sont r\'ecurrentes lin\'eaires. Assez r\'ecemment, Bliss et al. \cite{Bliss} ont montr\'e le r\'esultat (figurant d\'ej\`a implicitement dans un article ant\'erieur de Kimberling \cite{kimb}) selon lequel \og{}le terme g\'en\'eral d'une suite \`a forte divisibilit\'e ${\boldsymbol{a}}=\left(a_n\right)_{n\geq 1}$ peut toujours s'\'ecrire sous la forme:
\[a_{n}=\prod_{d\mid n}u_d~~~~(\forall n\geq 1),\]
pour une certaine suite d'entiers strictement positifs $\left(u_{n}\right)_{n\geq 1}$\fg{}. Ce r\'esultat a permis aux auteurs de \cite{Bliss} d'\'etablir (dans le m\^eme contexte) une expression importante de $\left(u_n\right)_{n\geq 1}$ en fonction de $\left(a_n\right)_{n\geq 1}$, diff\'erente de celle qui s'obtient via la formule d'inversion de M\"obius (voir le théorème \ref{pourc11}). Bien que la r\'eciproque de leur r\'esultat soit fausse, Bliss et al. \cite{Bliss} ont r\'eussi \`a \'etablir une condition n\'ecessaire et suffisante sur une suite $\boldsymbol{u}=\left(u_n\right)_{n\geq 1}$ pour que la suite $\boldsymbol{a}$ d\'efinie par $a_n=\prod_{d\mid n}u_d$ $(\forall n\geq 1)$ soit \`a forte divisibilit\'e (voir le théorème \ref{divseq}). Une autre condition plus pratique, \'equivalente \`a celle-ci, a \'et\'e \'etablie tout r\'ecemment par Nowicki \cite{Nowicki} (voir le th\'eor\`eme \ref{nowi}). Pour une suite à forte divisibilité $\left(a_n\right)_{n\geq 1}$, Myerson (1994) se sert dans \cite{myer} d'un lemme de Kimberling \cite{kimb} pour montrer que: 
\[\ppcm(a_{1},a_{2},\dots,a_{n})~~\text{divise}~~\frac{a_{1}a_{2}\cdots a_{n}}{\left(\displaystyle\prod_{1\leq k\leq n/b_{1}}a_{k}\right)\left(\displaystyle\prod_{1\leq k\leq n/b_{2}}a_{k}\right)\left(\displaystyle\prod_{1\leq k\leq n/b_{3}}a_{k}\right)\cdots},\]
pour toute suite d'entiers strictement positifs $\left(b_{n}\right)_{n\geq 1}$ telle que: $\sum_{k\geq 1}1/b_{k}=1$. Par la suite, Farhi \cite{far} avait établi en 2005 que pour toute suite arithmétique $\left(u_{k}\right)_{k\in \mathbb{N}}$, dont la raison $r$ et le premier terme $u_{0}$ sont strictement positifs et premiers entre eux, on a:
\[\ppcm\left(u_{0},u_{1},\dots,u_{n}\right)\geq u_{0}\left(r+1\right)^{n-1}~~~~\left(\forall n\in\mathbb{N}\right).\]
En outre, Farhi avait conjecturé que l'exposant $(n-1)$ figurant dans cette minoration peut être remplacé par $n$, qui est l'exposant optimal que l'on peut obtenir. Cette conjecture a été confirmée par Hong et al. \cite{hong1} en 2006. Notons que Farhi utilise dans sa démonstration l'intégrale $\int_{0}^{1}x^{m+\frac{u_{0}}{r}-1}(1-x)^{n-m}\mathrm{d}x$ $(0\leq m\leq n)$. Plusieurs d'autres améliorations ont été établies, mais uniquement pour des valeurs de $n$ assez grandes en fonction de $u_0$ et $r$ (voir par exemple \cite{hong1,hongkom,kane}). Concernant la majoration du $\ppcm$ d'une progression arithmétique générale, aucun résultat n'est établi à ce jour et d'ailleurs, cela fait bien partie de notre recherche. La méthode de Hanson (utilisée pour majorer les nombres $\ppcm(1,2,\dots,n)$) semble non généralisable aux progressions arithmétiques. D'autre part, Farhi \cite{far} avait obtenu des minorations non triviales pour le $\ppcm$ d'une certaine classe de suites quadratiques; il obtient en particulier la minoration suivante:
\[\ppcm\left(1^{2}+1,2^2+1,\dots,n^{2}+1\right)\geq 0.32\left(1.442\right)^{n}~~~~\left(\forall n\in\mathbb{N^*}\right).\]
Cette dernière avait été améliorée par Oon \cite{oon} qui a montré plus généralement que pour tous $c,m,n\in\mathbb{N^*}$ tels que $m\leq \left\lceil \frac{n}{2}\right\rceil$, on a: 
\[\ppcm\left(m^{2}+c,(m+1)^2+c,\dots,n^{2}+c\right)\geq 2^{n}.\]
L'idée de Oon consiste à évaluer l'intégrale $\int_{0}^{1}x^{m-1+\sqrt{c}i}\left(1-x\right)^{n-m}\mathrm{d}x$ $(1\leq m\leq n)$ de deux façons différentes, généralisant ainsi la méthode de Nair \cite{nair}. Dans la continuation, Hong et al. \cite{hong2} avaient réussi à étendre la méthode de Oon aux suites polynomiales $(f(n))_{n\in\mathbb{N^*}}$, où $f\in\mathbb{Z}[X]$ est un polynôme non constant à coefficients positifs. Pour ce faire, ils ont dû compléter la méthode de Oon par de nouveaux arguments de la théorie algébrique des nombres.

Dans une autre direction, plusieurs estimations asymptotiques du $\ppcm$ de certaines suites d'entiers avaient été établies par divers auteurs. Parmi ces estimations, celle de Bateman et al. \cite{bat} qui énonce que pour tous $a,b\in\mathbb{Z}$, tels que $b>0$, $a+b>0$ et $\pgcd(a,b)=1$, on a:
\[\log\ppcm\left(a+b,a+2b,\dots,a+nb\right)\sim_{+\infty}\left(\frac{b}{\varphi(b)} \sum_{\begin{subarray}{c} 1\leq m\leq b \\ \pgcd(m,b)=1  \end{subarray}}\frac{1}{m}\right)n,\]
où $\varphi$ désigne la fonction indicatrice d'Euler. Ce résultat s'obtient comme conséquence du théorème des nombres premiers pour les progressions arithmétiques, qui lui même constitue une généralisation du théorème des nombres premiers (voir par exemple \cite{bla}, p. $72$). En désignant par $\left(F_{n}\right)_{n\geq 1}$ la suite de Fibonacci usuelle, définie récursivement par: $F_1=1$, $F_2=1$ et $F_{n+2}=F_n+F_{n+1}$ $(\forall n\geq 1)$; Matiyasevich et al. \cite{Mat} ont montré que:
\[\log\ppcm\left(F_1,F_2,\dots,F_n\right)\sim_{+\infty}\frac{3}{\pi^2}n^2\log\Phi,\]
où $\Phi$ désigne le nombre d'or ($\Phi:=\frac{1+\sqrt{5}}{2}$). Ce résultat a été généralisé par Kiss et al. \cite{kiss} pour les suites de Fibonacci généralisées à la place de $\left(F_n\right)_n$. Une autre estimation un peu plus complexe est due à Cilleruelo \cite{cil} qui énonce que pour tout polynôme quadratique irréductible $f\in \mathbb{Z}[X]$, on a:
\[\log \ppcm\left(f(1),f(2),\dots,f(n)\right)= n\log n +Bn+o(n),\] 
où $B$ est une constante qui dépend de $f$. Cette dernière estimation entraîne, en particulier, que:
\[\log \ppcm\left(f(1),f(2),\dots,f(n)\right)\sim_{+\infty} n\log n.\]
Pour le cas où $f(x)=x^2+1$, Rué et al. \cite{rue} ont amélioré le terme d'erreur de l'estimation précédente en montrant que l'on a pour tout $\alpha<\frac{4}{9}$:
\[\log\ppcm \left(1^2+1,2^2+1,\dots,n^2+1\right)=n\log n +Bn+O\left(\frac{n}{\left(\log n\right)^\alpha}\right).\]
Cilleruelo \cite{cil} a aussi donné une conjecture qui est ouverte à ce jour et qui généralise son estimation. Plus précisément, on a:

\begin{con}[Cilleruelo \cite{cil}]\label{cocil}
Pour tout polynôme irréductible $f\in\mathbb{Z}[X]$ tel que $\deg f\geq 3$, on a:
\[\log \ppcm\left(f(1),f(2),\dots,f(n)\right)\sim_{+\infty} (\deg f-1)n\log n.\] 
\end{con}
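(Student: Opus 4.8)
The plan is to push the method by which Cilleruelo \cite{cil} settled the case $\deg f=2$, which confines the whole problem to an estimate for the radical of the product of values. Write $d:=\deg f$, $V_n:=\prod_{k=1}^{n}|f(k)|$ and $L_n:=\ppcm(f(1),\dots,f(n))$ (taking absolute values, or assuming $f$ positive on $\mathbb{N}^{*}$). Since $|f(k)|\asymp k^{d}$, one has at once $\log V_n=\sum_{k\le n}\log|f(k)|=dn\log n+O(n)$. As $\operatorname{rad}(V_n)\mid L_n\mid V_n$, the conjecture splits into two parts: $\log\operatorname{rad}(V_n)\sim(d-1)n\log n$, and $\log L_n-\log\operatorname{rad}(V_n)=\sum_{p}(\vartheta_p(L_n)-1)_{+}\log p=o(n\log n)$; the latter is the contribution of the square parts of the $f(k)$, and is handled (for $\deg f\le 3$) by the circle of ideas around Hooley's theorem on squarefree values of polynomials, but is already a delicate point for $\deg f\ge 4$.

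For $\log\operatorname{rad}(V_n)$ I would split the primes at $p=n$. Those with $p\le n$ contribute at most $\theta(n)=O(n)$, so the main term is
\[
S_n:=\sum_{p>n}(\log p)\,\mathbf{1}\bigl[\exists\,k\le n:\ p\mid f(k)\bigr].
\]
Since $p>n$, no residue class meets $\{1,\dots,n\}$ twice, so $\#\{k\le n:\ p\mid f(k)\}=N_p(n)$, the number of roots of $f$ modulo $p$ lying in $[1,n]$, and $0\le N_p(n)\le d$. The identity $\sum_{p>n}N_p(n)\log p=\sum_{k\le n}\log\operatorname{rad}_{>n}(f(k))$ (where $\operatorname{rad}_{>n}$ keeps only prime factors $>n$), together with $\log f(k)-\log\operatorname{rad}_{>n}(f(k))=\sum_{p\le n}\vartheta_p(f(k))\log p+(\text{square part over }p>n)$, gives
\[
\sum_{k\le n}\log\operatorname{rad}_{>n}(f(k))=\log V_n-\sum_{p\le n}\vartheta_p(V_n)\log p+O(n).
\]
A routine valuation count (using $\rho_f(p^{j})\le d$ for $p\nmid\operatorname{disc}f$ by Hensel lifting, where $\rho_f(m)$ counts the roots of $f$ modulo $m$, and the prime ideal theorem for $\mathbb{Q}[X]/(f)$, equivalently $\sum_{p\le x}\rho_f(p)\log p\sim x$) then yields $\sum_{p\le n}\vartheta_p(V_n)\log p=n\log n+O(n)$, hence $\sum_{k\le n}\log\operatorname{rad}_{>n}(f(k))=(d-1)n\log n+O(n)$. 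Since $\mathbf{1}[\,\cdot\,]\le N_p(n)$, this already gives $S_n\le(d-1)n\log n+O(n)$, so the upper bound $\log L_n\le(d-1+o(1))n\log n$ is within reach; and
\[
E_n:=\sum_{p>n}\bigl(N_p(n)-1\bigr)_{+}\log p=\sum_{k\le n}\log\operatorname{rad}_{>n}(f(k))-S_n .
\]
Thus everything reduces to the lower bound, that is, to proving $E_n=o(n\log n)$: then $S_n\sim(d-1)n\log n$ and $\log L_n\ge\log\operatorname{rad}(V_n)\ge S_n\sim(d-1)n\log n$.

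The main obstacle — and, I believe, the reason the conjecture is open for $d\ge 3$ — is exactly the estimate $E_n=o(n\log n)$. A term $N_p(n)\ge 2$ means that $f$ has two roots in the short interval $[1,n]$ modulo $p$, with $p$ possibly as large as $|f(n)|\asymp n^{d}$; heuristically this should occur with ``probability'' $\asymp n/p$ rather than $\asymp1$, which is precisely what would force $E_n$ down, but making this rigorous is a genuine equidistribution statement for the roots of $f(x)\equiv 0\pmod{p}$ in short intervals. For $d=2$ the situation is rigid: if $r_1,r_2\in[1,n]$ are the two roots of $f(X)=aX^{2}+bX+c$ modulo $p$, then $a(r_1+r_2)+b\equiv 0\pmod p$ while $|a(r_1+r_2)+b|=O(n)$, so once $p$ exceeds a fixed multiple of $n$ the symmetric function $r_1+r_2$ is pinned to an integer (and $r_1r_2$ likewise once $p\gg n^{2}$), which would exhibit an integer factorisation of the irreducible $f$ — a contradiction; the remaining intermediate range of $p$ is then covered by Hooley's equidistribution theorem for roots of quadratic congruences. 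For $d\ge 3$ this mechanism collapses completely, because knowing that two among the $d$ roots lie in $[1,n]$ imposes no congruence on their symmetric functions — the other $d-2$ roots range freely over $[1,p]$ — and no sufficiently strong equidistribution result for roots of higher-degree congruences in short intervals is available. Even the obvious reduction to divisor sums is too weak: if $p>n$ divides both $f(k_1)$ and $f(k_2)$ with $k_1\ne k_2$, then $p$ divides $g(k_1,k_2):=\bigl(f(k_1)-f(k_2)\bigr)/(k_1-k_2)$, an integer of size $O(n^{d-1})$, so $E_n\ll\sum_{k_1<k_2\le n}\log\operatorname{rad}_{>n}\!\bigl(g(k_1,k_2)\bigr)$, but the trivial bound here is of order $n^{2}\log n$, hopelessly large, and extracting the expected cancellation appears to demand genuinely new input. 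This is the step I expect to be decisive.
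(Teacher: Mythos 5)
This statement is a conjecture, not a theorem: the paper itself says it is open (\og ouverte \`a ce jour\fg{}) and lists its proof among the perspectives, so there is no proof in the paper to compare against, and your proposal does not supply one either. The decisive estimate you isolate, $E_n=\sum_{p>n}(N_p(n)-1)_{+}\log p=o(n\log n)$, is not a missing technical lemma --- it \emph{is} the conjecture. Indeed, immediately after the statement the paper records Cilleruelo's own observation that the conjecture is equivalent to
\[\#\left\lbrace p~\text{premier};~n\leq p\ll n^{\deg f-1},~p\mid f(k),~p\mid f(j)~\text{pour certains}~1\leq j<k\leq n \right\rbrace=o(n),\]
and this is exactly your $E_n=o(n\log n)$: any prime $p>n$ dividing two values $f(j),f(k)$ with $j<k\le n$ automatically divides $(f(k)-f(j))/(k-j)\ll n^{\deg f-1}$, so $\log p\asymp\log n$ in the relevant range and $(N_p(n)-1)_{+}$ is bounded by $\binom{\deg f}{2}$, whence the two statements are equivalent up to constants. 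Your argument therefore reproduces the known reduction rather than advancing beyond it.

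To be fair, the surrounding analysis is sound and honestly presented: $\log V_n=dn\log n+O(n)$, the splitting of primes at $p=n$, the valuation count $\sum_{p\le n}\vartheta_p(V_n)\log p=n\log n+O(n)$ via the prime ideal theorem, and the resulting one-sided bound $\sum_{k\le n}\log\mathrm{rad}_{>n}(f(k))=(d-1)n\log n+O(n)$ are all correct, and your diagnosis of why the $d=2$ mechanism (pinning the symmetric functions of the two roots modulo $p$) collapses for $d\ge3$ is accurate. Note also that even your claimed upper bound $\log L_n\le(d-1+o(1))n\log n$ is not unconditional for $\deg f\ge4$, since it requires controlling $\sum_{p>n}(\vartheta_p(L_n)-1)_{+}\log p$, i.e.\ the large-prime square parts of the $f(k)$, which is tied to the (open) problem of squarefree values of polynomials of degree $\ge4$. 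In short: the statement remains unproved, and the step you yourself flag as decisive is precisely the open problem.
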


\noindent En outre, Cilleruelo avait précisé que cette conjecture est équivalente à dire que pour tout polynôme irréductible de degré $\geq 3$, on a:
\[\#\left\lbrace p~\text{premier};~n\leq p\ll n^{\deg f-1},~p\mid f(k),~p\mid f(j)~\text{pour certains}~1\leq j<k\leq n \right\rbrace=o(n).\]
   
\`A ce stade, nous avons presque introduit tout ce qui existe en littérature à propos des estimations du $\ppcm$ de certaines suites d'entiers.

\begin{center}
\textbf{Le but de la thèse}
\end{center}

Le but de cette thèse consiste en les trois points suivants:
\begin{enumerate}
\item Dans la minoration de Oon, le nombre de termes figurant dans le plus petit commun multiple $L_{c,m,n}:=\ppcm\left(m^{2}+c,(m+1)^2+c,\dots,n^{2}+c\right)$ est strictement plus grand que $n/2$. On se propose dans cette thèse de supprimer cette contrainte, en cherchant des minorations non triviales de $L_{c,m,n}$ pour des entiers strictement positifs $c,m,n$ quelconques (bien entendu $m\leq n$).
\item On s'intéresse aussi à estimer le $\ppcm$ de quelques suites non polynomiales, comme la suite de Fibonacci, les suites de Lucas ou plus généralement les suites à forte divisibilité. Le but ici est au moins d'effectiviser le résultat asymptotique de Matiyasevich et al. \cite{Mat} et sa généralisation par Kiss et al. \cite{kiss}.
\item Nous cherchons également à obtenir une première majoration effective et non triviale pour le $\ppcm$ d'une progression arithmétique finie, ce qui revient à préciser et effectiviser l'estimation asymptotique de Bateman et al. \cite{bat}.
\end{enumerate}  

\begin{center}
\textbf{Aperçu de la thèse}
\end{center}

Cette thèse est composée de cinq chapitres que nous décrivons brièvement ci-dessous:

Dans le premier chapitre, nous présentons des généralités sur les estimations du $\ppcm$ de suites d'entiers. La partie principale de notre travail se situe donc dans les chapitres \ref{ch2}, \ref{ch3}, \ref{ch4} et \ref{ch5}.

Le deuxième chapitre est consacré à l'étude des nombres 
\[L_{c,m,n}:=\ppcm\{m^2+c,(m+1)^2+c,\dots,n^2+c\},\]
où $c,m,n$ sont des entiers strictement positifs tels que $m\leq n$. Plus précisément, nous utilisons des arguments d'algèbre commutative et d'analyse complexe pour obtenir un diviseur rationnel et non trivial de $L_{c,m,n}$. Comme conséquence, nous établissons des nouvelles minorations non triviales pour $L_{c,m,n}$. 

Dans le troisième chapitre, nous obtenons des identités intéressantes concernant le $\ppcm$ de suites à forte divisibilité. Nous appliquons ensuite ces identités pour donner un premier encadrement effectif du $\ppcm$ d'une suite de Fibonacci généralisée; ce qui constitue en particulier une version effective du résultat de Matyasevich et al. \cite{Mat} et sa généralisation par Kiss et al. \cite{kiss}.

Dans le quatrième chapitre, nous établissons une première majoration effective du $\ppcm$ des termes consécutifs d'une suite arithmétique finie et nous donnons quelques conséquences de cette dernière. 

Dans le dernier chapitre, nous adaptons la méthode du quatrième chapitre pour la suite $(n^2+1)_n$ et nous établissons d'une part une amélioration de la minoration de Oon (pour le cas $c=1$) et d'autre part nous obtenons une première majoration effective du $\ppcm$ de cette suite.  
             
\markboth{}{Introduction}

\newpage

\chapter{Généralités sur les estimations du $\ppcm$ de suites d'entiers}\label{ch1}

\section{Introduction}
Ce chapitre rassemble les différents travaux réalisés dans le cadre des estimations du $\ppcm$ de certaines suites d'entiers. On se focalise notamment sur certains résultats effectifs dus à Chebyshev \cite{cheb}, Myerson \cite{myer}, Hanson \cite{han}, Nair \cite{nair}, Farhi \cite{Farhi1,far}, Oon \cite{oon} et Hong et al. \cite{hong1,hong2,hong3}. Nous terminons par une brève présentation de quelques résultats asymptotiques dus à Bateman et al. \cite{bat} et Cilleruelo \cite{cil}.   
 
\section{La méthode de Chebyshev}

Chebyshev fut le premier à avoir obtenu en 1850 des estimations effectives et significatives de la fonction de comptage des nombres premiers $\pi$. Ses célèbres fonctions $\psi$ et $\theta$ (définies ci-dessous) furent les points clef de sa méthode.
     
\begin{defi}
Les fonctions $\theta$, $\psi$, $T$ et $\chi$ de Chebyshev associent à tout réel positif $x$ les réels respectifs $\theta(x)$, $\psi(x)$, $T(x)$ et $\chi(x)$ définis comme suit:
\begin{align*}
\theta(x)&:=\sum_{p\leq x}\log p, \\ \psi(x)&:=\log \ppcm\left(1,2,\dots,\lfloor x\rfloor\right),\\ T(x)&:=\psi\left(x\right)+\psi\left(\frac{x}{2}\right)+\psi\left(\frac{x}{3}\right)+\dots, \\ \chi(x)&:=T(x)+T\left(\frac{x}{30}\right)-T\left(\frac{x}{2}\right)-T\left(\frac{x}{3}\right)-T\left(\frac{x}{5}\right).
\end{align*}
\end{defi}

\begin{rmq}
En se servant du fait que $\ppcm\left(1,2,\dots,\lfloor x\rfloor\right)=\prod_{p^{\alpha}\leq x}p$, où le produit étant étendu à tous les couples $(p,\alpha)$, avec $p$ premier et $\alpha\in \mathbb{N^{*}}$, on montre immédiatement que l'on a pour tout $x\in \mathbb{R^+}$:
\begin{equation}\label{chp}
\psi(x)=\theta(x)+\theta(x^{1/2})+\theta(x^{1/3})+\dots .
\end{equation}
De cette identité se déduit aussi l'expression de $T$ en fonction de $\theta$. On a la proposition suivante:
\end{rmq}

\begin{prop}[Chebyshev \cite{cheb}]\label{lcheb1}
Pour tout réel positif $x$, on a:
\begin{equation}\label{cheb1}
T(x)=\sum_{n\geq 1}\sum_{j\geq 1}\theta\left(\left(\frac{x}{n}\right)^{\frac{1}{j}}\right)=\log \left(\lfloor x\rfloor !\right).
\end{equation}
\end{prop}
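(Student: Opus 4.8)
The plan is to prove the two equalities in \eqref{cheb1} separately, starting from the definition $T(x)=\sum_{n\geq 1}\psi(x/n)$ and the identity \eqref{chp} which expresses $\psi$ in terms of $\theta$. First I would substitute \eqref{chp} into the definition of $T$: since $\psi(x/n)=\sum_{j\geq 1}\theta\bigl((x/n)^{1/j}\bigr)$, we get immediately
\[T(x)=\sum_{n\geq 1}\psi\!\left(\frac{x}{n}\right)=\sum_{n\geq 1}\sum_{j\geq 1}\theta\!\left(\left(\frac{x}{n}\right)^{\frac1j}\right),\]
which is the first equality. One should note that both sums are in fact finite for fixed $x$ (all terms with $x/n<2$, resp. $(x/n)^{1/j}<2$, vanish), so there is no convergence issue and the interchange of summations is legitimate.

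For the second equality I would go back to the arithmetic meaning of $\psi$. Using Legendre's formula (quoted in the introduction), $\vartheta_p(\lfloor x\rfloor!)=\sum_{k\geq 1}\lfloor x/p^k\rfloor$, hence
\[\log(\lfloor x\rfloor!)=\sum_{p}\left(\sum_{k\geq 1}\left\lfloor\frac{x}{p^k}\right\rfloor\right)\log p=\sum_{k\geq 1}\sum_{p}\left\lfloor\frac{x}{p^k}\right\rfloor\log p.\]
On the other hand, from $\psi(y)=\log\ppcm(1,\dots,\lfloor y\rfloor)=\sum_{p^\alpha\leq y}\log p=\sum_{k\geq1}\theta(y^{1/k})$ one has $\psi(y)=\sum_{p}\lfloor \log_p y\rfloor\log p$. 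Therefore
\[T(x)=\sum_{n\geq 1}\psi\!\left(\frac{x}{n}\right)=\sum_{n\geq 1}\sum_{p}\left\lfloor\log_p\frac{x}{n}\right\rfloor\log p=\sum_{p}\log p\sum_{n\geq 1}\#\{k\geq 1:\ p^k\leq x/n\}.\]
The inner double count is $\sum_{n\geq1}\#\{k\geq1: n\leq x/p^k\}=\sum_{k\geq1}\lfloor x/p^k\rfloor$, which matches the expression for $\log(\lfloor x\rfloor!)$ above. Assembling these gives $T(x)=\log(\lfloor x\rfloor!)$.

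Alternatively — and this is probably the cleaner route to write up — one can avoid Legendre's formula and argue directly: $T(x)=\sum_{n\ge1}\log\ppcm(1,\dots,\lfloor x/n\rfloor)=\sum_{n\ge1}\sum_{p^\alpha\le x/n}\log p$, and then count, for each prime power $p^\alpha$, the number of $n$ with $p^\alpha\le x/n$, i.e.\ with $n\le x/p^\alpha$, which is $\lfloor x/p^\alpha\rfloor$; so $T(x)=\sum_{p^\alpha}\lfloor x/p^\alpha\rfloor\log p=\sum_p\bigl(\sum_{\alpha\ge1}\lfloor x/p^\alpha\rfloor\bigr)\log p=\sum_p\vartheta_p(\lfloor x\rfloor!)\log p=\log(\lfloor x\rfloor!)$ by Legendre. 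The only genuinely delicate point is bookkeeping: making sure the triple interchange of sums ($\sum_n$, $\sum_p$, $\sum_\alpha$) is justified, which it is because for fixed $x$ only finitely many terms are nonzero. I do not expect any real obstacle here; the statement is essentially a reorganization of Legendre's formula, and the main care needed is to keep the finiteness of all sums explicit so the rearrangements are unambiguous.
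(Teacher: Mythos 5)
Your proposal is correct and follows essentially the same route as the paper: the first equality by substituting \eqref{chp} into the definition of $T$, and the second by the identity $\ppcm(1,\dots,\lfloor y\rfloor)=\prod_{p^{\alpha}\leq y}p$, a double count of the pairs $(n,p^{\alpha})$ with $p^{\alpha}\leq x/n$ giving the exponent $\lfloor x/p^{\alpha}\rfloor$, and Legendre's formula. The only difference is cosmetic: the paper writes the computation multiplicatively via $\exp(T(x))$, whereas you write it additively with logarithms.
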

\begin{proof}
La première égalité de \eqref{cheb1} résulte immédiatement de \eqref{chp}. Pour montrer la seconde égalité de \eqref{cheb1}, on se sert d'une part de l'identité $\ppcm(1,2,\dots,\lfloor y\rfloor)$ $=\prod_{p^{\alpha}\leq y}p ~~(\forall y\geq 0)$ et d'autre part de la formule des factoriels de Legendre. En effet, pour tout $x\geq 0$, on a:
\begin{align*}
\exp\left(T(x)\right)&=\prod_{j\geq 1}\prod_{n\geq 1}\left(\prod_{p^j\leq x/n}p\right)=\prod_{j\geq 1}\prod_{p^{j}\leq x}p^{\left\lfloor\frac{x}{p^j}\right\rfloor}\\&=\prod_{p\leq x}\left(\prod_{\begin{subarray}{c} j\geq 1 \\ p^{j}\leq x \end{subarray}}p^{\left\lfloor\frac{x}{p^j}\right\rfloor}\right)=\prod_{p\leq x}p^{\sum_{j\geq 1}\left\lfloor\frac{x}{p^j}\right\rfloor}=\lfloor x\rfloor!.
\end{align*}
Ce qui conclut à l'égalité requise et complète cette démonstration.
\end{proof}

\begin{lemme}[Chebyshev \cite{cheb}]\label{lcheb2}
Pour tout réel positif $x$, on a:
\[\psi(x)-\psi\left(\frac{x}{6}\right)\leq \chi(x) \leq \psi(x).\]
\end{lemme}
\begin{proof}
En exprimant $\chi$ en fonction de $\psi$, on obtient pour tout $x\geq 0$:
\[\chi(x)=A_{1}\psi(x)+A_{2}\psi\left(\frac{x}{2}\right)+\dots+A_{n}\psi\left(\frac{x}{n}\right)+\dots,\]
où les coefficients $A_{1},A_{2},\dots,A_{n},\dots$ prennent les valeurs $0$, $1$ ou $-1$ selon les restes de leurs indices modulo $30$. On a plus précisément:\\
$A_{n}=1$, si $n\equiv 1$, $7$, $11$, $13$, $17$, $19$, $23$, $29$ $\pmod {30}$, \\
$A_{n}=0$, si $n\equiv 2$, $3$, $4$, $5$, $8$, $9$, $14$, $16$, $21$, $22$, $25$, $26$, $27$, $28$ $\pmod {30}$,\\
$A_{n}=-1$, si $n\equiv 0$, $6$, $10$, $12$, $15$, $18$, $20$, $24$ $\pmod {30}$.\\
Pour montrer ce fait, il suffit de remarquer que pour tout $i\in\lbrace 1,2,3,5,30\rbrace$, le nombre $T\left(\frac{x}{i}\right)$ est la somme des nombres $\psi\left(\frac{x}{n}\right)$, où $n$ parcourt l'ensemble des multiples de $i$. On a ainsi pour tout $x\geq 0$:
\[\chi(x)=\psi(x)-\psi\left(\frac{x}{6}\right)+\psi\left(\frac{x}{7}\right)-\psi\left(\frac{x}{10}\right)+\psi\left(\frac{x}{11}\right)-\psi\left(\frac{x}{12}\right)+\dots.\]
Constatant que cette série est alternée et que ses termes décroissent continuellement en valeurs absolues, on peut l'encadrer entre son premier terme et la somme de ses deux premiers termes; soit
\[\psi(x)-\psi\left(\frac{x}{6}\right)\leq \chi(x) \leq \psi(x)~~~~(\forall x\geq 0).\]
Le lemme est ainsi démontré.   
\end{proof}

\begin{thm}[Chebyshev \cite{cheb}]\label{chebbb}
pour tout réel $x\geq 1$, on a:
\[Ax-\frac{5}{2}\log x-1 \leq \psi(x)\leq \frac{6}{5}Ax +\frac{5}{4\log 6}\log^{2}x +\frac{5}{4}\log x +1,\]
où $A:=\log\left(\frac{2^{1/2}3^{1/3}5^{1/5}}{30^{1/30}}\right)\simeq 0,92129202$.
\end{thm}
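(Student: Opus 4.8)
The plan is to read off the theorem from the two preceding results. By Proposition~\ref{lcheb1} we have $T(y)=\log(\lfloor y\rfloor!)$, so the definition of $\chi$ rewrites, for every $x\geq 0$, as
\[
\chi(x)=\log(\lfloor x\rfloor!)+\log(\lfloor x/30\rfloor!)-\log(\lfloor x/2\rfloor!)-\log(\lfloor x/3\rfloor!)-\log(\lfloor x/5\rfloor!),
\]
while Lemma~\ref{lcheb2} gives the sandwich $\psi(x)-\psi(x/6)\leq\chi(x)\leq\psi(x)$. Hence it suffices to estimate $\chi(x)$ above and below: the lower bound of the theorem will follow at once from $\psi(x)\geq\chi(x)$, and the upper bound by iterating the left-hand inequality $\psi(x)\leq\chi(x)+\psi(x/6)$.

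\textbf{Estimating $\chi$.} First I would record an explicit Stirling-type estimate of the shape $\bigl|\log(\lfloor y\rfloor!)-(y\log y-y)\bigr|\leq c_1\log y+c_2$ for $y\geq 1$, with explicit $c_1,c_2$, obtained by comparing $\sum_{k\leq y}\log k$ with $\int_0^{y}\log t\,\mathrm dt=y\log y-y$ and noting that replacing $\lfloor y\rfloor$ by $y$ costs only an $O(\log y)$ term, since $t\log t-t$ has derivative $\log t$. Substituting into the formula for $\chi(x)$, the crucial point is that the coefficient of $x\log x$ is
\[
1+\tfrac1{30}-\tfrac12-\tfrac13-\tfrac15=0,
\]
so that term disappears, while the surviving contributions $-\tfrac{x}{k}\log k$ assemble into
\[
\Bigl(\tfrac{\log 2}{2}+\tfrac{\log 3}{3}+\tfrac{\log 5}{5}-\tfrac{\log 30}{30}\Bigr)x=\log\!\left(\frac{2^{1/2}3^{1/3}5^{1/5}}{30^{1/30}}\right)x=Ax ,
\]
which is precisely why the index set $\{1,30\}$ is opposed to $\{2,3,5\}$. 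The five error terms, estimated separately, add up to something affine in $\log x$, so one obtains $Ax-r(x)\leq\chi(x)\leq Ax+r(x)$ with $r$ explicit and affine in $\log x$; the constants must be pushed so that $r(x)\leq\tfrac52\log x+1$ for $x\geq 1$.

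\textbf{The two bounds.} The lower bound is then immediate: $\psi(x)\geq\chi(x)\geq Ax-r(x)\geq Ax-\tfrac52\log x-1$, after checking the finitely many small values of $x$ (for which $Ax-\tfrac52\log x-1$ is in fact negative, so there is nothing to prove) directly. For the upper bound, iterating $\psi(x)\leq\chi(x)+\psi(x/6)\leq Ax+r(x)+\psi(x/6)$ with $x$ replaced successively by $x/6,x/6^2,\dots$ and stopping once the argument drops below $1$ (where $\psi$ vanishes) gives
\[
\psi(x)\leq A\sum_{j\geq 0}\frac{x}{6^{j}}+\sum_{j\geq 0}r\!\left(\frac{x}{6^{j}}\right)=\frac{6}{5}\,Ax+\sum_{j}r\!\left(\frac{x}{6^{j}}\right).
\]
The geometric series explains the factor $\tfrac65$; the error sum runs over about $\log x/\log 6$ indices $j$, and since each $r(x/6^{j})$ is affine in $\log(x/6^{j})=\log x-j\log 6$, it is a quadratic polynomial in $\log x$ whose leading coefficient — one checks — comes out as $\tfrac{5}{4\log 6}$, the rest being $\leq\tfrac54\log x+1$. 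This is exactly the form claimed.

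\textbf{Main obstacle.} No new idea is needed beyond Proposition~\ref{lcheb1} and Lemma~\ref{lcheb2}; the only genuinely delicate part is quantitative. One must carry the explicit constants through the Stirling estimate, through the passage from $\lfloor x/k\rfloor$ to $x/k$ (the floor functions being the most annoying source of lower-order terms), and through both the geometric summation and the summation of the logarithmic errors, tightly enough that the final constants emerge exactly as $\tfrac52$, $1$, $\tfrac65$, $\tfrac{5}{4\log 6}$, $\tfrac54$, $1$ rather than merely of the right order of magnitude. In particular the coefficient $\tfrac{5}{4\log 6}$ already prescribes how much may be lost in the per-step error $r(x)$, so the bookkeeping in the $\chi$-estimate has to be done with that target in mind.
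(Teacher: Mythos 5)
Your proposal follows exactly the paper's route: Proposition \ref{lcheb1} plus Stirling to bound $\chi(x)$ by $Ax\pm\bigl(\tfrac52\log x+1\bigr)$ (with the same cancellation $1+\tfrac1{30}-\tfrac12-\tfrac13-\tfrac15=0$ isolating $A$), the lower bound read off from $\psi\geq\chi$, and the upper bound from iterating $\psi(x)\leq\chi(x)+\psi(x/6)$, which the paper packages via the auxiliary function $f(x)=\tfrac65Ax+\tfrac{5}{4\log 6}\log^2x+\tfrac54\log x$ satisfying $f(x)-f(x/6)=Ax+\tfrac52\log x$. The only work you defer — carrying the explicit constants through Stirling and the summation, and checking $1\leq x\leq 30$ by hand — is exactly the bookkeeping the paper performs, so the approach is sound and essentially identical.
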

\begin{proof}
En posant $n=\lfloor x\rfloor$, on a l'estimation de Stirling suivante:
\[n^{n}e^{-n}\sqrt{2\pi n} \leq n!\leq n^{n}e^{-n}\sqrt{2\pi n}\cdot e^{\frac{1}{12n}}\]
(voir \cite[Problème $1.15$]{kon}), qui entraîne (en vertu de la proposition \ref{lcheb1}) que:
\begin{align*}
&T(x)\leq \log\left(\sqrt{2\pi}\right)+n\log n -n +\frac{1}{2}\log n +\frac{1}{12n}\\ &~~~~~~\leq \log\left(\sqrt{2\pi}\right)+x\log x -x +\frac{1}{2}\log x +\frac{1}{12},\\& T(x)\geq  \log\left(\sqrt{2\pi}\right)+(n+1)\log (n+1)-(n+1) -\frac{1}{2}\log (n+1)\\&~~~~~~\geq\log\left(\sqrt{2\pi}\right)+x\log x-x -\frac{1}{2}\log x.
\end{align*}
En utilisant ces dernières estimations de la fonction $T$ (qui sont valables pour tout $x\geq 1$), on obtient les estimations suivantes (valables pour tout $x\geq 30$):
\begin{align*}
&T(x)+T\left(\frac{x}{30}\right)\leq 2\log \left(\sqrt{2\pi}\right)+\frac{2}{12}+\frac{31}{30}x\log x -x\log \left(30^{1/30}\right)-\frac{31}{30}x+\log x-\frac{1}{2}\log 30,\\&T(x)+T\left(\frac{x}{30}\right)\geq 2\log \left(\sqrt{2\pi}\right)+\frac{31}{30}x\log x -x\log \left(30^{1/30}\right)-\frac{31}{30}x-\log x+\frac{1}{2}\log 30,\\&T\left(\frac{x}{2}\right)+T\left(\frac{x}{3}\right)+T\left(\frac{x}{5}\right)\leq 3\log\left(\sqrt{2\pi}\right)+\frac{3}{12}+\frac{31}{30}x\log x-x\log\left(2^{1/2}3^{1/3}5^{1/5}\right)-\frac{31}{30}x\\&~~~~~~~~~~~~~~~~~~~~~~~~~~~~~~~~~~+\frac{3}{2}\log x -\frac{1}{2}\log 30,\\ &T\left(\frac{x}{2}\right)+T\left(\frac{x}{3}\right)+T\left(\frac{x}{5}\right)\geq 3\log\left(\sqrt{2\pi}\right)+\frac{31}{30}x\log x-x\log\left(2^{1/2}3^{1/3}5^{1/5}\right)-\frac{31}{30}x\\&~~~~~~~~~~~~~~~~~~~~~~~~~~~~~~~~~~-\frac{3}{2}\log x +\frac{1}{2}\log 30.
\end{align*} 
Il découle de ces dernières estimations et de la définition même de la fonction $\chi$ que l'on a pour tout $x\geq 30$:
\[ Ax-\frac{5}{2}\log x -1\leq \chi(x)\leq Ax+\frac{5}{2}\log x ,\]
avec $A:=\log\left(\frac{2^{1/2}3^{1/3}5^{1/5}}{30^{1/30}}\right)$. D'autre part, un calcul manuel montre que cette double inégalité reste également vraie pour tout $1\leq x\leq 30$; elle est, par conséquent, vraie pour tout $x\geq 1$. Il s'ensuit, en vertu du lemme \ref{lcheb2} que l'on a pour tout $x\geq 1$:
\begin{align}
&\psi(x)-\psi\left(\frac{x}{6}\right)\leq Ax+\frac{5}{2}\log x,\label{der1}\\&\psi(x)\geq Ax-\frac{5}{2}\log x-1.\label{der2}
\end{align} 
L'estimation \eqref{der2} fournit la minoration requise de $\psi(x)$. Pour montrer la majoration requise pour $\psi(x)$, on considère la fonction réelle $f$ définie sur l'intervalle $]0,+\infty[$ par:
\[f(x):=\frac{6}{5}Ax+\frac{5}{4\log 6}\log^{2}x+\frac{5}{4}\log x ~~~~(\forall x>0).\] 
On vérifie immédiatement que l'on a pour tout $x>0$:
\[f(x)-f\left(\frac{x}{6}\right)=Ax+\frac{5}{2}\log x .\]
Cette égalité retranchée membre à membre de \eqref{der1} donne:
\[\psi(x)-f(x)\leq \psi\left(\frac{x}{6}\right)-f\left(\frac{x}{6}\right)~~~~(\forall x>0).\]
En réitérant cette dernière plusieurs fois, on obtient que pour tout $x>0$ et tout $m\in \mathbb{N^*}$, on a:
\[\psi(x)-f(x)\leq \psi\left(\frac{x}{6}\right)-f\left(\frac{x}{6}\right)\leq \psi\left(\frac{x}{6^{2}}\right)-f\left(\frac{x}{6^{2}}\right)\leq \dots \leq \psi\left(\frac{x}{6^{m+1}}\right)-f\left(\frac{x}{6^{m+1}}\right),\]
En prenant $m$ le plus grand entier positif vérifiant $\frac{x}{6^{m}}\geq 1$, on aura $\frac{x}{6^{m+1}}\in [\frac{1}{6},1[$; de sorte que $\psi\left(\frac{x}{6^{m+1}}\right)=0$ et $-f\left(\frac{x}{6^{m+1}}\right)\leq 1$; on obtient donc:
\[\psi(x)\leq 1+f(x)~~~~(\forall x>0)\]
et l'on conclut à la majoration requise pour $\psi(x)$ en substituant dans cette dernière $f(x)$ par l'expression qui la définit. Ce qui complète cette démonstration.
\end{proof}

\noindent Le corollaire suivant est immédiat.
\begin{coll}
Pour tout $n\in \mathbb{N^*}$, on a:
\[ e^{-1}\cdot n^{-\frac{5}{2}}\left(c_{1}\right)^{n}\leq \ppcm(1,2,\dots,n)\leq e\cdot n^{\frac{5}{4}}e^{\frac{5}{4\log 6}\log^{2}n}\left(c_{2}\right)^{n},\]
avec $c_{1}=e^A\simeq 2,51$ et $c_{2}=e^{\frac{6}{5}A}\simeq 3,02$.
\end{coll}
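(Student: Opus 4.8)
The plan is simply to specialize Theorem \ref{chebbb} to integer arguments and exponentiate. Fix $n\in\mathbb{N^*}$ and apply the theorem with $x=n$. Since $n$ is an integer, $\lfloor n\rfloor=n$, and by the very definition of the Chebyshev function $\psi$ one has $\psi(n)=\log\ppcm(1,2,\dots,n)$. Hence the double inequality of Theorem \ref{chebbb} becomes
\[An-\tfrac{5}{2}\log n-1\;\leq\;\log\ppcm(1,2,\dots,n)\;\leq\;\tfrac{6}{5}An+\tfrac{5}{4\log 6}\log^{2}n+\tfrac{5}{4}\log n+1,\]
where $A=\log\!\left(\frac{2^{1/2}3^{1/3}5^{1/5}}{30^{1/30}}\right)$.

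For the lower bound I would exponentiate the left-hand inequality, obtaining
\[\ppcm(1,2,\dots,n)\;\geq\;\exp\!\left(An-\tfrac{5}{2}\log n-1\right)=e^{-1}\,n^{-\frac{5}{2}}\,\bigl(e^{A}\bigr)^{n},\]
which is the asserted inequality with $c_{1}:=e^{A}$. For the upper bound, exponentiating the right-hand inequality gives
\[\ppcm(1,2,\dots,n)\;\leq\;\exp\!\left(\tfrac{6}{5}An+\tfrac{5}{4\log 6}\log^{2}n+\tfrac{5}{4}\log n+1\right)=e\,n^{\frac{5}{4}}\,e^{\frac{5}{4\log 6}\log^{2}n}\,\bigl(e^{\frac{6}{5}A}\bigr)^{n},\]
which is the asserted inequality with $c_{2}:=e^{\frac{6}{5}A}$.

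It then only remains to record the numerical values $c_{1}=e^{A}\simeq 2{,}51$ and $c_{2}=e^{\frac{6}{5}A}\simeq 3{,}02$, using $A\simeq 0{,}92129202$ from Theorem \ref{chebbb}. There is no genuine obstacle here: the corollary is a direct restatement of the theorem. The only point deserving a moment's attention is the rounding of the two constants — one should round $c_{1}$ down and $c_{2}$ up to keep the displayed inequalities literally valid — but with two decimal places this is harmless since $e^{A}\approx 2{,}5124$ and $e^{\frac{6}{5}A}\approx 3{,}0199$.
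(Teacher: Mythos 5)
Your derivation is exactly the intended one: the paper states this corollary as an immediate consequence of Theorem \ref{chebbb} (taking $x=n$, using $\psi(n)=\log\ppcm(1,2,\dots,n)$, and exponentiating), and offers no further proof. Your write-up, including the remark on rounding the constants in the safe direction, is correct and matches the paper's approach.
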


Le théorème \ref{chebbb} entraîne aussi un encadrement effectif pour la fonction $\theta$ de Chebyshev. Un tel encadrement est donné par le corollaire suivant:

\begin{coll}[Chebyshev \cite{cheb}]\label{theta}
Pour tout réel $x\geq 1$, on a:
\[Ax-\frac{12}{5}x^{1/2}-\frac{5}{8\log 6}\log^2 x-\frac{15}{4}\log x-3\leq \theta(x)\leq \frac{6}{5}Ax +\frac{5}{4\log 6}\log^{2}x +\frac{5}{4}\log x +1.\]
En particulier, on a:
\begin{equation}\label{saeq}
Ax+o\left(x\right)\leq\theta(x)\leq \frac{6}{5}Ax+o\left(x\right)~~~~(\forall x\geq 1),
\end{equation}
où les termes en $o$ sont explicites.
\end{coll}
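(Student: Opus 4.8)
The plan is to deduce the bounds on $\theta$ from those on $\psi$ furnished by Theorem~\ref{chebbb}, using the identity~\eqref{chp}, namely $\psi(x)=\theta(x)+\theta(x^{1/2})+\theta(x^{1/3})+\cdots$. Every term here is nonnegative, and only finitely many are nonzero since $\theta(x^{1/k})=0$ as soon as $x^{1/k}<2$, i.e. $k>\log x/\log 2$. The upper bound is then immediate: \eqref{chp} gives $\theta(x)\le\psi(x)$, and the right-hand inequality of the corollary is verbatim the upper estimate of Theorem~\ref{chebbb}.

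For the lower bound I would write $\theta(x)=\psi(x)-\sum_{k\ge2}\theta(x^{1/k})$ and bound the ``ramified part'' $\psi(x)-\theta(x)=\sum_{k\ge2}\theta(x^{1/k})$ from above. Its dominant contribution is the single term $\theta(x^{1/2})\le\psi(x^{1/2})$, to which I apply the upper estimate of Theorem~\ref{chebbb} at the point $x^{1/2}$; this yields $\tfrac65Ax^{1/2}+\tfrac{5}{16\log 6}\log^2x+\tfrac58\log x+1$, and the leading coefficient $\tfrac65A\approx1.11$ comfortably respects the target $\tfrac{12}{5}$. The remaining terms $\sum_{k\ge3}\theta(x^{1/k})$ are estimated crudely: there are fewer than $\log x/\log 2$ of them, each at most $\theta(x^{1/3})\le\psi(x^{1/3})\ll x^{1/3}$ (again by Theorem~\ref{chebbb}), so $\sum_{k\ge3}\theta(x^{1/k})\ll x^{1/3}\log x=o(x^{1/2})$. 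Substituting the lower bound $\psi(x)\ge Ax-\tfrac52\log x-1$ of Theorem~\ref{chebbb} and collecting all contributions produces $\theta(x)\ge Ax-\tfrac{12}{5}x^{1/2}-\tfrac{5}{8\log 6}\log^2x-\tfrac{15}{4}\log x-3$, once the low-order surplus has been absorbed.

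The main obstacle is exactly this absorption. The naive bound on $\sum_{k\ge3}\theta(x^{1/k})$ carries constants (in front of $\log^2x$ and $\log x$, and from the number of nonzero terms) that are slightly too big to be fitted term-by-term inside the stated error $\tfrac{5}{8\log 6}\log^2x+\tfrac{15}{4}\log x+3$. I expect to handle this in the usual Chebyshev spirit: either peel off a further auxiliary term (control $k=3$ also via Theorem~\ref{chebbb}, or bound $\psi(x)-\theta(x)\le\pi(x^{1/2})\log x$ and insert a Chebyshev-type estimate for $\pi(x^{1/2})$), so that for all large $x$ the whole $k\ge3$ contribution is swallowed by the slack $\bigl(\tfrac{12}{5}-\tfrac65A\bigr)x^{1/2}$ left in the $x^{1/2}$-coefficient; and then verify the finitely many small values of $x$ by a direct computation, just as in the proof of Theorem~\ref{chebbb}.

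Finally, for~\eqref{saeq}: since $x^{1/2}$, $x^{1/3}\log x$, $\log^2x$ and $\log x$ are all $o(x)$, the two explicit inequalities just proved collapse to $Ax+o(x)\le\theta(x)\le\tfrac65Ax+o(x)$; and because every error term above is written down explicitly, the $o(x)$ is explicit, as claimed.
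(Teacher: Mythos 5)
Your upper bound $\theta(x)\le\psi(x)$ is exactly the paper's, and your general strategy for the lower bound (write $\theta(x)=\psi(x)-\sum_{k\ge2}\theta(x^{1/k})$ and feed in Th\'eor\`eme \ref{chebbb}) is also the right one. But your treatment of the sum $\sum_{k\ge2}\theta(x^{1/k})$ leaves a genuine gap, and you have in effect flagged it yourself: the ``absorption'' step \emph{is} the proof, and it is not carried out. With your estimates the $k\ge3$ block is majorized by $\frac{\log x}{\log 2}\,\psi(x^{1/3})\le\frac{\log x}{\log 2}\bigl(\frac{6}{5}Ax^{1/3}+\cdots\bigr)$, and the inequality you need --- that this, plus the $\frac{6}{5}Ax^{1/2}+\cdots$ coming from $k=2$ and the error terms of $\psi(x)$, fits under $\frac{12}{5}x^{1/2}+\frac{5}{8\log 6}\log^2x+\frac{15}{4}\log x+3$ --- fails on a substantial range of moderate $x$: near $x=e^{6}$ your majorant of the $k\ge3$ block is several times larger than the total available slack $\bigl(\frac{12}{5}-\frac{6}{5}A\bigr)\sqrt{x}+\cdots$. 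So ``check finitely many small values'' is not a footnote; you would first have to produce an explicit threshold and then verify a whole interval, and the specific constants $\frac{12}{5}$, $\frac{5}{8\log 6}$, $\frac{15}{4}$, $3$ never emerge from your computation. Your fallback $\psi(x)-\theta(x)\le\pi(\sqrt{x})\log x$ has the same defect: the only explicit bound available at this point, $\pi(y)\le 2\psi(y)/\log y$ from the remark following Th\'eor\`eme \ref{dcheb}, turns it into $4\psi(\sqrt{x})$, whose leading coefficient $\frac{24}{5}A$ already exceeds $\frac{12}{5}$.

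The paper sidesteps all of this with one observation you are missing: pair consecutive terms using $\theta\bigl(x^{1/(2j+1)}\bigr)\le\theta\bigl(x^{1/(2j)}\bigr)$, so that
\[\sum_{k\ge2}\theta\bigl(x^{1/k}\bigr)\le 2\sum_{j\ge1}\theta\bigl(x^{1/(2j)}\bigr)=2\,\psi\bigl(\sqrt{x}\bigr),\]
hence $\theta(x)\ge\psi(x)-2\psi(\sqrt{x})$ exactly, for every $x\ge1$, with no case distinction and no numerical verification. Substituting the lower bound for $\psi(x)$ and the upper bound for $\psi(\sqrt{x})$ from Th\'eor\`eme \ref{chebbb} then yields precisely the stated constants: $2\cdot\frac{6}{5}A=\frac{12}{5}A\le\frac{12}{5}$, $2\cdot\frac{5}{4\log 6}\cdot\frac{1}{4}\log^{2}x=\frac{5}{8\log 6}\log^{2}x$, $\frac{5}{2}\log x+2\cdot\frac{5}{8}\log x=\frac{15}{4}\log x$, and $1+2=3$. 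Your deduction of \eqref{saeq} from the explicit double inequality is fine once that inequality is actually established.
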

\begin{proof}
Pour $x\geq 1$, on a d'une part (d'après \eqref{chp}) $\theta(x)\leq \psi(x)$, et d'autre part:
\[\theta(x)\geq \theta(x)-\left[\theta(x^{1/2})-\theta(x^{1/3})\right]-\left[\theta(x^{1/4})-\theta(x^{1/5})\right]-\dots= \psi(x)-2\psi(\sqrt{x}),\] 
car les termes $\theta(x^{1/2})-\theta(x^{1/3}),\theta(x^{1/4})-\theta(x^{1/5}),\dots$ sont tous positifs. On conclut au résultat requis grâce à l'encadrement du théorème \ref{chebbb}. 
\end{proof}

Dans ce qui suit, nous indiquons brièvement comment l'on peut obtenir un encadrement effectif de la fonction de comptage des nombres premiers $\pi$. Vu la complexité des estimations de Chebyshev, nous n'allons pas rentrer dans les détails des calculs.

\begin{thm}[Chebyshev \cite{cheb}]\label{dcheb}
Pour tout réel $x\geq 2$, on a:
\[\left(A+o(1)\right)\frac{x}{\log x}\leq\pi(x)\leq \frac{x}{\log x}\left(\frac{6}{5}A+o(1)\right),\]
où les termes en $o$ sont explicites.
\end{thm}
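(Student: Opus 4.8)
Le plan est de relier $\pi(x)$ \`a la fonction $\theta$ de Chebyshev par une sommation d'Abel, puis d'invoquer l'encadrement du corollaire \ref{theta}. La minoration est la partie facile: de l'in\'egalit\'e triviale
\[\theta(x)=\sum_{p\leq x}\log p\leq\Big(\sum_{p\leq x}1\Big)\log x=\pi(x)\log x,\]
on tire $\pi(x)\geq\theta(x)/\log x$, et \eqref{saeq} fournit alors $\pi(x)\geq\big(A+o(1)\big)\,x/\log x$.

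Pour la majoration, je proc\'ederais par sommation d'Abel. En \'ecrivant $1=\log p\cdot\frac{1}{\log p}$ et en prenant pour fonction sommatoire $t\mapsto\theta(t)=\sum_{p\leq t}\log p$, on obtient l'identit\'e exacte, valable pour tout $x\geq 2$:
\[\pi(x)=\frac{\theta(x)}{\log x}+\int_{2}^{x}\frac{\theta(t)}{t\log^{2}t}\,\mathrm{d}t.\]
Il reste \`a voir que l'int\'egrale est $o(x/\log x)$. Pour cela on utilise la majoration $\theta(t)\ll t$ (cons\'equence imm\'ediate du corollaire \ref{theta}) en d\'ecoupant $\int_{2}^{x}=\int_{2}^{\sqrt{x}}+\int_{\sqrt{x}}^{x}$: le premier morceau est $\ll\sqrt{x}$ (car $\log t\geq\log 2$), et le second est $\ll\int_{\sqrt{x}}^{x}\frac{\mathrm{d}t}{\log^{2}t}\ll\frac{x}{\log^{2}x}$ (car $\log t\geq\frac{1}{2}\log x$ sur $[\sqrt{x},x]$); la somme est bien $o\!\left(\frac{x}{\log x}\right)$. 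On en d\'eduit $\pi(x)=\theta(x)/\log x+o(x/\log x)$, d'o\`u, via \eqref{saeq}, $\pi(x)\leq\big(\tfrac{6}{5}A+o(1)\big)\,x/\log x$. Comme les termes en $o$ du corollaire \ref{theta} sont explicites et que les majorations pr\'ec\'edentes le sont aussi, les termes en $o$ ainsi obtenus pour $\pi(x)$ le sont \'egalement.

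La difficult\'e principale n'est pas conceptuelle: c'est le report et l'int\'egration des termes d'erreur explicites du corollaire \ref{theta} (en $x^{1/2}$, $\log^{2}x$, etc.) dans l'identit\'e ci-dessus, calcul \'el\'ementaire mais fastidieux, ce qui justifie qu'on se borne ici \`a une esquisse. Signalons une variante classique \'evitant la sommation d'Abel pour la majoration: pour $\delta\in(0,1)$ fix\'e,
\[\theta(x)\geq\sum_{x^{\delta}<p\leq x}\log p\geq\delta\log x\,\big(\pi(x)-\pi(x^{\delta})\big)\geq\delta\log x\,\big(\pi(x)-x^{\delta}\big),\]
d'o\`u $\pi(x)\leq\dfrac{\theta(x)}{\delta\log x}+x^{\delta}$; en faisant ensuite tendre $\delta$ vers $1$ on retrouve la constante $\tfrac{6}{5}A$, mais cette voie livre moins directement des termes en $o$ explicites.
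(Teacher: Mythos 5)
Votre démonstration est correcte et suit essentiellement la même voie que celle du texte: la formule sommatoire d'Abel $\pi(x)=\frac{\theta(x)}{\log x}+\int_{2}^{x}\frac{\theta(t)}{t\log^{2}t}\,\mathrm{d}t$, la coupure de l'intégrale en $\sqrt{x}$ pour obtenir $O\!\left(\frac{x}{\log^{2}x}\right)$, et le report de l'encadrement \eqref{saeq} de $\theta$. La minoration par $\pi(x)\geq\theta(x)/\log x$ est également celle du texte (qui la tire de la positivité de l'intégrale plutôt que de l'inégalité $\theta(x)\leq\pi(x)\log x$, ce qui revient au même).
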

\begin{proof}
Premièrement, on a en vertu de la formule sommatoire d'Abel (voir par exemple \cite[p. 5]{kon}):
\begin{equation}\label{abel1}
\pi(x)=\frac{\theta(x)}{\log x}+\int_{2}^{x}\frac{\theta(t)}{t\log^2 t}\mathrm{d}t~~~~(\forall x\geq 2).
\end{equation}
D'après l'inégalité \eqref{saeq}, on peut estimer l'intégrale précédente comme suit:
\[\int_{2}^{x}\frac{\theta(t)}{t\log^2 t}\mathrm{d}t \leq \frac{6}{5}A\int_{2}^{x}\frac{\mathrm{d}t}{\log^2 t}+o\left(\int_{2}^{x}\frac{\mathrm{d}t}{\log^2 t}\right).\]
Ensuite, comme on a:
\begin{equation}\label{psuite}
\int_{2}^{x}\frac{\mathrm{d}t}{\log^2 t}=\int_{2}^{\sqrt{x}}\frac{\mathrm{d}t}{\log^2 t}+\int_{\sqrt{x}}^{x}\frac{\mathrm{d}t}{\log^2 t}\leq \frac{\sqrt{x}}{\log^2 2}+\frac{4x}{\log^2 x}=O\left(\frac{x}{\log^2 x}\right),
\end{equation} 
il s'ensuit que:
\[\int_{2}^{x}\frac{\theta(t)}{t\log^2 t}\mathrm{d}t=O\left(\frac{x}{\log^2 x}\right).\]
En reportant ceci dans \eqref{abel1} et en utilisant de nouveau l'estimation \eqref{saeq}, on obtient:
\[\pi(x)\leq \frac{6}{5}A\frac{x}{\log x}+O\left(\frac{x}{\log^2 x}\right)=\frac{6}{5}A\frac{x}{\log x}\left(1+o(1)\right).\]
D'autre part, on a en vertu de \eqref{saeq} et de \eqref{abel1}:
\[\pi(x)\geq \frac{\theta(x)}{\log x}\geq A\frac{x}{\log x}+o\left(\frac{x}{\log x}\right)=A\frac{x}{\log x}\left(1+o(1)\right).\]
Ce qui complète cette démonstration.
\end{proof}

\noindent Le corollaire suivant est immédiat:

\begin{coll}[Chebyshev \cite{cheb}]\label{chch}
Il existe $x_0>0$ effectivement calculable, tel que:
\[0,9\frac{x}{\log x}\leq \pi(x)\leq 1,2\frac{x}{\log x}~~~~(\forall x\geq x_0).\]
\end{coll}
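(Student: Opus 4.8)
The statement to prove is Corollary~\ref{chch}, which asserts the existence of an effectively computable $x_0>0$ such that $0.9\,\frac{x}{\log x}\leq \pi(x)\leq 1.2\,\frac{x}{\log x}$ for all $x\geq x_0$. The plan is to deduce this directly from Theorem~\ref{dcheb}, which already gives $\pi(x)\geq (A+o(1))\frac{x}{\log x}$ and $\pi(x)\leq (\frac{6}{5}A+o(1))\frac{x}{\log x}$ with \emph{explicit} $o$-terms, together with the numerical value $A\simeq 0.92129202$.

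First I would record the two bounds from Theorem~\ref{dcheb} in the form $\pi(x)=\frac{x}{\log x}\bigl(c+\varepsilon(x)\bigr)$ where $c$ is $A$ (for the lower bound) or $\tfrac65 A$ (for the upper bound) and $\varepsilon(x)\to 0$; tracking through the proof of that theorem, $\varepsilon(x)$ comes from the explicit $o(x)$-terms in \eqref{saeq} of Corollary~\ref{theta} and from the $O\!\left(\frac{x}{\log^2 x}\right)$ estimate in \eqref{psuite}, so it is of the shape $O\!\left(\frac{1}{\log x}\right)+$ (contributions decaying like $x^{-1/2}\log^2 x$), all effective. Second, since $A\simeq 0.92129202>0.9$, the gap $A-0.9>0.029$ is a fixed positive constant, so there is an effectively computable $x_1$ with $|\varepsilon(x)|<A-0.9$ for all $x\geq x_1$, giving the lower bound $\pi(x)>0.9\frac{x}{\log x}$. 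Third, likewise $\tfrac65 A\simeq 1.10555<1.2$, so the gap $1.2-\tfrac65 A>0.094$ is a fixed positive constant, and there is an effectively computable $x_2$ with $|\varepsilon(x)|<1.2-\tfrac65 A$ for $x\geq x_2$, giving the upper bound. Taking $x_0:=\max(x_1,x_2)$ finishes the argument.

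The only mildly delicate point — and the one I would be most careful about — is making sure the $o$- and $O$-terms inherited from Corollary~\ref{theta} and from \eqref{psuite} are genuinely effective, i.e.\ that one can write down an explicit threshold beyond which $|\varepsilon(x)|$ is smaller than the fixed gaps $0.029$ and $0.094$. This is not conceptually hard: Corollary~\ref{theta} already states that its $o$-terms are explicit (they are polynomial-in-$\log x$ and $\sqrt{x}$ corrections divided by $x$), and the constant in \eqref{psuite} is literally exhibited, so one just needs to chase the constants through the Abel-summation identity \eqref{abel1}. No new idea is needed beyond Theorem~\ref{dcheb}; the corollary is genuinely ``imm�diat'' once one observes that $0.9<A$ and $\tfrac65 A<1.2$ strictly, leaving room to absorb the error terms. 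I would therefore present the proof in two or three lines, citing Theorem~\ref{dcheb} and noting these two numerical inequalities.
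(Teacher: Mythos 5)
Votre démonstration est correcte et suit exactement la voie du texte: le mémoire se contente de déclarer ce corollaire \og immédiat\fg{} à partir du théorème \ref{dcheb}, et votre argument explicite précisément ce que cela signifie, à savoir que $0,9<A\simeq 0,92129$ et $\frac{6}{5}A\simeq 1,1055<1,2$ laissent des marges strictement positives permettant d'absorber les termes en $o(1)$ explicites pour $x$ assez grand. Rien à redire.
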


\begin{rmq}\label{rrrrm}
Soit $x\geq 2$. Puisque $\ppcm\left(1,2,\dots,\lfloor x\rfloor\right)=\prod_{p^{\nu}\leq x}p$, où le produit étant étendu à tous les couples $(p,\nu)$, avec $p$ premier et $\nu\in \mathbb{N^{*}}$, alors on a:
\[\psi(x)=\sum_{p^\nu\leq x}\log p.\]
Par ailleurs, pour chaque nombre premier $p$ fixé, on a:
\[p^\nu\leq x\Longleftrightarrow \nu\leq \left\lfloor \frac{\log x}{\log p}\right\rfloor.\]
Il y a donc exactement $\left\lfloor\frac{\log x}{\log p}\right\rfloor$ valeurs de $\nu$ tels que $p^\nu\leq x$, ce qui permet d'écrire:
\[\psi(x)=\sum_{p\leq x}\left\lfloor\frac{\log x}{\log p}\right\rfloor\log p.\]
Par suite, grâce à l'encadrement $\lfloor u\rfloor\leq u \leq 2\lfloor u\rfloor$ valable pour $u\geq 1$, on a:
\[\psi(x)\leq \sum_{p\leq x}\log x=\pi(x)\log x=\sum_{p\leq x}\left(\frac{\log x}{\log p}\right)\log p\leq 2\sum_{p\leq x}\left\lfloor\frac{\log x}{\log p}\right\rfloor\log p,\]
soit
\begin{equation}\label{ustt}
\frac{\psi(x)}{\log x}\leq\pi(x)\leq 2\frac{\psi(x)}{\log x}.
\end{equation}
Ce qui permet (via le théorème \ref{chebbb}) d'obtenir un autre encadrement de $\pi(x)$ $(\forall x\geq 1)$.
\end{rmq}

Le corollaire \ref{chch} et la remarque \ref{rrrrm} impliquent qu'il existe un réel $N$ effectivement calculable tel que $\pi(2n-2)>\pi(n)$ pour tout entier $n\geq N$. En explicitant les termes $o(1)$ dans le théorème \ref{dcheb}, Chebychev a pu prouver que cette inégalité est bien valable pour tout entier $n>3$, ce qui constitue une preuve d'une conjecture célèbre, connue sous le nom de \og postulat de Bertrand\fg{} (datant de 1845): 

\noindent\textbf{Le postulat de Bertrand:} Pour tout entier $n\geq 2$, il existe au moins un nombre premier $p$ tel que:
\[n<p<2n.\]

Notons que l'estimation des fonctions liées aux nombres premiers (comme les fonctions de Chebyshev) pour des valeurs de $x$ assez petites se fait généralement à la main en se servant des tables de nombres premiers ou d'un logiciel de calcul. Nous verrons dans les prochaines sections, que l'étude de certaines propriétés arithmétiques des nombres $d_n:=\ppcm(1,2,\dots,n)$ $(n\in\mathbb{N^*})$, permet d'obtenir d'autres estimations effectives plus simples pour les fonctions $\psi$ et $\pi$.

\section{Un multiple du $\ppcm$ de suites à forte divisibilité}\label{for}

Le $\ppcm$ des $n$ premiers termes d'une suite d'entiers naturels divise de toute évidence le produit de ces termes, mais en général ce produit est beaucoup plus grand que le $\ppcm$ en question. Dans cette section, nous présentons un multiple non trivial du nombre $\ppcm (a_{1},a_{2},\dots,a_{n})$ pour une certaine classe de suites $a_{1},a_{2},\dots$ d'entiers strictement positifs.

\begin{defi}
Une suite d'entiers strictement positifs $\left(a_n\right)_{n\geq 1}$ est dite \textit{à divisibilité} lorsqu'elle vérifie la propriété:
\[n \mid m \Longrightarrow a_n \mid a_m~~~~(\forall n , m \in \mathbb{N^*}).\]
Elle est dite \textit{à forte divisibilité} lorsqu'elle vérifie la propriété plus forte:
\[\pgcd\left(a_n,a_m\right)=a_{\pgcd\left(n,m\right)}~~~~(\forall n,m\in\mathbb{N^*}).\]
\end{defi}

\begin{exemples}~

\begin{itemize}
\item Une suite à divisibilité n'est pas forcément à forte divisibilité. En effet, la suite $\left(2^n\right)_{n\geq 1}$ est à divisibilité mais elle n'est pas à forte divisibilité, car on a par exemple: $\pgcd\left(2^2,2^3\right)=4\neq 2^{\pgcd(2,3)}=2$.
\item La suite de tous les entiers strictement positifs $(n)_{n\geq 1}$ est de toute évidence à forte divisibilité.
\item Pour tous $m,n\in\mathbb{N^*}$, si l'on désigne par $r$ le reste de la division euclidienne de $n$ sur $m$, on montre facilement que $(2^r-1)$ est le reste de la division euclidienne de $(2^n-1)$ par $(2^m-1)$. Ce qui entraîne (via l'algorithme d'Euclide) que la suite $(2^n-1)_{n\geq 1}$ est à forte divisibilité.
\item Un autre exemple très important d'une suite à forte divisibilité est la suite de Fibonacci usuelle $(F_{n})_{n\geq 1}$ (voir \cite{vorob}, p. 34) qui est définie récursivement par: $F_{1}=1$, $F_{2}=1$ et $F_{n+2}=F_{n}+F_{n+1}$ pour tout entier $n\geq 1$.
\item Si deux suites $(t_{n})_{n\geq 1}$ et $(s_{n})_{n\geq 1}$ sont à forte divisibilité alors, les suites $(t_{n}^{\alpha})_{n\geq 1}$ $(\alpha\in\mathbb{N^*})$ et $(t_{s_n})_{n\geq 1}$ le sont aussi (voir \cite{kimb}).
\end{itemize}
\end{exemples}

\begin{thm}[Myerson \cite{myer}]\label{ts1}
Soient $(a_{n})_{n\geq 1}$ une suite à forte divisibilité et $(b_{n})_{n\geq 1}$ une suite d'entiers strictement positifs vérifiant $\sum_{n\geq 1}\frac{1}{b_{n}}\leq 1$. Alors, on a pour tout $n\geq 1$:
\begin{equation}\label{ds}
\ppcm(a_{1},a_{2},\dots,a_{n})~~\text{divise}~~\frac{a_{1}a_{2}\cdots a_{n}}{\left(\displaystyle\prod_{1\leq k\leq n/b_{1}}a_{k}\right)\left(\displaystyle\prod_{1\leq k\leq n/b_{2}}a_{k}\right)\left(\displaystyle\prod_{1\leq k\leq n/b_{3}}a_{k}\right)\cdots}.
\end{equation} 
\end{thm}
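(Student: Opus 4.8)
L'idée centrale est de travailler valuation $p$-adique par valuation $p$-adique : pour montrer qu'un entier $D$ divise un entier $M$, il suffit de vérifier que $\vartheta_p(D)\leq \vartheta_p(M)$ pour tout premier $p$. Ici $D=\ppcm(a_1,\dots,a_n)$ et $M$ est le quotient figurant au second membre de \eqref{ds}. On fixe donc un premier $p$ et on doit établir
\[
\max_{1\leq k\leq n}\vartheta_p(a_k)\;\leq\;\sum_{k=1}^{n}\vartheta_p(a_k)\;-\;\sum_{j\geq 1}\ \sum_{1\leq k\leq n/b_j}\vartheta_p(a_k).
\]
Le point clef est une propriété de monotonie/scommodité des valuations pour les suites à forte divisibilité : de $\pgcd(a_k,a_m)=a_{\pgcd(k,m)}$ on tire, en prenant la valuation $p$-adique, $\min(\vartheta_p(a_k),\vartheta_p(a_m))=\vartheta_p(a_{\pgcd(k,m)})$ ; en particulier, si $k\mid m$ alors $\vartheta_p(a_k)\leq\vartheta_p(a_m)$, et plus finement $\vartheta_p(a_k)$ ne dépend, une fois $p$ fixé, que de l'ensemble des diviseurs de $k$ via la fonction $d\mapsto \vartheta_p(a_d)$. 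C'est le lemme de Kimberling \cite{kimb} évoqué dans l'introduction ; je le réénoncerais explicitement sous la forme : la fonction $k\mapsto \vartheta_p(a_k)$ est « déterminée par ses sauts aux puissances de premiers » — précisément, en posant $e_p(k):=\vartheta_p(a_k)$, on a $e_p(k)=\max_{q^{\alpha}\parallel k}\big(e_p(q^{\alpha})-e_p(q^{\alpha-1})\big)^{+}$... mais en fait la seule chose dont j'ai besoin est plus simple.

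Voici l'argument que je mènerais concrètement. Soit $K\in\{1,\dots,n\}$ tel que $\vartheta_p(a_K)=\max_{1\leq k\leq n}\vartheta_p(a_k)=:\nu$. L'inégalité à prouver se réécrit
\[
\sum_{k=1}^{n}\vartheta_p(a_k)\;-\;\sum_{j\geq 1}\ \sum_{1\leq k\leq n/b_j}\vartheta_p(a_k)\;-\;\vartheta_p(a_K)\;\geq\;0.
\]
Pour chaque $k\in\{1,\dots,n\}$, soit $g_k:=\pgcd(k,K)$ ; alors $g_k\mid K$ donc $\vartheta_p(a_{g_k})\leq\nu$, et $\min(\vartheta_p(a_k),\nu)=\vartheta_p(a_{g_k})$, d'où $\vartheta_p(a_k)\geq\vartheta_p(a_{g_k})$ avec égalité dès que $\vartheta_p(a_k)\leq\nu$ — ce qui est toujours le cas par maximalité de $\nu$ ! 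Donc $\vartheta_p(a_k)=\vartheta_p(a_{\pgcd(k,K)})$ pour tout $k\leq n$. Cela ramène tout au cas particulier où la suite est $a_d=$ (une suite supportée sur les diviseurs de $K$), et en fait, en utilisant l'écriture $a_n=\prod_{d\mid n}u_d$ du théorème de Bliss et al. \cite{Bliss} rappelé dans l'introduction, on a $\vartheta_p(a_k)=\sum_{d\mid k}\vartheta_p(u_d)$. En substituant, le membre de gauche devient
\[
\sum_{d\geq 1}\vartheta_p(u_d)\Big(\#\{k\leq n: d\mid k\}-\sum_{j\geq 1}\#\{k\leq n/b_j: d\mid k\}-\mathbf{1}_{d\mid K}\Big),
\]
et comme $\#\{k\leq x: d\mid k\}=\lfloor x/d\rfloor$, le facteur entre parenthèses vaut $\lfloor n/d\rfloor-\sum_{j\geq 1}\lfloor n/(b_j d)\rfloor-\mathbf{1}_{d\mid K}$. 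Il suffit alors de voir que cette quantité est $\geq 0$ pour tout $d\geq 1$ (en se rappelant que les $\vartheta_p(u_d)$ ne sont pas nécessairement $\geq 0$ a priori — donc il faut en réalité contrôler le signe terme à terme après regroupement, ou bien invoquer directement le lemme de Kimberling qui garantit $u_d\geq 1$ dans ce contexte).

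L'obstacle principal, et le cœur technique, est donc l'inégalité arithmétique élémentaire
\[
\Big\lfloor\frac{n}{d}\Big\rfloor\;\geq\;\mathbf{1}_{d\mid K}\;+\;\sum_{j\geq 1}\Big\lfloor\frac{n}{b_j d}\Big\rfloor\qquad(\forall d\geq 1,\ \forall K\leq n),
\]
valable sous l'hypothèse $\sum_{j\geq 1}1/b_j\leq 1$. Lorsque $d\nmid K$ elle résulte de $\lfloor n/d\rfloor\geq\sum_j\lfloor n/(b_jd)\rfloor$, conséquence immédiate de $\sum_j 1/(b_jd)\leq 1/d$ et de la sous-additivité $\sum_j\lfloor x_j\rfloor\leq\lfloor\sum_j x_j\rfloor$ quand $\sum_j x_j$... — attention, cette dernière n'est vraie que si $\sum\{x_j\}\leq$ quelque chose ; je la remplacerais par l'observation que les intervalles $(0,n/(b_jd)]$ « comptés par » les planchers se recollent : $\sum_j\lfloor n/(b_jd)\rfloor\leq\lfloor\sum_j n/(b_jd)\rfloor\leq\lfloor n/d\rfloor$ tient bien ici car on peut regrouper $d$ dans $n$ et utiliser $\sum_j 1/b_j\le 1$ pour conclure $\sum_j n/(b_j d) \le n/d$, puis passer aux planchers via $\sum_j \lfloor y_j\rfloor \le \lfloor \sum_j y_j \rfloor$ (vraie sans condition pour des réels positifs). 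Lorsque $d\mid K$ il faut gagner le $+1$ : on écrit $\lfloor n/d\rfloor\geq K/d$ (car $d\mid K$ et $K\leq n$ donnent $\lfloor n/d\rfloor\geq\lfloor K/d\rfloor=K/d$), puis $K/d\geq 1+\sum_j\lfloor n/(b_jd)\rfloor$ en majorant chaque $\lfloor n/(b_jd)\rfloor\leq\lfloor (K-d)/(b_jd)\rfloor+\lceil d/(b_jd)\rceil\cdot(\dots)$... ce dernier raffinement est le point délicat et je le traiterais soigneusement en distinguant selon que $b_j d\leq n$ ou non, et en utilisant que l'indice $K$ contribue une unité « dédiée » non recouverte par les sous-sommes. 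En résumé : le squelette (réduction à $\pgcd$ avec $K$, passage aux $u_d$, inégalité sur les planchers) est court ; la seule vraie difficulté est de soigner cette dernière inégalité de planchers avec le terme $+1$.
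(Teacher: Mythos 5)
Votre d\'emonstration est correcte sur le fond et repose sur les deux m\^emes piliers que celle du texte, \`a savoir l'int\'egralit\'e des nombres $u_d$ d\'efinis par $a_n=\prod_{d\mid n}u_d$ (le lemme \ref{pourc11} de Bliss et al.) et l'in\'egalit\'e de parties enti\`eres $\lfloor n/d\rfloor-\sum_{j}\lfloor n/(b_jd)\rfloor\geq 1$; mais elle contourne le reste de la machinerie. L\`a o\`u le texte \'etablit d'abord l'identit\'e $\ppcm(a_1,\dots,a_n)=\prod_{m\leq n}u_m$ via la notion de champion (lemmes \ref{lc} et \ref{cat}), vous \'ecrivez simplement $\vartheta_p\left(\ppcm(a_1,\dots,a_n)\right)=\vartheta_p(a_K)=\sum_{d\mid K}\vartheta_p(u_d)$ pour un indice maximisant $K$, puis vous comparez les coefficients des $\vartheta_p(u_d)\geq 0$: c'est un raccourci r\'eel, puisqu'il suffit alors que le crochet $\lfloor n/d\rfloor-\sum_{j}\lfloor n/(b_jd)\rfloor-\mathbf{1}_{d\mid K}$ soit positif ou nul, et toute l'analyse des champions devient superflue. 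Deux remarques cependant. Premi\`erement, votre r\'eduction $\vartheta_p(a_k)=\vartheta_p(a_{\pgcd(k,K)})$ est exacte mais ne sert \`a rien dans la suite: la substitution $\vartheta_p(a_k)=\sum_{d\mid k}\vartheta_p(u_d)$ suffit. Deuxi\`emement, l'\'etape que vous qualifiez de d\'elicate est pr\'ecis\'ement le calcul final du texte et se traite en deux lignes, sans distinction de cas suppl\'ementaire: si $d\mid K$ alors $d\leq K\leq n$, donc $k:=\lfloor n/d\rfloor\geq 1$ et $\lfloor n/(b_jd)\rfloor=\lfloor k/b_j\rfloor$; comme la s\'erie $\sum_{j}1/b_j\leq 1$ converge, on a $b_j\rightarrow+\infty$, donc seuls un nombre fini d'indices $j$ v\'erifient $b_j\leq k$ et
\[
\sum_{j\geq 1}\left\lfloor \frac{k}{b_j}\right\rfloor\leq k\sum_{b_j\leq k}\frac{1}{b_j}<k,
\]
la somme partielle \'etant strictement inf\'erieure \`a $1$ puisqu'une infinit\'e de termes strictement positifs en sont omis; d'o\`u $\sum_{j}\lfloor k/b_j\rfloor\leq k-1$ et le crochet est bien $\geq \mathbf{1}_{d\mid K}$. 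Votre tentative de raffinement via $\lfloor (K-d)/(b_jd)\rfloor$ est \`a abandonner au profit de cet argument. Une fois ce point nettoy\'e, la preuve est compl\`ete.
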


\begin{rmq}
Un exemple important d'une suite $(b_{n})_{n\geq 1}$ vérifiant les conditions du théorème \ref{ts1} est la suite de Sylvester, qui est définie par $b_{1}=2$ et $b_{n+1}=b_{1}b_{2}\cdots b_{n}+1$ $(\forall n\in\mathbb{N^*})$. On vérifie facilement que pour tout $n\in\mathbb{N^*}$, on a:
\begin{equation}\label{sylv1}
b_{n+1}=b_{n}^{2}-b_{n}+1
\end{equation}
et
\begin{equation}\label{sylv2}
\sum_{\ell=1}^{n}\frac{1}{b_{\ell}}+\frac{1}{b_{1}b_{2} \cdots b_{n}}=1.
\end{equation}
Nous ferons appel à ces identités dans la section suivante.
\end{rmq}

\begin{exemple}\label{exemple1}
En prenant dans le théorème \ref{ts1} $a_{n}=n$ $(\forall n\geq 1)$ et $(b_{n})_{n\geq 1}$ la suite de Sylvester, on obtient que pour tout $n\in \mathbb{N^*}$, on a:
\[\ppcm (1,2,\dots,n)~~divise~~\frac{n!}{\lfloor n/2 \rfloor ! \lfloor n/3 \rfloor ! \lfloor n/7\rfloor !\lfloor n/43\rfloor !\cdots},\]
où $2,3,7,43,\dots$ est la suite de Sylvester.
\end{exemple}

\noindent Nous partageons la preuve du théorème \ref{ts1} en plusieurs lemmes.\\
Le lemme suivant est une conséquence immédiate du principe d'inclusion-exclusion (voir Hua \cite[p. 10]{Hua}).

\begin{lemme}[\cite{Hua}, Theorem 7.3]\label{pourc10}
Soient $n\in\mathbb{N^*}$ et $S=\left\lbrace x_1,x_2,\dots,x_n\right\rbrace$ un sous ensemble de $\mathbb{N^*}$. Alors, on a l'identité suivante:
\[\prod_{i=1}^{n}x_i=\ppcm\left(S\right)\prod_{\begin{subarray}{c} A\subset S \\ \left| A\right| \geq 2\end{subarray}}\pgcd\left(A\right)^{(-1)^{\left| A\right|}},\]
où $\left| A\right|$, $\pgcd\left(A\right)$ et $\ppcm\left(S\right)$ désignent respectivement le cardinal de l'ensemble $A$, le plus grand diviseur commun des éléments de $A$ et le $\ppcm$ des éléments de $S$.
\end{lemme}

\noindent Fixons une suite à forte divisibilité $\left(a_{n}\right)_{n\geq 1}$ et considérons $u_{1},u_{2},\dots$ les nombres rationnels strictement positifs définis récursivement par la formule:
\begin{equation}\label{eqqq}
a_{n}=\prod_{d \mid n}u_{d}.
\end{equation}
D'après la formule d'inversion de Möbius (voir par exemple \cite[p. 35]{ten}), on a:
\begin{equation}\label{corrr}
u_{n}=\prod_{d \mid n}{a}^{\mu(d)}_{n/d}.
\end{equation}

Le lemme suivant donne quelques propriétés de la suite $\left(u_n\right)_n$. Il figure d\'ej\`a implicitement dans un article ant\'erieur de Kimberling \cite{kimb}. Nous donnons ici sa version explicite établie par Bliss et al. \cite{Bliss}.

\begin{lemme}[Bliss et al. \cite{Bliss}]\label{pourc11}
Les nombres $u_1,u_2,\dots$ sont tous des entiers strictement positifs. De plus, si $n\in\mathbb{N^*}$ possède la factorisation primaire $n={q_1}^{\alpha_1}{q_2}^{\alpha_2}\cdots {q_m}^{\alpha_m}$, on a:
\begin{equation}\label{pourc12}
u_n=\frac{a_n}{\ppcm\left(a_{n/q_1},a_{n/q_2},\dots,a_{n/q_m}\right)}.
\end{equation}
\end{lemme}
\begin{proof}
Pour tout $\ell\in\{0,1,\dots,m\}$, désignons par $Q_{\ell}$ le produit de tous les $a_d$ tels que $\frac{n}{d}$ soit le produit de $\ell$ nombres premiers distincts. Puisque $\mu\left(\frac{n}{d}\right)=0$ si et seulement si $\frac{n}{d}$ possède un facteur carré $>1$ (i.e., il existe un nombre premier $p$ tel que $p^2\mid\frac{n}{d}$), alors on a:
\begin{equation}\label{bn}
u_n=\prod_{d\mid n}a_{d}^{\mu\left(n/d\right)}=\prod_{\ell=0}^{m}Q_{\ell}^{(-1)^{\ell}}.
\end{equation}
Par ailleurs, d'après le lemme \ref{pourc10} qu'on applique pour $x_i=a_{n/q_i}$ ($\forall i\in\{1,2,\dots,m\}$) et $S=\left\lbrace x_1,x_2,\dots,x_m\right\rbrace$, on a:
\begin{equation}\label{Q1}
Q_1 =\prod_{i=1}^{m}x_i=\ppcm\left(S\right)\prod_{\begin{subarray}{c} A\subset S \\ \left| A\right| \geq 2\end{subarray}}\pgcd\left(A\right)^{(-1)^{\left| A\right|}}=\ppcm\left(S\right)\prod_{k=2}^{m}\prod_{\begin{subarray}{c} A\subset S \\ \left| A\right| =k\end{subarray}}\pgcd\left(A\right)^{(-1)^{k}},
\end{equation}
où la première égalité vient de la définition de $Q_1$. Par suite, comme $\left(a_k\right)_{k\geq 1}$ est à forte divisibilité, on a pour tout $k\in\{2,\dots,m\}$ et pour tous $1\leq i_1<i_2<\dots <i_k\leq m$:
\[\pgcd\left(x_{i_1},x_{i_2},\dots,x_{i_k}\right)=a_{\pgcd\left(n/q_{i_1},n/q_{i_2},\dots,n/q_{i_k}\right)}=a_{n/\left(q_{i_1}q_{i_2}\cdots q_{i_k}\right)}.\]
Il s'ensuit de cette dernière et de la définition de $Q_{k}$ $(k\in\mathbb{N})$ que:
\begin{equation}\label{elemh}
Q_k=\prod_{1\leq i_1<i_2<\dots <i_k\leq m}a_{n/\left(q_{i_1}q_{i_2}\cdots q_{i_k}\right)}=\prod_{\begin{subarray}{c} A\subset S \\ \left| A\right| =k\end{subarray}}\pgcd\left(A\right)~~~~(\forall k\in\{2,\dots,m\}).
\end{equation}
En combinant \eqref{Q1} et \eqref{elemh}, on obtient:
\[Q_1=\ppcm\left(S\right)\prod_{k=2}^{m}Q_{k}^{(-1)^k}.\]
D'où l'on a (en vertu de \eqref{bn}):
\begin{align*}
u_n &=\frac{Q_{0}\prod_{k=2}^{m}Q_{k}^{(-1)^k}}{Q_1}=\frac{Q_{0}\prod_{k=2}^{m}Q_{k}^{(-1)^k}}{\ppcm\left(S\right)\prod_{k=2}^{m}Q_{k}^{(-1)^k}}=\frac{Q_{0}}{\ppcm\left(S\right)}\\ &=\frac{a_n}{\ppcm\left(a_{n/q_1},a_{n/q_2},\dots,a_{n/q_m}\right)},
\end{align*}
ce qui confirme l'identité \eqref{pourc12}. Maintenant, puisque $a_n$ est multiple de chacun des $a_{n/q_i}$ $(\forall i\in\{1,\dots,m\})$ (car $\left(a_k\right)_{k\geq 1}$ est à forte divisibilité), alors $a_n$ est multiple de $\ppcm\left(a_{n/q_1},a_{n/q_2},\dots,a_{n/q_m}\right)$, ce qui entraîne que les nombres $u_1,u_2,\dots$ sont tous des entiers strictement positifs. Le lemme est ainsi démontré.    
\end{proof}

\begin{defi}
Soient $m\in \mathbb{N^*}$ et $p$ un nombre premier. On dit que $m$ est ``champion pour $p$" si $(m=1$ et $\vartheta_{p}(a_{1})>0)$ ou $(m>1$ et $\vartheta_{p}(a_{m})>\vartheta_{p}(a_{j})$ pour tout $j<m)$.
\end{defi}

\begin{rmq}\label{rmq1s}
Soient $m_{1},m_{2}\in \mathbb{N^*}$ et $p$ un nombre premier. Si $m_{1}<m_{2}$ sont tous les deux champions pour $p$, alors $m_{1}$ divise $m_{2}$. En effet, si $d=\pgcd(m_{1},m_{2})$ alors: $\vartheta_{p}(a_{d})=\min (\vartheta_{p}(a_{m_{1}}),\vartheta_{p}(a_{m_{2}}))=\vartheta_{p}(a_{m_{1}})$, ce qui implique que $d=m_{1}$ (car: $d\leq m_1$ et $m_{1}$ est champion pour $p$).  
\end{rmq} 

\begin{lemme}[Myerson \cite{myer}]\label{lc}
Pour tout $m \in \mathbb{N^*}$ et tout nombre premier $p$, on a: $\vartheta_{p}(u_{m})>0$ si et seulement si $m$ est champion pour $p$.
\end{lemme}
\begin{proof}
Soient $m\in\mathbb{N^*}$ et $p$ un nombre premier.\\
$\bullet (\Rightarrow):$ Procédons par l'absurde et supposons que $\vartheta_{p}(u_{m})>0$ et $\vartheta_{p}(a_{m})\leq \vartheta_{p}(a_{j})$ pour un certain $j<m$. D'une part, on a (en vertu de \eqref{eqqq}) pour tout diviseur propre $d'$ de $m$:
\[\vartheta_{p}(a_{m})=\sum_{d\mid m}\vartheta_{p}\left(u_d\right)=\vartheta_{p}\left(u_m\right)+\sum_{d\mid d'}\vartheta_{p}\left(u_d\right)+\sum_{\begin{subarray}{c} d\mid m \\ d\neq m \\ d\nmid d'\end{subarray}}\vartheta_{p}\left(u_d\right)>\sum_{d\mid d'}\vartheta_{p}\left(u_d\right)=\vartheta_{p}(a_{d'}),\]
(où la deuxième égalité vient du fait que tout diviseur de $d'$ est aussi un diviseur de $m$), et d'autre part, pour $d':=\pgcd(m,j)$ on a: $\vartheta_{p}(a_{d'})=\min\left(\vartheta_{p}(a_{m}),\vartheta_{p}(a_{j})\right)=\vartheta_{p}(a_{m})$, ce qui contredit le fait que $d'=\pgcd(m,j)<m$ est un diviseur propre de $m$. Ceci confirme l'implication directe du lemme.\\  $\bullet (\Leftarrow):$ Supposons que $m$ est champion pour $p$. D'après le lemme \ref{pourc11}, on a: $\vartheta_{p}(u_{m})=\vartheta_{p}(a_{m})-\vartheta_{p}(a_{m/q})$ pour un certain facteur premier $q$ de $m$. Puisque $m/q<m$ et $m$ est champion pour $p$, alors $\vartheta_{p}(a_{m})>\vartheta_{p}(a_{m/q})$ et donc $\vartheta_{p}(u_{m})>0$, comme il fallait le prouver. Le lemme est ainsi démontré.
\end{proof} 

\begin{lemme}[Myerson \cite{myer}]\label{cat}
Pour tout entier strictement positif $n$, on a:
\begin{equation}\label{eqttttt1}
\ppcm(a_{1},a_{2},\dots ,a_{n})=\prod_{m=1}^{n}u_{m}.
\end{equation}
\end{lemme}
\begin{proof}
Il s'agit de montrer que pour tout nombre premier $p$, on a:
\[\vartheta_{p}\left(\ppcm(a_{1},a_{2},\dots ,a_{n})\right)=\vartheta_{p}\left(\prod_{m=1}^{n}u_{m}\right).\]
Soit donc $p$ un nombre premier. Sans perte de généralité, nous supposons que $p$ divise au moins l'un des nombres $a_{m}$, avec $1\leq m\leq n$ (le résultat est évident dans le cas contraire). D'une part, on a:
\[\vartheta_{p}\left(\ppcm(a_{1},a_{2},\dots ,a_{n})\right)=\max_{1\leq i\leq n}\vartheta_{p}(a_{i})=\vartheta_{p}(a_{r}),\]
où $r$ est le plus grand champion pour $p$ qui est inférieur ou égal à $n$. D'autre part, d'après les lemmes \ref{pourc11} et \ref{lc} et la remarque \ref{rmq1s}, on a:
\[\vartheta_{p}\left(\prod_{m=1}^{n}u_{m}\right)=\sum_{m=1}^{n}\vartheta_{p}(u_{m})={\sum}^{'}\vartheta_{p}(u_{m})=\sum_{m\mid r}\vartheta_{p}(u_{m})=\vartheta_{p}\left(\prod_{m\mid r}u_{m}\right)=\vartheta_{p}(a_{r}),\]
où ${\sum}^{'}$ est la somme étendue sur tous les champions $m\leq n$ pour $p$. En comparant les deux résultats, on en déduit que pour tout nombre premier $p$, on a:
\[\vartheta_{p}\left(\ppcm(a_{1},a_{2},\dots ,a_{n})\right)=\vartheta_{p}\left(\prod_{m=1}^{n}u_{m}\right).\]
Ce qui conclut au résultat du lemme et achève cette démonstration. 
\end{proof}

\noindent\begin{proof}[Démonstration du théorème \ref{ts1}]
Pour tout $m \in \mathbb{N^*}$, on écrit (en vertu de \eqref{eqqq}):
\[\prod_{1\leq j\leq m}a_{j}=\prod_{1\leq j\leq m}\prod_{d\mid j}u_{d}=\prod_{1\leq j\leq m}u_{j}^{\lfloor m/j \rfloor}.\]
Le membre de droite de \eqref{ds} s'écrit donc:
\[\prod_{1\leq j\leq n}u_{j}^{\lfloor n/j \rfloor - \lfloor n/jb_{1} \rfloor - \lfloor n/jb_{2}\rfloor - \lfloor n/jb_{3} \rfloor -\dots}.\]
Il suffit donc de montrer (en vertu du lemme \ref{cat}) que pour tout entier strictement positif $k$, on a:
\[1 \leq k -\lfloor k/b_{1} \rfloor -\lfloor k/b_{2}\rfloor -\lfloor k/b_{3}\rfloor - \dots, \]
(remarquer que $\lfloor \lfloor x\rfloor /n \rfloor=\lfloor x/n\rfloor$, $\forall x \in \mathbb{R},\forall n\in \mathbb{N^*}$). Étant donné $k\in \mathbb{N^*}$, il existe $r\in \mathbb{N^*}$ tel que $k< b_{r+1}$. On a par suite:
\[\lfloor k/b_{1} \rfloor +\lfloor k/b_{2} \rfloor + \lfloor  k/b_{3}\rfloor + \dots \leq k/b_{1}+k/b_{2}+\dots +k/b_{r}=k\sum_{l=1}^{r}\frac{1}{b_{l}} < k.\]
Ceci entraîne que le nombre $k -\lfloor k/b_{1} \rfloor -\lfloor k/b_{2}\rfloor -\lfloor k/b_{3}\rfloor - \dots$ est un entier strictement positif, comme il fallait le prouver. Le théorème est démontré. 
\end{proof}

\begin{rmq}
Une étude plus profonde des suites à forte divisibilité est donnée dans le chapitre \ref{ch3}.
\end{rmq}

\section{Majoration effective du nombre $\ppcm(1 , 2 , \dots , n)$}\label{prec}
D'après la section  \textsection\ref{for}, on comprend bien que l'étude des propriétés arithmétiques de certains coefficients multinomiaux permet d'obtenir des majorations effectives pour le nombre $\ppcm(1,2,\dots,n)$. Dans cette section, nous présentons l'estimation de Hanson \cite{han} qui utilise la relation de l'exemple \ref{exemple1} et aboutit au théorème suivant:

\begin{thm}[Hanson \cite{han}]\label{thh}
Pour tout entier $n\geq 1$, on a:
\[\ppcm(1,2,\dots,n)\leq 3^{n}.\]
\end{thm}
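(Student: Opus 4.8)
The strategy is to exploit the divisibility relation from Example~\ref{exemple1}, namely that $\ppcm(1,2,\dots,n)$ divides
\[
\frac{n!}{\lfloor n/2\rfloor!\,\lfloor n/3\rfloor!\,\lfloor n/7\rfloor!\,\lfloor n/43\rfloor!\cdots},
\]
where $2,3,7,43,\dots$ is the Sylvester sequence $(b_\ell)_{\ell\ge 1}$ defined by $b_1=2$, $b_{\ell+1}=b_1b_2\cdots b_\ell+1$. Since the left-hand side divides the right-hand side and both are positive integers, it suffices to bound the right-hand side by $3^n$. So the whole proof reduces to the inequality
\[
\frac{n!}{\prod_{\ell\ge 1}\lfloor n/b_\ell\rfloor!}\;\le\;3^n\qquad(\forall n\ge 1).
\]

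First I would take logarithms and reformulate everything in terms of the function $\psi$-like quantity, or more directly in terms of $\log(\lfloor x\rfloor!)=T(x)$ via Proposition~\ref{lcheb1}. Writing $G(n):=\log\bigl(n!/\prod_\ell \lfloor n/b_\ell\rfloor!\bigr)$, the claim is $G(n)\le n\log 3$. The natural approach is induction on $n$ together with a telescoping/self-similarity argument analogous to the one used in the proof of Theorem~\ref{chebbb}: the key identity $\sum_{\ell=1}^{r}\frac{1}{b_\ell}+\frac{1}{b_1\cdots b_r}=1$ \eqref{sylv2} means that the "weights" $1/b_\ell$ sum to $1$, so that $G$ satisfies an approximate functional equation $G(n)\approx G(n/2)+\cdots$ reflecting how the factorials at scale $n$ decompose into factorials at the smaller scales $n/b_\ell$. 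One sets up a comparison function $h(x)=x\log 3$ (or a slightly larger explicit function absorbing lower-order terms), checks that $h$ satisfies the corresponding exact functional equation up to a controlled error using Stirling's formula, and then iterates down to small values of $x$ where the inequality is verified by hand, exactly as in the Chebyshev argument. The constant $\log 3$ is what makes it work: $\sum_\ell \frac{\log b_\ell}{b_\ell}$-type series and the Stirling corrections must combine to leave room below $\log 3 \approx 1.0986$, whereas the "honest" growth rate of $\log\ppcm(1,\dots,n)$ is only $\sim n$.

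The main obstacle will be the bookkeeping of the error terms. Stirling's formula gives $\log(m!)=m\log m - m + \tfrac12\log(2\pi m)+O(1/m)$, and when we substitute $m=\lfloor n/b_\ell\rfloor$ the floor functions, the $\tfrac12\log$ terms, and the fact that the product over $\ell$ is genuinely infinite (though only finitely many factors are $\ne 1$ for fixed $n$) all contribute secondary terms that must be shown not to overwhelm the gap between the leading behavior and $n\log 3$. Concretely: the leading terms $n\log n - \sum_\ell (n/b_\ell)\log(n/b_\ell)$ collapse nicely because $\sum 1/b_\ell=1$, leaving $n\sum_\ell \frac{\log b_\ell}{b_\ell}$, and one needs the numerical fact that this series plus all the correction terms stays strictly below $n\log 3$; the rapid (doubly exponential) growth of $b_\ell$ makes the series converge very fast, which is what saves us. I would organize this as: (i) reduce to the factorial quotient bound via Example~\ref{exemple1}; (ii) apply Stirling to each factorial and separate leading from lower-order terms; (iii) use \eqref{sylv2} to kill the leading terms and estimate the remaining series $\sum \frac{\log b_\ell}{b_\ell}$ numerically; (iv) bound the Stirling corrections and the floor-function discrepancies crudely; (v) finish with a finite check for small $n$. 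Steps (iii) and (iv) together are where essentially all the real work lies, and getting the constant to come out at exactly $3$ rather than something slightly larger is the delicate point.
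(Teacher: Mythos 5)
Your plan follows essentially the same route as the paper (and Hanson's original argument): reduce via the Sylvester-sequence divisibility of Example~\ref{exemple1} to bounding the multinomial quotient $C(n)$, use $\sum_{\ell}1/b_{\ell}=1$ to collapse the leading terms, identify the decisive numerical constant $\prod_{\ell}b_{\ell}^{1/b_{\ell}}\leq 2.952<3$, absorb the subexponential corrections for large $n$, and finish with a finite verification for small $n$. The only substantive difference is the tool used to estimate the factorial quotient: where you invoke Stirling's formula (and would have to track its error terms plus the floor discrepancies by hand), the paper gets $C(n)\leq n^{n}/\prod_{\ell}\lfloor n/b_{\ell}\rfloor^{\lfloor n/b_{\ell}\rfloor}$ from the elementary multinomial inequality (Lemme~\ref{avde}) and controls the floors via $\left(1+\tfrac{1}{x}\right)^{x}\leq e$ (Lemme~\ref{H1}), which leads to the same conclusion with slightly less bookkeeping.
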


\noindent D'après l'exemple \ref{exemple1}, on a:
\begin{equation}\label{eqH1}
\ppcm (1,2,\dots,n)\leq C(n):=\frac{n!}{\lfloor n/2 \rfloor ! \lfloor n/3 \rfloor ! \lfloor n/7\rfloor !\lfloor n/43\rfloor !\cdots},
\end{equation}
où $2,3,7,43,\dots$ est la suite de Sylvester (déjà définie dans la partie \textsection\ref{for}). Notons que le choix de cette suite étant heuristique; il s'appuie sur le fait que la somme des inverses de ses termes converge vers $1$ plus vite que toute autre série de la forme $\sum_{\ell\geq 1}\frac{1}{b_{\ell}}$, où $(b_{\ell})_{\ell}$ est une suite d'entiers satisfaisant aux conditions du théorème \ref{ts1}. Cela permet d'exploiter le théorème \ref{ts1} d'une façon optimale. Dans tout ce qui suit, on désigne par $(b_{\ell})_{\ell}$ la suite de Sylvester.

\begin{lemme}[Hanson \cite{han}]\label{avde}
Soit $n\in\mathbb{N^*}$ et désignons par $k$ l'unique entier strictement positif vérifiant $b_{k}\leq n<b_{k+1}$. Alors, on a:
\[C(n)\leq\frac{n^n}{{\lfloor n/b_{1} \rfloor}^{\lfloor n/b_{1} \rfloor}{\lfloor n/b_{2} \rfloor}^{\lfloor n/b_{2} \rfloor}\cdots {\lfloor n/b_{k} \rfloor}^{\lfloor n/b_{k} \rfloor}}.\]
\end{lemme} 
\begin{proof}
Si un entier strictement positif $t$ possède une partition sous la forme $t=t_{1}+t_{2}+\cdots + t_{k}$, avec $t_{\ell}\in\mathbb{N^*}$ $(\forall \ell\in\{1,\dots,k\})$, alors on a (en vertu de la formule multinomiale):
\[{t}^{t}=(t_{1}+t_{2}+\cdots + t_{k})^t=\sum_{i_1+i_2+\dots +i_k=t}\frac{t!}{i_1!i_2!\cdots i_k!}{t_1}^{i_1}{t_2}^{i_2}\cdots {t_k}^{i_k}\geq\frac{t!}{t_{1}!t_{2}!\cdots t_{k}!} {t_{1}}^{t_{1}} {t_{2}}^{t_{2}} \cdots {t_{k}}^{t_{k}}.\]
Par suite, en appliquant cette dernière inégalité pour $t:=\sum_{\ell=1}^{k}\lfloor n/b_{\ell} \rfloor\leq n\sum_{\ell\geq 1}1/b_{\ell}=n$, $t_{\ell}:=\lfloor n/b_{\ell} \rfloor$ $(\forall \ell\in\{1,\dots,k\})$ et en tenant compte du fait que $\lfloor n/b_{\ell}\rfloor=0$ $(\forall \ell\geq k+1)$ (car: $n<b_{k+1}<b_{k+2}<\dots$), on aboutit à: 
\[C(n)=\frac{n(n-1)\cdots (t+1)t!}{{\lfloor n/b_{1} \rfloor}!{\lfloor n/b_{2} \rfloor}!\cdots {\lfloor n/b_{k} \rfloor}!}\leq \frac{n^{n-t}t^{t}}{{\lfloor n/b_{1} \rfloor}^{\lfloor n/b_{1} \rfloor}{\lfloor n/b_{2} \rfloor}^{\lfloor n/b_{2} \rfloor}\cdots {\lfloor n/b_{k} \rfloor}^{\lfloor n/b_{k} \rfloor}}.\]
Ce qui conclut (via l'inégalité: $n^{n-t}t^{t}\leq n^{n-t}n^{t}=n^n$) à l'estimation requise par le lemme et achève cette démonstration.  
\end{proof}
 
\begin{lemme}[Hanson \cite{han}]\label{H1}
Soient $\ell$ et $n$ deux entiers strictement positifs tels que $b_{\ell}\leq n$. Alors, on a:
\begin{equation}\label{eqH2}
\frac{(n/b_{\ell})^{n/b_{\ell}}}{{\lfloor n/b_{\ell}\rfloor}^{\lfloor n/b_{\ell}\rfloor}}\leq \left(\frac{en}{b_{\ell}}\right)^{(b_{\ell}-1)/b_{\ell}}.
\end{equation}
\end{lemme}
\begin{proof}
Pour $n=b_{\ell}$, le résultat est trivial. Supposons pour la suite que $n>b_{\ell}$ et montrons préalablement que l'on a:
\begin{equation}\label{ron}
\left((n-b_{\ell}+1)/b_{\ell}\right)^{(n-b_{\ell}+1)/b_{\ell}}\leq {\lfloor n/b_{\ell}\rfloor}^{\lfloor n/b_{\ell}\rfloor}.
\end{equation}
Puisque $\lfloor n/b_{\ell}\rfloor>n/b_{\ell}-1$, on a: $b_{\ell}\lfloor n/b_{\ell}\rfloor\geq n-b_{\ell}+1$, ce qui entraîne que $\lfloor n/b_{\ell}\rfloor\geq (n-b_{\ell}+1)/b_{\ell}$. Maintenant, si $(n-b_{\ell}+1)/b_{\ell}\geq 1$, l'inégalité \eqref{ron} découle de la croissance de la fonction $x\mapsto x^x$ sur l'intervalle $\left[1,+\infty\right[$. Par contre, si $(n-b_{\ell}+1)/b_{\ell}< 1$, alors l'inégalité \eqref{ron} est évidente (puisque $n>b_{\ell}\Rightarrow {\lfloor n/b_{\ell}\rfloor}^{\lfloor n/b_{\ell}\rfloor}\geq 1$), comme il fallait le prouver. Il s'ensuit (en vertu de \eqref{ron} et de l'inégalité $\left(1+\frac{1}{x}\right)^x\leq e$ $(\forall x>0)$) que: 
\begin{align*}
\frac{(n/b_{\ell})^{n/b_{\ell}}}{{\lfloor n/b_{\ell}\rfloor}^{\lfloor n/b_{\ell}\rfloor}} &\leq \frac{(n/b_{\ell})^{n/b_{\ell}}}{((n-b_{\ell}+1)/b_{\ell})^{(n-b_{\ell}+1)/b_{\ell}}}\\ 
& = \left(1+\frac{1}{(n-b_{\ell}+1)/(b_{\ell}-1)}\right)^{((n-b_{\ell}+1)/(b_{\ell}-1))\times ((b_{\ell}-1)/b_{\ell})}\left(\frac{n}{b_{\ell}}\right)^{(b_{\ell}-1)/b_{\ell}} \\ & \leq \left(\frac{en}{b_{\ell}}\right)^{(b_{\ell}-1)/b_{\ell}}.
\end{align*}
Ce qui termine cette démonstration.
\end{proof}

\begin{lemme}[Hanson \cite{han}]\label{dernierL}
Soit $n$ un entier $\geq 7$ et $k$ l'unique entier strictement positif vérifiant $b_{k}\leq n <b_{k+1}$. Alors, on a:
\[k\leq\log_{2}\left(\log_{2}n\right)+2,\]
où $\log_{2}$ désigne le logarithme de base $2$ (i.e., $\log_{2}x=\frac{\log x}{\log 2}$ $(\forall x>0)$).
\end{lemme}
\begin{proof}
Puisque $b_{3}=7\leq n$, alors on a nécessairement $k\geq 3$. Par ailleurs, en se servant de \eqref{sylv1}, on montre facilement par récurrence que pour tout entier $\ell\geq 3$, on a: $b_{\ell}\geq 2^{2^{\ell-2}}+1$. Il s'ensuit de cette dernière inégalité (appliquée à $\ell=k$) que:
\[k \leq \log_{2}\log_{2}(b_{k}-1)+2 \leq \log_{2}\log_{2}n+2.\]
Le lemme est démontré.
\end{proof}

\begin{proof}[Démonstration du théorème \ref{thh}]
En utilisant un logiciel de calcul (Maple ou Mathematica par exemple), on vérifie que le résultat du théorème est valable pour tout entier $n < 4500$. Supposons pour la suite que $n\geq 4500$ et désignons par $k$ l'unique entier strictement positif vérifiant $b_{k} \leq n < b_{k+1}$. D'après les lemmes \ref{avde} et \ref{H1}, on a:
\begin{equation}\label{finn}
C(n)\leq \frac{n^n}{{\lfloor n/b_{1} \rfloor}^{\lfloor n/b_{1} \rfloor}\cdots {\lfloor n/b_{k} \rfloor}^{\lfloor n/b_{k} \rfloor}}\leq\frac{n^{n}(en/b_{1})^{(b_{1}-1)/b_{1}}\cdots (en/b_{k})^{(b_{k}-1)/b_{k}}}{(n/b_{1})^{n/b_{1}}\cdots (n/b_{k})^{n/b_{k}}}.
\end{equation}
Considérons la suite $\left(u_{\ell}\right)_{\ell\geq 1}$ donnée par: $u_{\ell}:=b_{1}^{1/b_{1}}b_{2}^{1/b_{2}}\cdots b_{\ell}^{1/b_{\ell}}$ $(\forall\ell\geq 1)$ et montrons préalablement que:
\begin{equation}\label{665}
u_{\ell}\leq 2,952~~~~(\forall \ell\in\mathbb{N^*}).
\end{equation}
Il est immédiat que $\left(u_{\ell}\right)_{\ell\geq 1}$ est strictement croissante. D'autre part, pour $\ell\in\mathbb{N^*}$, on a (en vertu de \eqref{sylv1}): ${b_{\ell}}^2=b_{\ell+1}+b_{\ell}-1\geq b_{\ell+1}+1>b_{\ell+1}$ et $b_{\ell+1}={b_{\ell}}^2-b_{\ell}+1>{b_{\ell}}^2-2b_{\ell}+1=\left(b_{\ell}-1\right)^2$, soit:
\[b_{\ell}^{2} > b_{\ell+1} > (b_{\ell}-1)^{2}~~~~(\forall \ell\in\mathbb{N^*}).\]
Cette double-inégalité entraîne que pour tout entier $\ell\geq 3$, on a:
\[\frac{\log\left(b_{\ell+1}^{1/b_{\ell+1}}\right)}{\log\left(b_{\ell}^{1/b_{\ell}}\right)}=\frac{b_{\ell}\log\left(b_{\ell+1}\right)}{b_{\ell+1}\log\left(b_{\ell}\right)} < \frac{2b_{\ell}}{b_{\ell+1}}<\frac{2b_{\ell}}{(b_{\ell}-1)^{2}}<\frac{1}{2}.\]
En combinant cela avec l'inégalité $\log\left({b_{6}}^{1/b_{6}}\right)\leq 5\cdot 10^{-6}$, on obtient:
\begin{align*}
\sum_{\ell=1}^{+\infty}\log\left(b_{\ell}^{1/b_{\ell}}\right)&=\sum_{\ell=1}^{5}\log\left(b_{\ell}^{1/b_{\ell}}\right)+\sum_{\ell=6}^{+\infty}\log\left(b_{\ell}^{1/b_{\ell}}\right)\\&\leq 1,0824 + \log\left(b_{6}^{1/b_{6}}\right)\sum_{\ell\geq 0}\left(\frac{1}{2}\right)^{\ell}\\&\leq 1,0824 +10^{-5}.
\end{align*}
D'où: $u_{m}\leq\lim\limits_{\ell\rightarrow+\infty}b_{1}^{1/b_{1}}b_{2}^{1/b_{2}}\cdots b_{\ell}^{1/b_{\ell}}\leq e^{1,0824 +10^{-5}}\leq 2,952$ $(\forall m\in\mathbb{N^*})$, comme nous l'avons prétendu. Par ailleurs, on a (en vertu de \eqref{sylv2}): 
\begin{align}
\frac{b_{1}-1}{b_{1}}+\frac{b_{2}-1}{b_{2}}+\dots +\frac{b_{k}-1}{b_{k}}&=k-1+ \frac{1}{b_{1}b_{2}\cdots b_{k}}=k-1+\frac{1}{b_{k+1}-1}\label{666}
\intertext{et}
\frac{1}{b_{1}}+\frac{1}{b_{2}}+\dots +\frac{1}{b_{k}}&=1-\frac{1}{b_{1}b_{2}\cdots b_{k}}=1-\frac{1}{b_{k+1}-1}.\label{667}
\end{align}
En combinant les estimations \eqref{finn}, \eqref{665}, \eqref{666}, \eqref{667} et le lemme \ref{dernierL}, on aboutit à:
\begin{align*}
C(n)&\leq\frac{n^n}{\left(n^{1/b_{1}+1/b_{2}+\dots +1/b_{k}}\right)^n}\cdot \frac{(en)^{(b_{1}-1)/b_{1}+(b_{2}-1)/b_{2}+\dots +(b_{k}-1)/b_{k}}\left({b_{1}}^{1/b_{1}}{b_{2}}^{1/b_{2}}\cdots {b_{k}}^{1/b_{k}}\right)^n}{b_{1}^{(b_{1}-1)/b_{1}}b_{2}^{(b_{2}-1)/b_{2}}\cdots b_{k}^{(b_{k}-1)/b_{k}}} \\ &\leq n^{n/(b_{k+1}-1)}(en)^{k-1+1/(b_{k+1}-1)}{(2,952)}^{n}\\ &\leq e^{k}n^{k+1}{(2,952)}^{n}\leq (en)^{\log_{2}(\log_{2}n)+3}{(2,952)}^{n}.
\end{align*}
Puisque la fonction $x\mapsto (ex)^{\frac{\log_{2}(\log_{2}x)+3}{x}}$ est décroissante sur l'intervalle $\left[4500,+\infty\right[$, il s'ensuit que:
\[(en)^{\frac{\log_{2}(\log_{2}n)+3}{n}} \leq (4500e)^{\frac{\log_{2}(\log_{2}4500)+3}{4500}}\leq 1.014.\]
D'où: $C(n)\leq (1.014)^{n}(2.952)^{n}\leq 3^n$. Ce qui complète cette démonstration.
\end{proof}

\noindent Le corollaire suivant est une conséquence immédiate du théorème \ref{thh} et de la définition de la fonction $\psi$ de Chebyshev.

\begin{coll}[Hanson \cite{han}]\label{chebfin}
Pour tout réel $x\geq 1$, on a:
\[\psi(x)\leq x\log 3.\]
\end{coll}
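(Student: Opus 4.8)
The plan is to read this corollary straight off Theorem~\ref{thh} together with the defining formula for Chebyshev's function $\psi$. Recall that $\psi(x) = \log \ppcm\left(1,2,\dots,\lfloor x\rfloor\right)$ for every real $x \geq 1$, so the statement to be proved is purely a matter of rewriting the integer bound $\ppcm(1,2,\dots,n)\le 3^n$ in terms of $\psi$.

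Concretely, first I would fix a real number $x \geq 1$ and put $n := \lfloor x\rfloor$, which is a positive integer since $x\ge 1$. Applying Theorem~\ref{thh} to this $n$ gives $\ppcm(1,2,\dots,n) \leq 3^{n}$, and taking logarithms yields $\psi(x) = \log\ppcm(1,2,\dots,n) \leq n\log 3$. Then, using $n = \lfloor x\rfloor \leq x$ and $\log 3 > 0$, I would conclude $\psi(x) \leq x\log 3$, which is exactly the assertion of the corollary.

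There is no real obstacle here: the entire arithmetic content sits in Theorem~\ref{thh}, and what remains is only the unwinding of the definition of $\psi$ and the elementary inequality $\lfloor x\rfloor \le x$. It is worth noting only that this bound, while cruder than the effective estimate of Theorem~\ref{chebbb} (it carries no lower-order error terms), has the advantage of being clean and globally valid for all $x\ge 1$, which is precisely why it is recorded as an immediate corollary of Hanson's theorem.
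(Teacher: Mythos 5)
Your argument is correct and is exactly the one the paper intends: the corollary is stated there as an immediate consequence of Theorem~\ref{thh} and the definition of $\psi$, and your unwinding via $n=\lfloor x\rfloor$, taking logarithms, and using $\lfloor x\rfloor\le x$ is precisely that. Nothing is missing.
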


\begin{coll}[Hanson \cite{han}]
Pour tout réel $x\geq 2$, on a:
\[\pi(x)\leq 1,25506\frac{x}{\log x},\]
où $\pi$ désigne la fonction de comptage des nombres premiers. 
\end{coll}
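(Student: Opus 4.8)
The plan is to deduce this from Corollary~\ref{chebfin}, which gives $\psi(x) \le x\log 3$ for all $x \ge 1$, by extracting $\pi$ from $\psi$ via the elementary relation recalled in Remark~\ref{rrrrm}, namely
\[
\psi(x) = \sum_{p \le x} \left\lfloor \frac{\log x}{\log p} \right\rfloor \log p .
\]
The naïve step $\psi(x)/\log x \le \pi(x)$ goes the wrong way; instead I would work with $\theta$. First I would note that $\theta(x) \le \psi(x) \le x\log 3$, but this alone only yields $\pi(x) \lesssim (\log 3)\, x/\log x$ after an Abel summation, and $\log 3 \approx 1{,}0986$ is already better than $1{,}25506$ for large $x$ — so the real work is to handle small and moderate $x$, where the $\int_2^x \theta(t)/(t\log^2 t)\,dt$ correction term in \eqref{abel1} is not yet negligible. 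The constant $1{,}25506$ is precisely the Rosser–Schoenfeld constant, so the cleanest route is: use the Chebyshev-type upper bound $\theta(x) \le (\log 3)x$ together with the identity \eqref{abel1},
\[
\pi(x) = \frac{\theta(x)}{\log x} + \int_2^x \frac{\theta(t)}{t\log^2 t}\,dt ,
\]
bound the integral by $(\log 3)\int_2^x dt/\log^2 t$, and then estimate $\int_2^x dt/\log^2 t$ crudely (as in \eqref{psuite}, splitting at $\sqrt{x}$) to get something of the form $c\,x/\log^2 x + O(\sqrt x)$. Combining gives $\pi(x) \le (\log 3)\,\tfrac{x}{\log x}\bigl(1 + \tfrac{c'}{\log x}\bigr) + O(\sqrt x)$, and one checks that for $x$ beyond an explicit threshold $x_1$ the bracket times $\log 3$ stays below $1{,}25506$.

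For the remaining range $2 \le x \le x_1$, I would verify the inequality directly by a finite computation: since $\pi$ is a step function increasing only at primes, it suffices to check $\pi(p) \le 1{,}25506\, p/\log p$ at each prime $p \le x_1$ (the worst case on $[p, p')$ being just after a prime), which is a routine table lookup / machine verification exactly in the spirit of the other proofs in this chapter (e.g.\ the checks "pour tout entier $n<4500$" in Theorem~\ref{thh}). One should also double-check the low primes $p = 2, 3$ by hand, since there $\log p$ is small and the ratio $\pi(p)\log p / p$ is largest there — for $p=2$ it is $\log 2 \approx 0{,}693$, for $p=3$ it is $\tfrac{2}{3}\log 3 \approx 0{,}732$, comfortably below $1{,}25506$, and the ratio only grows slowly, so the finite check is genuinely finite and harmless.

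The main obstacle is calibrating the threshold $x_1$ so that the asymptotic estimate and the finite check meet: one needs the explicit error term in $\int_2^x dt/\log^2 t$ to be good enough that $(\log 3)(1 + c'/\log x) \le 1{,}25506$ already at a moderate $x_1$ (otherwise the finite verification becomes unwieldy). Since $1{,}25506/\log 3 \approx 1{,}1424$, we need $c'/\log x \le 0{,}1424$, i.e.\ $\log x \gtrsim 7c'$; with the crude constant $c' \le 4$ coming from \eqref{psuite} this forces $x_1$ around $e^{28}$, which is too large for a hand check. So I would instead use the sharper bound $\psi(x) \le x\log 3$ more carefully — e.g.\ subtract off $\theta(\sqrt x) \le (\log 3)\sqrt x$ to control $\theta$ precisely, and integrate by parts once more to get a genuinely $o(x/\log^2 x)$, in fact $O(x/\log^3 x)$, correction — shrinking $x_1$ to a range (a few thousand to a few hundred thousand) where a Maple/Mathematica verification is immediate, matching the computational style already used in this chapter. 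The delicate accounting of these lower-order terms, rather than any conceptual difficulty, is where the care is needed.
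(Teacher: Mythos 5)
Your strategy is the same as the paper's: the Abel summation identity \eqref{abel1}, the bound $\theta(t)\leq\psi(t)\leq t\log 3$ coming from Corollary \ref{chebfin}, the estimate \eqref{psuite} for $\int_{2}^{x}\mathrm{d}t/\log^{2}t$, and a finite verification for small $x$. The obstacle you flag about calibrating the threshold is not a weakness of your write-up but a genuine issue, and in fact it affects the printed proof as well: with \eqref{psuite} one only gets
\[\pi(x)\leq \frac{x\log 3}{\log x}\left(1+\frac{4}{\log x}+\frac{\log x}{\sqrt{x}\,\log^{2}2}\right),\]
and since $1{,}25506/\log 3\approx 1{,}1424$ this forces $4/\log x\leq 0{,}142$, i.e.\ roughly $x\geq e^{28}$ --- far beyond the range $x<350$ that the paper claims suffices for the machine check (at $x=350$ the bracket times $\log 3$ is about $1{,}85$). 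Your proposed repair is the right one: replace \eqref{psuite} by a sharper estimate of $\int_{2}^{x}\mathrm{d}t/\log^{2}t$ (a further integration by parts, or a split at $x^{2/3}$ instead of $\sqrt{x}$, so that the main correction is $\sim x/\log^{2}x$ with constant close to $1$). Since $\int_{2}^{x}\mathrm{d}t/\log^{2}t\sim x/\log^{2}x$ while the available margin is $0{,}1424\,x/\log x$, the crossover then occurs around $x\approx 10^{4}$--$10^{5}$, and the finite check below that threshold reduces, as you note, to the primes, where $\max_{p}\pi(p)\log p/p=1{,}25506\ldots$ is attained at $p=113$; this is exactly the kind of computation already used elsewhere in the chapter. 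So your proposal is sound, and it is in fact more careful than the paper's own proof on the one point where care is actually needed.
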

\begin{proof}
En se servant d'un logiciel de calcul (Maple ou Mathematica par exemple), on vérifie que le résultat du théorème est valable pour tout $x < 350$. Supposons pour la suite que $x\geq 350$. En utilisant successivement la formule \eqref{abel1}, l'inégalité triviale $\theta(t)\leq \psi(t)$ $(\forall t\geq 1)$, le corollaire \ref{chebfin} et l'estimation \eqref{psuite}, on obtient:
\begin{align*}
\pi(x)&=\frac{\theta(x)}{\log x}+\int_{2}^{x}\frac{\theta(t)}{t\log^2 t}\mathrm{d}t\\&\leq \frac{\psi(x)}{\log x}+\int_{2}^{x}\frac{\psi(t)}{t\log^2 t}\mathrm{d}t\\&\leq \frac{x\log 3}{\log x}+\log 3\int_{2}^{x}\frac{\mathrm{d}t}{\log^2 t}\\&\leq \frac{x\log 3}{\log x}\left(\frac{1}{\sqrt{x}{\log}^{2}x}+\frac{4}{{\log}^{2}x}\right)\\&\leq 1,25506\frac{x}{\log x}.
\end{align*}
Ce qui complète cette démonstration.
\end{proof}

\section{Minoration effective du nombre $\ppcm(1 , 2 , \dots , n)$}
Nous présentons ici la démonstration de Nair \cite{nair} prouvant l'estimation:
\[\ppcm(1,2,\dots,n)\geq 2^n~~~~(\forall n\geq 7).\]
En fait, il existe dans la littérature mathématique des minorations bien meilleures, néan\-moins celles-ci ne s'obtiennent pas de façon élémentaire. Notre préférence à cette méthode réside dans sa simplicité (on exploite l'intégrale $\int_{0}^{1}x^{n}(1-x)^{n}\mathrm{d}x$ $(n\in\mathbb{N})$). 

\begin{thm}[Nair \cite{nair}]\label{thnai}
Pour tout entier $n\geq 7$, on a:
\begin{equation}\label{minnair}
d_{n}:=\ppcm(1,2,\dots,n)\geq 2^{n}.
\end{equation}
\end{thm}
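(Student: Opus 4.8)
The plan is to exploit the integral $I_n := \int_0^1 x^n(1-x)^n\,\mathrm{d}x$, following Nair. The starting point is the elementary observation that, on one hand, expanding $(1-x)^n$ by the binomial theorem and integrating term by term gives
\[
I_n = \sum_{k=0}^{n}\binom{n}{k}(-1)^k\int_0^1 x^{n+k}\,\mathrm{d}x = \sum_{k=0}^{n}\binom{n}{k}\frac{(-1)^k}{n+k+1},
\]
which is a rational number whose denominator divides $\ppcm(n+1,n+2,\dots,2n+1)$, and hence divides $d_{2n+1}$. On the other hand $I_n>0$, so $d_{2n+1}\cdot I_n$ is a positive integer, i.e. $d_{2n+1}\,I_n\geq 1$, which yields $d_{2n+1}\geq 1/I_n$.

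Next I would estimate $I_n$ from above. On $[0,1]$ the function $x(1-x)$ attains its maximum $1/4$ at $x=1/2$, so $x^n(1-x)^n\leq 4^{-n}$ and therefore $I_n\leq 4^{-n}$. Combining with the previous step, $d_{2n+1}\geq 4^n$. Writing $m=2n+1$ (an odd integer $\geq 3$), this reads $d_m\geq 4^{(m-1)/2} = 2^{m-1}$. That is not quite $2^m$; to gain the extra factor I would sharpen the analysis slightly, using that the denominator of $I_n$ in lowest terms actually divides $\ppcm(n+1,\dots,2n+1)$ and exploiting a standard refinement — for instance, observing that $(2n+1)\binom{2n}{n}I_n$ is an integer, or more simply that $(2n+1)\binom{2n}{n}\mid d_{2n+1}$ together with $\binom{2n}{n}\geq 4^n/(2n+1)$, to conclude $d_{2n+1}\geq 4^n$ with room to spare, and then handle the even case $d_{2n}\geq d_{2n-1}\geq 4^{n-1}=2^{2n-2}$ separately, strengthening it via the integral $\int_0^1 x^{n-1}(1-x)^{n}\,\mathrm{d}x$ and the identity $m\binom{m-1}{k}\mid d_m$ to reach $d_{2n}\geq 2^{2n}$ once $n$ is large enough.

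So the structure is: (i) derive the divisibility relation linking the denominator of the relevant beta-type integral to $d_N$; (ii) bound the integral above by a power of $4$; (iii) assemble these into $d_N \gg 4^{N/2}=2^N$, treating odd and even $N$ by taking $I_n=\int_0^1 x^{n}(1-x)^{n}\mathrm{d}x$ respectively $\int_0^1 x^{n-1}(1-x)^{n}\mathrm{d}x$; (iv) check the finitely many small values of $n$ (here $7\leq n\leq$ some explicit bound) directly, e.g. with a table of $\ppcm(1,\dots,n)$, since the asymptotic argument only kicks in for $n$ past a threshold.

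\textbf{Main obstacle.} The crude bound $I_n\leq 4^{-n}$ only gives $d_{2n+1}\geq 4^n=2^{2n}$, i.e. essentially $d_m\geq 2^{m-1}$ for odd $m$ and something weaker for even $m$ — it is off by a constant factor from the claimed $2^m$. The real work is squeezing out that last factor of $2$: one must use the integral more efficiently (the sharper fact is that $N\binom{N-1}{k}$ divides $d_N$, not merely that $N+1,\dots,2N+1$ do), combine it with the lower bound $\binom{2n}{n}\geq 4^n/(2n+1)$ and the monotonicity of $d_N$ in $N$, and then verify by hand that the resulting inequality, which holds for all large $n$, in fact holds down to $n=7$ — the boundary cases $n=7,8,\dots$ being exactly where the bound is tightest and must be confirmed numerically.
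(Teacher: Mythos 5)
Your proposal is correct and follows essentially the same route as the paper: Nair's beta-type integral $\int_0^1 x^{\ell-1}(1-x)^{k-\ell}\,\mathrm{d}x = 1/\bigl(\ell\binom{k}{\ell}\bigr)$ gives the divisibility $\ell\binom{k}{\ell}\mid d_k$, and the central binomial bound $4^m\leq(2m+1)\binom{2m}{m}$ finishes, with the small cases checked directly. The one point where you diverge is how the last factor of $2$ is extracted: the paper combines $m\binom{2m}{m}\mid d_{2m+1}$ and $(2m+1)\binom{2m}{m}\mid d_{2m+1}$ via $\pgcd(m,2m+1)=1$ to get $d_{2m+1}\geq m(2m+1)\binom{2m}{m}\geq m\cdot 4^m$ using only the crude binomial estimate, whereas your plan instead needs a sharpened lower bound on $\binom{2n}{n}$ (something like $\binom{2n}{n}\geq 4^n/(2\sqrt{n})$, provable by induction) together with the boundary verification you already flag.
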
  
\begin{proof}
Soient $k\in\mathbb{N^*}$, $\ell\in\{1,2,\dots,k\}$ et considérons l'intégrale suivante:
\[I(k,\ell):=\int_{0}^{1}x^{\ell-1}(1-x)^{k-\ell}\mathrm{d}x.\]
On a en vertu de la formule du binôme: 
\[I(k,\ell)=\sum_{r=0}^{k-\ell}(-1)^{r}\binom{k-\ell}{r}\frac{1}{\ell+r}.\]
Ce qui entraîne que $I(k,\ell)d_{k}\in \mathbb{N^{*}}$ $(\forall k,\ell\in\mathbb{N^*};~1\leq \ell\leq k)$. D'autre part, en intégrant successivement par parties, on obtient:
\[I(k,\ell)=\frac{(k-\ell)}{\ell}\int_{0}^{1}x^{\ell}(1-x)^{k-\ell-1}\mathrm{d}x=\dots=\frac{(k-\ell)!}{\ell(\ell+1)\cdots (k-1)}\int_{0}^{1}x^{k-1}\mathrm{d}x=\frac{1}{\ell\binom{k}{\ell}}.\]
Ce qui implique que:
\begin{equation}\label{601}
\ell\binom{k}{\ell}~\text{divise}~d_{k}~~~~(\forall k,\ell\in\mathbb{N^*};~1\leq \ell\leq k).
\end{equation}
Fixons $m\in\mathbb{N^*}$. D'après la relation \eqref{601}, on a:
\[m \binom{2m}{m}\mid d_{2m} \mid d_{2m+1}~~\text{et}~~(2m+1)\binom{2m}{m}=(m+1)\binom{2m+1}{m+1}\mid   d_{2m+1}.\]
Comme $\pgcd(m,2m+1)=1$, il s'ensuit que:
\begin{equation}\label{600}
\ppcm\left(m\binom{2m}{m},(2m+1)\binom{2m}{m}\right)= m(2m+1)\binom{2m}{m}~~\text{divise}~~d_{2m+1}.
\end{equation}
Par ailleurs, on a en vertu de la formule du binôme: 
\[4^m=(1+1)^{2m}=\sum_{\ell=0}^{2m}\binom{2m}{\ell}\leq (2m+1)\max\left\lbrace\binom{2m}{\ell};~0\leq \ell\leq 2m\right\rbrace=(2m+1)\binom{2m}{m}.\]
En combinant cette dernière inégalité avec \eqref{600}, on en déduit que:
\begin{align*}
d_{2m+1}&\geq m(2m+1)\binom{2m}{m} \geq m4^{m}\geq 2^{2m+1}~~~~(\forall m\geq 2)\intertext{et}
d_{2m+2}&\geq d_{2m+1}\geq m4^m\geq 2^{2m+2}~~~~(\forall m\geq 4).
\end{align*}
L'estimation requise par le théorème découle de ces deux dernière inégalités et de l'inégalité triviale $d_8\geq 2^8$. Le théorème est ainsi démontré.
\end{proof}

\begin{coll}[Nair \cite{nair}]
Pour tout entier $n\geq 7$, on a:
\[\pi(n)\geq \frac{n\log 2}{\log n}.\]
\end{coll}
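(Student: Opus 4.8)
L'idée est de déduire la minoration de $\pi(n)$ directement du théorème \ref{thnai}, en majorant $d_n := \ppcm(1,2,\dots,n)$ par une quantité simple faisant intervenir $\pi(n)$. D'abord, je rappelle l'identité $d_n = \prod_{p^\nu \leq n} p$, où le produit porte sur tous les couples $(p,\nu)$ avec $p$ premier et $\nu \in \mathbb{N^*}$ (c'est la remarque \ref{rrrrm}). Pour chaque nombre premier $p \leq n$, il existe exactement $\lfloor \log n / \log p \rfloor$ valeurs de $\nu$ telles que $p^\nu \leq n$, donc $p^{\lfloor \log n/\log p\rfloor} \leq n$, ce qui permet d'écrire
\[
d_n = \prod_{p \leq n} p^{\lfloor \log n/\log p\rfloor} \leq \prod_{p \leq n} n = n^{\pi(n)}.
\]

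Ensuite, j'applique le théorème \ref{thnai}: pour tout entier $n \geq 7$, on a $2^n \leq d_n \leq n^{\pi(n)}$. En prenant le logarithme des deux membres extrêmes, il vient $n \log 2 \leq \pi(n) \log n$, d'où immédiatement
\[
\pi(n) \geq \frac{n \log 2}{\log n} \qquad (\forall n \geq 7),
\]
ce qui est exactement l'énoncé recherché.

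Il n'y a pas vraiment d'obstacle sérieux ici: toute la difficulté a été absorbée dans le théorème \ref{thnai} de Nair, que l'on suppose acquis. Le seul point demandant un minimum d'attention est la majoration $d_n \leq n^{\pi(n)}$, qui repose sur le fait élémentaire que l'exposant de $p$ dans $d_n$ est $\lfloor \log n/\log p\rfloor$ et que $p$ élevé à cet exposant ne dépasse pas $n$ (par définition même de la partie entière). Une fois cette inégalité établie, la conclusion est une simple prise de logarithme.
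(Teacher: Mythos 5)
Votre démonstration est correcte et suit essentiellement la même voie que celle du texte : la majoration $d_n\leq n^{\pi(n)}$ que vous établissez n'est autre que l'inégalité $\psi(n)\leq \pi(n)\log n$ de la remarque \ref{rrrrm} (c'est-à-dire la partie gauche de \eqref{ustt}), combinée avec la minoration $d_n\geq 2^n$ du théorème \ref{thnai}. Rien à redire.
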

\begin{proof}
Le résultat découle immédiatement de \eqref{ustt} et du fait que l'on a (en vertu du théorème \ref{thnai}): $\psi(n)\geq n\log 2$ $(\forall n\geq 7)$.
\end{proof}

\noindent Un autre résultat très important dû à Nair est le suivant.
\begin{thm}[Nair \cite{nair}]\label{thnair2}
Pour tout entier strictement positif $n$, on a:
\[d_{n}:=\ppcm(1,2,\dots,n)=\ppcm\left(\binom{n}{1},2\binom{n}{2},\dots,n\binom{n}{n}\right).\]
\end{thm}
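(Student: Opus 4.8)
We want to show $d_n := \ppcm(1,2,\dots,n) = \ppcm\!\left(\binom n1, 2\binom n2, \dots, n\binom nn\right)$. Call the right-hand side $L_n$. The divisibility $L_n \mid d_n$ is already available: by \eqref{601}, $\ell\binom n\ell \mid d_n$ for every $\ell \in \{1,\dots,n\}$, hence $\ppcm$ of these numbers divides $d_n$, i.e. $L_n \mid d_n$. So the whole content is the reverse divisibility $d_n \mid L_n$, and it suffices to check it prime by prime: fix a prime $p \le n$ and show $\vartheta_p(d_n) \le \vartheta_p(L_n)$, i.e. that there exists $\ell$ with $\vartheta_p\!\left(\ell\binom n\ell\right) \ge \vartheta_p(d_n)$.

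First I would recall that $\vartheta_p(d_n) = \lfloor \log n / \log p \rfloor =: a$, the largest exponent with $p^a \le n$. The natural candidate is $\ell = p^a$ (note $1 \le p^a \le n$, so this is a legitimate index). Then $\vartheta_p\!\left(p^a\binom{n}{p^a}\right) = a + \vartheta_p\!\left(\binom{n}{p^a}\right) \ge a = \vartheta_p(d_n)$, which is exactly what we need. So the key step is simply producing the right witness $\ell = p^a$ and reading off its $p$-adic valuation; the auxiliary observation is that $\vartheta_p\binom{n}{p^a} \ge 0$, which is trivial since binomial coefficients are integers — no Kummer/Legendre bookkeeping is actually needed for the lower bound.

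Putting the two halves together: $L_n \mid d_n$ from \eqref{601}, and $d_n \mid L_n$ because for every prime $p \le n$, writing $a = \vartheta_p(d_n)$, the term $p^a\binom{n}{p^a}$ appearing in $L_n$ already has $p$-adic valuation $\ge a$; primes $p > n$ contribute nothing to either side. Since the two numbers divide each other and are positive, they are equal.

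**Main obstacle.** There is essentially no obstacle: the only thing to be careful about is the index range (one must ensure $p^a \le n$, which holds by definition of $a$, so $\ell = p^a$ is among $1,\dots,n$) and the already-established fact \eqref{601}. The statement is really a clean repackaging of $\vartheta_p(d_n) = \lfloor \log_p n \rfloor$ together with the integrality of binomial coefficients.
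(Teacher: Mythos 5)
Votre preuve est correcte et suit pour l'essentiel la m\^eme structure que celle du texte: la divisibilit\'e $L_n \mid d_n$ vient de \eqref{601}, et la divisibilit\'e r\'eciproque vient du fait que chaque indice $m$ divise le terme $m\binom{n}{m}$. La seule diff\'erence est cosm\'etique: l\`a o\`u vous d\'eroulez la r\'eciproque premier par premier en exhibant le t\'emoin $\ell=p^a$, le texte observe simplement que $m\mid m\binom{n}{m}$ pour tout $m\in\{1,\dots,n\}$ entra\^ine directement que $\ppcm(1,\dots,n)$ divise $\ppcm\left(\binom{n}{1},2\binom{n}{2},\dots,n\binom{n}{n}\right)$ — votre argument en est la sp\'ecialisation aux indices puissances de premiers, et les deux sont valables.
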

\begin{proof}
D'après la relation \eqref{601}, on a pour tout $m\in\{1,2,\dots,n\}$: $m \binom{n}{m}$ divise $d_{n}$. Ce qui entraîne que:
\[\ppcm\left(\binom{n}{1},2\binom{n}{2},\dots,m\binom{n}{m}\right)~\text{divise}~d_{n}.\]
D'autre part, puisqu'on a visiblement $m$ divise $m\binom{n}{m}$ pour tout $m\in\{1,2,\dots,n\}$, il s'ensuit que:
\[d_{n}~\text{divise}~\ppcm\left(\binom{n}{1},2\binom{n}{2},\dots,n\binom{n}{n}\right).\]
Ce qui conclut à l'identité requise par le théorème et complète cette démonstration.  
\end{proof}

\begin{coll}[Nair \cite{nair}]
Pour tout entier strictement positif $n$, on a:
\begin{equation}\label{binfpp}
d_n:=\ppcm\left(1,2,\dots,n\right)\leq 4^n.
\end{equation}
\end{coll}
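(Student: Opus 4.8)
The plan is to prove the inequality by strong induction on $n$, treating separately the odd values $n=2m+1$ and the passage from $2m+1$ to $2m+2$. (Strictly speaking one could finish in a single line, since Theorem~\ref{thh} already gives the stronger bound $d_n\leq 3^n<4^n$; but in the spirit of this section it is natural to give a self-contained elementary argument.) For the base cases one checks by hand that $d_1=1\leq 4$ and $d_2=2\leq 16$.

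The core of the proof is the divisibility relation
\[d_{2m+1}\mid\binom{2m+1}{m}\,d_{m+1}\qquad(\forall m\geq 1).\]
Since $m+1\leq 2m+1$ we have $d_{m+1}\mid d_{2m+1}$, so this amounts to showing that the positive integer $d_{2m+1}/d_{m+1}$ divides $\binom{2m+1}{m}$, which I would check one prime at a time. Fix a prime $p$ and set $a:=\vartheta_p(d_{2m+1})=\lfloor\log_p(2m+1)\rfloor$ and $b:=\vartheta_p(d_{m+1})=\lfloor\log_p(m+1)\rfloor$. From $p^{b+1}>m+1$ one gets $p^{a}\leq 2m+1<2(m+1)\leq 2p^{b+1}\leq p^{b+2}$, whence $a-b\in\{0,1\}$ (note $a\geq b$). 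If $a=b$ there is nothing to prove; if $a=b+1$, then $m+1<p^{a}\leq 2m+1$, and in Legendre's formula
\[\vartheta_p\binom{2m+1}{m}=\sum_{i\geq 1}\left(\left\lfloor\frac{2m+1}{p^{i}}\right\rfloor-\left\lfloor\frac{m}{p^{i}}\right\rfloor-\left\lfloor\frac{m+1}{p^{i}}\right\rfloor\right)\]
every summand is $\geq 0$ while the one with $i=a$ equals $1-0-0=1$; hence $\vartheta_p\binom{2m+1}{m}\geq 1=a-b$, as required.

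To run the induction I would then use the elementary bound $\binom{2m+1}{m}\leq 4^{m}$, coming from $2\binom{2m+1}{m}=\binom{2m+1}{m}+\binom{2m+1}{m+1}\leq\sum_{k=0}^{2m+1}\binom{2m+1}{k}=2^{2m+1}$. Combining this with the relation above and the induction hypothesis $d_{m+1}\leq 4^{m+1}$ gives $d_{2m+1}\leq\binom{2m+1}{m}\,d_{m+1}\leq 4^{m}\cdot 4^{m+1}=4^{2m+1}$. For the even index, $d_{2m+2}=\ppcm(d_{2m+1},2m+2)$; writing $2m+2=2^{\alpha}c$ with $c$ odd, one has $c\leq m+1$ so $c\mid d_{2m+1}$, and $2^{\alpha-1}\leq m+1\leq 2m+1$ so $\vartheta_2(d_{2m+1})\geq\alpha-1$, whence $\pgcd(d_{2m+1},2m+2)\geq m+1$ and $d_{2m+2}\leq d_{2m+1}\cdot\frac{2m+2}{m+1}=2\,d_{2m+1}\leq 2\cdot 4^{2m+1}\leq 4^{2m+2}$. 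This closes the induction.

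The main obstacle is the prime-by-prime divisibility $d_{2m+1}/d_{m+1}\mid\binom{2m+1}{m}$; the only non-routine point is the behaviour at a prime $p$ whose exponent $\lfloor\log_p(\cdot)\rfloor$ jumps when passing from $m+1$ to $2m+1$, and the computation above shows that such a jump is always compensated by a prime factor of $\binom{2m+1}{m}$. An alternative that bypasses the induction is to combine the identity $d_n=\prod_{p\leq n}p^{\lfloor\log_p n\rfloor}$, each factor being $\leq n$, which yields $d_n\leq n^{\pi(n)}$, with the effective bound $\pi(n)\leq 1,25506\,n/\log n$ established above: then $\log d_n\leq 1,25506\,n<(\log 4)\,n$, so $d_n<4^n$ for every $n\geq 2$, while $d_1=1$.
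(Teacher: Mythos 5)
Votre preuve est correcte et son squelette co\"incide avec celui du texte: une r\'ecurrence pilot\'ee par la divisibilit\'e $d_{2m+1}\mid\binom{2m+1}{m}\,d_{m+1}$ (identique \`a celle du texte puisque $\binom{2m+1}{m}=\binom{2m+1}{m+1}$) combin\'ee \`a la majoration $\binom{2m+1}{m}\leq 4^m$. L\`a o\`u vous divergez r\'eellement, c'est dans la preuve de cette divisibilit\'e et dans le traitement des indices pairs. Le texte d\'eduit $d_{2m}\mid d_m\binom{2m}{m}$ et $d_{2m+1}\mid d_{m+1}\binom{2m+1}{m+1}$ de l'identit\'e $k\binom{2m}{k}\binom{2m-k}{m-k}=k\binom{m}{k}\binom{2m}{m}$ jointe au th\'eor\`eme \ref{thnair2} ($d_n=\ppcm(\binom{n}{1},2\binom{n}{2},\dots,n\binom{n}{n})$); vous v\'erifiez au contraire la divisibilit\'e nombre premier par nombre premier via $\vartheta_p(d_n)=\lfloor\log_p n\rfloor$ et la formule de Legendre, ce qui est autonome et contourne enti\`erement le th\'eor\`eme \ref{thnair2}, au prix d'une courte discussion sur le saut \'eventuel de $\lfloor\log_p(\cdot)\rfloor$ entre $m+1$ et $2m+1$ --- votre analyse de ce cas (le terme $i=a$ de la somme de Legendre vaut $1$ car $m+1<p^a\leq 2m+1<2p^a$) est correcte. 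Pour les indices pairs, vous utilisez le pas plus grossier mais parfaitement valable $d_{2m+2}\leq 2\,d_{2m+1}$, l\`a o\`u le texte r\'eutilise la divisibilit\'e de dichotomie $d_{2m}\mid d_m\binom{2m}{m}$. Vos deux variantes finales ($d_n\leq 3^n<4^n$ via le th\'eor\`eme \ref{thh}, ou $d_n\leq n^{\pi(n)}$ joint \`a $\pi(n)\leq 1,25506\,n/\log n$) sont \'egalement correctes, mais toutes deux reposent sur le th\'eor\`eme de Hanson, ce qui fait perdre le caract\`ere \'el\'ementaire et ind\'ependant de l'argument de Nair.
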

\begin{proof}
On procède par récurrence sur $n$. Pour $n=1$, l'inégalité \eqref{binfpp} est triviale. Fixons $m\in\mathbb{N^*}$ et supposons que \eqref{binfpp} est vraie pour tout entier strictement positif $n<2m$. Nous montrons que \eqref{binfpp} reste vraie pour $n=2m$ et pour $n=2m+1$, ce qui conclura au résultat requis. Premièrement, on vérifie facilement que l'on a l'identité:
\begin{equation}\label{idnair}
k\binom{2m}{k}\binom{2m-k}{m-k}=k\binom{m}{k}\binom{2m}{m}~~~~(\forall k\in\{1,2,\dots,m\}).
\end{equation}
Cette dernière entraîne que $k\binom{2m}{k}$ divise $k\binom{m}{k}\binom{2m}{m}$ $(\forall k\in\{1,2,\dots,m\})$. D'autre part, le membre de droite de \eqref{idnair} est (en vertu du théorème \ref{thnair2}) un diviseur de $d_m\binom{2m}{m}$. Par conséquent, on a pour tout $k\in\{1,2,\dots,m\}$:
\begin{equation}\label{idnair1}
k\binom{2m}{k}~~\text{divise}~~d_m\binom{2m}{m}.
\end{equation}
Puisque pour tout $k\in\{m+1,\dots,2m\}$, on a: $k\binom{2m}{k}=(2m-k+1)\binom{2m}{2m-k+1}$ et $1\leq 2m-k+1\leq m$, il s'ensuit que la relation \eqref{idnair1} est valable pour tout $k\in\{1,2,\dots,2m\}$. En combinant cela avec le théorème \ref{thnair2}, on obtient que: $d_{2m}$ divise $d_m\binom{2m}{m}$. En procédant comme ci-dessus et en utilisant l'identité suivante:
\[k\binom{2m+1}{k}\binom{2m+1-k}{m+1-k}=k\binom{m+1}{k}\binom{2m+1}{m+1}~~~~(\forall k\in\{1,2,\dots,m+1\}),\]
au lieu de \eqref{idnair}, on obtient que: $d_{2m+1}$ divise $d_{m+1}\binom{2m+1}{m+1}$. On a par conséquent:
\begin{align*}
d_{2m}&\leq d_m\binom{2m}{m}\leq 4^m(1+1)^{2m}=4^{2m}
\intertext{et}
d_{2m+1}&\leq d_{m+1}\binom{2m+1}{m+1}\leq 4^{m+1}\frac{(1+1)^{2m+1}}{2}=4^{2m+1}.
\end{align*}
Ce qui achève cette récurrence et complète la démonstration du corollaire.
\end{proof}

\section{Minorations effectives du $\ppcm$ d'une suite arith\-métique}

Dans la continuation, Farhi \cite{far} a présenté une méthode permettant d'obtenir des minorations effectives et non triviales du $\ppcm$ d'une suite arithmétique et d'une certaine classe de suites quadratiques. Cette partie est consacrée à la démonstration de quelques résultats portant sur le $\ppcm$ d'une suite arithmétique. On parlera aussi d'une certaine estimation conjecturée dans \cite{far} et démontrée par Hong \cite{hong1}. 

\begin{thm}[Farhi \cite{far}]\label{Flcm}
Soit $(u_{k})_{k\in \mathbb{N}}$ une suite arithmétique d'entiers, de raison $r$ et de premier terme $u_{0}$ strictement positifs et premiers entre eux. Alors, pour tout entier naturel $n$, on a:
\begin{equation}\label{ff1}
\ppcm(u_{0},u_{1},\dots,u_{n}) \geq u_{0}(1+r)^{n-1}.
\end{equation}
Si de plus $n$ est multiple de $(r+1)$, on a:
\begin{equation}\label{ff2}
\ppcm(u_{0},u_{1},\dots,u_{n}) \geq u_{0}(1+r)^{n}.
\end{equation}
\end{thm}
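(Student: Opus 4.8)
The statement concerns a lower bound for $\ppcm(u_0, u_1, \dots, u_n)$ where $u_k = u_0 + kr$ with $\pgcd(u_0, r) = 1$. Following Farhi's method as advertised in the introduction, the plan is to study the Beta-type integral
\[
I(m,n) := \int_0^1 x^{m + u_0/r - 1}(1-x)^{n-m}\,\mathrm{d}x \qquad (0 \le m \le n),
\]
and to evaluate it in two ways. First, expanding $(1-x)^{n-m}$ by the binomial theorem and integrating term by term gives
\[
I(m,n) = \sum_{j=0}^{n-m} (-1)^j \binom{n-m}{j} \frac{1}{m+j+u_0/r} = \sum_{j=0}^{n-m} (-1)^j \binom{n-m}{j} \frac{r}{u_{m+j}},
\]
so $r\cdot I(m,n)$ is a $\mathbb{Z}$-linear combination of the reciprocals $1/u_{m+j}$; multiplying through by $\ppcm(u_m, u_{m+1}, \dots, u_n)$, and hence a fortiori by $L_n := \ppcm(u_0, u_1, \dots, u_n)$, produces an integer. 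Second, integrating $I(m,n)$ by parts repeatedly (lowering the power on $1-x$) yields a closed product formula, essentially $I(m,n) = \dfrac{(n-m)!\,\Gamma(m+u_0/r)}{\Gamma(n+1+u_0/r)}$, i.e. a quantity whose reciprocal is $\dfrac{1}{n-m}\cdot\dfrac{u_m u_{m+1}\cdots u_n}{r^{n-m}\, \binom{?}{?}}$-type expression. The upshot is a divisibility statement: a certain explicit product of the $u_k$'s, divided by an explicit power of $r$ and a binomial-type factor, divides $L_n$.

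**Extracting the bound.** With the divisibility in hand, the strategy is to choose $m$ cleverly (for the first bound, $m$ around $0$ or $1$; for the second, exploit $(r+1)\mid n$) so that the divisor we obtain is as large as possible, and then bound that divisor from below. The key arithmetic input is that $\pgcd(u_0, r) = 1$, which propagates: each $u_k = u_0 + kr$ is coprime to $r$, and more refined coprimality among the $u_k$ themselves lets us assemble $L_n$ from the pairwise-coprime pieces. The size estimate then reduces to showing that $\dfrac{u_m u_{m+1}\cdots u_n}{(n-m)\, r^{n-m}\,\binom{n-m}{?}}$ — or whatever the precise divisor turns out to be — is at least $u_0(1+r)^{n-1}$. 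Since each $u_{k}/r \approx k$ but more precisely $u_{k} = u_0 + kr \ge r(k+1)$ when... one must be careful: the cleanest route is to recognize the divisor as literally $u_0 \binom{n+u_0/r-1}{n}^{-1}$-adjacent and compare $\prod (m+j+u_0/r)$ against $(1+r)^{n-1}$ after clearing denominators. Concretely I expect the divisor to come out as
\[
\frac{u_0 u_1 \cdots u_n}{r^n \, n!} \cdot (\text{something}),
\]
and since $u_k \ge (k+1) \cdot 1$ won't suffice, one uses $u_k + r = u_{k+1}$ to telescope: the product $\prod_{k=1}^{n} u_k \ge \prod_{k=1}^n (kr) $ is too weak, so instead one writes each factor as contributing at least $(r+1)$ after the binomial coefficient is divided out — this is the combinatorial heart of the argument.

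**Main obstacle.** The genuinely delicate step is the second one: converting the divisibility relation into the clean exponential lower bound $u_0(1+r)^{n-1}$ with the \emph{optimal-up-to-one} exponent, and then squeezing out the extra factor of $(1+r)$ when $(r+1)\mid n$. The integer produced by the integral is a single alternating-sum expression, and showing it is both positive (nonzero) and at least $u_0(1+r)^{n-1}$ requires carefully pairing the binomial coefficient $\binom{n-m}{j}$ against the denominators $u_{m+j}$ so that the quotient telescopes into a product of $n-1$ (resp. $n$) factors each $\ge 1+r$, times the factor $u_0$. The case analysis on the residue of $n$ modulo $r+1$ — needed to gain the final factor in \eqref{ff2} — is where the choice of the parameter $m$ must be tuned, presumably taking $m$ a multiple of $r+1$ or using a slightly shifted integrand $\int_0^1 x^{u_0/r}(1-x)^{n-1}\,\mathrm{d}x$ and its relatives. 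I would also need to verify separately the small cases $n = 0, 1$ (where the bound reads $\ppcm(u_0) = u_0 \ge u_0(1+r)^{-1}$ and $\ppcm(u_0,u_1) \ge u_0$, both trivial) so that the integral argument, which needs $n-m \ge 1$, applies in the remaining range.
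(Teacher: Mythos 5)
Il y a une lacune r\'eelle au c\oe{}ur de la proposition : la relation de divisibilit\'e que vous extrayez de l'int\'egrale est trop faible d'un facteur $r^{n-m}$, et le diviseur que vous annoncez ne suffit pas \`a conclure. Concr\`etement, l'\'evaluation en deux temps de $I(m,n)$ \`a la Nair--Oon donne seulement que $L_n\cdot I(m,n)\in r\,\mathbb{Z}_{>0}$, d'o\`u $L_n\geq r/I(m,n)=\frac{u_m u_{m+1}\cdots u_n}{r^{\,n-m}\,(n-m)!}$ ; c'est exactement le diviseur $\frac{u_0u_1\cdots u_n}{r^n\,n!}$ que vous proposez pour $m=0$. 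Or
\[
\frac{u_0u_1\cdots u_n}{r^n\,n!}=u_0\prod_{k=1}^{n}\Bigl(1+\frac{u_0}{kr}\Bigr)\asymp n^{u_0/r},
\]
une croissance polynomiale en $n$ : aucune optimisation du param\`etre $m$ ni aucun t\'elescopage ne peut en tirer la minoration exponentielle $u_0(1+r)^{n-1}$. L'ingr\'edient manquant est pr\'ecis\'ement celui que votre phrase sur la \og coprimalit\'e qui se propage \fg{} effleure sans l'identifier : la d\'emonstration du papier n'obtient pas la divisibilit\'e via l'int\'egrale, mais via la d\'ecomposition en \'el\'ements simples de $1/\bigl((x+u_0)\cdots(x+u_n)\bigr)$ en $x=0$, qui donne $u_0\cdots u_n\mid L_n\cdot n!\,r^n$ (car $\prod_{i\neq j}(u_i-u_j)=\pm\,j!\,(n-j)!\,r^n$), puis le lemme de Gauss --- $r$ \'etant premier avec chaque $u_k$ puisque $\pgcd(u_0,r)=1$ --- supprime \emph{toute} la puissance $r^n$ et fournit le diviseur $v_{n,k}:=\frac{u_k\cdots u_n}{(n-k)!}$ de $L_n$, sans aucun $r$ au d\'enominateur. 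C'est ce gain d'un facteur $r^{n-k}$ entier qui rend la borne exponentielle accessible.

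Le reste de votre plan est ensuite conforme \`a celui du papier : une fois le bon diviseur $v_{n,k}$ acquis, on maximise sur $k$ (le maximum est atteint en $k_n=\max\{0,\lfloor\frac{n-u_0}{r+1}\rfloor+1\}$, et le papier travaille avec $k^*=\lfloor\frac{n-1}{r+1}+1\rfloor$, puis $k^{**}=\frac{n}{r+1}$ pour le cas $(r+1)\mid n$), et l'int\'egrale b\^eta $v_{n,k}=r^{\,n-k+1}/\int_0^1x^{k+u_0/r-1}(1-x)^{n-k}\,\mathrm{d}x$ sert uniquement d'outil d'\emph{estimation}, la minoration finale d\'ecoulant de l'in\'egalit\'e $x(1-x)^r\leq\frac{r^r}{(r+1)^{r+1}}$ sur $[0,1]$ plut\^ot que d'un t\'elescopage combinatoire. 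Sans la correction de l'\'etape de divisibilit\'e, toutefois, l'argument tel que vous l'esquissez ne d\'emontre pas le th\'eor\`eme.
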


\noindent La démonstration de ce théorème est partagée en plusieurs lemmes.

\begin{lemme}[Farhi \cite{far}]\label{lF1}
Soit $(u_{k})_{k\in \mathbb{N}}$ une suite d'entiers non nuls, strictement croissante. Alors, pour tout entier naturel $n$, le produit $u_{0}u_{1}\cdots u_{n}$ divise le nombre:
\[M_n:=\ppcm\left(u_{0},u_{1},\dots,u_{n}\right)\cdot\ppcm\left\{\prod_{\begin{subarray}{c}0\leq i\leq n\\i\neq j\end{subarray}}(u_{i}-u_{j});~j=0,1,\dots,n\right\}.\]
\end{lemme}
\begin{proof}
En substituant $x=0$ dans la décomposition en éléments simples de la fraction rationnelle $1/(x+u_{0})(x+u_{1})\dots (x+u_{n})$, on obtient:
\[\sum_{j=0}^{n}\frac{1}{u_{j}}\cdot\frac{1}{\prod_{\begin{subarray}{c}0\leq i\leq n\\i\neq j\end{subarray}}(u_{i}-u_{j})}=\frac{1}{u_{0}u_{1}\cdots u_{n}}.\] 
Le résultat requis en découle en multipliant les deux membres de cette dernière identité par $M_n$. 
\end{proof}

\begin{lemme}[Farhi \cite{far}]\label{xyxy}
Soit $(u_{k})_{k\in \mathbb{N}}$ une progression arithmétique strictement croissante d'entiers non nuls. Alors, pour tout entier naturel $n$, on a:
\[\ppcm\left(u_{0},u_{1},\dots,u_{n}\right)~\text{est multiple de}~\frac{u_{0}u_{1}\cdots u_{n}}{n!\left(\pgcd\left(u_{0},u_{1}\right)\right)^{n}}.\]
\end{lemme}
\begin{proof}
On peut supposer que $\pgcd\left(u_{0},u_{1}\right)=1$ (quitte à remplacer la suite $(u_{k})_k$ par la suite $(v_{k})_k$ de terme général $v_{k}:=u_{k}/\pgcd\left(u_{0},u_{1}\right)$ $(\forall k\in\mathbb{N})$). D'après le lemme \ref{lF1}, le nombre $\ppcm\left(u_{0},u_{1},\dots,u_{n}\right)$ est multiple du nombre rationnel:
\[\frac{u_{0}u_{1}\cdots u_{n}}{\ppcm\left\{\prod_{\begin{subarray}{c} 0\leq i\leq n\\i\neq j\end{subarray}}(u_{i}-u_{j});~0\leq j\leq n\right\}}.\]
En désignant par $r\in\mathbb{N^*}$ la raison de la suite arithmétique $(u_{k})_{k\in \mathbb{N}}$, on a pour tout $j\in\{0,1,\dots,n\}$:
\[\prod_{\begin{subarray}{c} 0\leq i\leq n \\ i\neq j\end{subarray}}(u_{i}-u_{j})=(-1)^{j}j!(n-j)!r^{n}.\]
Par conséquent, on a:
\[\ppcm\left\{\prod_{\begin{subarray}{c} 0\leq i\leq n \\ i\neq j\end{subarray}}(u_{i}-u_{j});~0\leq j\leq n\right\}=r^{n}n!.\]
Ce qui entraîne que le nombre $\ppcm\left(u_{0},u_{1},\dots,u_{n}\right)$ est multiple du nombre rationnel $\frac{u_{0}u_{1}\cdots u_{n}}{r^{n}n!}$. Par ailleurs, puisque les entiers $u_{0}$ et $u_{1}$ sont premiers entre eux, alors $r$ est premier avec tous les termes de la suite $(u_{k})_{k \in\mathbb{N}}$, il est donc premier avec le produit $u_{0}u_{1}\cdots u_{n}$. Enfin, le lemme de Gauss permet de déduire que le nombre $\ppcm\left(u_{0},u_{1},\dots,u_{n}\right)$ est multiple du nombre rationnel $\frac{u_{0}u_{1}\cdots u_{n}}{n!}$. Ce qui complète cette démonstration.
\end{proof}

Dans les deux lemmes suivants, nous fixons une suite arithmétique $\left(u_k\right)_{k\in\mathbb{N}}$, ayant pour raison $r$ et pour premier terme $u_0$, avec $r , u_0 \in \mathbb{N}^*$. Pour tout $n \in \mathbb{N}$, nous désignons par $(v_{n,k})_{0\leq k \leq n}$ la suite finie de nombres rationnels définie par $v_{n,k}:=\frac{u_{k}u_{k+1}\cdots u_{n}}{(n-k)!}$ \linebreak $(\forall k\in\{0,1,\dots,n\})$.

\begin{lemme}[Farhi \cite{far}]\label{k0}
Pour tout entier naturel $n$, la suite $(v_{n,k})_{0\leq k \leq n}$ atteint son maximum en $k_{n}\in\{0,1,\dots,n\}$, défini par:
\begin{equation}\label{k0n}
k_{n}:=\max  \left\lbrace 0,\left\lfloor\frac{n-u_{0}}{r+1} \right\rfloor +1\right\rbrace.
\end{equation}
\end{lemme}
\begin{proof}
Pour tout $k\in \lbrace 0,1,\dots,n-1\rbrace$, on a:
\[\frac{v_{n,k+1}}{v_{n,k}}=\frac{n-k}{u_{k}}=\frac{n-k}{u_{0}+kr};\]
d'où:
\[v_{n,k+1}\geq v_{n,k} \Longleftrightarrow k\leq \frac{n-u_{0}}{r+1}\Longleftrightarrow k\leq \left\lfloor \frac{n-u_{0}}{r+1} \right\rfloor.\]
Deux cas peuvent se présenter. Si $n< u_{0}$, alors la suite $(v_{n,k})_{0\leq k \leq n}$ est décroissante et elle atteint donc son maximum en $k=0$. Si on a au contraire $n\geq u_{0}$, alors la suite $(v_{n,k})_{0\leq k \leq n}$ croit jusqu'au terme d'indice $\left\lfloor\frac{n-u_{0}}{r+1} \right\rfloor +1$ et elle décroit au-delà de ce dernier; elle atteint donc sont maximum en $k=\left\lfloor\frac{n-u_{0}}{r+1} \right\rfloor +1$. Ce qui conclut au résultat du lemme et achève cette démonstration.   
\end{proof}

\begin{lemme}[Farhi \cite{far}]\label{beta}
Pour tout $k\in \lbrace 0,1,\dots,n\rbrace$, on a:
\begin{equation}\label{eqbeta}
v_{n,k}=\frac{r^{n-k+1}}{\int_{0}^{1}x^{k+\frac{u_{0}}{r}-1}(1-x)^{n-k}\mathrm{d}x}.
\end{equation}
\end{lemme}
\begin{proof}
D'après les propriétés usuelles des fonctions $\Gamma$ et $\beta$ d'Euler, on a:
\begin{align*}\label{vbn}
v_{n,k}=\frac{u_{k}\cdots u_{n}}{(n-k)!}&=\frac{u_{k}(u_{k}+r)\cdots (u_{k}+(n-k)r)}{(n-k)!} \\ &=r^{n-k+1}\cdot \frac{\frac{u_{k}}{r}(\frac{u_{k}}{r}+1)\cdots (\frac{u_{k}}{r}+n-k)}{(n-k)!} \\ &=r^{n-k+1}\frac{\Gamma(\frac{u_{k}}{r}+n-k+1)}{\Gamma(\frac{u_{k}}{r})\cdot \Gamma(n-k+1)}\\ &=\frac{r^{n-k+1}}{\beta(\frac{u_{k}}{r},n-k+1)},
\end{align*}
L'identité \eqref{eqbeta} découle de la formule intégrale de la fonction $\beta$. Ce qui complète cette démonstration.
\end{proof}

\begin{proof}[Démonstration du théorème \ref{Flcm}]
Fixons $n\in\mathbb{N}$ et montrons l'inégalité \eqref{ff1} du théorème. Étant donné $k\in \lbrace 0,1,\dots,n\rbrace$, le nombre $\ppcm\left(u_{0},u_{1},\dots,u_{n}\right)$ est visiblement un multiple du nombre $\ppcm\left(u_{k},u_{k+1},\dots,u_{n}\right)$, qui est lui même (d'après le lemme \ref{xyxy}) multiple de $v_{n,k}:=\frac{u_{k}u_{k+1}\cdots u_{n}}{(n-k)!}$. En combinant cela avec le lemme \ref{k0}, on obtient que:
\begin{equation}\label{cl}
\ppcm\left(u_{0},u_{1},\dots,u_{n}\right)\geq \max \left\lbrace v_{n,k};~0\leq k\leq n\right\rbrace=v_{n,k_n}.
\end{equation}
Il ne reste plus qu'à trouver une bonne minoration pour le nombre $v_{n,k_n}$. Pour surmonter certaines difficultés de calcul, nous allons plutôt minorer le nombre $v_{n,k^*}$, où $k^{*}:=\left\lfloor \frac{n-1}{r+1}+1\right\rfloor\in\{0,1,\dots,n\}$ et on conclura au résultat requis via l'inégalité $v_{n,k_n}\geq v_{n,k^*}$. D'après le lemme \ref{beta}, on a:
\begin{equation}\label{rama}
v_{n,k^*}=\frac{r^{n-k^{*}+1}}{\int_{0}^{1}x^{k^{*}+\frac{u_{0}}{r}-1}(1-x)^{n-k^{*}}\mathrm{d}x}.
\end{equation}
Par ailleurs, comme $\frac{n-1}{r+1}<k^{*}\leq \frac{n+r}{r+1}$, il s'ensuit que:
\begin{equation}\label{rrr}
r^{n-k^{*}+1}\geq r^{\frac{(n-1)r}{r+1}+1}.
\end{equation}
De même, on a pour tout $x\in \left[0,1\right]$:
\[x^{k^{*}+\frac{u_{0}}{r}-1}(1-x)^{n-k^{*}}\leq x^{\frac{n-1}{r+1}+\frac{u_{0}}{r}-1}(1-x)^{\frac{(n-1)r}{r+1}}.\]
En intégrant les deux membres de cette dernière inégalité sur l'intervalle $\left[0,1\right]$, on obtient:
\begin{equation}\label{uu}
\int_{0}^{1}x^{k^{*}+\frac{u_{0}}{r}-1}(1-x)^{n-k^{*}}\mathrm{d}x \leq \int_{0}^{1}\left\lbrace x(1-x)^{r} \right\rbrace^{\frac{n-1}{r+1}}x^{\frac{u_{0}}{r}-1}\mathrm{d}x. 
\end{equation}
L'estimation \eqref{uu} avec l'inégalité $x(1-x)^{r}\leq \frac{r^{r}}{(r+1)^{r+1}}$ $(\forall x\in \left[0,1\right])$, entraînent que:
\begin{equation}\label{ccc}
\int_{0}^{1}x^{k^{*}+\frac{u_{0}}{r}-1}(1-x)^{n-k^{*}}\mathrm{d}x \leq \frac{r^{\frac{(n-1)r}{r+1}}}{(r+1)^{n-1}}\cdot\frac{r}{u_{0}}.
\end{equation} 
En combinant les estimations \eqref{cl}, \eqref{rama}, \eqref{rrr} et \eqref{ccc}, on aboutit à:
\[\ppcm\left(u_{0},u_{1},\dots,u_{n}\right)\geq v_{n,k_n}\geq v_{n,k^*}\geq u_{0}(r+1)^{n-1},\] 
comme il fallait le prouver. La minoration \eqref{ff2} du théorème se démontre de la même façon en travaillons cette fois-ci avec l'entier naturel $k^{**}:=\frac{n}{r+1}$ au lieu de $k^{*}$. Ce qui complète cette démonstration. 
\end{proof}

Farhi \cite{far} a conjecturé que l'estimation \eqref{ff2} est vraie pour tout $n\in \mathbb{N}$. Une démonstration de cette conjecture, établie par Hong \cite{hong1}, est fournie dans le théorème suivant:

\begin{thm}[Hong \cite{hong1}]
Soit $(u_{k})_{k\in \mathbb{N}}$ une suite arithmétique d'entiers, de raison $r\in \mathbb{N^{*}}$ et dont le premier terme $u_{0}$ est strictement positif et premier avec $r$. Alors, pour tout entier naturel $n$, on a:
\begin{equation}
\ppcm\left(u_{0},u_{1},\dots,u_{n}\right)\geq u_{0}(1+r)^{n}.
\end{equation}  
\end{thm}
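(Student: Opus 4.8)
The plan is to leave Farhi's apparatus (Lemmas~\ref{k0} and~\ref{beta}, and inequality~\eqref{cl} from the proof of Theorem~\ref{Flcm}) entirely intact and to add one inductive layer that upgrades the exponent $n-1$ to $n$. Keep the notation $v_{n,k}:=\dfrac{u_k u_{k+1}\cdots u_n}{(n-k)!}$, let $k_n$ be the maximising index of Lemma~\ref{k0}, and set $w_n:=v_{n,k_n}=\max_{0\le k\le n}v_{n,k}$. Our hypotheses are exactly those of Theorem~\ref{Flcm}, so inequality~\eqref{cl} is available and gives $\ppcm(u_0,u_1,\dots,u_n)\ge w_n$; it therefore suffices to prove, by induction on $n$, that
\[
w_n\ \ge\ u_0(r+1)^n\qquad(\forall\,n\in\mathbb N).
\]

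For $n=0$ one has $k_0=0$ (since $u_0\ge1$), hence $w_0=v_{0,0}=u_0$, with equality. For the inductive step, fix $n\ge1$, assume $w_{n-1}\ge u_0(r+1)^{n-1}$, and put $k^\ast:=k_{n-1}\in\{0,1,\dots,n-1\}$, so that $v_{n-1,k^\ast}=w_{n-1}$ by Lemma~\ref{k0}. Since $k^\ast\le n-1<n$, the index $k^\ast$ is admissible at level $n$ too, and the definition of $v_{n,k}$ immediately gives the one-step identity $v_{n,k^\ast}=\dfrac{u_n}{\,n-k^\ast\,}\,v_{n-1,k^\ast}$, in which $n-k^\ast\ge1$. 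Hence
\[
w_n\ \ge\ v_{n,k^\ast}\ =\ \frac{u_n}{\,n-k^\ast\,}\,w_{n-1},
\]
and the whole theorem collapses to the single inequality $\dfrac{u_n}{\,n-k^\ast\,}\ge r+1$; writing $u_n=u_0+nr$ and clearing the positive denominator, this is equivalent to $k^\ast\ge\dfrac{n-u_0}{r+1}$.

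This last inequality is the crux of the argument, and it involves only the explicit shape of $k_{n-1}$. If $n\le u_0$, then $k^\ast=k_{n-1}=0\ge\frac{n-u_0}{r+1}$ trivially. If $n\ge u_0+1$, write the nonnegative integer $m:=n-1-u_0$ as $m=(r+1)a+b$ with $0\le b\le r$; then $k^\ast=\lfloor m/(r+1)\rfloor+1=a+1$, and
\[
k^\ast-\frac{n-u_0}{r+1}\ =\ (a+1)-\frac{m+1}{r+1}\ =\ \frac{(r+1)(a+1)-(m+1)}{r+1}\ =\ \frac{r-b}{r+1}\ \ge\ 0 .
\]
Substituting this back yields $w_n\ge(r+1)w_{n-1}\ge(r+1)u_0(r+1)^{n-1}=u_0(r+1)^n$, which closes the induction; combined with~\eqref{cl} it gives $\ppcm(u_0,u_1,\dots,u_n)\ge u_0(1+r)^n$ for every $n\in\mathbb N$, as asserted. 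I do not anticipate a real obstacle here: once one fixes on $w_n$ as the quantity to carry through the induction and notes the telescoping relation between $v_{n,k}$ and $v_{n-1,k}$, the only computation left is the elementary floor estimate above, and no ingredient beyond Farhi's lemmas is required.
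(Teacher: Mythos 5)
Your proof is correct and follows the same overall strategy as the paper's (which is Hong's original one): start from inequality \eqref{cl}, furnished by Lemmas \ref{xyxy} and \ref{k0}, and prove $v_{n,k_n}\geq u_0(r+1)^n$ by induction on $n$. The one real difference lies in how the inductive step is organised. The paper computes the new maximising index exactly: it checks that $k_{n+1}\in\{k_n,k_n+1\}$ and then splits into two cases, bounding the ratio $u_{n+1}/(n+1-k_n)$ in one case and $u_{n+1}/u_{k_n}$ in the other. You instead bound $w_n$ from below by the possibly sub-optimal value $v_{n,k_{n-1}}$, so that only the single ratio $u_n/(n-k^{\ast})$ with $k^{\ast}=k_{n-1}$ ever appears, and the whole step reduces to the one floor inequality $k_{n-1}\geq (n-u_0)/(r+1)$, which your division-with-remainder computation $(r-b)/(r+1)\geq 0$ settles. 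This eliminates the case analysis and is, if anything, cleaner than the published argument; all the details you wrote (the base case $w_0=v_{0,0}=u_0$, the telescoping identity $v_{n,k}=\frac{u_n}{n-k}v_{n-1,k}$, and the treatment of $n\leq u_0$ versus $n\geq u_0+1$) check out.
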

\begin{proof}
On procède par récurrence pour montrer que $v_{n,k_{n}}\geq u_{0}(1+r)^{n}$ $(\forall n\in \mathbb{N})$, où $k_{n}\in\{0,1,\dots,n\}$ est déjà défini dans le lemme \ref{k0}, ce qui conclura au résultat requis (via \eqref{cl}). En vertu du lemme \ref{k0}, on a pour tout entier naturel $n\leq u_{0}$:
\begin{equation}\label{HH1}
v_{n,k_{n}}\geq v_{n,0}=\frac{u_{0}u_{1}\cdots u_{n}}{n!}=u_{0}(u_{0}+r)\left(\frac{u_{0}}{2}+r\right)\cdots \left(\frac{u_{0}}{n}+r\right)\geq u_{0}(1+r)^{n}.
\end{equation}
Ce qui montre en particulier que la propriété requise est vraie pour $n\in\{0,1\}$ (car $u_{0}\geq 1$). Soit $n$ un entier strictement positif. Supposons que $v_{n,k_n}\geq u_{0}(1+r)^{n}$ et montrons que $v_{n+1,k_{n+1}}\geq u_{0}(1+r)^{n+1}$. Si $n\leq u_{0}$, le résultat découle de \eqref{HH1}. Supposons pour la suite que $n>u_{0}$. On vérifie facilement (à partir de la définition de $k_n$) que $k_{n}\leq k_{n+1}\leq k_{n}+1$; ce qui nous amène à distinguer les deux cas suivants:\\
\textbullet{} \underline{\textbf{1\textsuperscript{er}cas:}} (si $k_{n+1}=k_{n}$). Dans ce cas, on a:
\begin{equation}\label{eee2}
v_{n+1,k_{n+1}}=v_{n+1,k_{n}}=\frac{u_{k_{n}}u_{k_{n}+1}\cdots u_{n+1}}{(n+1-k_{n})!}=v_{n,k_{n}}\cdot \frac{u_{n+1}}{n+1-k_{n}}.
\end{equation}
Par ailleurs, on vérifie facilement (en se servant de l'égalité $k_n=k_{n+1}$) que:
\[\frac{n+1-u_{0}}{r+1}<k_{n}.\]
Cette dernière entraîne que:
\[u_{n+1}-(r+1)(n+1-k_{n})=u_{0}-(n+1)+k_{n}(r+1)>0.\]
D'où l'on tire:
\begin{equation}\label{eee3}
\frac{u_{n+1}}{n+1-k_{n}}>(r+1).
\end{equation}
En combinant \eqref{eee2}, \eqref{eee3} et l'hypothèse de récurrence, on aboutit à:
\[v_{n+1,k_{n+1}}\geq v_{n,k_{n}}(r+1)\geq u_0(1+r)^{n}(1+r)=u_{0}(1+r)^{n+1},\]
comme il fallait le prouver.\\
\textbullet{} \underline{\textbf{2\textsuperscript{nd}cas:}} (si $k_{n+1}=k_{n}+1$). Dans ce cas, on a:
\begin{equation}\label{eee5}
v_{n+1,k_{n+1}}=v_{n+1,k_{n}+1}=\frac{u_{k_{n}+1}\cdots u_{n+1}}{(n-k_{n})!}=v_{n,k_{n}}\cdot \frac{u_{n+1}}{u_{k_{n}}}.
\end{equation}
Par ailleurs, on vérifie facilement (en se servant de l'égalité $k_{n+1}=k_{n}+1$) que:
\[k_{n}\leq \frac{n+1-u_{0}}{r+1}.\]
Cette dernière entraîne que:
\begin{align*}
u_{n+1}-(r+1)u_{k_{n}}&=u_{0}+(n+1)r-(r+1)u_{0}-k_{n}r(r+1)\\ &\geq nr+r-u_{0}r-r(n+1-u_{0})=0.
\end{align*}
D'où l'on tire:
\begin{equation}\label{eee6}
\frac{u_{n+1}}{u_{k_{n}}}\geq (r+1).
\end{equation}
En combinant \eqref{eee5} et \eqref{eee6} et l'hypothèse de récurrence, on a abouti à:
\[v_{n+1,k_{n+1}}\geq v_{n,k_{n}}(r+1)\geq u_0(1+r)^{n}(1+r)=u_{0}(1+r)^{n+1}.\]
Ce qui achève cette récurrence et complète la preuve du théorème.
\end{proof}

\section{Minorations effectives du $\ppcm$ de certaines suites quadratiques}\label{far-oon}
Dans cette section, nous présentons les résultats de Farhi \cite{far} portant sur l'estimation du plus petit commun multiple d'une certaine classe de progressions quadratiques. On parlera ensuite d'une amélioration remarquable établie par Oon \cite{oon} en 2013. Notons que cette amélioration est adaptable à une généralisation  importante qu'on discutera dans la section suivante.
 
\begin{thm}[Farhi \cite{far}]\label{JJJ}
Soient $n\in\mathbb{N}$ et $(u_{k})_{k\in \mathbb{N}}$ la suite d'entiers de terme général donné par:
\[u_{k}:=ak(k+t)+b~~~~(\forall k\in \mathbb{N}),\]
avec $a,b\in \mathbb{N^*}$, $t\in\mathbb{N}$ et $\pgcd(a,b)=1$. Alors, on a:
\begin{equation}
\ppcm \left(u_{0},u_{1},\dots,u_{n}\right)\geq \begin{cases} 2b\left(\frac{a}{4}\right)^n &\text{si}~t=0, \\ \frac{b}{t2^{t}}\left(\frac{a}{4}\right)^n &\text{si}~t\geq 1. \end{cases}.
\end{equation} 
\end{thm}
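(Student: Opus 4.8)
The plan is to mimic the proof of Farhi's arithmetic-progression estimate (Theorem \ref{Flcm}), replacing the integral $\int_0^1 x^{k+u_0/r-1}(1-x)^{n-k}\,\mathrm dx$ by a suitable Beta-type integral adapted to the quadratic sequence $u_k = ak(k+t)+b$. The key observation is that $u_k$ factors over $\mathbb{C}$ as $u_k = a\left(k+\frac{t-\delta}{2}\right)\left(k+\frac{t+\delta}{2}\right)$ where $\delta := \sqrt{t^2 - 4b/a}$ (which may be complex). So writing $\alpha := \frac{t-\delta}{2}$ and $\beta := \frac{t+\delta}{2}$, we have $u_k = a(k+\alpha)(k+\beta)$ and the telescoping products $u_k u_{k+1}\cdots u_n$ will split into two Pochhammer-type products, one in $\alpha$ and one in $\beta$.

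First I would introduce, for $0 \le k \le n$, the rational-looking quantity
\[
v_{n,k} := \frac{u_k u_{k+1}\cdots u_n}{\left((n-k)!\right)^2}
\]
and show, exactly as in Lemma \ref{xyxy} but applied twice (once for each linear factor), that $\ppcm(u_0,\dots,u_n)$ — or rather $\ppcm(u_k,\dots,u_n)$, which divides it — is a multiple of $v_{n,k}$ up to controlled denominators. More precisely, following Lemma \ref{lF1}, one looks at the partial fraction decomposition of $\prod_{i=0}^n \frac{1}{(x+u_i)}$ evaluated appropriately, or alternatively one argues directly that $\ppcm(u_k,\dots,u_n)$ is divisible by $\frac{u_k\cdots u_n}{\prod_{i\neq j}(u_i - u_j)}$-type expressions; since $u_i - u_j = a(i-j)(i+j+t)$, the relevant $\ppcm$ of differences is a divisor of a power of $a$ times $\big((n-k)!\big)^2$ times small factors, and the coprimality $\pgcd(a,b)=1$ lets Gauss's lemma strip the powers of $a$. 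This is where the hypotheses $\pgcd(a,b)=1$ and the two cases $t=0$ versus $t\ge 1$ enter — when $t=0$ the factor $i+j+t = i+j$ can vanish/collide less favorably and the combinatorial bookkeeping changes slightly, which accounts for the different constants $2b$ versus $\frac{b}{t2^t}$.

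Next I would locate the index $k_n$ maximizing $v_{n,k}$, by examining the ratio $v_{n,k+1}/v_{n,k} = \frac{u_k}{(n-k)^2}$; the threshold $u_k \le (n-k)^2$, i.e. $ak(k+t)+b \le (n-k)^2$, is solved (for large $n$) near $k \approx \frac{n}{1 + \sqrt a}\cdot$(something), but for the stated bound with base $a/4$ it suffices to plug in the specific value $k^* := \lceil n/2 \rceil$ or a nearby explicit index and bound $v_{n,k^*}$ from below — this mirrors Farhi's trick of using a convenient $k^*$ rather than the exact maximizer. I would then express $v_{n,k^*}$ via a Beta integral: using $\Gamma$-function identities, $v_{n,k} = \frac{a^{n-k+1}\,\Gamma(k+\alpha+\,n-k+1)\,\Gamma(k+\beta+n-k+1)}{\Gamma(k+\alpha)\Gamma(k+\beta)\,\Gamma(n-k+1)^2}$, and hence $1/v_{n,k}$ is (a power of $a$ times) a product of two Beta values $B(k+\alpha, n-k+1)B(k+\beta,n-k+1)$, each representable as $\int_0^1 x^{\bullet}(1-x)^{n-k}\mathrm dx$. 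The elementary bound $x(1-x) \le 1/4$ on $[0,1]$ then converts each integral factor into a clean power of $1/4$, yielding $v_{n,k^*} \ge (\text{const})\cdot(a/4)^n$.

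The main obstacle I anticipate is the divisibility bookkeeping in the first step: unlike the linear case where $\prod_{i\neq j}(u_i-u_j) = \pm j!(n-j)!\,r^n$ has an exact clean form, here $u_i - u_j = a(i-j)(i+j+t)$ introduces the extra factor $\prod(i+j+t)$, whose $\ppcm$ over $j$ is not simply a factorial and must be absorbed into the denominator $\big((n-k)!\big)^2$ and a bounded correction; controlling this — and checking it is coprime to the numerator after removing powers of $a$ via $\pgcd(a,b)=1$ — is the delicate part, and it is presumably also the source of the precise constants $2b$ and $\frac{b}{t2^t}$ in the two cases. Everything else (finding $k_n$, the Beta-integral rewriting, the $x(1-x)\le 1/4$ estimate) is routine analytic manipulation parallel to the proofs of Lemmas \ref{k0}, \ref{beta} and Theorem \ref{Flcm} already given.
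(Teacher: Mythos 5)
Your overall strategy (a \ref{lF1}-type divisibility statement followed by an analytic lower bound for the resulting divisor) is the right family of ideas, but the central divisibility claim as you state it is wrong, and the error propagates into the analytic step. You assert that the relevant $\ppcm$ of the difference-products $\prod_{i\neq j}(u_i-u_j)$ is ``a power of $a$ times $\bigl((n-k)!\bigr)^2$ times small factors'', so that $\ppcm(u_k,\dots,u_n)$ would be a multiple of $v_{n,k}:=\frac{u_k\cdots u_n}{\left((n-k)!\right)^2}$ up to bounded corrections. It is not: since $u_i-u_j=a(i-j)(i+j+t)$, one computes (as the paper does, for $k=0$) that $\prod_{i\neq j}(u_i-u_j)=(-1)^{j}a^{n}\,\frac{(n-j)!\,(n+j+t)!}{\Phi(j,t)\,(2j+t)}$, and the least common multiple over $j$ of these quantities is only controlled by $a^{n}(2n+t)!/(t-1)!$, not by $a^{n}\bigl(n!\bigr)^2$. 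The ratio $(2n+t)!/\bigl(n!\,(n+t)!\bigr)=\binom{2n+t}{n}$ is of size $2^{t}4^{n}$, i.e.\ exponentially large, not a ``bounded correction''. This factor $4^{n}$ is precisely the origin of the base $a/4$ in the statement (via $\binom{2n+t}{n}\le 2^{t}4^{n}$ together with $u_0\cdots u_n\ge b\,a^{n}\,n!(n+t)!/t!$ and Gauss's lemma to reinstate $a^{n}$); it does not come from a Beta-integral estimate. Consequently, if you both (a) used the true divisor (with $(2n+t)!$ in the denominator) and (b) additionally applied an $x(1-x)\le 1/4$ bound in a Beta-integral step, you would lose the factor $4^{n}$ twice and land at something like $(a/16)^{n}$, which is weaker than the theorem; whereas if $v_{n,k}$ with denominator $\bigl((n-k)!\bigr)^2$ really were a divisor, the theorem would improve to base $a$ rather than $a/4$ --- neither of which is what is being proved.

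For comparison, the paper's proof needs no Beta integrals, no complex factorization $u_k=a(k+\alpha)(k+\beta)$, and no maximizing index $k_n$: it applies Lemma \ref{lF1} directly with $k=0$, shows each $\prod_{i\neq j}(u_i-u_j)$ divides the single explicit integer $f(t,n)$ ($=a^{n}(2n)!/2$ if $t=0$, $=a^{n}(2n+t)!/(t-1)!$ if $t\ge 1$), strips the $a^{n}$ by Gauss's lemma using $\pgcd(a,b)=1$, and concludes with the two elementary inequalities $\binom{2n+t}{n}\le 2^{t}4^{n}$ and $u_0\cdots u_n\ge b\,a^{n}\,n!(n+t)!/t!$. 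If you want to salvage your analytic route, you must first replace $\bigl((n-k)!\bigr)^2$ by the correct denominator $(2n+t)!/(t-1)!$ (or its tail analogue), at which point the binomial bound already finishes the proof and the Beta-integral machinery becomes superfluous.
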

\begin{proof}
D'après le lemme \ref{lF1}, le nombre $\ppcm \left(u_{0},u_{1},\dots,u_{n}\right)$ est multiple du nombre rationnel:
\[R:=\frac{u_{0}u_{1}\cdots u_{n}}{\ppcm\left\{\prod_{\begin{subarray}{c} 0\leq i\leq n \\ i\neq j \end{subarray}}(u_{i}-u_{j});~j=0,1,\dots,n\right\}}.\]
D'autre part, on vérifie facilement que pour tout $j\in\lbrace 0,1,\dots,n\rbrace$, on a:
\begin{equation}
\prod_{\begin{subarray}{c} 0\leq i\leq n \\ i\neq j \end{subarray}}(u_{i}-u_{j})=\begin{cases} (-1)^{j}a^{n}\frac{(n-j)!(n+j)!}{2} &\text{si}~t=0,\\ (-1)^{j}a^{n}\frac{(n-j)!(n+j+t)!}{\Phi(j,t)}\frac{1}{2j+t} &\text{si}~t\geq 1. \end{cases},
\end{equation}
où $\Phi(j,t):=1$ si $t=1$ et $\Phi(j,t):=(j+1)\cdots (j+t-1)$ si $t\geq 2$. Puisque $(n-j)!(n+j+t)!$ divise $(2n+t)!$ (car $\frac{(2n+t)!}{(n-j)!(n+j+t)!}=\binom{2n+t}{n-j}\in\mathbb{N^*}$) et pour tout $t\geq 1$, $\Phi(j,t)$ est multiple de $(t-1)!$ (car $\frac{\Phi(j,t)}{(t-1)!}=\binom{j+t-1}{t-1}\in\mathbb{N^*}$), on en déduit que le produit $\prod_{\begin{subarray}{c} 0\leq i\leq n \\ i\neq j \end{subarray}}(u_{i}-u_{j})$ divise l'entier strictement positif $f(t,n)$ défini par:
\begin{equation}\label{bbbb}
f(t,n):=\begin{cases} a^{n}\frac{(2n)!}{2}&\text{si}~t=0,\\ a^{n}\frac{(2n+t)!}{(t-1)!}&\text{si}~t\geq 1.\end{cases}.
\end{equation} 
Comme $f(t,n)$ est indépendant de $j\in \lbrace 0,1,\dots,n\rbrace$, il s'ensuit que:
\[\ppcm\left\{\prod_{\begin{subarray}{c} 0\leq i\leq n \\ i\neq j \end{subarray}}(u_{i}-u_{j});~j=0,1,\dots,n\right\}~~\text{divise}~~f(t,n).\]
Cette dernière relation entraîne que $R$ est multiple de $\frac{u_{0}u_{1}\cdots u_{n}}{f(t,n)}$. Par conséquent, l'entier $\ppcm\left(u_{0},u_{1},\dots,u_{n}\right)$ est aussi multiple de $\frac{u_{0}u_{1}\cdots u_{n}}{f(t,n)}$. Par ailleurs, puisque $a^{n}$ est premier avec tous les termes de la suite $(u_{k})_{k\in \mathbb{N}}$ (car $\pgcd(a,b)=1$) et $f(t,n)$ est multiple de $a^{n}$ (par définition), alors (en vertu du lemme de Gauss) l'entier $\ppcm\left(u_{0},u_{1},\dots,u_{n}\right)$ est multiple de $a^{n}\frac{u_{0}u_{1}\cdots u_{n}}{f(t,n)}$. D'où l'on a: $\ppcm\left(u_{0},u_{1},\dots,u_{n}\right)\geq a^{n}\frac{u_{0}u_{1}\cdots u_{n}}{f(t,n)}$. L'estimation requise découle alors de cette dernière, des inégalités $\binom{2n}{n}\leq 2^{2n}=4^{n}$, $\binom{2n+t}{n}\leq 2^{2n+t}=2^{t}4^{n}$ et de l'estimation:
\[u_{0}u_{1}\cdots u_{n}\geq b\left(a(1+t)\right)\left(2a(2+t)\right)\cdots\left(na(n+t)\right)=ba^{n}\frac{n!(n+t)!}{t!}.\]
Ce qui complète cette démonstration.
\end{proof}

\begin{rmq}
La minoration du théorème \ref{JJJ} est non triviale dès que $a\geq 5$. \`A priori, en appliquant ce résultat pour la suite $(n^{2}+1)_{n\in\mathbb{N^*}}$, on obtient une minoration triviale sans importance. Par contre, si $r$ est un entier $\geq 3$, le théorème \ref{JJJ} donne une minoration intéressante du $\ppcm$ de la suite $(r^{2}n^{2}+1)_{n\geq 1}$, qui est une sous-suite de $(n^{2}+1)_{n\in\mathbb{N^*}}$. En fait, on a:
\[\ppcm\left(1^2+1,2^2+1,\cdots,n^2+1\right)\geq \ppcm\left(r^2+1,r^{2}2^2+1,\cdots,r^{2}k^2+1\right),\]
où $k:=\lfloor\frac{n}{r}\rfloor$. Ensuite, le théorème \ref{JJJ} appliqué à la suite $(r^{2}n^{2}+1)_{n\geq 1}$ entraîne:
\begin{equation}\label{ffff}
\ppcm\left(1^2+1,2^2+1,\cdots,n^2+1\right)\geq 2\left(\frac{r^{2}}{4}\right)^{k}>2\left(\frac{r^{2}}{4}\right)^{\frac{n}{r}-1}=\frac{8}{r^{2}}\left\{\left(\frac{r}{2}\right)^{\frac{2}{r}}\right\}^{n}.
\end{equation}
En prenant enfin dans \eqref{ffff} $r=5$ (qui est la valeur de $r$ qui rend la quantité $\frac{8}{r^{2}}\left\{\left(\frac{r}{2}\right)^{\frac{2}{r}}\right\}^{n}$ maximale), on obtient le corollaire suivant: 
\end{rmq}

\begin{coll}[Farhi \cite{far}]\label{ffff1}
pour tout entier $n\geq 1$, on a:
\[\ppcm\left(1^2+1,2^2+1,\cdots,n^2+1\right)\geq 0,32(1,442)^{n}.\]
\end{coll}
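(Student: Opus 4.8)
The plan is to derive Corollary~\ref{ffff1} directly from Theorem~\ref{JJJ}, applied not to $(n^2+1)_{n\geq 1}$ itself (for which $a=1$ in the notation of that theorem, hence only a trivial bound) but to a suitably dilated subsequence. For any fixed integer $r\geq 2$ the sequence $v_{k}:=r^2k^2+1$ has the form $ak(k+t)+b$ with $a=r^2\in\mathbb{N}^*$, $t=0$, $b=1$ and $\pgcd(a,b)=\pgcd(r^2,1)=1$; so Theorem~\ref{JJJ} applies and gives, for every $k\in\mathbb{N}^*$,
\[\ppcm\bigl(v_0,v_1,\dots,v_k\bigr) \geq 2\bigl(\tfrac{r^2}{4}\bigr)^{k}.\]
Choosing $r\geq 3$ makes $r^2/4>1$, which is exactly what turns this into an exponential lower bound.

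First I would handle $n\geq 5$ with the choice $r=5$. Set $k:=\lfloor n/5\rfloor\geq 1$. For each $k'\in\{1,\dots,k\}$ one has $5k'\leq 5k\leq n$, so the integer $v_{k'}=25k'^2+1=(5k')^2+1$ occurs among $1^2+1,\dots,n^2+1$; hence $\ppcm(1^2+1,\dots,n^2+1)$ is a common multiple of $v_1,\dots,v_k$ and is therefore divisible by $\ppcm(v_1,\dots,v_k)$. Since $v_0=1$ we have $\ppcm(v_0,v_1,\dots,v_k)=\ppcm(v_1,\dots,v_k)$, so the inequality above yields
\[\ppcm(1^2+1,\dots,n^2+1) \geq \ppcm(v_1,\dots,v_k) \geq 2\bigl(\tfrac{25}{4}\bigr)^{k}.\]
Using $k>\tfrac n5-1$ together with $\tfrac{25}{4}>1$, the right-hand side is at least $2\bigl(\tfrac{25}{4}\bigr)^{n/5-1}=\tfrac{8}{25}\bigl((\tfrac52)^{2/5}\bigr)^{n}$.

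It then remains to observe that $\tfrac{8}{25}=0,32$ and that $(\tfrac52)^{2/5}>1,442$, the latter by the elementary estimate $(1,442)^{5}\approx 6,235<6,25=(\tfrac52)^2$; this proves the corollary for all $n\geq 5$. For $1\leq n\leq 4$ it is immediate, since $\ppcm(1^2+1,\dots,n^2+1)\geq 1^2+1=2>0,32\cdot(1,442)^4\geq 0,32\cdot(1,442)^{n}$. Finally, the choice $r=5$ is the optimal one among $r\geq 3$: the general bound has base $(r/2)^{2/r}$, and comparing the values for $r=3,4,5,6,7$ shows that the maximum is attained at $r=5$ (with $(\tfrac52)^{2/5}\approx 1,4427$, narrowly beating $3^{1/3}$ for $r=6$), while $8/r^2$ is then exactly $0,32$.

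There is no substantial obstacle here: the whole argument is a careful repackaging of Theorem~\ref{JJJ}. The only points requiring attention are ensuring that $k=\lfloor n/5\rfloor\geq 1$ (which forces the separate, trivial treatment of $n\leq 4$) and the rounding in $(\tfrac52)^{2/5}>1,442$; a cruder rounding would only give a base $1,44$, which is why the statement is phrased with $1,442$ rather than with the sharp value.
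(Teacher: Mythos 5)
Your proposal is correct and is essentially the argument the paper itself gives (in the remark immediately preceding the corollary): apply Theorem~\ref{JJJ} to the subsequence $(r^2k^2+1)_k$ with $k=\lfloor n/r\rfloor$, obtain the bound $\frac{8}{r^2}\bigl((\frac{r}{2})^{2/r}\bigr)^n$, and optimize at $r=5$. Your separate treatment of $n\leq 4$ and the explicit check $(\tfrac52)^{2/5}>1,442$ are welcome tidy-ups of details the paper glosses over, but the route is the same.
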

\noindent Le corollaire \ref{ffff1} a été amélioré par Oon \cite{oon}, qui avait établi le résultat plus fort suivant:
 
\begin{thm}[Oon \cite{oon}]\label{oon'}
Soient $c,m,n\in \mathbb{N^*}$ tels que $m\leq \lceil \frac{n}{2}\rceil$. Alors, on a:
\[L_{c,m,n}:=\ppcm\left(m^{2}+c,(m+1)^{2}+c,\dots,n^{2}+c\right)\geq 2^{n}.\]
\end{thm}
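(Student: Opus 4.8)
The plan is to carry the integral method of Theorem~\ref{thnai} into the ring $\mathbb{Z}[\sqrt{-c}]$. Fix a square root $z$ of $-c$ in $\mathbb{C}$ (so $z^{2}=-c$ and $\overline{z}=-z$); the elementary point is that every nonzero $a+bz$ with $a,b\in\mathbb{Z}$ has $|a+bz|^{2}=a^{2}+cb^{2}\in\mathbb{N}^{*}$, hence $|a+bz|^{2}\geq 1$. For an integer $\ell$ with $m\leq\ell\leq n$ I would study
\[
J(\ell):=\int_{0}^{1}x^{\ell-1+z}(1-x)^{n-\ell}\,\mathrm{d}x
\]
and evaluate it in two ways, exactly as Nair does. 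Iterated integration by parts (the boundary terms vanish because $\mathrm{Re}(z)=0$) gives the closed form $J(\ell)=\frac{(n-\ell)!}{\prod_{j=\ell}^{n}(j+z)}$, so that $|J(\ell)|^{2}=J(\ell)\overline{J(\ell)}=\frac{\big((n-\ell)!\big)^{2}}{\prod_{j=\ell}^{n}(j^{2}+c)}$. Expanding $(1-x)^{n-\ell}$ by the binomial theorem instead gives $J(\ell)=\sum_{r=0}^{n-\ell}(-1)^{r}\binom{n-\ell}{r}\frac{(\ell+r)-z}{(\ell+r)^{2}+c}$; since each $(\ell+r)^{2}+c$ divides $L_{c,\ell,n}:=\ppcm(\ell^{2}+c,\dots,n^{2}+c)$, the number $L_{c,\ell,n}\,J(\ell)$ is a nonzero element of $\mathbb{Z}[\sqrt{-c}]$. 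Taking squared moduli and comparing the two evaluations,
\[
L_{c,\ell,n}^{2}\,\frac{\big((n-\ell)!\big)^{2}}{\prod_{j=\ell}^{n}(j^{2}+c)}\geq 1,
\qquad\text{whence}\qquad
L_{c,\ell,n}\geq\frac{\sqrt{\prod_{j=\ell}^{n}(j^{2}+c)}}{(n-\ell)!}\geq\frac{n!}{(\ell-1)!\,(n-\ell)!}.
\]

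Next I would invoke the hypothesis $m\leq\lceil n/2\rceil$, which allows the choice $\ell=\lceil n/2\rceil$; since $L_{c,\ell,n}$ divides $L_{c,m,n}$, the last display gives $L_{c,m,n}\geq p\binom{2p}{p}$ when $n=2p$ and $L_{c,m,n}\geq(2p+1)\binom{2p}{p}$ when $n=2p+1$. A short induction (starting from $(2p+1)\binom{2p}{p}\geq 4^{p}$, the bound already used in Nair's proof) yields $p\binom{2p}{p}\geq 4^{p}$ for $p\geq 4$ and $(2p+1)\binom{2p}{p}\geq 2\cdot 4^{p}$ for $p\geq 3$, i.e.\ $L_{c,m,n}\geq 2^{n}$ for all $n\geq 7$. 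For $n\in\{5,6\}$ I would not discard the factors $j^{2}+c$: the bound $\sqrt{\prod_{j=\lceil n/2\rceil}^{n}(j^{2}+c)}\big/\lfloor n/2\rfloor!$ is increasing in $c$ and already beats $2^{n}$ at $c=1$ (it equals $\tfrac12\sqrt{10\cdot17\cdot26}>32$, resp.\ $\tfrac16\sqrt{10\cdot17\cdot26\cdot37}>64$). For $1\leq n\leq 4$ the trivial estimate $L_{c,m,n}\geq n^{2}+c\geq 2^{n}$ finishes it.

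The only step I expect to need real care is the last passage: replacing $\sqrt{\prod_{j=\ell}^{n}(j^{2}+c)}$ by $\prod_{j=\ell}^{n}j$ throws away a factor of order $\sqrt{n}$ near $\ell=n/2$, so the clean constant $2^{n}$ survives only because the governing central binomial coefficient $\binom{n}{\lfloor n/2\rfloor}$ is itself $\approx 2^{n}$ — one has to push the estimate by hand through the small values of $n$. A more robust variant, should the constant threaten to fail, is to sharpen the divisibility step by factoring the ideals $\big(\prod_{j=\ell}^{n}(j+z)\big)$ and $\big(\prod_{j=\ell}^{n}(j-z)\big)$ in the ring of integers of $\mathbb{Q}(\sqrt{-c})$: a prime $\mathfrak{p}$ dividing both must lie over a rational prime $p\leq 2n$, because $\mathfrak{p}\mid(j_{1}+z)$ and $\mathfrak{p}\mid(j_{2}-z)$ force $\mathfrak{p}\mid j_{1}+j_{2}\leq 2n$, so their greatest common divisor carries only small primes and most of the square-root loss is recovered. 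The conceptual content — and the sole place where $m\leq\lceil n/2\rceil$ is genuinely used — is the translation of the product's left endpoint to the central position $\ell\approx n/2$.
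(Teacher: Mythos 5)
Your proof is correct and follows essentially the same route as the paper's (Oon's) argument: the integral $J(\ell)$ is exactly the paper's $I_{c,m,n}$, evaluated once by the binomial expansion to show $L_{c,\ell,n}J(\ell)$ is a nonzero element of $\mathbb{Z}[\sqrt{-c}]$ of modulus $\geq 1$, and once via the Beta function to get $L_{c,\ell,n}\geq \ell\binom{n}{\ell}$, after which the choice $\ell=\lceil n/2\rceil$ and the bound $\lceil n/2\rceil\binom{n}{\lceil n/2\rceil}\geq 2^n$ for $n\geq 7$ finish the proof. Your explicit treatment of $n\in\{1,\dots,6\}$ (keeping the factors $j^2+c$ for $n=5,6$) merely makes concrete what the paper dispatches as a case-by-case verification.
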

\begin{proof}
Définissons le nombre $I_{c,m,n}$ comme suit:
\[I_{c,m,n}:=\int_{0}^{1}x^{m-1+\sqrt{c}i}(1-x)^{n-m}\mathrm{d}x,\]
(où $i$ désigne le nombre complexe $\sqrt{-1}$). Nous allons évaluer $I_{c,m,n}$ par deux méthodes différentes.\\ 
\textbullet{} \underline{\textbf{1\textsuperscript{ère}méthode:}} En développant par la formule du binôme l'expression $(1-x)^{n-m}$, dans $I_{c,m,n}$, on obtient:
\begin{align*}
I_{c,m,n}&=\sum_{k=0}^{n-m}(-1)^{k}\binom{n-m}{k}\int_{0}^{1}x^{m-1+k+\sqrt{c}i}\mathrm{d}x=\sum_{k=0}^{n-m}\frac{(-1)^{k}\binom{n-m}{k}}{m+k+\sqrt{c}i}\\ 
&=\sum_{k=0}^{n-m}\frac{(-1)^{k}\binom{n-m}{k}(m+k-\sqrt{c}i)}{(m+k)^{2}+c}.
\end{align*} 
Cette dernière expression montre que $I_{c,m,n}L_{c,m,n}$ est un nombre complexe de la forme $x+i\sqrt{c}y$, avec $x,y\in \mathbb{Z}$. Puisque $I_{c,m,n}L_{c,m,n}\neq 0$, il s'ensuit que $\left|I_{c,m,n}L_{c,m,n}\right|=x^{2}+cy^{2}\in \mathbb{N}^{*}$. Par conséquent, on a:
\begin{equation}\label{ffff3}
L_{c,m,n}\geq \frac{1}{\left|I_{c,m,n}\right|}.
\end{equation}
\textbullet{} \underline{\textbf{2\textsuperscript{nde}méthode}:} D'après les propriétés usuelles des fonctions $\Gamma$ et $\beta$ d'Euler, on a:
\begin{align*}
I_{c,m,n}&=\beta(m+\sqrt{c}i,n-m+1)=\frac{\Gamma(m+\sqrt{c}i)\Gamma(n-m+1)}{\Gamma(n+1+\sqrt{c}i)}=\frac{(n-m)!}{\frac{\Gamma(n+1+\sqrt{c}i)}{\Gamma(m+\sqrt{c}i)}}.
\end{align*}
D'où l'on a:
\begin{equation}\label{ffff4}
I_{c,m,n}=\frac{(n-m)!}{\prod_{k=m}^{n}(k+\sqrt{c}i)}.
\end{equation}
En substituant \eqref{ffff4} dans \eqref{ffff3}, on aboutit à:
\begin{equation}\label{ffff5}
L_{c,m,n}\geq \frac{\prod_{k=m}^{n}\sqrt{k^{2}+c}}{(n-m)!}\geq \frac{\prod_{k=m}^{n}k}{(n-m)!}=m\binom{n}{m}.
\end{equation}
Maintenant, l'estimation \eqref{ffff5} est vraie pour tous $n,m\in \mathbb{N}^{*}$ tels que $m\leq n$; en particulier, on a pour $m=\left\lceil \frac{n}{2}\right\rceil$:
\[L_{c,\left\lceil \frac{n}{2}\right\rceil,n}\geq \left\lceil \frac{n}{2}\right\rceil\binom{n}{\left\lceil \frac{n}{2}\right\rceil}.\]
Par ailleurs, on montre facilement par récurrence que pour tout entier $n\geq 7$, on a:
\begin{equation}\label{nsup7}
\left\lceil \frac{n}{2}\right\rceil\binom{n}{\left\lceil \frac{n}{2}\right\rceil}\geq 2^{n}.
\end{equation}
Ce qui entraîne que pour tous $n,m\in \mathbb{N}^{*}$ tels que $n\geq 7$ et $m\leq \left\lceil \frac{n}{2}\right\rceil$, on a:
\[L_{c,m,n}\geq L_{c,\left\lceil \frac{n}{2}\right\rceil,n}\geq 2^{n}.\] 
Pour les petites valeurs de $n$ (i.e., $n\leq 6$), on vérifie le résultat de façon calculatoire (cas par cas). Ce qui complète la démonstration du théorème.
\end{proof}

\section{Minorations effectives du $\ppcm$ de suites polynomiales}

Comme nous l'avons signalé dans la section \textsection\ref{far-oon}, la méthode de Oon est adaptable à une généralisation importante. En 2013, Hong et al. \cite{hong2} ont généralisé le résultat de Oon (i.e., le théorème \ref{oon'}) en montrant qu'il reste vrai en remplaçant la suite $(n^{2}+c)_{n}$ par n'importe quelle suite polynomiale (non constante), ayant des coefficients entiers positifs. La méthode de Hong et al. utilise quelques arguments algébriques, ainsi que certaines identités binomiales que nous présenterons dans ce qui va suivre.
   
\begin{thm}[Hong et al. \cite{hong2}]\label{HH1}
Soient $n$ un entier $\geq 7$ et $f\in \mathbb{Z}[X]$ un polynôme non constant, à coefficients positifs. Alors, pour tout entier $0<m\leq \left\lceil \frac{n}{2}\right\rceil$, on a:
\[\ppcm\left(f(m),f(m+1),\dots,f(n)\right) \geq 2^{n}.\]
\end{thm}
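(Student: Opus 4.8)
The plan is to deduce the theorem from the stronger, $f$-uniform estimate
\[\ppcm\bigl(f(m),f(m+1),\dots,f(n)\bigr)\ \ge\ m\binom{n}{m}\qquad(1\le m\le n),\]
which is the exact analogue of Oon's bound \eqref{ffff5}. Granting it, the theorem is immediate: for $m\le\lceil n/2\rceil$ the $\ppcm$ of the longer block $f(m),\dots,f(n)$ is a multiple of that of $f(\lceil n/2\rceil),\dots,f(n)$, whence
\[\ppcm\bigl(f(m),\dots,f(n)\bigr)\ \ge\ \ppcm\bigl(f(\lceil n/2\rceil),\dots,f(n)\bigr)\ \ge\ \lceil n/2\rceil\binom{n}{\lceil n/2\rceil}\ \ge\ 2^{n}\]
for $n\ge7$, the last step being \eqref{nsup7}. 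Write $d:=\deg f\ge1$, let $a_{d}\ge1$ be the leading coefficient and $\alpha_{1},\dots,\alpha_{d}\in\mathbb C$ the roots of $f$ with multiplicity, so $f(X)=a_{d}\prod_{j}(X-\alpha_{j})$, and put $L:=\ppcm(f(m),\dots,f(n))$. Since $f$ has nonnegative coefficients and $m\ge1$, $f(k)\ge a_{d}k^{d}\ge1$ on $\{m,\dots,n\}$, so no $\alpha_{j}$ lies in $\{m,\dots,n\}$.

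Next I would generalise Oon's two-way evaluation of a Beta integral. For each root $\alpha$ set
\[I_{\alpha}\ :=\ \sum_{l=0}^{n-m}\frac{(-1)^{l}\binom{n-m}{l}}{m+l-\alpha}\ =\ \frac{(n-m)!}{\prod_{k=m}^{n}(k-\alpha)},\]
the second equality being the standard partial-fraction identity (and $I_{\alpha}$ is Oon's integral $\int_{0}^{1}x^{m-1-\alpha}(1-x)^{n-m}\,dx$ whenever that converges). Multiplying the first form by $L$ and writing $\frac{L}{m+l-\alpha}=\frac{L}{f(m+l)}\cdot\frac{f(m+l)}{m+l-\alpha}$, the factor $L/f(m+l)$ is a positive integer while $\frac{f(m+l)}{m+l-\alpha}=a_{d}\prod_{\alpha'\neq\alpha}(m+l-\alpha')$ becomes an algebraic integer after multiplication by a power of $a_{d}$ (each $a_{d}\alpha'$ being an algebraic integer); hence some fixed power of $a_{d}$ times $LI_{\alpha}$ lies in the ring of integers of the splitting field of $f$. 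Taking the product over the $d$ roots and using the Beta-form of $I_{\alpha}$ gives
\[\prod_{j=1}^{d}\bigl(LI_{\alpha_{j}}\bigr)\ =\ \frac{L^{d}\,\bigl((n-m)!\bigr)^{d}\,a_{d}^{\,n-m+1}}{\prod_{k=m}^{n}f(k)},\]
a nonzero rational number, since each $I_{\alpha_{j}}\neq0$.

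The crux --- the part genuinely beyond Oon, who only needed the monic quadratic case --- is to show this rational is $\ge1$, i.e. that $\prod_{k=m}^{n}f(k)$ divides $a_{d}^{\,n-m+1}\bigl((n-m)!\bigr)^{d}L^{d}$. For $d=1$ this is exactly Lemma~\ref{xyxy}, and for $d=2$ it drops out of the above with no loss; but for $d\ge3$ the naive ``one power of $a_{d}$ per root'' estimate is far too wasteful (it leaves an unwanted factor $a_{d}^{\,d(d-2)}$, fatal for large $a_{d}$), so one must argue prime by prime. Fixing a place $w$ above a prime $p$, with valuation $v$, one needs
\[\sum_{k=m}^{n}v_{p}\bigl(f(k)\bigr)\ \le\ (n-m+1)\,v_{p}(a_{d})\ +\ d\,v_{p}\bigl((n-m)!\bigr)\ +\ d\,v_{p}(L).\]
Here one separates the $p$-integral roots from the non-$p$-integral ones: a non-$p$-integral root $\alpha_{j}$ has $v(k-\alpha_{j})=v(\alpha_{j})<0$ for every $k$, so it only lowers $\sum_{k}v_{p}(f(k))$; for a $p$-integral root $\beta$, writing $v_{0}=\max_{k}v_{p}(k-\beta)$ attained at $k_{0}$, one has $v_{p}(k-\beta)=\min\bigl(v_{p}(k-k_{0}),v_{0}\bigr)$ for $k\neq k_{0}$, so $\sum_{k=m}^{n}v_{p}(k-\beta)\le v_{0}+v_{p}\bigl((k_{0}-m)!\,(n-k_{0})!\bigr)\le v_{0}+v_{p}\bigl((n-m)!\bigr)$, and one controls $v_{0}$ via $v_{p}(f(k_{0}))\le v_{p}(L)$. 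The delicate bookkeeping is the interplay of the non-$p$-integral roots with the term $v_{p}(a_{d})$, which is governed by the Newton polygon of $f$ over $\mathbb Q_{p}$ (and is where the integrality of $a_{d}\alpha_{j}$ and the binomial identities of Hong et al.\ are truly needed). This valuation analysis --- especially ramified primes and keeping the coefficient of $v_{p}(L)$ down to $d$ --- is what I expect to be the main obstacle.

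Finally, granted the divisibility, $\prod_{j}(LI_{\alpha_{j}})\ge1$ yields $L^{d}\ge \dfrac{\prod_{k=m}^{n}f(k)}{a_{d}^{\,n-m+1}\bigl((n-m)!\bigr)^{d}}$, and since $f(k)\ge a_{d}k^{d}$ on $\{m,\dots,n\}$,
\[L^{d}\ \ge\ \frac{a_{d}^{\,n-m+1}\bigl(n!/(m-1)!\bigr)^{d}}{a_{d}^{\,n-m+1}\bigl((n-m)!\bigr)^{d}}\ =\ \left(\frac{n!}{(m-1)!\,(n-m)!}\right)^{d}\ =\ \left(m\binom{n}{m}\right)^{d},\]
so $L\ge m\binom{n}{m}$, and the reduction in the first paragraph completes the proof; no separate treatment of small $n$ is required, as \eqref{nsup7} already holds for all $n\ge7$.
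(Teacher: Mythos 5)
Your reduction to the uniform bound $\ppcm\left(f(m),\dots,f(n)\right)\geq m\binom{n}{m}$, the identity $\prod_{j}\left(LI_{\alpha_j}\right)=a_d^{\,n-m+1}\left((n-m)!\right)^dL^d/\prod_{k=m}^nf(k)$ and the final numerical step are all exactly as in the paper; but the step you yourself flag as the main obstacle --- proving that $\prod_{k=m}^{n}f(k)$ divides $a_{d}^{\,n-m+1}\left((n-m)!\right)^{d}L^{d}$ --- is precisely the content of the lemme \ref{HH0}, and your proposal leaves it as an unexecuted sketch (a prime-by-prime valuation analysis with Newton polygons and ramified places that you do not carry out). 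That is a genuine gap: without this divisibility nothing in your last paragraph follows, and the bookkeeping you describe (controlling the contribution of the non-$p$-integral roots against $(n-m+1)v_p(a_d)$ at every place) is exactly the part that is hard to close by hand.

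The ingredient you are missing is the lemme \ref{alg1}, which removes both the wasteful powers of $a_d$ and the need for any local analysis. For $g(X)=\sum_{i=0}^{s}b_iX^i\in\mathbb{Z}[X]$ of degree $s$ with roots $\beta_1,\dots,\beta_s$, the number $b_s\prod_{i\neq j}\beta_i$ is an algebraic integer outright: when the roots are nonzero it equals $(-1)^sb_0/\beta_j$, and $b_0/\beta_j$ is a root of the monic integer polynomial $X^s+b_1X^{s-1}+b_2b_0X^{s-2}+\dots+b_sb_0^{s-1}$ obtained by reversing $g$ and rescaling. Applied to $h_k(X):=(-1)^df(k-X)$ (leading coefficient $a_d$, constant term $(-1)^df(k)$, roots $k-\alpha_i$), this shows that $\frac{f(k)}{k-\alpha_j}=a_d\prod_{i\neq j}(k-\alpha_i)$ is an algebraic integer --- one power of $a_d$ in total, not $d-1$ per factor. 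Hence each $Q_j:=(n-m)!\,L/\prod_{k=m}^n(k-\alpha_j)=\sum_{k}(-1)^{k-m}\binom{n-m}{k-m}\frac{L}{f(k)}\cdot\frac{f(k)}{k-\alpha_j}$ is an algebraic integer, their product is the rational number $a_d^{\,n-m+1}\left((n-m)!\right)^dL^d/\prod_{k}f(k)$, and a rational algebraic integer is a rational integer; being nonzero it has absolute value at least $1$, which is exactly the divisibility you need. Your valuation-theoretic programme is thus not wrong in spirit, but it is both incomplete as written and unnecessary.
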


\noindent La preuve de ce théorème est partagée en plusieurs lemmes. Rappelons d'abord la définition d'un entier algébrique.

\begin{defi}
Un nombre complexe $\alpha$ est appelé un ``entier algébrique", s'il est racine d'un polynôme unitaire $f\in\mathbb{Z}[X]$.
\end{defi}

\begin{lemme}\label{ppta}
On a les propriétés suivantes:
\begin{enumerate}
\item Les entiers algébriques rationnels sont simplement les entiers rationnels.\label{ppp1}
\item Si $\alpha\in\mathbb{C}$ est un entier algébrique, alors il en est de même des nombres $k\alpha$ $(k\in\mathbb{Z})$.\label{ppp2}
\end{enumerate} 
\end{lemme}
\begin{proof}~\\
\textbullet{} Montrons le point \ref{ppp1} du lemme. Il est immédiat que tout entier rationnel est un entier algébrique (car: si $m\in\mathbb{Z}$, alors $m$ est racine du polynôme $\left(X-m\right)\in\mathbb{Z}[X]$). Inversement, supposons que $m\in\mathbb{Q}$ est racine d'un polynôme unitaire $f(X)=X^n+a_{n-1}X^{n-1}+\dots+a_0\in\mathbb{Z}[X]$. Il existe $a,b\in\mathbb{Z}$ tels que $b\neq 0$, $\pgcd(a,b)=1$ et $m=\frac{a}{b}$. On a par suite:
\[\left(\frac{a}{b}\right)^n+a_{n-1}\left(\frac{a}{b}\right)^{n-1}+\dots+a_0=0.\]
En multipliant les deux membres de cette dernière par $b^{n-1}$, on obtient:
\[\frac{a^{n-1}}{b}=-a_{n-1}a^{n-1}-a_{n-2}a^{n-1}b-\dots-a_0b^{n-1}\in\mathbb{Z},\]
ce qui montre que $b\mid a^{n-1}$. Comme $\pgcd(a,b)=1$, alors on a forcément $b=\pm 1$ et donc $m=\pm a\in\mathbb{Z}$, comme il fallait le prouver. \\
\textbullet{} Montrons maintenant le point \ref{ppp2} du lemme. Supposons que $\alpha$ est racine d'un polynôme unitaire $f(X)=X^n+a_{n-1}X^{n-1}+\dots+a_0\in\mathbb{Z}[X]$. Puisque $f(\alpha)=0$, alors $k^nf(\alpha)=0$; cela entraîne que $k\alpha$ est racine du polynôme $g\in\mathbb{Z}[X]$, donné par:
\[g(X):=X^n+ka_{n-1}X^{n-1}+k^2a_{n-2}X^{n-2}+\dots+k^na_0,\]
comme il fallait le prouver. Le lemme est ainsi démontré.
\end{proof}

Dans ce qui suit, nous utilisons librement les résultats du lemme \ref{ppta}, sans se référer à ce dernier.

\begin{lemme}[Hong et al. \cite{hong2}]\label{alg1}
Soient $s$ un entier strictement positif et $f(X)=\sum_{i=0}^{s}a_{i}X^{i}$ $\in \mathbb{Z}[X]$ un polynôme de degré $s$. Désignons par $\alpha_{1},\alpha_2,\dots,\alpha_{s}$ les racines complexes de $f$ comptées avec leurs multiplicités. Alors, pour tout $j\in\{1,2,\dots,s\}$, le nombre $\beta_j:=a_{s}\left(\prod_{\begin{subarray}{c} 1\leq i\leq s \\ i\neq j\end{subarray}}\alpha_{i}\right)$ est un entier algébrique.
\end{lemme}
\begin{proof}
Pour $s=1$, le résultat du lemme est trivial. Supposons pour la suite que $s\geq 2$. Si au moins deux racines de $f$ sont nulles, alors $\beta_{j}=0$ $(\forall j\in\{1,2,\dots,s\})$, ce qui conclut au résultat requis. Si une seule racine $\alpha_{t}$ (pour un certain $t\in\{1,2,\dots,s\}$) est nulle, alors $\beta_{t}=(-1)^{s-1}a_{1}\in \mathbb{Z}$ et $\beta_{j}=0$ $(\forall j\neq t)$, ce qui permet de conclure. Maintenant, si toutes les racines $\alpha_{1},\alpha_2,\dots,\alpha_{s}$ sont non nulles, alors $\beta_{j}=\frac{(-1)^{s}a_{0}}{\alpha_{j}}$ $(\forall j\in\{1,2,\dots,s\})$. Il suffit donc de montrer que pour tout $j\in\{1,2,\dots,s\}$, le nombre $\frac{a_{0}}{\alpha_{j}}$ est un entier algébrique. Fixons $j\in\{1,2,\dots,s\}$. Puisque $f(\alpha_{j})=0$, il s'ensuit que:
\[\frac{a_{0}^{s-1}}{\alpha_{j}^{s}}f(\alpha_{i})=\left(\frac{a_{0}}{\alpha_{j}}\right)^{s}+a_{1}\left(\frac{a_{0}}{\alpha_{j}}\right)^{s-1}+\dots+a_{s-1}a_{0}^{s-2}\left(\frac{a_{0}}{\alpha_{j}}\right)+a_{s}a_{0}^{s-1}=0.\]
Par conséquent, le nombre $\frac{a_{0}}{\alpha_{j}}$ est racine du polynôme unitaire $g(X)=X^{s}+a_{1}X^{s-1}+\dots+a_{s-1}a_{0}^{s-2}X+a_{s}a_{0}^{s-1}\in \mathbb{Z}[X]$, comme il fallait le prouver. Le lemme est démontré. 
\end{proof}

\begin{lemme}[Hong et al. \cite{hong2}]\label{binom1}
Soient $m$ et $n$ deux entiers strictement positifs tels que $m\leq n$. Alors, pour tout $z\in\mathbb{C}\setminus\lbrace m,m+1,\dots,n\rbrace$, on a:
\begin{equation}\label{hongr}
\frac{(n-m)!}{\prod_{k=m}^{n}(k-z)}=\sum_{k=m}^{n}(-1)^{k-m}\binom{n-m}{k-m}\frac{1}{k-z}.
\end{equation}
\end{lemme}
\begin{proof}
Considérons le polynôme de Lagrange $L$, donné par:
\[L(z):=\sum_{k=m}^{n}\prod_{\begin{subarray}{c} m\leq j\leq n \\ j\neq k \end{subarray}}\left(\frac{j-z}{j-k}\right)=\sum_{k=m}^{n}\frac{(-1)^{k-m}}{(k-m)!(n-k)!}\prod_{\begin{subarray}{c} m\leq j\leq n \\ j\neq k \end{subarray}}(j-z).\]
On a visiblement $\deg(L-1)\leq n-m$. Puisque le polynôme $(L-1)$ s'annule en chacun des nombres $m,m+1,\dots,n$, alors le nombre de racines de $(L-1)$ est strictement plus grand que son degré. Cela entraîne que $(L-1)$ n'est rien d'autre que le polynôme nul. Il s'ensuit que pour tout $z\in\mathbb{C}\setminus\lbrace m,m+1,\dots,n\rbrace$, on a:
\begin{align*}
(n-m)!&=(n-m)!L(z)=\sum_{k=m}^{n}(-1)^{k-m}\binom{n-m}{k-m}\prod_{\begin{subarray}{c} m\leq j\leq n \\ j\neq k \end{subarray}}(j-z)\\&=\left\lbrace\prod_{k=m}^{n}(k-z)\right\rbrace\sum_{k=m}^{n}(-1)^{k-m}\binom{n-m}{k-m}\frac{1}{k-z},
\end{align*}
confirmant ainsi l'identité \eqref{hongr}. La démonstration du lemme est achevée.
\end{proof}

\begin{lemme}[Hong et al. \cite{hong2}]\label{HH0}
Soient $m,n,s\in\mathbb{N}^{*}$ tels que $m\leq n$ et soit $f(x)=\sum_{i=0}^{s}a_{i}X^{i}\in \mathbb{Z}[X]$ un polynôme de degré $s$. Si $f$ ne s'annule en aucun entier de l'intervalle $[m,n]$, alors on a:
\[\prod_{k=m}^{n}f(k)~~\text{divise}~~{a_{s}}^{n-m+1}\left((n-m)!\right)^{s}\left(\ppcm\left(f(m),f(m+1),\dots,f(n)\right)\right)^s.\] 
\end{lemme}
\begin{proof}
Désignons par $\alpha_{1},\alpha_{2},\dots,\alpha_{s}$ les racines complexes de $f$ comptées avec leurs multiplicités. Donc, on peut écrire $f(X)=a_{s}\left(X-\alpha_{1}\right)\left(X-\alpha_{2}\right)\cdots\left(X-\alpha_{s}\right)$. Soit $k\in\{m,m+1,\dots,n\}$. Il est immédiat que le polynôme $h_{k}(X):=(-1)^{s}f(k-X)$ est dans $\mathbb{Z}[X]$, son degré est égal à $s$, son coefficient dominant est égal à $a_{s}$ et ses racines (comptées avec leurs multiplicités) sont $k-\alpha_{1},k-\alpha_{2},\dots,k-\alpha_{s}$. En appliquant le lemme \ref{alg1} sur le polynôme $h_{k}$, on en déduit que $\frac{f(k)}{k-\alpha_{j}}=a_{s}\prod_{\begin{subarray}{c}1\leq i\leq s \\ i\neq j \end{subarray}}(k-\alpha_{i})$ est un entier algébrique $(\forall j\in\{1,2,\dots,s\})$. Cela est aussi vrai pour tout $k\in\{m,m+1,\dots,n\}$ (puisque $k$ est arbitraire). Par ailleurs, on a (en vertu du lemme \ref{binom1}):
\begin{equation}\label{binom2}
\frac{(n-m)!}{\prod_{k=m}^{n}(k-\alpha_{j})}=\sum_{k=m}^{n}(-1)^{k-m}\binom{n-m}{k-m}\frac{1}{k-\alpha_{j}}~~~~(\forall j\in\{1,2,\dots,s\}).
\end{equation}
En multipliant les deux membres de \eqref{binom2} par $\ppcm\left(f(m),f(m+1),\dots,f(n)\right)$ et en se servant de ce qui précède, il s'ensuit que les nombres $Q_1,Q_2,\dots,Q_s$ donnés par:
\begin{equation}
Q_{j}:=\frac{(n-m)!\ppcm\left(f(m),f(m+1),\dots,f(n)\right)}{\prod_{k=m}^{n}(k-\alpha_{j})}~~~~(\forall j\in\{1,2,\dots,s\}),
\end{equation}
sont tous des entiers algébriques. Donc, leur produit $Q:=\prod_{j=1}^{s}Q_{j}$ l'est aussi. Le résultat requis par le lemme découle du fait que tout entier algébrique rationnel est un entier rationnel (voir le point \ref{ppp1} du lemme \ref{ppta}) et de l'expression suivante montrant en particulier que $Q$ est un nombre rationnel:
\[Q:=\frac{{a_{s}}^{n-m+1}\left((n-m)!)\right)^{s}\left(\ppcm\left(f(m),f(m+1),\dots,f(n)\right)\right)^{s}}{\prod_{k=m}^{n}f(k)}\in\mathbb{Q}.\]
Ce qui complète cette démonstration.       
\end{proof}



\begin{proof}[Démonstration du théorème \ref{HH1}]
Le lemme \ref{HH0} entraîne que:
\begin{align*}
\ppcm\left(f(m),f(m+1),\dots,f(n)\right)&\geq \ppcm\left(f(\left\lceil n/2\right\rceil),f(\left\lceil n/2\right\rceil+1),\dots,f(n)\right)\\&\geq \frac{\prod_{k=\left\lceil n/2\right\rceil}^{n}\left|f(k)/a_{s}\right|^{\frac{1}{s}}}{\left(n-\left\lceil n/2\right\rceil\right)!}\geq \frac{\prod_{k=\left\lceil n/2\right\rceil}^{n}k}{\left(n-\left\lceil n/2\right\rceil\right)!}=\left\lceil \frac{n}{2}\right\rceil\binom{n}{\left\lceil \frac{n}{2}\right\rceil}.
\end{align*}
En combinant cette dernière avec \eqref{nsup7}, on a:
\[\ppcm\left(f(m),f(m+1),\dots,f(n)\right)\geq 2^{n}~~~~(\forall n\geq 7).\]
Ce qui complète cette démonstration.
\end{proof}

\section{Estimations asymptotiques} 
Dans cette section on s'intéresse à l'étude du comportement asymptotique du nombre: 
\[\log \ppcm (f(1),f(2),\dots,f(n)),\]
où $f\in\mathbb{Z}[X]$ et $\deg f\in\{1,2\}$. On commencera par le résultat de Bateman et al. \cite{bat} dont on présentera une démonstration; nous énonçons ensuite le résultat de Cilleruelo \cite{cil} sans démonstration.   

\begin{thm}[Bateman et al. \cite{bat}]
Soient $a,b\in \mathbb{Z}$ tels que $b>0$, $a+b>0$ et $\pgcd(a,b)=1$. Alors, on a:
\begin{equation}\label{batem}
\log\ppcm\left(a+b,a+2b,\dots,a+nb\right)\sim_{+\infty}\left(\frac{b}{\varphi(b)}\sum_{\begin{subarray}{c} 1\leq m\leq b \\ \pgcd(m,b)=1\end{subarray}}\frac{1}{m}\right)n,
\end{equation}
où $\varphi$ désigne la fonction indicatrice d'Euler.
\end{thm}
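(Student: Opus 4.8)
The plan is to replace $L_n:=\ppcm(a+b,a+2b,\dots,a+nb)$ by the $\ppcm$ of \emph{all} positive integers $\le x$ that are $\equiv a\pmod b$ (with $x:=a+nb$), to compute its $p$-adic valuations through the Chinese remainder theorem, to regroup the resulting Chebyshev-type sum along the $\varphi(b)$ residue classes modulo $b$ that are prime to $b$, and then to conclude with the prime number theorem for arithmetic progressions (fixed modulus $b$). Put $\Lambda(x):=\ppcm\{N\in\mathbb N^{*}:\ N\equiv a\pmod b,\ N\le x\}$. Because $a+b>0$, the numbers $a+b,\dots,a+nb$ are precisely the positive integers $\equiv a\pmod b$ exceeding $\max(a,0)$, so the only elements of $\Lambda(x)$ not already among them are the at most $\lceil a/b\rceil$ positive integers $\equiv a\pmod b$ that are $\le a$, each dividing $\ppcm(1,\dots,\max(a,0))$. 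Hence $L_n\mid\Lambda(x)\mid L_n\cdot\ppcm(1,\dots,\max(a,0))$, so $0\le\log\Lambda(x)-\log L_n\le\psi(\max(a,0))$; as the right side is a constant and $n\sim x/b$, it suffices to show
\[\log\Lambda(x)\ \sim_{+\infty}\ \frac{1}{\varphi(b)}\Bigg(\sum_{\substack{1\le m\le b\\\pgcd(m,b)=1}}\frac1m\Bigg)x.\]

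For a prime $p$, $\vartheta_p(\Lambda(x))=\sum_{j\ge1}\mathbf 1\big[\exists\,N\le x:\ N\equiv a\!\!\pmod b,\ p^{j}\mid N\big]$, the indices $j$ with a solution forming an initial segment since the $N$ achieving the maximal $p$-valuation carries every lower power of $p$. If $p\mid b$ the summand vanishes for all $j$, because $\pgcd(a,b)=1$ forbids $p\mid N$; so only $p\nmid b$ matter. For $p\nmid b$, by the Chinese remainder theorem the least positive integer divisible by $p^{j}$ and $\equiv a\pmod b$ equals $p^{j}t_{p,j}$, where $t_{p,j}$ is the unique element of $\{1,\dots,b\}$ with $\pgcd(t_{p,j},b)=1$ and $p^{j}t_{p,j}\equiv a\pmod b$ (indeed $t_{p,j}<b$, as the relevant residue class lies below $p^{j}b$). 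Thus $\vartheta_p(\Lambda(x))=\#\{j\ge1:\ p^{j}t_{p,j}\le x\}$, and summing over $p$,
\[\log\Lambda(x)=\sum_{j\ge1}\vartheta^{(j)}(x),\qquad\vartheta^{(j)}(x):=\sum_{\substack{p\,\nmid\,b\\ p^{j}t_{p,j}\le x}}\log p.\]

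The leading term is $\vartheta^{(1)}(x)$. Since $t_{p,1}$ is the element $m\in\{1,\dots,b\}$ with $pm\equiv a\pmod b$, grouping the primes $p\nmid b$ according to the value $m:=t_{p,1}$ (prime to $b$) really is a partition, as $m\mapsto am^{-1}\bmod b$ is a bijection of $(\mathbb Z/b\mathbb Z)^{\times}$: the primes with $t_{p,1}=m$ are exactly those with $p\equiv am^{-1}\pmod b$, and for them $p\,t_{p,1}\le x\iff p\le x/m$. Hence $\vartheta^{(1)}(x)=\sum_{m}\theta\big(x/m;\ am^{-1},\ b\big)$, the sum over $m\in\{1,\dots,b\}$ prime to $b$, with each class $am^{-1}$ prime to $b$; the prime number theorem for the modulus $b$ gives $\theta(x/m;am^{-1},b)\sim(x/m)/\varphi(b)$, so $\vartheta^{(1)}(x)\sim\frac1{\varphi(b)}\sum_m\frac xm$. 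For $j\ge2$, $t_{p,j}\ge1$ yields $\vartheta^{(j)}(x)\le\sum_{p\le x^{1/j}}\log p=\theta(x^{1/j})$, whence $\sum_{j\ge2}\vartheta^{(j)}(x)\le\psi(x)-\theta(x)=O(\sqrt x\log x)=o(x)$. Therefore $\log\Lambda(x)=\vartheta^{(1)}(x)+o(x)\sim\frac1{\varphi(b)}\big(\sum_m\frac1m\big)x$, and undoing the reduction gives $\log L_n\sim\big(\frac{b}{\varphi(b)}\sum_m\frac1m\big)n$.

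The only non-elementary input is the prime number theorem for arithmetic progressions with fixed modulus $b$; the rest is bookkeeping, the delicate point being the reorganisation of $\vartheta^{(1)}$. The reason it works — and is not a tautological restatement — is that $t_{p,j}$ stays in the bounded set $\{1,\dots,b\}$ instead of growing with $p$, which is precisely what turns ``the least term of the progression divisible by $p$ is $\le x$'' into a prime-counting statement modulo $b$; note also that working with the bound $x$ on the \emph{size} of the terms (rather than $n$ on their index) is essential, since in the index formulation the analogous residue would range over $\{1,\dots,p\}$ and no elementary equidistribution is available.
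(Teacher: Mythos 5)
Your proof is correct and follows essentially the same route as the paper's: both identify, for each prime $p\nmid b$, the least term of the progression divisible by $p$ via the cofactor $m'\in\{1,\dots,b\}$ coprime to $b$ with $m'p\equiv a\pmod b$, rewrite the main term as a finite sum of values $\theta\left(\frac{a+nb}{m'};\cdot,b\right)$ over the $\varphi(b)$ reduced classes, and conclude with the prime number theorem for arithmetic progressions together with the bijectivity of $m\mapsto m'$. The only cosmetic difference lies in disposing of the higher prime powers: the paper passes to the radical $M(n)$ and bounds $\log L_{a,b,n}-\log M(n)\leq\sqrt{a+nb}\,\log(a+nb)$, whereas you bound $\sum_{j\geq 2}\vartheta^{(j)}(x)$ by $\psi(x)-\theta(x)$ — both being the same $o(n)$ estimate.
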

\begin{proof}
Fixons $n\in\mathbb{N^*}$ et définissons:
\begin{align*}
T(b)&:=\left\lbrace m\in\mathbb{N^*};~m\leq b~\text{et}~\pgcd(m,b)=1\right\rbrace,\\
L_{a,b,n}&:=\ppcm\left(a+b,a+2b,\dots,a+nb\right),\\
S(n)&:=\lbrace a+b,a+2b,\dots,a+nb\rbrace.
\end{align*}
Désignons par $P(n)$ l'ensemble des facteurs premiers de $L_{a,b,n}$ et posons $M(n):=\prod_{p\in P(n)}p$. En écrivant $L_{a,b,n}=\prod_{p\in P(n)}p^{a_{p}}$, avec $a_{p}=a_{p}(n):=\vartheta_{p}(L_{a,b,n})$ $(\forall p\in P(n))$, on a: $\frac{L_{a,b,n}}{M(n)}=\prod p^{a_{p}-1}$, où le produit porte sur tous les nombres premiers $p\in P(n)$ tels que $p^{2}$ divise $L_{a,b,n}$. Puisque $\max S(n)=a+nb$, il s'ensuit que: $p^{a_{p}-1}\leq a+nb$ $(\forall p\in P(n))$ et $\#\lbrace p\in P(n);~p^{2}~\text{divise}~L_{a,b,n}\rbrace\leq \sqrt{a+nb}$. On a par conséquent:
\[0\leq \log L_{a,b,n}-\log M(n)\leq \sqrt{a+nb}\log (a+nb)~~~~(\forall n\in\mathbb{N^*}).\] 
Ce qui entraîne que:
\[\lim_{n\rightarrow +\infty}\frac{\log L_{a,b,n}-\log M(n)}{n}=0.\]
Il suffit donc d'étudier $\lim_{n\rightarrow +\infty}\frac{\log M(n)}{n}$. Pour ce faire, nous caractérisons d'abord les nombres premiers de l'ensemble $P(n)$. Soit $p$ un nombre premier tel que $\pgcd(p,b)=1$. Si $m$ désigne le reste de la division euclidienne de $p$ par $b$, alors $m\in T(b)$ et $p\equiv m\pmod b$. Puisque $\pgcd(a,b)=1$, pour un tel $m$ il existe un unique $m'\in T(b)$ vérifiant $mm'\equiv a\pmod b$, on a en particulier $m'p\equiv a\pmod b$. Nous constatons que $m'p$ est le plus petit multiple positif de $p$ tel que $m'p\equiv a\pmod b$. On en déduit que $m'p\in S(n)$ si et seulement si $m'p\leq a+nb$; ce qui revient à dire que $p\in P(n)$ si et seulement si $p\leq \frac{a+nb}{m'}$. En désignant par $U(m)$ ($\forall m\in T(b)$) l'ensemble des nombres premiers $p$ tels que $p\equiv m\pmod b$ et $p\leq \frac{a+nb}{m'}$, il s'ensuit que:
\[\log M(n)=\sum_{m\in T(b)}\sum_{p\in U(m)}\log p=\sum_{m\in T(b)}\theta\left(\frac{a+nb}{m'};m,b\right),\]
où $\theta(x;m,b)$ désigne la fonction de Chebyshev généralisée. Par ailleurs, le théorème des nombres premiers pour les progressions arithmétiques (voir par exemple \cite{bla}, p. $72$) montre que l'on a:
\[\theta(x;m,b)=\frac{x}{\varphi(b)}+o(x).\]
Ce qui donne:
\[\log M(n)=\sum_{m\in T(b)}\left(\frac{a+nb}{m'\varphi(b)}+o(n)\right).\]     
D'où:
\[\lim_{n\rightarrow +\infty}\frac{\log M(n)}{n}=\sum_{m\in T(b)}\frac{b}{m'\varphi(b)}=\sum_{m\in T(b)}\frac{b}{m\varphi(b)},\]
où la dernière égalité est due au fait que $m'$ parcourt l'ensemble $T(b)$ tout comme $m$.\\ Ce qui termine cette démonstration.  
\end{proof}

\begin{thm}[Cilleruelo \cite{cil}]
Pour tout polynôme irréductible $f(X)=aX^{2}+bX+c\in\mathbb{Z}[X]$, on a:
\[\log \ppcm\left(f(1),f(2),\dots,f(n)\right)= n\log n +Bn +o(n),\]
où $B=B_{f}$ est la constante définie par:
\begin{align*}
B_{f}:&=\gamma -1-2\log 2 -\sum_{p}\left(\frac{d}{p}\right)\frac{\log p}{p-1}+\frac{1}{\varphi(q)}\sum_{\begin{subarray}{c}1\leq r\leq q \\ \pgcd(r,q)=1 \end{subarray}}\log\left(1+\frac{r}{q}\right)\\ & +\log a +\sum_{p|2aD}\log p \left(\frac{1+\left(\frac{d}{p}\right)}{p-1} -\sum_{k\geq 1}\frac{s\left(f,p^{k}\right)}{p^{k}}\right),
\end{align*}
avec $\gamma$ désigne la constante d'Euler, $D:=b^{2}-4ac$, $d$ est la partie sans facteur carré de $D$, $\left(\frac{\cdot}{\cdot}\right)$ est le symbole de Legendre, $q:=\frac{a}{\pgcd(a,b)}$ et $s\left(f,p^{k}\right)$ est le nombre de solutions de la congruence $f(X)\equiv 0\pmod {p^{k}}$.
\end{thm}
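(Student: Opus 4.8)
The theorem is stated without proof in the excerpt; I sketch the strategy of Cilleruelo \cite{cil}. The starting point is the elementary identity
\[
\log\ppcm(f(1),\dots,f(n))=\sum_{p}\Bigl(\max_{1\le k\le n}\vartheta_p(f(k))\Bigr)\log p ,
\]
to be compared term by term with
\[
S(n):=\sum_{k=1}^{n}\log|f(k)|=\sum_{p}\Bigl(\sum_{k=1}^{n}\vartheta_p(f(k))\Bigr)\log p .
\]
First I would evaluate $S(n)$ directly: since $f(k)=ak^{2}\bigl(1+O(1/k)\bigr)$, Stirling's formula gives $S(n)=2n\log n+(\log a-2)n+O(\log n)$. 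Subtracting the two displays reduces the theorem to the asymptotic
\[
R(n):=\sum_{p}\log p\Bigl(\sum_{k=1}^{n}\vartheta_p(f(k))-\max_{1\le k\le n}\vartheta_p(f(k))\Bigr)=n\log n+(\log a-2-B_f)\,n+o(n),
\]
where $R(n)\ge 0$ and where a prime contributes to $R(n)$ only if it divides some $f(k)$, $k\le n$, to a power $\ge2$ or at two distinct indices.

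For the bookkeeping set $N_p(j,n):=\#\{1\le k\le n:\ p^{j}\mid f(k)\}$; by periodicity of $f$ modulo $p^{j}$ one has $N_p(j,n)=s(f,p^{j})\,n/p^{j}+O\bigl(s(f,p^{j})\bigr)$, with $s(f,p^j)$ the root count from the theorem statement. The next step is to split the primes at a threshold of order $\sqrt n$. For $p\le\sqrt n$ the series $\sum_{j\ge1}N_p(j,n)$ converges rapidly and equals $n\sigma_p+O(\log n/\log p)$ with $\sigma_p=\sum_{j\ge1}s(f,p^{j})/p^{j}$; for $p\nmid 2aD$ Hensel lifting gives $s(f,p^{j})=1+\bigl(\tfrac{d}{p}\bigr)$ for every $j$, so $\sigma_p=\bigl(1+(\tfrac{d}{p})\bigr)/(p-1)$, while the finitely many ramified primes $p\mid 2aD$ produce exactly the correction terms $\log p\bigl(\tfrac{1+(d/p)}{p-1}-\sum_{k\ge1}s(f,p^{k})/p^{k}\bigr)$ appearing in $B_f$. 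Summing $n\sum_{p\le\sqrt n}\sigma_p\log p$ and invoking Mertens' theorem $\sum_{p\le x}\tfrac{\log p}{p}=\log x+O(1)$ (the bounded part, made explicit, is where $\gamma$ enters) together with the convergence of $\sum_p\bigl(\tfrac{d}{p}\bigr)\tfrac{\log p}{p-1}$ — equivalently $L(1,\chi_d)\ne0$, by Dirichlet — produces the contributions of $\gamma$, of the Legendre-symbol sum, and, after also treating the intermediate range $\sqrt n<p\le n$ where the higher-power corrections to $\max_k\vartheta_p(f(k))$ amount to an explicit $O(n)$, the archimedean constants $-1-2\log2$ and $\log a$ via Stirling. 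The boundary term $\tfrac1{\varphi(q)}\sum_{(r,q)=1}\log(1+r/q)$ with $q=a/\pgcd(a,b)$ is the one requiring the prime number theorem for arithmetic progressions, precisely as in the proof of Bateman et al.'s theorem above: it is the average over residue classes mod $q$ of a boundary defect in $N_p(1,n)$ for $p$ running over the progression cut out by the roots of $f$.

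The delicate step is the contribution of the primes $p>n$. For such $p$ the exponent $\vartheta_p(f(k))$ is bounded (indeed $\le1$ once $p>\sqrt a\,n$, since then $p^{2}>f(k)$), so the relevant quantity is $\sum_{p>n}\log p\cdot N_p(1,n)=\sum_{k\le n}\log\bigl(\text{the part of }f(k)\text{ supported on primes}>n\bigr)$, which — since $S(n)=2n\log n+\cdots$ while the primes $\le n$ only account for $\approx n\log n$ — supplies the remaining $n\log n$ of the main term; the associated contribution to $R(n)$ is $\sum_{p>n}\log p\,\bigl(N_p(1,n)-1\bigr)^{+}$, governed by the primes dividing two values $f(j),f(k)$ with $1\le j<k\le n$. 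Here the degree-two miracle is used: $p\mid f(j)-f(k)=(j-k)\bigl(a(j+k)+b\bigr)$ together with $0<k-j<n<p$ forces $p\mid a(j+k)+b$, hence $p\le a(j+k)+b=O(n)$, which confines $p$ to a short interval just above $n$ and pins down $j+k$ modulo $p$; a counting argument — the quantitative input Cilleruelo isolates, and precisely the statement that fails for $\deg f\ge3$ (cf.\ Conjecture~\ref{cocil}) — then bounds the number of such \emph{doubly covered} primes by $o(n/\log n)$, so that their total weight is $o(n)$. Assembling the main terms from the three prime ranges and collecting every explicit constant ($\gamma$, $-1-2\log2$, $\log a$, the two arithmetic sums, and the ramified-prime corrections) yields $\log\ppcm(f(1),\dots,f(n))=n\log n+B_f n+o(n)$. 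I expect the main obstacle to be exactly that last $o(n)$ bound on doubly covered primes: the factorization $p\mid(j-k)\bigl(a(j+k)+b\bigr)$ is immediate, but converting it into a uniform estimate needs a genuine sieve-type count rather than a formal manipulation, and it is this point — not the surrounding (lengthy but routine) bookkeeping of constants — on which the whole argument rests.
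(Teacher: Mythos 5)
First, a point of order: the thesis does not prove this theorem. Immediately after the statement it says the proof is long, that it relies on ``quelques r\'esultats concernant la r\'epartition des racines d'un polyn\^ome quadratique modulo des puissances de nombres premiers'', and it refers the reader to Cilleruelo's article; so there is no in-paper argument to compare yours against, and what follows measures your sketch against the actual proof in \cite{cil}. The architecture you describe is indeed the right one: comparing $\log \ppcm\left(f(1),\dots,f(n)\right)$ with $\sum_{k\le n}\log f(k)=2n\log n+(\log a-2)n+O(\log n)$, the local analysis via $s(f,p^k)$ and Hensel's lemma for $p\nmid 2aD$, Mertens' theorem producing $\gamma$ and the Legendre-symbol sum, and the factorization $f(j)-f(k)=(j-k)\left(a(j+k)+b\right)$ confining the primes $p>n$ that divide two values to the range $p\ll n$.

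The genuine gap is your treatment of precisely those primes. You claim that the primes $p>n$ dividing two values $f(j),f(k)$ with $j<k\le n$ number $o(n/\log n)$, hence contribute $o(n)$, and you present this as a sieve estimate analogous to the open conjecture for $\deg f\ge 3$. This is false in degree $2$: such a prime is exactly a prime $p\in\left(n,2an+b\right]$ for which \emph{both} roots of $f$ modulo $p$ lie in $[1,n]$, and these form a positive proportion of the primes in that interval. For $f(X)=X^2+1$ the roots are $\nu$ and $p-\nu$ with $\nu\le p/2$; both are $\le n$ if and only if $p\le 2n$ and $\nu\in[p-n,p/2]$, and the equidistribution of $\nu/p$ together with the prime number theorem in progressions gives
\[\sum_{\begin{subarray}{c} n<p\le 2n \\ p\ \text{divise deux valeurs}\end{subarray}}\log p=\left(\log 2-\tfrac{1}{2}\right)n+o(n),\]
a positive multiple of $n$. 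So this sum cannot be discarded: it must be \emph{computed} to within $o(n)$, and it is exactly here (together with the singly-covered primes $p>n$) that the constants $-2\log 2$ and $\frac{1}{\varphi(q)}\sum_{(r,q)=1}\log\left(1+\frac{r}{q}\right)$ of $B_f$ arise --- not, as you suggest, from Stirling's formula (which only accounts for the $-2$ in $\log a-2$) and a boundary defect for small primes. The input needed to evaluate it is the equidistribution of the normalized roots $\nu/p$ of $f(\nu)\equiv 0\pmod{p}$, due to Duke--Friedlander--Iwaniec ($D<0$) and Toth ($D>0$); this is the deep ingredient the thesis alludes to in the sentence quoted above, and it is absent from your sketch. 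Note finally that the comparison with Conjecture \ref{cocil} is misleading: for $\deg f\ge 3$ the doubly covered primes live in $\left(n,n^{\deg f-1}\right]$ and the conjecture is that their \emph{number} is $o(n)$, a statement which in degree $2$ holds trivially for cardinality reasons yet, as just explained, does not make their weighted contribution to the linear term negligible.
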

\noindent La démonstration de ce théorème est un peu longue; elle se sert de quelques résultats concernant la répartition des racines d'un polynôme quadratique modulo des puissances de nombres premiers. Pour une preuve bien détaillée, le lecteur est invité à consulter l'article \cite{cil} de Cilleruelo. 

\bigskip

\begin{center}
\includegraphics[scale=1]{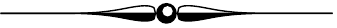}
\end{center}

\chapter{Minorations non triviales du $\ppcm$ de la suite $(n^2+c)_n$}\label{ch2}

\section{Introduction}

Ce chapitre (dont les résultats sont publiés dans \cite{Bousfar1}) est consacré à l'étude des nombres: 
\[L_{c,m,n}:=\ppcm\{m^2+c,(m+1)^2+c,\dots,n^2+c\},\]
où $c,m,n$ sont des entiers strictement positifs tels que $m\leq n$. Plus précisément, nous utilisons des arguments d'algèbre commutative et d'analyse complexe pour établir de nouvelles minorations non triviales de $L_{c,m,n}$. Le reste de ce chapitre est organisé en quatre parties. Dans la première partie, nous donnons un lemme algébrique qui nous permet, d'une part, de redémontrer le théorème \ref{oon'} de Oon par une méthode facile et purement algébrique, et d'autre part de reformuler le problème de minoration du nombre $L_{c,m,n}$. Dans cette reformulation, nous sommes amenés à introduire une fonction arithmétique, notée $h_c$, dont un multiple fournit un diviseur pour $L_{c,m,n}$. Dans les deux parties suivantes, nous étudions la fonction arithmétique $h_c$ et nous lui trouvons un multiple simple et non trivial. Dans la dernière partie, nous utilisons le multiple obtenu de $h_c$ pour déduire un diviseur non trivial pour $L_{c,m,n}$. Notre nouvelle minoration non triviale pour $L_{c,m,n}$ découle alors de ce diviseur.

Étant donné un polynôme $P\in\mathbb{C}[X]$, on désigne par $\overline{P}$ le polynôme conjugué de $P$ dans $\mathbb{C}[X]$ (i.e., le polynôme que nous obtenons en remplaçant chaque coefficient de $P$ par son conjugué complexe). Il est connu que la conjugaison des polynômes dans $\mathbb{C}[X]$ est compatible avec l'addition et la multiplication, c'est-à-dire que pour tous $P,Q\in\mathbb{C}[X]$, on a: $\overline{P+Q}=\overline{P}+\overline{Q}$ et $\overline{P\cdot Q}=\overline{P}\cdot\overline{Q}$. Par ailleurs, on désigne par $I$, $E_h$ $(h\in\mathbb{R})$ et $\Delta$ les opérateurs linéaires de $\mathbb{C}[X]$ qui représentent respectivement l'identité, l'opérateur de translation de pas $h$ ($E_hP\left(X\right)=P\left(X+h\right)$, $\forall P\in\mathbb{C}[X]$) et l'opérateur de différence avant ($\Delta P\left(X\right)=P\left(X+1\right)-P\left(X\right)$, $\forall P\in\mathbb{C}[X]$). Pour $n\in\mathbb{N}$, l'expression de $\Delta^n$ en fonction de $E_h$ s'obtient facilement à partir de la formule du binôme, comme suit:
\begin{equation}\label{dece}
\Delta^n=\left(E_1-I\right)^n=\sum_{m=0}^{n}(-1)^{n-m}\binom{n}{m}E_{1}^m=\sum_{m=0}^{n}(-1)^{n-m}\binom{n}{m}E_{m}.
\end{equation} 
Enfin, nous utilisons la notation de Knuth pour la factorielle décroissante:
\[X^{\underline{n}}:=X\left(X-1\right)\left(X-2\right)\cdots\left(X-n+1\right)~~~~(\forall n\in\mathbb{N}).\] 

\section{La méthode algébrique}

Bien que la méthode d'obtention du résultat de Oon \cite{oon} (i.e, le théorème \ref{oon'}) est d'apparence analytique, les ingrédients qui font son succès sont, en profondeur, algébriques! Nous allons montrer ce fait à travers le lemme algébrique fondamental suivant:

\begin{lemme}[fondamental]\label{lf'}
Soit $\mathcal{A}$ un anneau commutatif, unitaire et intègre et soient $n$ un entier strictement positif et $u_{0},u_{1},\dots,u_{n}$, $a,b$ des éléments de $\mathcal{A}$. Supposons que $a$ et $b$ vérifient les conditions suivantes: 
\begin{enumerate}
\item Chacun des éléments $u_{0},u_{1},\dots,u_{n}$ de $\mathcal{A}$ est un diviseur de $a$.
\item Chacun des éléments $\prod_{\begin{subarray}{c} 0\leq j\leq n \\ j\neq i\end{subarray}}\left(u_{i}-u_{j}\right)$ $(i=0,1,\dots,n)$ de $\mathcal{A}$ est un diviseur de $b$.
\end{enumerate}
Alors, le produit $ab$ est multiple du produit $u_{0}u_{1}\cdots u_{n}$.
\end{lemme}
\begin{proof}
Lorsque les éléments $u_{0},u_{1},\dots,u_{n}$ de $\mathcal{A}$ ne sont pas deux à deux distincts (i.e., il existe $i,j\in\{0,1,\dots,n\}$, avec $i\neq j$, tels que $u_i=u_j$), le résultat du lemme est immédiat puisqu'on aura $b=0_{\mathcal{A}}$. Supposons donc, pour toute la suite que les $u_{i}~(i=0,1,\dots,n)$ sont deux-à-deux distincts.\\
Étant donné que $\mathcal{A}$ est commutatif, unitaire et intègre, tout polynôme non identiquement nul de $\mathcal{A}[X]$, d'un certain degré $d\in \mathbb{N}$, possède au plus $d$ racines dans $\mathcal{A}$. C'est sur ce résultat bien connu que nous appuyons pour prouver le lemme. Comme $a$ est multiple de chacun des éléments $u_{0},u_{1},\dots,u_{n}$ de $\mathcal{A}$, il existe $k_{0},k_{1},\dots,k_{n}\in \mathcal{A}$ tels que:
\begin{equation}\label{res1}
a=k_{0}u_{0}=k_{1}u_{1}=\dots=k_{n}u_{n}.
\end{equation}
De même, comme $b$ est multiple de chacun des éléments $\prod_{\begin{subarray}{c} 0\leq j\leq n \\ j\neq i \end{subarray}}(u_{i}-u_{j})$ $(i=0,1,\dots,n)$ de $\mathcal{A}$, alors il existe $\ell_{0},\ell_{1},\dots,\ell_{n}\in \mathcal{A}$ tels que:
\begin{equation}\label{res1'}
b=\ell_{i}\prod_{\begin{subarray}{c} 0\leq j\leq n \\ j\neq i \end{subarray}}(u_{i}-u_{j})~~(\forall i\in \lbrace 0,1,\dots,n\rbrace).
\end{equation}
Considérons le polynôme $P\in\mathcal{A}[X]$ suivant:
\[P(X):=\sum_{i=0}^{n}\left[\ell_{i}\prod_{\begin{subarray}{c} 0\leq j\leq n \\ j\neq i \end{subarray}}(X-u_{j}) \right]-b.\]
Il est immédiat que $\deg P \leq n$. D'autre part, on a (d'après \eqref{res1'}): 
\[P(u_{i})=0~~~~(\forall i\in \lbrace 0,1,\dots,n\rbrace).\]
Ce qui montre que $P$ possède au moins $(n+1)$ racines distinctes dans $\mathcal{A}$. Il s'ensuit (en vertu du résultat signalé au début de cette démonstration) que $P$ est identiquement nul. D'où (en particulier) $P(0)=0$; ce qui donne:
\[b=(-1)^{n}\sum_{i=0}^{n}\ell_{i}\left(\prod_{\begin{subarray}{c} 0\leq j\leq n \\ j\neq i\end{subarray}}u_{j}\right).\]
En multipliant les deux membres de cette dernière égalité par $a$, il en résulte que:
\begin{align*}
ab &=(-1)^{n}\sum_{i=0}^{n}\ell_{i}a\left(\prod_{\begin{subarray}{c} 0\leq j\leq n \\ j\neq i\end{subarray}}u_{j}\right)\\&=(-1)^{n}\sum_{i=0}^{n}\ell_{i}k_{i}u_{i}\left(\prod_{\begin{subarray}{c} 0\leq j\leq n \\ j\neq i\end{subarray}}u_{j}\right)~~~~\text{(en vertu de \eqref{res1})}\\ &=(-1)^{n}\left(\sum_{i=0}^{n}k_{i}\ell_{i}\right)u_{0}u_{1}\cdots u_{n}. 
\end{align*}
Ce qui montre bien que $ab$ est multiple de $u_{0}u_{1}\cdots u_{n}$. Le lemme est ainsi démontré.
\end{proof}

\begin{rmq}
Le lemme \ref{lf'} est inspiré du lemme \ref{lF1} de Farhi \cite{far} qui en devient un cas particulier lorsqu'on prend $\mathcal{A}=\mathbb{Z}$, $a= \ppcm(u_0,u_1,\dots,u_n)$ et
\[b=\ppcm\left\lbrace\prod_{\begin{subarray}{c}0 \leq j \leq n \\ j \neq i
\end{subarray}} (u_i - u_j);~i = 0 , 1 , \dots , n\right\rbrace.\]
C'est précisément ce cas particulier qui a conduit Farhi \cite{far} à établir les premières minorations non triviales du $\ppcm$ d'une suite arithmétique et d'un certain type de suites quadratiques (voir les théorèmes \ref{Flcm} et \ref{JJJ}).
\end{rmq}

Maintenant, nous utilisons le lemme \ref{lf'} pour établir une nouvelle démonstration (purement algébrique) du théorème \ref{oon'} de Oon.

\begin{proof}[Démonstration algébrique du théorème \ref{oon'}]
Puisque $L_{c,m,n}$ est clairement croissant par rapport à $m$, alors il suffit de prouver le résultat du théorème pour $m=\left\lceil \frac{n}{2}\right\rceil$. Pour simplifier, posons $m_0:=\left\lceil \frac{n}{2}\right\rceil$. Nous devons donc démontrer que $L_{c,m_0,n}\geq 2^n$. Pour $n\in\{1,2,\dots,6\}$, cela peut être facilement vérifié à la main (comme le fait Oon). Supposons pour la suite que $n\geq 7$. Il est connu que pour tout entier $r\geq 7$, on a: $\left\lceil \frac{r}{2}\right\rceil\binom{r}{\left\lceil \frac{r}{2}\right\rceil}\geq 2^{r}$. En vertu de cette dernière inégalité pour $r=n$, il suffit de montrer que $L_{c,m_0,n}\geq m_0\binom{n}{m_0}$. Plus généralement, nous allons montrer que:
\begin{equation}\label{cor2}
L_{c,m',n}\geq m'\binom{n}{m'}~~~~(\forall m'\in\mathbb{N^*},~m'\leq n).
\end{equation}
Soit $m'\in\mathbb{N^*}$ tel que $m'\leq n$. Pour prouver \eqref{cor2}, nous appliquons le lemme \ref{lf'} pour $\mathcal{A}=\mathbb{Z}[\sqrt{-c}]$ en prenant à la place des $u_i$ les éléments $m'+\sqrt{-c},m'+1+\sqrt{-c},\dots,n+\sqrt{-c}$ de $\mathcal{A}$ et pour $a$ et $b$ les entiers $a=L_{c,m',n}$ et $b=(n-m')!$. Pour tout $k\in\left\lbrace m',m'+1,\dots,n\right\rbrace$, comme $L_{c,m',n}$ est clairement multiple (dans $\mathbb{Z}$, donc aussi dans $\mathcal{A}=\mathbb{Z}[\sqrt{-c}]$) de $(k^2+c)$ et $(k^2+c)=\left(k+\sqrt{-c}\right)\left(k-\sqrt{-c}\right)$ est multiple (dans $\mathbb{Z}[\sqrt{-c}]$) de $\left(k+\sqrt{-c}\right)$, alors $L_{c,m',n}$ est multiple (dans $\mathbb{Z}[\sqrt{-c}]$) de $\left(k+\sqrt{-c}\right)$. Cela montre que la première condition du lemme \ref{lf'} est satisfaite. D'autre part, on a pour tout $k\in\left\lbrace m',m'+1,\dots,n\right\rbrace$: 
\[\prod_{\begin{subarray}{c}m'\leq \ell\leq n \\ \ell\neq k\end{subarray}}\left\lbrace\left(k+\sqrt{-c}\right)-\left(\ell+\sqrt{-c}\right)\right\rbrace=\prod_{\begin{subarray}{c}m'\leq \ell\leq n\\ \ell\neq k\end{subarray}}(k-\ell)=(-1)^{n-k}(k-m')!(n-k)!,\]
qui divise (dans $\mathbb{Z}$, donc aussi dans $\mathbb{Z}[\sqrt{-c}]$) l'entier $(n-m')!$ (car: $\frac{(n-m')!}{(k-m')!(n-k)!}=\binom{n-m'}{k-m'}\in\mathbb{Z}$). Cela montre que la deuxième condition du lemme \ref{lf'} est aussi satisfaite. Nous en déduisons donc (en appliquant le lemme \ref{lf'}) que $L_{c,m',n}(n-m')!$ est multiple (dans $\mathbb{Z}[\sqrt{-c}]$) de $\prod_{k=m'}^{n}\left(k+\sqrt{-c}\right)$. Il existe par conséquent $x,y\in\mathbb{Z}$ tels que:
\begin{equation}\label{**}
L_{c,m',n}(n-m')!=\left(x+y\sqrt{-c}\right)\prod_{k=m'}^{n}\left(k+\sqrt{-c}\right).\end{equation}
Ainsi, en prenant les modules dans $\mathbb{C}$ des deux côtés, on obtient:
\[L_{c,m',n}(n-m')!=\sqrt{x^2+cy^{2}}\prod_{k=m'}^{n}\sqrt{k^2+c}.\]
Par suite, comme $x^2+cy^2\in\mathbb{N}$ et $x^2+cy^2\neq 0$ (car: $x^2+cy^2=0$ $\Longrightarrow$ $L_{c,m',n}=0$, ce qui est faux), alors $x^2+cy^2\geq 1$. D'où:
\[L_{c,m',n}=\frac{\sqrt{x^2+cy^2}\prod_{k=m'}^{n}\sqrt{k^2+c}}{(n-m')!}\geq \frac{\prod_{k=m'}^{n}\sqrt{k^2+c}}{(n-m')!}\geq \frac{\prod_{k=m'}^{n}k}{(n-m')!}=m'\binom{n}{m'},\]
comme il fallait le prouver. Ce qui achève la démonstration du théorème.
\end{proof}

\noindent Maintenant, il est naturel de soulever la question suivante:
\begin{quote}
\textit{Comment pourrait-on améliorer la minoration $L_{c,m,n}\geq \frac{\prod_{k=m}^{n}\sqrt{k^2+c}}{(n-m)!}$, obtenue lors de la preuve du théorème \ref{oon'} et établie initialement par Oon \cite{oon}?}
\end{quote}

\noindent Pour discuter cette question, nous aurons besoin de la définition suivante du $\pgcd$ et du $\ppcm$ dans un anneau commutatif unitaire.

\begin{defi}
Soit $\mathcal{A}$ un anneau commutatif unitaire et soient $a,b\in\mathcal{A}$. Un élément $d$ de $\mathcal{A}$ est appelé un plus grand commun diviseur de $a$ et $b$ (et est désigné par ${\pgcd}_{\mathcal{A}}(a,b)$) si $d$ divise à la fois $a$ et $b$ et si tout autre élément $d'$ de $\mathcal{A}$, qui divise à la fois $a$ et $b$, divise également $d$. De même, un élément $m$ de $\mathcal{A}$ est appelé un plus petit commun multiple de $a$ et $b$ (et est désigné par ${\ppcm}_{\mathcal{A}}(a,b)$) si $m$ est un multiple de $a$ et $b$ et si tout autre élément $m'$ de $\mathcal{A}$, qui est un multiple de $a$ et $b$ à la fois, est également un multiple de $m$. Noter que ${\pgcd}_{\mathcal{A}}(a,b)$ et ${\ppcm}_{\mathcal{A}}(a,b)$ existent au moins lorsque $\mathcal{A}$ est un anneau factoriel (ce qui est le cas de l'anneau des entiers de Gauss $\mathbb{Z}[i]$) et ils sont uniques à une multiplication près par une unité.
\end{defi}

Maintenant, pour simplifier, supposons que $c=1$ et soient $m,n\in\mathbb{N^*}$ tels que $m\leq n$. En vertu de la formule \eqref{**}, l'entier strictement positif $L_{1,m,n}(n-m)!$ est multiple (dans $\mathbb{Z}[i]$) de l'entier de Gauss $\prod_{k=m}^{n}(k+i)$. Par suite, en prenant les conjugués (dans $\mathbb{C}$) des deux membres de \eqref{**}, on obtient que $L_{1,m,n}(n-m)!$ est aussi multiple (dans $\mathbb{Z}[i]$) de l'entier de Gauss $\prod_{k=m}^{n}(k-i)$. Il résulte de ces deux faits que $L_{1,m,n}(n-m)!$ est multiple (dans $\mathbb{Z}[i]$) de:
\begin{align*}
{\ppcm}_{\mathbb{Z}[i]}\left\lbrace \prod_{k=m}^{n}(k+i),\prod_{k=m}^{n}(k-i)\right\rbrace&=\frac{\prod_{k=m}^{n}(k+i)\cdot\prod_{k=m}^{n}(k-i)}{{\pgcd}_{\mathbb{Z}[i]}\left\lbrace \prod_{k=m}^{n}(k+i),\prod_{k=m}^{n}(k-i)\right\rbrace}\\&=\frac{\prod_{k=m}^{n}(k^2+1)}{{\pgcd}_{\mathbb{Z}[i]}\left\lbrace \prod_{k=m}^{n}(k+i),\prod_{k=m}^{n}(k-i)\right\rbrace}.
\end{align*}
Par conséquent, on a:
\begin{equation}\label{eq4}
L_{1,m,n}\geq \frac{\prod_{k=m}^{n}(k^2+1)}{(n-m)!\left|{\pgcd}_{\mathbb{Z}[i]}\left\lbrace \prod_{k=m}^{n}(k+i),\prod_{k=m}^{n}(k-i)\right\rbrace\right|}.
\end{equation}
Nous remarquons que la majoration triviale: 
\[\left|{\pgcd}_{\mathbb{Z}[i]}\left\lbrace \prod_{k=m}^{n}(k+i),\prod_{k=m}^{n}(k-i)\right\rbrace\right|\leq \left|\prod_{k=m}^{n}(k+i)\right|\leq \prod_{k=m}^{n}\sqrt{k^2+1}\]
suffit pour établir la minoration de Oon $L_{1,m,n}\geq \frac{\prod_{k=m}^{n}\sqrt{k^2+1}}{(n-m)!}$. Par conséquent, toute majoration non triviale pour le nombre $\left|{\pgcd}_{\mathbb{Z}[i]}\left\lbrace \prod_{k=m}^{n}(k+i),\prod_{k=m}^{n}(k-i)\right\rbrace\right|$ entraîne immédiatement une amélioration du théorème \ref{oon'} de Oon. D'autre part, pour $a,b\in\mathbb{Z}$ tels que $(a,b)\neq (0,0)$, on peut facilement vérifier que ${\pgcd}_{\mathbb{Z}[i]}\left(a+bi,a-bi\right)$ n'est pas trop loin de ${\pgcd}_{\mathbb{Z}}(a,b)$. Plus précisément, on a:
\[{\pgcd}_{\mathbb{Z}[i]}\left(a+bi,a-bi\right)=\left(\sigma+i\tau\right){\pgcd}_{\mathbb{Z}}(a,b),\]
où $\sigma,\tau\in\left\lbrace -1,0,1\right\rbrace$ et $(\sigma,\tau)\neq (0,0)$. Donc, pour le cas $c=1$, on est amené à étudier la fonction arithmétique:
\[\begin{array}{rcl}h :~\mathbb{Z}[i]\setminus\left\lbrace 0\right\rbrace &\longrightarrow &\mathbb{N^{*}}\\a+bi ~~& \longmapsto &\pgcd (a,b)\end{array};\]
plus précisément, à trouver une majoration non triviale pour la quantité $h\left(\prod_{k=m}^{n}(k+i)\right)$ $(m,n\in\mathbb{N^*},~m\leq n)$. Pour le cas général $(c\in\mathbb{N^*})$, la fonction arithmétique que nous devons étudier est clairement donnée par:
\[\begin{array}{rcl}h_c :~\mathbb{Z}[\sqrt{-c}]\setminus\left\lbrace 0\right\rbrace &\longrightarrow &\mathbb{N^{*}}\\a+b\sqrt{-c}~~& \longmapsto &\pgcd (a,b)\end{array}\]
et la quantité que nous devons majorer est $h_c\left(\prod_{k=m}^{n}(k+\sqrt{-c})\right)$ $(m,n\in\mathbb{N^*},$ $m \leq n)$.

La proposition suivante a pour objectif de remplacer un langage arithmétique spécifique de l'anneau $\mathbb{Z}[\sqrt{-c}]$ par son analogue (plus simple) dans $\mathbb{Z}$. 
   
\begin{prop}\label{propr}
Soient $c\in \mathbb{N^*}$ et $N,a,b \in \mathbb{Z}$, avec $(a,b)\neq (0,0)$. Alors, $N$ est multiple (dans $\mathbb{Z}[\sqrt{-c}]$) de $(a+b\sqrt{-c})$ si et seulement si $N$ est multiple (dans $\mathbb{Z}$) de $\frac{a^2+cb^{2}}{\pgcd(a,b)}$.  
\end{prop}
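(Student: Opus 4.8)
The plan is to reduce both implications to a short computation in $\mathbb{Z}[\sqrt{-c}]$ after factoring out the content $d:=\pgcd(a,b)$. Write $a=da'$ and $b=db'$ with $\pgcd(a',b')=1$. Then $a^2+cb^2=d^2(a'^2+cb'^2)$, so $\frac{a^2+cb^2}{d}=d(a'^2+cb'^2)$ is indeed an integer, and it admits the factorisation $\frac{a^2+cb^2}{d}=d\,(a'+b'\sqrt{-c})(a'-b'\sqrt{-c})=(a+b\sqrt{-c})(a'-b'\sqrt{-c})$ in $\mathbb{Z}[\sqrt{-c}]$. I would record this identity first, since it powers the easy direction and clarifies the hard one.

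For the implication $(\Leftarrow)$: the displayed factorisation exhibits $\frac{a^2+cb^2}{d}$ as a multiple of $a+b\sqrt{-c}$ in $\mathbb{Z}[\sqrt{-c}]$. Hence if $\frac{a^2+cb^2}{d}$ divides $N$ in $\mathbb{Z}$ — a fortiori in $\mathbb{Z}[\sqrt{-c}]$ — then by transitivity of divisibility $a+b\sqrt{-c}$ divides $N$ in $\mathbb{Z}[\sqrt{-c}]$. This direction is immediate once the identity is in hand.

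For the implication $(\Rightarrow)$: suppose $N=(a+b\sqrt{-c})(x+y\sqrt{-c})$ with $x,y\in\mathbb{Z}$. Expanding and separating the rational part from the $\sqrt{-c}$-part, and using $N\in\mathbb{Z}$, yields the two relations $N=ax-cby$ and $ay+bx=0$. If $b=0$ (then $a\neq0$), the second relation forces $y=0$, so $N=ax$ is a multiple of $a$, i.e. of $|a|=\frac{a^2+cb^2}{d}$, and we are done. If $b\neq 0$, then $b'\neq 0$; dividing $ay+bx=0$ by $d$ gives $b'x=-a'y$, so $b'\mid a'y$, and since $\pgcd(a',b')=1$, Gauss's lemma gives $b'\mid y$. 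Writing $y=b't$ and cancelling $b'$ gives $x=-a't$; substituting back, $N=ax-cby=-t\,(aa'+cbb')=-t\,d(a'^2+cb'^2)=-t\cdot\frac{a^2+cb^2}{d}$, so $\frac{a^2+cb^2}{d}$ divides $N$ in $\mathbb{Z}$.

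The argument is essentially routine; the only points requiring a little care are the normalisation identity $\frac{a^2+cb^2}{\pgcd(a,b)}=d(a'^2+cb'^2)$, the use of Gauss's lemma to pass from $b'x=-a'y$ to $b'\mid y$, and the separate handling of the degenerate case $b=0$, in which the substitution $y=b't$ is not available. I do not expect any genuine obstacle beyond this bookkeeping; the substance is simply that an integer $N$ is divisible by $a+b\sqrt{-c}$ in $\mathbb{Z}[\sqrt{-c}]$ exactly when the ``imaginary'' obstruction $ay+bx=0$ forces $N$ onto the $\mathbb{Z}$-line, where it must then be a multiple of the norm of the primitive part, namely $a'^2+cb'^2$, scaled by the content $d$.
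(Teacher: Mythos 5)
Your proof is correct and follows essentially the same route as the paper's: the direct implication via separating the rational and $\sqrt{-c}$ parts, reducing by $d=\pgcd(a,b)$ and applying Gauss's lemma, and the converse via the factorisation $\frac{a^2+cb^2}{d}=(a+b\sqrt{-c})(a'-b'\sqrt{-c})$. The only cosmetic difference is that you state this factorisation up front and invoke transitivity, where the paper writes out the explicit cofactor; the substance is identical.
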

\begin{proof}
Le résultat de la proposition est trivial pour $b=0$. Supposons pour la suite que $b\neq 0$. On a deux implications à démontrer.\\
$\bullet(\Rightarrow):$ Supposons que $N$ est multiple (dans $\mathbb{Z}[\sqrt{-c}]$) de $(a+b\sqrt{-c})$. Il existe donc $x,y\in \mathbb{Z}$ tels que:
\[N=(x+y\sqrt{-c})(a+b\sqrt{-c}).\] 
En identifiant les parties réelles et imaginaires des deux côtés de cette égalité, on obtient:
\begin{align}
N&=ax-byc, \label{dor1}\\
0&=bx+ay. \label{dor2}
\end{align} 
Posons maintenant $d:=\pgcd (a,b)$. Il existe donc $a',b'\in\mathbb{Z}$, avec $b'\neq 0$ et $\pgcd(a',b')=1$, tels que $a=da'$ et $b=db'$. En substituant cela dans \eqref{dor2}, on obtient (après simplification):
\begin{equation}\label{dor3}
b'x=-a'y.
\end{equation}
Cette dernière égalité montre que $b'$ divise $a'y$. Puisque $\pgcd (a',b')=1$, alors (d'après le lemme de Gauss) $b'$ divise $y$. Il existe donc $k\in\mathbb{Z}$ tel que $y=kb'$. En remplaçant cela dans \eqref{dor3}, on obtient que $x=-ka'$. D'où, en substituant $x=-ka'=-k\frac{a}{d}$ et $y=kb'=k\frac{b}{d}$ dans \eqref{dor1}, on obtient enfin:  
\[N=-k\frac{a^2+cb^2}{d}=-k\frac{a^2+cb^2}{\pgcd(a,b)},\]
ce qui montre que $N$ est multiple (dans $\mathbb{Z}$) de $\frac{a^2+cb^2}{\pgcd(a,b)}$, comme il fallait le prouver.\\
$\bullet(\Leftarrow):$ Inversement, supposons que $N$ est multiple (dans $\mathbb{Z}$) de $\frac{a^2+cb^{2}}{\pgcd(a,b)}$. Il existe donc $k\in \mathbb{Z}$ tel que:
\[N=k\frac{a^2+cb^{2}}{\pgcd(a,b)}=k\frac{a-b\sqrt{-c}}{\pgcd(a,b)}(a+b\sqrt{-c})=\left(\frac{ka}{\pgcd(a,b)}-\frac{kb}{\pgcd(a,b)}\sqrt{-c}\right)(a+b\sqrt{-c}).\]
Puisque $\left(\frac{ka}{\pgcd(a,b)}-\frac{kb}{\pgcd(a,b)}\sqrt{-c}\right)\in \mathbb{Z}[\sqrt{-c}]$, cette dernière égalité montre que $N$ est multiple (dans $\mathbb{Z}[\sqrt{-c}]$) de $\left(a+b\sqrt{-c}\right)$, comme il fallait le prouver.\\ Ce qui complète la démonstration de la proposition.
\end{proof}

De la proposition \ref{propr}, nous tirons le corollaire suivant, qui est la première étape clé pour obtenir les résultats de ce chapitre.

\begin{coll}\label{p2}
Soient $c,m,n\in\mathbb{N^*}$ tels que $m\leq n$. Alors, le nombre $L_{c,m,n}(n-m)!$ est multiple (dans $\mathbb{Z}$) de l'entier strictement positif:
\[\frac{\prod_{k=m}^{n}\left(k^2+c\right)}{h_c\left(\prod_{k=m}^{n}\left(k+\sqrt{-c}\right)\right)}.\] 
\end{coll}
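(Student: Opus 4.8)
The plan is to combine the divisibility statement coming from the fundamental lemma (already obtained as equation \eqref{**} in the algebraic proof of Theorem \ref{oon'}) with the translation tool Proposition \ref{propr}. First I would recall that from \eqref{**}, applied with $m'=m$, there exist $x,y\in\mathbb{Z}$ with
\[
L_{c,m,n}(n-m)! = \left(x+y\sqrt{-c}\right)\prod_{k=m}^{n}\left(k+\sqrt{-c}\right).
\]
In particular the integer $N:=L_{c,m,n}(n-m)!$ is a multiple, in $\mathbb{Z}[\sqrt{-c}]$, of the element $\prod_{k=m}^{n}(k+\sqrt{-c})$. Write this product as $A+B\sqrt{-c}$ with $A,B\in\mathbb{Z}$; since its complex modulus equals $\prod_{k=m}^{n}\sqrt{k^2+c}\neq 0$, we have $(A,B)\neq(0,0)$.

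Next I would apply Proposition \ref{propr} to $N$ and to the pair $(A,B)$: since $N$ is a multiple of $A+B\sqrt{-c}$ in $\mathbb{Z}[\sqrt{-c}]$, the proposition gives that $N$ is a multiple, in $\mathbb{Z}$, of
\[
\frac{A^2+cB^2}{\pgcd(A,B)}.
\]
Now $A^2+cB^2 = \left|A+B\sqrt{-c}\right|^2 = \prod_{k=m}^{n}(k^2+c)$, and by the very definition of the arithmetic function $h_c$ one has $\pgcd(A,B) = h_c(A+B\sqrt{-c}) = h_c\!\left(\prod_{k=m}^{n}(k+\sqrt{-c})\right)$. Substituting these two identities yields exactly that $L_{c,m,n}(n-m)!$ is a multiple in $\mathbb{Z}$ of
\[
\frac{\prod_{k=m}^{n}(k^2+c)}{h_c\!\left(\prod_{k=m}^{n}(k+\sqrt{-c})\right)},
\]
which is the claim. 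That this quotient is a positive integer is automatic: $h_c(A+B\sqrt{-c})=\pgcd(A,B)$ divides $A$, hence divides $A^2$, and it divides $cB^2$ as well, so it divides $A^2+cB^2$; positivity is clear since all factors $k^2+c$ are positive.

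I do not expect any serious obstacle here: the corollary is essentially a repackaging. The only point requiring a little care is to make explicit that equation \eqref{**} — established inside the proof of Theorem \ref{oon'} — is available for the specific parameters $(c,m,n)$ with $m\le n$ (it was proved there for an arbitrary $m'\le n$, so this is immediate), and to check the non-vanishing of $(A,B)$ so that Proposition \ref{propr} applies; both are handled by taking complex moduli. If one prefers not to invoke the interior of a previous proof, an alternative is to re-derive the divisibility $\prod_{k=m}^{n}(k+\sqrt{-c}) \mid L_{c,m,n}(n-m)!$ in $\mathbb{Z}[\sqrt{-c}]$ directly from Lemma \ref{lf'} with $\mathcal{A}=\mathbb{Z}[\sqrt{-c}]$, $a=L_{c,m,n}$, $b=(n-m)!$, exactly as in the algebraic proof of Theorem \ref{oon'}; this is the cleaner route and is the one I would write up.
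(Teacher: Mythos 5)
Your proof is correct and follows exactly the paper's own route: it invokes the divisibility \eqref{**} in $\mathbb{Z}[\sqrt{-c}]$ obtained during the algebraic proof of Theorem \ref{oon'}, then translates it via Proposition \ref{propr} with $A+B\sqrt{-c}=\prod_{k=m}^{n}(k+\sqrt{-c})$. The extra checks you make (that $(A,B)\neq(0,0)$ and that the quotient is a positive integer) are worthwhile details the paper leaves implicit.
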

\begin{proof}
La formule \eqref{**} (obtenue lors de notre nouvelle preuve du théorème \ref{oon'}) montre que $L_{c,m,n}(n-m)!$ est multiple (dans $\mathbb{Z}[\sqrt{-c}]$) de $\prod_{k=m}^{n}\left(k+\sqrt{-c}\right)$. Selon la proposition \ref{propr}, cette dernière propriété est équivalente à l'énoncé du corollaire. 
\end{proof}

En vertu du corollaire \ref{p2}, pour minorer le nombre $L_{c,m,n}$ $(c,m,n\in\mathbb{N^*},~m\leq n)$, il suffit de majorer la quantité $h_c\left(\prod_{k=m}^{n}(k+\sqrt{-c})\right)$. De même, pour trouver un diviseur (rationnel) non trivial de $L_{c,m,n}$, il suffit de trouver un multiple non trivial pour $h_c\left(\prod_{k=m}^{n}(k+\sqrt{-c})\right)$. C'est ce que nous allons faire dans la suite.
 
\section{Une identité de Bézout explicite}\label{sub22}

\noindent Dans toute la suite, fixons $c\in\mathbb{N^*}$ et $k\in\mathbb{N}$ et Posons:
\begin{align*}
P_{k}(X)&:=(X+\sqrt{-c})(X-1+\sqrt{-c})\cdots (X-k+\sqrt{-c})=A_{k}(X)+B_{k}(X)\sqrt{-c}, \\ \overline{P_{k}}(X)&:=(X-\sqrt{-c})(X-1-\sqrt{-c})\cdots (X-k-\sqrt{-c})=A_{k}(X)-B_{k}(X)\sqrt{-c},
\end{align*}
où l'on sous-entend que $A_{k},B_{k}\in \mathbb{Z}[X]$. Dans ce qui suit, nous trouvons un multiple non trivial pour l'entier strictement positif $h_c\left(P_{k}(n)\right)=\pgcd\left(A_{k}(n),B_{k}(n)\right)$. Pour ce faire, nous allons plutôt chercher deux suites polynomiales $\left(a_{k}(n)\right)_n$ et $\left(b_{k}(n)\right)_n$ de sorte que la suite polynomiale $\left(a_{k}(n)A_{k}(n)+b_{k}(n)B_{k}(n)\right)_n$ soit indépendante de $n$. Il est clair que cela conduit à la recherche de deux polynômes $U_{k},V_{k}\in \mathbb{Q}[X]$ qui satisfont l'identité de Bézout:
\[U_{k}(X)A_{k}(X)+V_{k}(X)B_{k}(X)=1.\]
Par suite, comme $A_{k}=\frac{P_{k}+\overline{P_{k}}}{2}$ et $B_{k}=\frac{P_{k}-\overline{P_{k}}}{2\sqrt{-c}}$, cela revient donc à chercher $\sigma_{k},\tau_{k}\in \mathbb{Q}\left(\sqrt{-c}\right)[X]$ tels que:
\[\sigma_{k}\left(X\right)P_{k}\left(X\right)+\tau_{k}\left(X\right)\overline{P_{k}}\left(X\right)=1.\]
Justifions d'abord l'existence de $\sigma_k$ et $\tau_k$. En désignant par $Z(P)$ l'ensemble de toutes les racines complexes d'un polynôme $P\in\mathbb{C}[X]$, on a clairement:
\[Z\left(P_{k}\right)=\lbrace -\sqrt{-c},1-\sqrt{-c},\dots,k-\sqrt{-c}\rbrace~\text{et}~Z\left(\overline{P_{k}}\right)=\lbrace \sqrt{-c},1+\sqrt{-c},\dots,k+\sqrt{-c}\rbrace,\]
donc $Z\left(P_{k}\right)\cap Z\left(\overline{P_{k}}\right)=\emptyset$; c'est-à-dire que $P_{k}$ et $\overline{P_{k}}$ n'ont pas de racines communes dans $\mathbb{C}$. Cela implique que $P_{k}$ et $\overline{P_{k}}$ sont premiers entre eux dans $\mathbb{C}[X]$; donc ils sont aussi premiers entre eux dans $\mathbb{Q}\left(\sqrt{-c}\right)[X]$. Il s'ensuit (en vertu du théorème de Bézout) qu'il existe $\sigma_{k},\tau_{k}\in \mathbb{Q}\left(\sqrt{-c}\right)[X]$ tels que: $\sigma_{k}P_{k}+\tau_{k}\overline{P_{k}}=1$, comme il fallait le prouver. 

Maintenant, pour trouver explicitement de tels $\sigma_k$ et $\tau_k$, nous avons besoin de la version plus précise du théorème de Bézout suivante:

\begin{thm}\label{p3}
Soient $\mathbb{K}$ un corps et $P$ et $Q$ deux polynômes non constants de $\mathbb{K}[X]$ tels que ${\pgcd}_{\mathbb{K}[X]}\left(P,Q\right)=1$. Alors, il existe un couple unique $\left(U,V\right)$ de polynômes de $\mathbb{K}[X]$, avec $\deg U<\deg Q$ et $\deg V<\deg P$, tel que:
\[PU+QV=1.\]
\end{thm}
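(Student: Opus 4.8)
The plan is to prove existence and uniqueness separately, both via the standard Euclidean-division mechanism in the polynomial ring $\mathbb{K}[X]$, which is a Euclidean domain. First, for existence: since $\pgcd_{\mathbb{K}[X]}(P,Q)=1$, Bézout's theorem (the basic version) gives polynomials $U_0,V_0\in\mathbb{K}[X]$ with $PU_0+QV_0=1$. These need not satisfy the degree bounds, so I would perform Euclidean division of $U_0$ by $Q$: write $U_0=Q\cdot W+U$ with $\deg U<\deg Q$ (and $U$ possibly zero, in which case the degree bound is read as $U=0$). Substituting, $1=P(QW+U)+QV_0=PU+Q(PW+V_0)$, so setting $V:=PW+V_0$ we get $PU+QV=1$ with $\deg U<\deg Q$. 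It then remains to check $\deg V<\deg P$ automatically: from $QV=1-PU$ we have $\deg(QV)=\deg(1-PU)\le\max(0,\deg P+\deg U)\le \deg P+\deg Q-1$ (using $\deg U\le\deg Q-1$ and $\deg P\ge 1$), hence $\deg V\le \deg P-1<\deg P$. (If $U=0$ then $QV=1$, impossible since $Q$ is nonconstant, so actually $U\ne 0$; one should note this to keep the degree arithmetic clean.)

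For uniqueness: suppose $(U_1,V_1)$ and $(U_2,V_2)$ both satisfy the conclusion. Subtracting, $P(U_1-U_2)+Q(V_1-V_2)=0$, i.e. $P(U_1-U_2)=-Q(V_1-V_2)$. Thus $Q$ divides $P(U_1-U_2)$; since $\pgcd_{\mathbb{K}[X]}(P,Q)=1$, Gauss's lemma for the Euclidean domain $\mathbb{K}[X]$ gives that $Q$ divides $U_1-U_2$. But $\deg(U_1-U_2)\le\max(\deg U_1,\deg U_2)<\deg Q$, so the only multiple of $Q$ of degree $<\deg Q$ is the zero polynomial; hence $U_1=U_2$. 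Then $Q(V_1-V_2)=0$ and, $\mathbb{K}[X]$ being an integral domain with $Q\ne 0$, we conclude $V_1=V_2$.

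I do not expect any genuine obstacle here; the only points requiring care are bookkeeping ones: handling the degenerate case $U=0$ correctly in the degree estimate (ruling it out because $Q$ is nonconstant), and being precise that ``$\deg U<\deg Q$'' is meant to allow $U$ to be a nonzero constant or even zero, with the usual convention $\deg 0=-\infty$. The core ideas — reduce a given Bézout pair modulo $Q$ to get the first degree bound, deduce the second degree bound from the relation itself, and use $\pgcd=1$ together with the degree constraint for uniqueness — are entirely routine applications of Euclidean division and the integral-domain property of $\mathbb{K}[X]$. In the sequel this theorem will be applied with $\mathbb{K}=\mathbb{Q}(\sqrt{-c})$, $P=P_k$, $Q=\overline{P_k}$, which have no common complex root and hence are coprime in $\mathbb{K}[X]$, to produce the explicit $\sigma_k,\tau_k$.
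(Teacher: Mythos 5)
Your proposal is correct and follows essentially the same route as the paper: both reduce a basic B\'ezout pair $(U_0,V_0)$ by Euclidean division by $Q$ to enforce $\deg U<\deg Q$, and both get uniqueness from a degree constraint forcing a difference of solutions to vanish. The only (harmless) variations are that the paper also divides $V_0$ by $-P$ and kills the cross term $PQ(V_1-U_1)$ by a degree count, whereas you read $\deg V<\deg P$ directly off $QV=1-PU$ (correctly ruling out $U=0$), and your uniqueness step invokes Gauss's lemma where the paper manipulates the two identities to show $P$ divides $V_*-V$.
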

\begin{proof}
Puisque ${\pgcd}_{\mathbb{K}[X]}\left(P,Q\right)=1$, alors (d'après le théorème de Bézout) il existe $U_0,V_0\in\mathbb{K}[X]$ tels que:
\[PU_0+QV_0=1.\]
Considérons la division euclidienne de $U_0$ par $Q$ et la division euclidienne de $V_0$ par $\left(-P\right)$ dans $\mathbb{K}[X]$:
\begin{align*}
U_0 &= U_1Q+U , \\
V_0 &= V_1\left(-P\right)+V,
\end{align*}
où $U_1,V_1,U,V\in\mathbb{K}[X]$, $\deg U<\deg Q$ et $\deg V<\deg \left(-P\right)=\deg P$. Donc, on a:
\[PU+QV = P\left(U_0-U_1Q\right)+Q\left(V_0+V_1P\right) = PQ\left(V_1-U_1\right)+PU_0+QV_0 = PQ\left(V_1-U_1\right)+1.\]
Si $V_1-U_1\neq 0$, alors la dernière égalité implique que $\deg\left(PU+QV\right)\geq \deg\left(PQ\right)$, ce qui est impossible, car $\deg U<\deg Q$ et $\deg V<\deg P$. D'où $V_1-U_1=0$, ce qui donne $PU+QV=1$. L'existence du couple $\left(U,V\right)$ requis par le théorème est prouvée. Il reste à prouver l'unicité de $\left(U,V\right)$. Soit $\left(U_*,V_*\right)$ un autre couple de polynômes de $\mathbb{K}[X]$, avec $\deg U_* <\deg Q$, $\deg V_* <\deg P$ et $PU_*+QV_*=1$ et montrons que $\left(U_*,V_*\right)=\left(U,V\right)$. On a:
\begin{align*}
P\left(UV_*-U_*V\right)=\left(PU\right)V_*-\left(PU_*\right)V=\left(1-QV\right)V_*-\left(1-QV_*\right)V=V_*-V,
\end{align*}
ce qui montre que le polynôme $\left(V_*-V\right)$ est multiple de $P$ dans $\mathbb{K}[X]$. Comme \linebreak $\deg\left(V_*-V\right)<\deg P$ (car: $\deg V<\deg P$ et $\deg V_*<\deg P$), on a nécessairement $V_*-V=0$; donc $V_*=V$. Il s'ensuit de cela que $PU_*=1-QV_*=1-QV=PU$. D'où $U_*=U$. On a par conséquent $\left(U_*,V_*\right)=\left(U,V\right)$, comme il fallait le prouver.\\ Ce qui complète la démonstration du théorème. 
\end{proof}

Dans notre contexte, l'application du théorème \ref{p3} donne le corollaire suivant:

\begin{coll}\label{ly}
Il existe un unique polynôme $\sigma_{k}\in\mathbb{C}[X]$, de degré $\leq k$, tel que: 
\[\sigma_{k}P_{k}+\overline{\sigma_{k}}\overline{P_{k}}=1.\]
\end{coll}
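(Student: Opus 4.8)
The plan is to deduce Corollary \ref{ly} from the uniqueness clause of Theorem \ref{p3} combined with the stability of the Bézout identity under complex conjugation of polynomials. First I would record that, as already observed just above the statement, $P_k$ and $\overline{P_k}$ are non-constant (each has degree $k+1$) and have no common root in $\mathbb{C}$, hence are coprime in $\mathbb{C}[X]$. Applying Theorem \ref{p3} over $\mathbb{K}=\mathbb{C}$ with $P=P_k$ and $Q=\overline{P_k}$ therefore produces a unique pair $(U,V)\in\mathbb{C}[X]^2$ with $\deg U<\deg\overline{P_k}=k+1$ and $\deg V<\deg P_k=k+1$ — that is, $\deg U\le k$ and $\deg V\le k$ — satisfying $P_kU+\overline{P_k}V=1$.

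Next I would apply polynomial conjugation to this identity. Since $\overline{\,\overline{P_k}\,}=P_k$, one has $\overline{P_kU+\overline{P_k}V}=\overline{P_k}\,\overline{U}+P_k\overline{V}$, and $\overline{1}=1$, so the identity becomes $P_k\overline{V}+\overline{P_k}\,\overline{U}=1$. The pair $(\overline{V},\overline{U})$ again satisfies the degree constraints $\deg\overline{V}\le k<\deg\overline{P_k}$ and $\deg\overline{U}\le k<\deg P_k$, so by the uniqueness part of Theorem \ref{p3} we must have $(\overline{V},\overline{U})=(U,V)$; in particular $V=\overline{U}$. Setting $\sigma_k:=U$ then yields a polynomial of degree $\le k$ with $\sigma_kP_k+\overline{\sigma_k}\,\overline{P_k}=1$, which establishes existence.

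For uniqueness, suppose $\sigma_k'\in\mathbb{C}[X]$ is another polynomial of degree $\le k$ with $\sigma_k'P_k+\overline{\sigma_k'}\,\overline{P_k}=1$. Then $(\sigma_k',\overline{\sigma_k'})$ is a pair of polynomials with $\deg\sigma_k'\le k<\deg\overline{P_k}$ and $\deg\overline{\sigma_k'}\le k<\deg P_k$ satisfying $P_k\sigma_k'+\overline{P_k}\,\overline{\sigma_k'}=1$; by the uniqueness in Theorem \ref{p3} it must coincide with $(U,V)=(\sigma_k,\overline{\sigma_k})$, whence $\sigma_k'=\sigma_k$.

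There is no serious obstacle here; the argument is essentially bookkeeping. The one point deserving a little care is checking that the conjugated pair $(\overline{V},\overline{U})$ still meets the degree hypotheses of Theorem \ref{p3}, which it does precisely because both bounds equal the same number $k+1$, since $\deg P_k=\deg\overline{P_k}=k+1$. One could equally run the whole argument inside $\mathbb{Q}(\sqrt{-c})[X]$ instead of $\mathbb{C}[X]$, as $P_k,\overline{P_k}$ lie in $\mathbb{Q}(\sqrt{-c})[X]$ and remain coprime there; this refinement is harmless but not needed for the statement as phrased.
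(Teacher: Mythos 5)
Your argument is correct and is essentially identical to the paper's own proof: both apply Theorem \ref{p3} over $\mathbb{K}=\mathbb{C}$ to the coprime pair $\left(P_k,\overline{P_k}\right)$, conjugate the resulting B\'ezout identity, and invoke the uniqueness clause to conclude that the second polynomial of the pair is the conjugate of the first. Your explicit verification of the uniqueness of $\sigma_k$ at the end is a small welcome addition, but the route is the same.
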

\begin{proof}
D'après le théorème \ref{p3} (appliqué à $\mathbb{K}=\mathbb{C}$ et $\left(P,Q\right)=\left(P_k,\overline{P_k}\right)$), il existe un unique couple $\left(\sigma_k,\tau_k\right)$ de polynômes de $\mathbb{C}[X]$, avec $\deg \sigma_k<\deg \overline{P_k}=k+1$ et $\deg \tau_k<\deg P_k=k+1$, tel que $\sigma_{k}P_{k}+\tau_{k}\overline{P_{k}}=1$. En prenant les conjugués dans $\mathbb{C}[X]$ des deux côtés de cette dernière égalité, on obtient: $\overline{\sigma_{k}}\overline{P_{k}}+\overline{\tau_{k}}P_{k}=1$, c'est-à-dire que $\overline{\tau_{k}}P_{k}+\overline{\sigma_{k}}\overline{P_{k}}=1$. Comme $\deg \overline{\tau_k}=\deg \tau_k<k+1$ et $\deg \overline{\sigma_k}=\deg \sigma_k<k+1$, cela montre que le couple $\left(\overline{\tau_k},\overline{\sigma_k}\right)$ satisfait à la propriété caractéristique du couple $\left(\sigma_k,\tau_k\right)$. D'où $\left(\overline{\tau_k},\overline{\sigma_k}\right)=\left(\sigma_k,\tau_k\right)$, ce qui revient à dire que $\tau_k=\overline{\sigma_k}$. On a par conséquent $\sigma_{k}P_{k}+\overline{\sigma_{k}}\overline{P_{k}}=1$. Ce qui complète la démonstration du corollaire.   
\end{proof}

Maintenant, nous allons déterminer l'expression explicite du polynôme $\sigma_k$ annoncé dans le corollaire \ref{ly}. En remplaçant dans l'identité $\sigma_{k}\left(X\right)P_{k}\left(X\right)+\overline{\sigma_{k}}\left(X\right)\overline{P_{k}}\left(X\right)=1$, l'indéterminée $X$ par les nombres $s+\sqrt{-c}$ $(s=0,1,\dots,k)$, on obtient:  
\begin{equation}\label{dec1}
\sigma_k\left(s+\sqrt{-c}\right)=\frac{1}{P_k\left(s+\sqrt{-c}\right)}~~(\forall s\in\left\lbrace 0,1,\dots,k\right\rbrace).
\end{equation}
(car: $\overline{P_{k}}\left(s+\sqrt{-c}\right)=0$ pour $s=0,1,\dots,k$). Ainsi, les valeurs de $\sigma_{k}$ sont connues en $(k+1)$ points équidistants de distance $1$. Comme $\deg\sigma_k\leq k$, cela suffit pour déterminer l'expression de $\sigma_{k}\left(X\right)$ en utilisant par exemple la formule d'interpolation de Newton. En procédant ainsi, on obtient: 
\[\sigma_k\left(X\right)=\sum_{\ell=0}^{k}\frac{\left(\Delta^{\ell}\sigma_k\right)\left(\sqrt{-c}\right)}{\ell!}\left(X-\sqrt{-c}\right)^{\underline{\ell}}.\]
Puis, en utilisant \eqref{dece}, on obtient:
\begin{align*}
\sigma_{k}\left(X\right)&=\sum_{\ell=0}^{k}\sum_{j=0}^{\ell}\frac{(-1)^{\ell-j}}{\ell!}\binom{\ell}{j}\sigma_{k}\left(j+\sqrt{-c}\right)\left(X-\sqrt{-c}\right)^{\underline{\ell}}\\
&=\sum_{\ell=0}^{k}\left\lbrace\frac{1}{\ell!}\sum_{j=0}^{\ell}(-1)^{\ell-j}\binom{\ell}{j}\sigma_{k}\left(j+\sqrt{-c}\right)\right\rbrace\left(X-\sqrt{-c}\right)^{\underline{\ell}}\\
&=\sum_{\ell=0}^{k}\left\lbrace\frac{1}{\ell!}\sum_{j=0}^{\ell}(-1)^{\ell-j}\binom{\ell}{j}\frac{1}{P_{k}\left(j+\sqrt{-c}\right)}\right\rbrace\left(X-\sqrt{-c}\right)^{\underline{\ell}}
\end{align*}  
(en vertu de \eqref{dec1}). Donc, en posant pour tout $\ell \in\left\lbrace 0,1,\dots,k\right\rbrace$:
\begin{equation}\label{dec2}
\Theta_{k,\ell}:=\frac{1}{\ell!}\sum_{j=0}^{\ell}(-1)^{\ell-j}\binom{\ell}{j}\frac{1}{P_{k}(j+\sqrt{-c})},
\end{equation}
il vient que:
\begin{equation}\label{dec3}
\sigma_{k}\left(X\right)=\sum_{\ell=0}^{k}\Theta_{k,\ell}\left(X-\sqrt{-c}\right)^{\underline{\ell}}.
\end{equation}
Il reste à simplifier les expressions des nombres $\Theta_{k,\ell}$ $(0\leq\ell\leq k)$. Pour ce faire, nous introduisons les fonctions rationnelles $R_{k,\ell}$ $(0\leq\ell\leq k)$, définies par:
\begin{equation}\label{dec4}
R_{k,\ell}(z):=\frac{1}{\ell!}\sum_{j=0}^{\ell}(-1)^{\ell-j}\binom{\ell}{j}\frac{1}{P_{k}(z+j+\sqrt{-c})},
\end{equation}
de sorte que l'on ait:
\begin{equation}\label{dec5}
\Theta_{k,\ell}=R_{k,\ell}(0)~~~~(\forall \ell\in\left\lbrace 0,1,\dots,k\right\rbrace).
\end{equation}
Le domaine commun d'holomorphie des fonctions $R_{k,\ell}$ $(0\leq\ell\leq k)$ est visiblement la région connexe et ouverte $D$ de $\mathbb{C}$, donnée par:
\[D:=\mathbb{C}\setminus\lbrace j - 2 \sqrt{-c}~;~j\in\mathbb{Z}~\text{et} -k\leq j\leq k\rbrace.\]
En utilisant le principe du prolongement analytique (voir \cite{rud}) ainsi que la théorie des fonctions gamma et bêta (que l'on peut trouver dans \cite{artin}), nous pouvons trouver une autre expression de $R_{k,\ell}$ $(0\leq\ell\leq k)$, qui est plus simple que celle de ci-dessus. On a la proposition suivante:

\begin{prop}\label{pp}
Pour tout $\ell\in\mathbb{N}$, avec $\ell\leq k$, et tout $z\in D$, on a:
\begin{equation}\label{dec6}
R_{k,\ell}(z)=\frac{(-1)^{k+\ell}}{z+2\sqrt{-c}}\binom{k+\ell}{\ell}\frac{1}{\left(k-2\sqrt{-c}-z\right)^{\underline{k}}\left(\ell+2\sqrt{-c}+z\right)^{\underline{\ell}}}.
\end{equation}
\end{prop}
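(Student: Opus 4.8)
The plan is to prove the closed form \eqref{dec6} by the standard trick for such rational-function identities: both sides are rational functions of $z$ that are holomorphic on the connected open set $D$, so by the principle of analytic continuation it suffices to check that they agree on an infinite subset of $D$ — concretely, on all sufficiently large negative integer shifts, or equivalently to verify the identity after multiplying through by the common denominator and comparing as polynomial identities. First I would rewrite the defining sum \eqref{dec4}. Since $P_k(X)=(X+\sqrt{-c})(X-1+\sqrt{-c})\cdots(X-k+\sqrt{-c})$, we have
\[
P_k(z+j+\sqrt{-c}) = \prod_{i=0}^{k}\bigl(z+j+2\sqrt{-c}-i\bigr) = (z+j+2\sqrt{-c})^{\underline{k+1}},
\]
a falling factorial of length $k+1$. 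Hence
\[
R_{k,\ell}(z)=\frac{1}{\ell!}\sum_{j=0}^{\ell}(-1)^{\ell-j}\binom{\ell}{j}\frac{1}{(z+j+2\sqrt{-c})^{\underline{k+1}}}.
\]
Writing $w:=z+2\sqrt{-c}$, the claim becomes the purely formal identity
\[
\frac{1}{\ell!}\sum_{j=0}^{\ell}(-1)^{\ell-j}\binom{\ell}{j}\frac{1}{(w+j)^{\underline{k+1}}}
=\frac{(-1)^{k+\ell}}{w}\binom{k+\ell}{\ell}\frac{1}{(k-w)^{\underline{k}}\,(\ell+w)^{\underline{\ell}}},
\]
which I would recognize as (a shifted form of) the classical finite-difference / partial-fraction identity for iterated differences of the function $w\mapsto 1/(w)^{\underline{k+1}}$.

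Next I would carry out the verification. There are two natural routes. The cleaner one is to use the $\ell$-th forward difference operator: note $\sum_{j=0}^{\ell}(-1)^{\ell-j}\binom{\ell}{j}g(w+j)=(\Delta^\ell g)(w)$, and for $g(w)=1/(w)^{\underline{k+1}}=\frac{(w-k-1)!}{(w)!}$ (interpreting via Gamma functions) one has the clean rule $\Delta\bigl(1/(w)^{\underline{m}}\bigr)=-m/(w)^{\underline{m+1}}$, which telescopes to
\[
\Delta^\ell\!\left(\frac{1}{(w)^{\underline{k+1}}}\right) = (-1)^\ell\,(k+1)(k+2)\cdots(k+\ell)\,\frac{1}{(w)^{\underline{k+\ell+1}}}
=(-1)^\ell\frac{(k+\ell)!}{k!}\cdot\frac{1}{(w)^{\underline{k+\ell+1}}}.
\]
Dividing by $\ell!$ gives $R_{k,\ell}(z)=(-1)^\ell\binom{k+\ell}{\ell}\,\dfrac{1}{(w)^{\underline{k+\ell+1}}}$ with $w=z+2\sqrt{-c}$. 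Finally I would reconcile this with the right-hand side of \eqref{dec6} by splitting the length-$(k+\ell+1)$ falling factorial around the factor $w$:
\[
(w)^{\underline{k+\ell+1}} = \underbrace{w(w-1)\cdots(w-k)}_{\text{from }w\text{ down}}\cdot\underbrace{(w-k-1)\cdots(w-k-\ell)}_{\ell\text{ further factors}},
\]
noticing that $w(w-1)\cdots(w-k) = w\cdot(-1)^k (k-w)^{\underline{k}}$ and that the remaining $\ell$ factors equal $(-1)^\ell(\ell+?\,)$ — here one rewrites $(w-k-1)\cdots(w-k-\ell)$ as $(-1)^\ell(k+\ell-w)(k+\ell-1-w)\cdots$, and after the substitution matches $(-1)^\ell(\ell+w)^{\underline{\ell}}$ up to the sign bookkeeping; collecting the signs produces the factor $(-1)^{k+\ell}$ and $1/w$ exactly as in \eqref{dec6}. (Throughout, $D$ is precisely the set where none of these factorial factors vanishes, so all manipulations are legitimate on $D$, and analytic continuation extends the identity from, say, large real $z$ to all of $D$.)

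The main obstacle is entirely bookkeeping rather than conceptual: one must be careful that the "function" $1/(w)^{\underline{k+1}}$ used in the difference calculus is the genuine meromorphic function $\Gamma(w-k)/\Gamma(w+1)$, so that the telescoping rule $\Delta\bigl(1/(w)^{\underline{m}}\bigr)=-m/(w)^{\underline{m+1}}$ is valid as an identity of meromorphic functions (not just for integer $w$), which is where the reference to the Gamma/Beta theory and to analytic continuation is needed; and then the sign/shift reconciliation between $(w)^{\underline{k+\ell+1}}$ and the product $(k-2\sqrt{-c}-z)^{\underline{k}}(\ell+2\sqrt{-c}+z)^{\underline{\ell}}$ must be done with care, since an off-by-one in the length of a falling factorial or a misplaced $(-1)^k$ would break the formula. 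Once those two points are handled, \eqref{dec6} follows, and this is exactly the explicit B\'ezout data $\sigma_k(X)=\sum_{\ell=0}^k \Theta_{k,\ell}(X-\sqrt{-c})^{\underline{\ell}}$ via \eqref{dec3} and \eqref{dec5} that the rest of the chapter needs.
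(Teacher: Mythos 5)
Your overall strategy --- rewrite $P_k(z+j+\sqrt{-c})$ as the falling factorial $(w+j)^{\underline{k+1}}$ with $w:=z+2\sqrt{-c}$, recognize the sum as $\frac{1}{\ell!}\Delta^{\ell}$ applied to $g(w)=1/(w)^{\underline{k+1}}$, telescope, and regroup --- is sound and genuinely different from the paper's proof, which instead writes $1/P_k(z+j+\sqrt{-c})$ as a Beta integral for $\Re(z)>k$, sums the binomial terms under the integral sign to produce $(t-1)^{\ell}$, evaluates the resulting Beta function, and only then invokes analytic continuation. Your route is an identity between rational functions of $w$, so it needs no convergence hypothesis and, strictly speaking, no continuation argument and no Gamma functions at all; that is a real simplification.

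However, the telescoping rule as you state it is false for the paper's convention $X^{\underline{n}}=X(X-1)\cdots(X-n+1)$, and the error is exactly the off-by-one you warn against without resolving. A direct computation gives
\[
\Delta\!\left(\frac{1}{(w)^{\underline{m}}}\right)=\frac{1}{w(w-1)\cdots(w-m+2)}\left(\frac{1}{w+1}-\frac{1}{w-m+1}\right)=\frac{-m}{(w+1)^{\underline{m+1}}},
\]
with the argument shifted by $1$, so that iterating yields
\[
\frac{1}{\ell!}\,\Delta^{\ell}\!\left(\frac{1}{(w)^{\underline{k+1}}}\right)=(-1)^{\ell}\binom{k+\ell}{\ell}\frac{1}{(w+\ell)^{\underline{k+\ell+1}}},
\]
not $(-1)^{\ell}\binom{k+\ell}{\ell}/(w)^{\underline{k+\ell+1}}$ as you wrote. (The corrected form is precisely the penultimate line of the paper's own computation.) With the shift in place the regrouping is clean: $(w+\ell)^{\underline{k+\ell+1}}=(\ell+w)^{\underline{\ell}}\cdot w\cdot(w-1)\cdots(w-k)$ and $(w-1)\cdots(w-k)=(-1)^{k}(k-w)^{\underline{k}}$, which gives \eqref{dec6} exactly. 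By contrast, your unshifted factorial splits as $w(w-1)\cdots(w-k)$ times $(w-k-1)\cdots(w-k-\ell)$, and that last block equals $(-1)^{\ell}(k+\ell-w)^{\underline{\ell}}$, not a multiple of $(\ell+w)^{\underline{\ell}}$; no sign bookkeeping converts one into the other, so the final reconciliation does not close as written. Once the shift is corrected, your argument is complete and arguably tidier than the paper's.
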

\begin{proof}
Soit $\ell \in \mathbb{N}$ tel que $\ell \leq k$. D'après le principe du prolongement analytique, il suffit de prouver la formule \eqref{dec6} pour $z\in\mathbb{C}$, tel que $\Re(z)>k$. Pour un tel $z$, on a:
\begin{align*}
R_{k,\ell}(z)&:=\frac{1}{\ell!}\sum_{j=0}^{\ell}(-1)^{\ell-j}\binom{\ell}{j}\frac{1}{P_{k}(z+j+\sqrt{-c})}\\
&=\frac{1}{\ell!}\sum_{j=0}^{\ell}(-1)^{\ell-j}\binom{\ell}{j}\frac{1}{\left(z+j+2\sqrt{-c}\right)\left(z+j-1+2\sqrt{-c}\right)\cdots\left(z+j-k+2\sqrt{-c}\right)}\\
&=\frac{1}{\ell!}\sum_{j=0}^{\ell}(-1)^{\ell-j}\binom{\ell}{j}\frac{\Gamma\left(z+j-k+2\sqrt{-c}\right)}{\Gamma\left(z+j+1+2\sqrt{-c}\right)}\\
&=\frac{1}{\ell!}\sum_{j=0}^{\ell}(-1)^{\ell-j}\binom{\ell}{j}\frac{1}{k!}\beta\left(z+j-k+2\sqrt{-c},k+1\right)\\
&=\frac{1}{k!\ell!}\sum_{j=0}^{\ell}\left[(-1)^{\ell-j}\binom{\ell}{j}\int_{0}^{1}t^{z+j-k-1+2\sqrt{-c}}(1-t)^{k}\mathrm{d}t\right] \\
&=\frac{1}{k!\ell!}\int_{0}^{1} t^{z-k-1+2\sqrt{-c}}(1-t)^{k}\left\lbrace \sum_{j=0}^{\ell}(-1)^{\ell-j}\binom{\ell}{j}t^{j}\right\rbrace \mathrm{d}t \\
&=\frac{1}{k!\ell!}\int_{0}^{1} t^{z-k-1+2\sqrt{-c}}(1-t)^{k}\left(t-1\right)^{\ell} \mathrm{d}t \\
&=\frac{(-1)^{\ell}}{k!\ell!}\int_{0}^{1} t^{z-k-1+2\sqrt{-c}}(1-t)^{k+\ell}\mathrm{d}t \\
&=\frac{(-1)^\ell}{k!\ell!}\beta\left(z-k+2\sqrt{-c},k+\ell+1\right)\\
&=\frac{(-1)^\ell}{k!\ell!}\frac{\Gamma\left(z-k+2\sqrt{-c}\right)\Gamma\left(k+\ell+1\right)}{\Gamma\left(z+\ell+1+2\sqrt{-c}\right)}\\
&=(-1)^\ell\binom{k+\ell}{\ell}\frac{1}{\left(z+\ell+2\sqrt{-c}\right)\left(z+\ell-1+2\sqrt{-c}\right)\cdots\left(z-k+2\sqrt{-c}\right)}\\
&=\frac{(-1)^{k+\ell}}{z+2\sqrt{-c}}\binom{k+\ell}{\ell}\frac{1}{\left(k-2\sqrt{-c}-z\right)^{\underline{k}}\left(\ell+2\sqrt{-c}+z\right)^{\underline{\ell}}},
\end{align*}
comme il fallait le prouver. Ce qui complète cette démonstration.
\end{proof}

De la proposition \ref{pp}, découle immédiatement une expression explicite plus simple de $\sigma_k\left(X\right)$. On a le corollaire suivant:

\begin{coll}\label{jan1}
On a:
\[\sigma_{k}\left(X\right)=\frac{1}{2\sqrt{-c}\left(k-2\sqrt{-c}\right)^{\underline{k}}}\sum_{\ell=0}^{k}\frac{(-1)^{k+\ell}\binom{k+\ell}{\ell}}{\left(\ell+2\sqrt{-c}\right)^{\underline{\ell}}}\left(X-\sqrt{-c}\right)^{\underline{\ell}}.\]
\end{coll}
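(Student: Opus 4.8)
The plan is to substitute $z=0$ into the closed form of Proposition~\ref{pp} and feed the result into the Newton interpolation formula \eqref{dec3}. First I would check that $0$ lies in the domain $D$ on which Proposition~\ref{pp} is valid: since $c\in\mathbb{N^*}$, the number $\sqrt{-c}$ is purely imaginary and nonzero, hence $2\sqrt{-c}$ is never an integer, so $0\neq j-2\sqrt{-c}$ for every integer $j$ with $-k\leq j\leq k$; thus $0\in D$ and the evaluation $R_{k,\ell}(0)$ is legitimate.

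Next, from \eqref{dec5} we have $\Theta_{k,\ell}=R_{k,\ell}(0)$ for all $\ell\in\{0,1,\dots,k\}$, and Proposition~\ref{pp} taken at $z=0$ gives
\[\Theta_{k,\ell}=\frac{(-1)^{k+\ell}}{2\sqrt{-c}}\binom{k+\ell}{\ell}\frac{1}{\left(k-2\sqrt{-c}\right)^{\underline{k}}\left(\ell+2\sqrt{-c}\right)^{\underline{\ell}}}\qquad(0\leq\ell\leq k).\]
Substituting this into \eqref{dec3} and pulling out the factor $\dfrac{1}{2\sqrt{-c}\left(k-2\sqrt{-c}\right)^{\underline{k}}}$, which does not depend on the summation index $\ell$, yields exactly the announced expression for $\sigma_k(X)$.

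There is essentially no genuine obstacle here: all the analytic work (the passage through the $\Gamma$/$\beta$ integral and the appeal to analytic continuation) has already been carried out in the proof of Proposition~\ref{pp}, so this corollary is a pure bookkeeping step. The only point deserving a word of care is the non-vanishing of the denominators occurring in the closed form, namely $\left(k-2\sqrt{-c}\right)^{\underline{k}}\neq 0$ and $\left(\ell+2\sqrt{-c}\right)^{\underline{\ell}}\neq 0$; both follow again from $\sqrt{-c}$ being purely imaginary, since then none of the factors $k-2\sqrt{-c}-i$ $(0\leq i\leq k-1)$ nor $\ell+2\sqrt{-c}-i$ $(0\leq i\leq \ell-1)$ can vanish. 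Hence the right-hand side is a well-defined polynomial in $X$ of degree $\leq k$, in agreement with Corollary~\ref{ly}.
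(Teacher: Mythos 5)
Your proposal is correct and coincides with the paper's own (one-line) argument: the corollary is obtained by evaluating the closed form of Proposition~\ref{pp} at $z=0$, identifying the result with $\Theta_{k,\ell}$ via \eqref{dec5}, and substituting into the Newton expansion \eqref{dec3}. Your added checks that $0$ lies in the domain $D$ and that the falling-factorial denominators do not vanish are sound and, if anything, make the step slightly more careful than the paper's.
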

\begin{proof}
Cela découle immédiatement des formules \eqref{dec3}, \eqref{dec5} et \eqref{dec6}. 
\end{proof}

\section{Multiples non triviaux de certaines valeurs de $h_c$}

Dans cette section, nous conservons les notations de la section \textsection\ref{sub22}. Du corollaire \ref{jan1}, nous déduisons le théorème suivant:

\begin{thm}\label{jan2}
Pour tous $c,n,m\in\mathbb{N^*}$, avec $m\leq n$, on a:
\[h_c\left(\prod_{\ell=m}^{n}\left(\ell+\sqrt{-c}\right)\right)~~\text{divise}~~c\prod_{\ell=1}^{n-m}(\ell^2+4c).\] 
\end{thm}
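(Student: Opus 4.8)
The strategy is to exploit the explicit Bézout identity $\sigma_k P_k + \overline{\sigma_k}\,\overline{P_k} = 1$ from Corollary~\ref{ly}, whose coefficients we control thanks to Corollary~\ref{jan1}. Set $k := n - m$ and specialize the indeterminate $X$ to the integer value $n$ in that identity. Writing $P_k(n) = A_k(n) + B_k(n)\sqrt{-c}$ with $A_k(n), B_k(n) \in \mathbb{Z}$, the product $\prod_{\ell=m}^{n}(\ell + \sqrt{-c})$ is exactly $P_k(n)$, so $h_c\!\left(\prod_{\ell=m}^{n}(\ell+\sqrt{-c})\right) = \pgcd(A_k(n), B_k(n))$. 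Evaluating the Bézout identity at $X = n$ gives $\sigma_k(n) P_k(n) + \overline{\sigma_k(n)}\,\overline{P_k(n)} = 1$; separating this into its rational and $\sqrt{-c}$-components produces a genuine Bézout relation in $\mathbb{Z}$ of the shape $a(n) A_k(n) + b(n) B_k(n) = D$ for suitable rational $a(n), b(n)$ and a suitable nonzero rational (in fact integer, after clearing denominators) constant $D$. The point is that $\pgcd(A_k(n), B_k(n))$ then divides $D$ once one clears the common denominator $\mathrm{den}$ of $a(n)$ and $b(n)$: more precisely $\pgcd(A_k(n),B_k(n))$ divides $\mathrm{den}\cdot D$.

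\textbf{Extracting the denominator.} The heart of the computation is therefore to read off, from the closed form
\[
\sigma_{k}\left(X\right)=\frac{1}{2\sqrt{-c}\left(k-2\sqrt{-c}\right)^{\underline{k}}}\sum_{\ell=0}^{k}\frac{(-1)^{k+\ell}\binom{k+\ell}{\ell}}{\left(\ell+2\sqrt{-c}\right)^{\underline{\ell}}}\left(X-\sqrt{-c}\right)^{\underline{\ell}},
\]
what rational integer one must multiply through by so that $\sigma_k(n)$, after splitting into real and imaginary parts, has integer-times-($1/\text{something small}$) coefficients. The denominators that appear are $2\sqrt{-c}$, the falling factorial $\left(k-2\sqrt{-c}\right)^{\underline{k}} = \prod_{j=0}^{k-1}(k-j-2\sqrt{-c})$, and the factors $\left(\ell+2\sqrt{-c}\right)^{\underline{\ell}} = \prod_{j=0}^{\ell-1}(\ell-j+2\sqrt{-c})$. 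Rationalizing each linear factor $r + 2\sqrt{-c}$ (with $r \in \mathbb{Z}$) by multiplying by its conjugate $r - 2\sqrt{-c}$ turns it into the rational integer $r^2 + 4c$. Since the factors $r$ that occur in $\left(k-2\sqrt{-c}\right)^{\underline{k}}\left(\ell+2\sqrt{-c}\right)^{\underline{\ell}}$ range exactly over $\{1,2,\dots,k\}$ (when $\ell = k$, which dominates), the denominator after rationalization divides $4c \cdot \prod_{r=1}^{k}(r^2+4c)$, the extra factor $4c$ coming from the leading $2\sqrt{-c}$ and bookkeeping with the $\sqrt{-c}$ in $(X-\sqrt{-c})^{\underline{\ell}}$ when $X = n \in \mathbb{Z}$. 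Tracking this carefully, and checking that a stray factor of $4$ can be removed, yields that $h_c(P_k(n))$ divides $c \cdot \prod_{r=1}^{k}(r^2+4c) = c\prod_{\ell=1}^{n-m}(\ell^2+4c)$, which is the claim.

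\textbf{The main obstacle.} The genuine difficulty is the denominator accounting: one must show that the various fractions, once $\sigma_k(n)$ and $\overline{\sigma_k(n)}$ are combined and rationalized, collapse to a common denominator that is \emph{exactly} $c\prod_{\ell=1}^{n-m}(\ell^2+4c)$ rather than something larger like $4^{k}c\prod(\ell^2+4c)$ or $(2c)^{k}\prod(\ell^2+4c)$. This requires noticing cancellations: the term $(X - \sqrt{-c})^{\underline{\ell}}$ evaluated at $X = n$ is a product of factors $n - j - \sqrt{-c}$ whose conjugate-pairing partially cancels against the $\left(\ell+2\sqrt{-c}\right)^{\underline{\ell}}$ in the denominator, and the powers of $2$ hidden in $2\sqrt{-c}$ and in $4c = (2\sqrt{-c})(2\sqrt{-c})^{*}$ must be shown not to accumulate beyond a single factor $c$. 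A clean way to organize this is to note that $h_c(P_k(n))$ divides $\mathrm{num}$ whenever $\sigma_k(n) = \mathrm{num}'/\mathrm{den}'$ in lowest terms and to argue directly that $2\sqrt{-c}\,(k-2\sqrt{-c})^{\underline{k}}\,\sigma_k(n) \in \mathbb{Z}[\sqrt{-c}]$, then take norms; the norm of $2\sqrt{-c}\,(k-2\sqrt{-c})^{\underline{k}}$ is $4c\prod_{r=1}^{k}(r^2+4c)$, and a final parity check reduces $4c$ to $c$. Once the bookkeeping is pinned down the rest is routine.
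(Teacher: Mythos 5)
Your overall strategy is the paper's: evaluate the B\'ezout identity $\sigma_kP_k+\overline{\sigma_k}\,\overline{P_k}=1$ at $X=n$, clear denominators, split into rational and $\sqrt{-c}$ components, and conclude that $\pgcd\left(A_k(n),B_k(n)\right)$ divides the cleared constant. But the step you yourself flag as ``the main obstacle'' --- the denominator accounting --- is exactly where your two proposed mechanisms fail, and it is the entire content of the proof. First, the claim that $2\sqrt{-c}\left(k-2\sqrt{-c}\right)^{\underline{k}}\sigma_k(n)\in\mathbb{Z}[\sqrt{-c}]$ is false: by the corollary \ref{jan1}, multiplying $\sigma_k$ by $2\sqrt{-c}\left(k-2\sqrt{-c}\right)^{\underline{k}}$ still leaves the factors $\left(\ell+2\sqrt{-c}\right)^{\underline{\ell}}$ in the denominators of each term, so taking norms of that multiplier proves nothing. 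Second, your alternative of rationalizing each linear factor by its conjugate overcounts: for the term of index $\ell$ you would introduce $\prod_{r=1}^{k}(r^2+4c)\cdot\prod_{r=1}^{\ell}(r^2+4c)$ in the denominator (the factors $r\in\{1,\dots,k\}$ occur \emph{twice} when $\ell=k$, not once as you assert), which is larger than the target by a whole extra product, and the ``stray factor of $4$'' and the extra product are never actually removed --- you only assert that ``tracking this carefully'' works.

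The missing idea is the choice of a single \emph{rational integer} multiplier that clears everything at once. The paper observes that, up to sign, $2d=\sqrt{-c}\cdot 2\sqrt{-c}\left(k-2\sqrt{-c}\right)^{\underline{k}}\left(k+2\sqrt{-c}\right)^{\underline{k}}$ with $d:=c\prod_{\ell=1}^{k}(\ell^2+4c)$, and that this works because $\left(k+2\sqrt{-c}\right)^{\underline{k}}\big/\left(\ell+2\sqrt{-c}\right)^{\underline{\ell}}=\prod_{r=\ell+1}^{k}\left(r+2\sqrt{-c}\right)\in\mathbb{Z}[\sqrt{-c}]$ for every $\ell\leq k$: one conjugate falling factorial simultaneously cancels all the per-term denominators instead of being rationalized term by term. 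Hence $2d\sigma_k=r_k+s_k\sqrt{-c}$ with $r_k,s_k\in\mathbb{Z}[X]$, and multiplying the B\'ezout identity by $2d$ and extracting the rational part gives $r_kA_k-cs_kB_k=d$ (the factor $2$ disappears because $z+\overline{z}=2\,\Re(z)$, not by a parity argument), whence $h_c\left(P_k(n)\right)\mid d$ exactly. Without this specific multiplier your argument only yields divisibility by a strictly larger quantity, so as written the proof does not establish the stated bound.
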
 
\begin{proof}
Soient $c,n,m\in\mathbb{N^*}$, avec $m\leq n$ et posons $k:=n-m\in\mathbb{N}$ et $d:=c\prod_{\ell=1}^{n-m}(\ell^2+4c)\in\mathbb{N^*}$. On a $\prod_{\ell=m}^{n}\left(\ell+\sqrt{-c}\right)=P_k(n)$; nous devons donc démontrer que $h_c\left(P_k(n)\right)$ divise $d$. En constatant que $2d=\sqrt{-c}\cdot 2\sqrt{-c}\left(k-2\sqrt{-c}\right)^{\underline{k}}\left(k+2\sqrt{-c}\right)^{\underline{k}}$, on obtient (en vertu du corollaire \ref{jan1}) que $2d\sigma_k\in\mathbb{Z}[\sqrt{-c}][X]$. Donc, il existe $r_k,s_k\in\mathbb{Z}[X]$ tels que:
\[2d\sigma_k\left(X\right)=r_k\left(X\right)+s_k\left(X\right)\sqrt{-c}.\] 
Par suite, l'identité polynomiale $\sigma_kP_k+\overline{\sigma_k}\overline{P_k}=1$ (donnée par le corollaire \ref{ly}) implique que $2d\sigma_k\cdot P_k+\overline{2d\sigma_k}\cdot \overline{P_k}=2d$. En substituant dans cette dernière égalité $P_k$ par $\left(A_k+B_k\sqrt{-c}\right)$ et $2d\sigma_k$ par $\left(r_k+s_k\sqrt{-c}\right)$, on obtient (en particulier) que:
\[r_kA_k-cs_kB_k=d,\] 
ce qui implique que ${\pgcd}_{\mathbb{Z}[X]}\left(A_k,B_k\right)$ divise $d$. On en déduit alors que $h_c\left(P_k(n)\right)={\pgcd}_{\mathbb{Z}}\left(A_k(n),B_k(n)\right)$ divise $d$, comme il fallait le prouver.
\end{proof}

\section{Nouvelles estimations pour le nombre $L_{c,m,n}$}

On a le théorème suivant:
\begin{thm}\label{t7}
Soient $c,m,n\in\mathbb{N^*}$ tels que $m\leq n$. Alors:
\begin{enumerate}
\item L'entier strictement positif $L_{c,m,n}$ est multiple du nombre rationnel:
\[\frac{\displaystyle\prod_{k=m}^{n}\left(k^2+c\right)}{c\cdot (n-m)!\displaystyle\prod_{k=1}^{n-m}\left(k^2+4c\right)}.\]\label{p1t7}
\item On a:
\[L_{c,m,n}\geq \lambda_1(c) \cdot m^2\frac{n!^2}{m!^2(n-m)!^3},\]
où $\lambda_1(c):=e^{-\frac{2\pi^{2}}{3} c}/c$.
\end{enumerate}
\end{thm}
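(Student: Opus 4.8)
The first part is essentially already in hand: by Corollaire \ref{p2}, the integer $L_{c,m,n}(n-m)!$ is a multiple (in $\mathbb{Z}$) of $\frac{\prod_{k=m}^{n}(k^2+c)}{h_c\left(\prod_{k=m}^{n}(k+\sqrt{-c})\right)}$, and by Théorème \ref{jan2} we know that $h_c\left(\prod_{k=m}^{n}(k+\sqrt{-c})\right)$ divides $c\prod_{\ell=1}^{n-m}(\ell^2+4c)$. Chaining these two divisibilities gives that $L_{c,m,n}(n-m)!$ is a multiple of $\frac{\prod_{k=m}^{n}(k^2+c)}{c\prod_{k=1}^{n-m}(k^2+4c)}$, which is exactly the rational number in point \ref{p1t7} multiplied by $(n-m)!$. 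So point \ref{p1t7} is just a restatement obtained by combining \ref{p2} and \ref{jan2}; I would write it out in two lines.

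For point 2, I would start from point \ref{p1t7}: since $L_{c,m,n}$ is a positive multiple of a positive rational, it is at least that rational, so
\[
L_{c,m,n}\ \geq\ \frac{\prod_{k=m}^{n}(k^2+c)}{c\,(n-m)!\prod_{k=1}^{n-m}(k^2+4c)}.
\]
Now the game is purely to estimate the right-hand side from below by $\lambda_1(c)\,m^2\,\frac{n!^2}{m!^2(n-m)!^3}$ with $\lambda_1(c)=e^{-2\pi^2 c/3}/c$. The $(n-m)!$ in the denominator is already there; I need to produce $\frac{n!^2}{m!^2(n-m)!^2}$ and $m^2$ from $\prod_{k=m}^n(k^2+c)$, and to control $\prod_{k=1}^{n-m}(k^2+4c)$ against $(n-m)!^2$ up to the constant. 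For the numerator: $\prod_{k=m}^{n}(k^2+c)\geq \prod_{k=m}^{n}k^2 = \left(\frac{n!}{(m-1)!}\right)^2 = m^2\left(\frac{n!}{m!}\right)^2$, which gives exactly the $m^2\,n!^2/m!^2$ factor with no loss. For the denominator product: $\prod_{k=1}^{n-m}(k^2+4c)=\left((n-m)!\right)^2\prod_{k=1}^{n-m}\left(1+\frac{4c}{k^2}\right)$, and $\prod_{k=1}^{n-m}\left(1+\frac{4c}{k^2}\right)\leq \prod_{k=1}^{\infty}\left(1+\frac{4c}{k^2}\right)$. The key step is the classical infinite product identity $\prod_{k=1}^{\infty}\left(1+\frac{x^2}{k^2}\right)=\frac{\sinh(\pi x)}{\pi x}$ applied at $x=2\sqrt{c}$; then one bounds $\frac{\sinh(2\pi\sqrt{c})}{2\pi\sqrt{c}}\leq \frac{e^{2\pi\sqrt{c}}}{2}\leq e^{2\pi^2 c/3}$ (using $2\pi\sqrt{c}\leq 2\pi^2 c/3$, i.e. $\sqrt{c}\le \pi c/3$, which holds for all $c\geq 1$ since $\pi/3>1$), so $\prod_{k=1}^{\infty}\left(1+\frac{4c}{k^2}\right)\le e^{2\pi^2 c/3}$.

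Putting these together: $\prod_{k=1}^{n-m}(k^2+4c)\leq (n-m)!^2\,e^{2\pi^2 c/3}$, hence
\[
L_{c,m,n}\ \geq\ \frac{m^2\,n!^2/m!^2}{c\,(n-m)!\cdot (n-m)!^2\,e^{2\pi^2 c/3}}\ =\ \frac{1}{c\,e^{2\pi^2 c/3}}\cdot m^2\,\frac{n!^2}{m!^2(n-m)!^3}\ =\ \lambda_1(c)\,m^2\,\frac{n!^2}{m!^2(n-m)!^3},
\]
which is the claim. The only genuinely nontrivial ingredient is recognizing the $\prod(1+x^2/k^2)=\sinh(\pi x)/(\pi x)$ product and then choosing the crude-but-clean bound $e^{2\pi\sqrt c}/2\le e^{2\pi^2 c/3}$ to land exactly on the stated $\lambda_1(c)$; everything else is bookkeeping. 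I expect the main (minor) obstacle to be pinning down the constant $\lambda_1(c)$ in precisely the form $e^{-2\pi^2 c/3}/c$ rather than some other admissible constant, which is why I would use the $\sqrt c\le \pi c/3$ reduction to absorb the $\sinh$ into a clean exponential and the factor $1/2$ into the inequality $\sinh t\le e^t/2$ for $t>0$.
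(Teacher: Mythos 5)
Your proposal is correct and follows essentially the same route as the paper: point \ref{p1t7} by chaining le corollaire \ref{p2} et le th\'eor\`eme \ref{jan2}, then point 2 by minorating $\prod_{k=m}^{n}(k^2+c)$ par $\prod_{k=m}^{n}k^2$ et en \'ecrivant $\prod_{k=1}^{n-m}(k^2+4c)=\left((n-m)!\right)^2\prod_{k=1}^{n-m}\bigl(1+\tfrac{4c}{k^2}\bigr)$. The only divergence is the bound $\prod_{k\geq 1}\bigl(1+\tfrac{4c}{k^2}\bigr)\leq e^{2\pi^2 c/3}$: you go through the identity $\prod_{k\geq 1}(1+x^2/k^2)=\sinh(\pi x)/(\pi x)$ and then weaken it via $\sqrt{c}\leq \pi c/3$, which is valid, whereas the paper simply applies $1+x\leq e^{x}$ term by term and sums $\sum_{k}4c/k^2=\tfrac{2\pi^2}{3}c$, landing on the constant directly; your detour is sharper at the intermediate stage but is then deliberately relaxed to the same $\lambda_1(c)$.
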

\begin{proof}
Le premier point du théorème est une conséquence immédiate du corollaire \ref{p2} et du théorème \ref{jan2}. Par suite, en utilisant l'inégalité $1+x\leq e^x$ $(\forall x\in\mathbb{R})$, on a:
\begin{align*}
\prod_{k=1}^{n-m}\left(k^2+4c\right)&=\prod_{k=1}^{n-m}k^2\left(1+\frac{4c}{k^2}\right)\\&={(n-m)!}^2\prod_{k=1}^{n-m}\left(1+\frac{4c}{k^2}\right)\\&\leq {(n-m)!}^2\prod_{k=1}^{n-m}e^{\frac{4c}{k^2}}\\&\leq {(n-m)!}^2\prod_{k=1}^{+\infty}e^{\frac{4c}{k^2}}\\&={(n-m)!}^2e^{\sum_{k=1}^{+\infty}\frac{4c}{k^2}}\\&={(n-m)!}^2e^{\frac{2\pi^2}{3}c}.
\end{align*}
D'où l'on a:
\begin{align*}
\frac{\prod_{k=m}^{n}\left(k^2+c\right)}{c\cdot (n-m)!\prod_{k=1}^{n-m}\left(k^2+4c\right)}&\geq \frac{\prod_{k=m}^{n}k^2}{c\cdot (n-m)!\cdot{(n-m)!}^2e^{\frac{2\pi^2}{3}c}}\\&= \frac{m^2\left(\frac{n!}{m!}\right)^2}{c\cdot {(n-m)!}^3e^{\frac{2\pi^2}{3}c}}\\&=\frac{e^{-\frac{2\pi^{2}}{3} c}}{c}\cdot m^2\frac{{n!}^2}{{m!}^2{(n-m)!}^3}.
\end{align*}
Le second point du théorème découle alors du premier et de cette dernière minoration. Le théorème est démontré.
\end{proof}

Nous allons maintenant imposer des conditions sur $m$ (en fonction de $n$) afin d'optimiser (resp. de simplifier) l'estimation du second point du théorème \ref{t7}. Pour ce faire, nous devons d'abord nous débarrasser des factoriels figurant dans cette estimation. On a le corollaire suivant:

\begin{coll}\label{t9}
Soient $c,n,m\in\mathbb{N^*}$ tels que $m<n$. Alors, on a:
\begin{equation}\label{jan3}
L_{c,m,n}\geq \lambda_{2}(c)\cdot\frac{nm}{\left(n-m\right)^{3/2}}\left( \frac{m^2}{(n-m)^3}\right)^{n-m} e^{3(n-m)},
\end{equation}
où $\lambda_2(c):=\frac{e^{-\frac{2\pi^{2}}{3} c -\frac{5}{12}}}{\left(2\pi\right)^{3/2}c}$.
\end{coll}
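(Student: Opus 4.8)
The statement to be proved is Corollary~\ref{t9}, which asks to pass from the factorial-heavy lower bound of Theorem~\ref{t7}(2), namely
\[L_{c,m,n}\geq \lambda_1(c)\, m^2\frac{n!^2}{m!^2(n-m)!^3},\]
to the cleaner bound \eqref{jan3}. The whole argument is an application of Stirling's formula to the three factorials appearing there. The plan is: start from Theorem~\ref{t7}(2), write $n!^2/(m!^2(n-m)!^3)$ and estimate each factorial. Since we need a \emph{lower} bound for the whole expression, I will use a lower bound for $n!$ (it appears in the numerator) and upper bounds for $m!$ and $(n-m)!$ (they appear in the denominator). The classical two-sided Stirling estimate already invoked in the proof of Theorem~\ref{chebbb}, namely $t^t e^{-t}\sqrt{2\pi t}\le t!\le t^t e^{-t}\sqrt{2\pi t}\,e^{1/(12t)}$, supplies exactly what is needed, with the crude simplification $e^{1/(12t)}\le e^{1/12}$ for $t\ge 1$ to control the upper bounds.

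First I would substitute: from the lower Stirling bound, $n!\ge n^n e^{-n}\sqrt{2\pi n}$, so $n!^2\ge n^{2n}e^{-2n}(2\pi n)$; from the upper bound, $m!\le m^m e^{-m}\sqrt{2\pi m}\,e^{1/12}$, hence $m!^2\le m^{2m}e^{-2m}(2\pi m)\,e^{1/6}$, and $(n-m)!\le (n-m)^{n-m}e^{-(n-m)}\sqrt{2\pi(n-m)}\,e^{1/12}$, hence $(n-m)!^3\le (n-m)^{3(n-m)}e^{-3(n-m)}(2\pi(n-m))^{3/2}e^{1/4}$. Dividing, the powers of $e$ collect into $e^{-2n+2m+3(n-m)}=e^{(n-m)}$ from the $e^{\pm t}$ factors — wait, more carefully: $e^{-2n}/(e^{-2m}e^{-3(n-m)})=e^{-2n+2m+3n-3m}=e^{n-m}$. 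Combined with the residual constant $e^{-1/6-1/4}=e^{-5/12}$, and the powers-of-$t$ factors $n^{2n}/(m^{2m}(n-m)^{3(n-m)})$ together with the $\sqrt{2\pi t}$ factors $(2\pi n)/((2\pi m)(2\pi(n-m))^{3/2})=\frac{1}{(2\pi)^{3/2}}\cdot\frac{n}{m(n-m)^{3/2}}$. Collecting everything and multiplying by the leading $\lambda_1(c)m^2=e^{-2\pi^2c/3}m^2/c$ gives
\[L_{c,m,n}\ \ge\ \frac{e^{-2\pi^2c/3-5/12}}{(2\pi)^{3/2}c}\cdot\frac{nm}{(n-m)^{3/2}}\cdot\frac{n^{2n}}{m^{2m}(n-m)^{3(n-m)}}\cdot e^{n-m}.\]
It remains to match this with the claimed form, which has the factor $\big(m^2/(n-m)^3\big)^{n-m}e^{3(n-m)}$. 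The only genuine manipulation is to rewrite $n^{2n}/m^{2m}$: since $m<n$, write $n=m+(n-m)$ and use $n^{2n}\ge n^{2m}\cdot n^{2(n-m)}\ge m^{2m}\cdot$ ... actually the cleanest route is to bound $n^{2n}\ge m^{2m}(n-m)^{2(n-m)}$ using $n\ge m$ and $n\ge n-m$ termwise in the exponent split $2n=2m+2(n-m)$; then $n^{2n}/(m^{2m}(n-m)^{3(n-m)})\ge (n-m)^{2(n-m)}/(n-m)^{3(n-m)}\cdot m^{2m}/m^{2m}=(n-m)^{-(n-m)}$ — that loses the $m^{2(n-m)}$ I want. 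Better: split $2n=2m+2(n-m)$, use $n^{2m}\ge m^{2m}$ to cancel the $m^{2m}$ downstairs and $n^{2(n-m)}\ge m^{2(n-m)}$ (valid since $n\ge m$) to produce the needed $m^{2(n-m)}$; then $n^{2n}/m^{2m}\ge m^{2(n-m)}$, so $n^{2n}/(m^{2m}(n-m)^{3(n-m)})\ge m^{2(n-m)}/(n-m)^{3(n-m)}=\big(m^2/(n-m)^3\big)^{n-m}$, and the $e^{n-m}$ combines with nothing further — hmm, the target has $e^{3(n-m)}$, not $e^{n-m}$.

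So the subtle point — and the one I expect to be the main obstacle — is the exponent on $e$: the Stirling bookkeeping naturally yields $e^{n-m}$, but \eqref{jan3} asks for $e^{3(n-m)}$. The resolution must come from \emph{not} discarding the full strength of $n^{2n}$ in the numerator. The right split is $2n=2m+2(n-m)$ but one should keep $n^{2(n-m)}=\big(n/(n-m)\big)^{2(n-m)}(n-m)^{2(n-m)}$ and, since $n=m+(n-m)\ge$ \emph{nothing useful unless} one instead writes $n^{2(n-m)}\ge\big((n-m)e\big)^{?}$... The honest mechanism: use $n!\ge (n-m)!\,m!\,\binom{n}{m}$-type reasoning is circular. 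The actual source of the extra $e^{2(n-m)}$ is that $n^{2n}e^{-2n}$ compared against $m^{2m}e^{-2m}$ and $(n-m)^{2(n-m)}e^{-2(n-m)}$ leaves, via the inequality $\frac{a^a b^b}{(a+b)^{a+b}}\le\frac{1}{?}$ reversed — precisely, $n^{2n}/(m^{2m}(n-m)^{2(n-m)})\cdot e^{-2n+2m+2(n-m)}=n^{2n}/(m^{2m}(n-m)^{2(n-m)})$ and one uses the elementary bound $\binom{n}{m}^2\ge$ something; concretely $\frac{n^n}{m^m(n-m)^{n-m}}\ge\binom{n}{m}\ge\frac{n^{n-m}}{(n-m)^{n-m}}$... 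I would instead simply apply Stirling to $\binom{n}{m}=\frac{n!}{m!(n-m)!}$ directly: the target \eqref{jan3} is, up to the explicit constant, $\lambda_1(c)m^2\binom{n}{m}^2/(n-m)!$, and estimating $\binom{n}{m}\ge c'\,n^{n}/(m^m(n-m)^{n-m}\sqrt{mn/(n-m)})$ and $(n-m)!\le C(n-m)^{n-m}e^{-(n-m)}\sqrt{n-m}$ gives $\binom{n}{m}^2/(n-m)!\gg \frac{n^{2n}}{m^{2m}(n-m)^{2(n-m)}}\cdot\frac{e^{n-m}}{(n-m)^{3(n-m)}}\cdot(\text{poly})$; then rewriting $n^{2n}/(m^{2m}(n-m)^{2(n-m)})$ via the AM-GM-type identity and absorbing yields the $e^{3(n-m)}$. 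In the write-up I would present this as a single clean chain of inequalities, being careful to keep all $e$-powers and all $\sqrt{\,\cdot\,}$-powers explicit so that the final constant is exactly $\lambda_2(c)=e^{-2\pi^2c/3-5/12}/((2\pi)^{3/2}c)$, and I would note that the case $m=n$ is excluded precisely because $(n-m)$ appears in denominators. The routine Stirling algebra I would not grind through in detail; the one line deserving care is the passage $n^{2n}\rightsquigarrow m^{2(n-m)}e^{2(n-m)}(n-m)^{2(n-m)}/(\text{something})$, i.e. the bound $n^{2n}\ge m^{2m}(em)^{2(n-m)}$ fails, so the correct statement is $\frac{n^{2n}}{m^{2m}(n-m)^{2(n-m)}}\ge e^{2(n-m)}$ which follows from $\big(1+\tfrac{n-m}{m}\big)^{m}\ge$ ... — this last inequality, $\frac{n^n}{m^m(n-m)^{n-m}}\ge e^{\,n-m}$ equivalently $\binom{n}{m}\ge(n/(n-m))^{n-m}\cdot(\text{lower order})$, is the crux and is an easy consequence of $\log$-convexity / the bound $\log\binom{n}{m}\ge (n-m)\log\frac{n}{n-m}+m\log\frac{n}{m}\ge(n-m)$ when $m\ge$ a constant; I would verify small cases separately.
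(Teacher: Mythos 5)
Your Stirling bookkeeping is correct and coincides with the paper's route: starting from the second point of Theorem \ref{t7} and the two-sided estimate $k^ke^{-k}\sqrt{2\pi k}\leq k!\leq k^ke^{-k}\sqrt{2\pi k}\,e^{1/(12k)}$, one does arrive at
\[L_{c,m,n}\geq \lambda_1(c)\,(2\pi)^{-3/2}e^{-5/12}\cdot\frac{nm}{(n-m)^{3/2}}\cdot\frac{n^{2n}}{m^{2m}(n-m)^{3(n-m)}}\cdot e^{n-m},\]
which is exactly the paper's intermediate inequality. The difficulty is entirely in the last step, and your proposal does not close it. The factorisation
\[\frac{n^{2n}}{m^{2m}(n-m)^{3(n-m)}}=\left(\frac{n}{m}\right)^{2n}\left(\frac{m^2}{(n-m)^3}\right)^{n-m}\]
is an exact identity (write $n^{2n}=(n/m)^{2n}\,m^{2m}\cdot m^{2(n-m)}$), and the missing factor $e^{2(n-m)}$ comes from
\[\left(\frac{n}{m}\right)^{2n}=e^{-2n\log(m/n)}\geq e^{-2n\left(\frac{m}{n}-1\right)}=e^{2(n-m)},\]
i.e. from $\log x\leq x-1$. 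This is precisely the bound $n^{2n}\geq m^{2m}(em)^{2(n-m)}$ that you declare to ``fail'': it does not fail; it is equivalent to $(n/m)^{2n}\geq e^{2(n-m)}$ and holds for all $1\leq m\leq n$.

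By contrast, the substitute you settle on as the ``correct statement'', namely $\frac{n^n}{m^m(n-m)^{n-m}}\geq e^{n-m}$ (equivalently $\frac{n^{2n}}{m^{2m}(n-m)^{2(n-m)}}\geq e^{2(n-m)}$), is false: for $m=1$ the left-hand side equals $n\left(\frac{n}{n-1}\right)^{n-1}\sim en$, which is eventually much smaller than $e^{n-1}$. Moreover, even on the range where that inequality does hold, inserting it into your chain yields $e^{3(n-m)}/(n-m)^{n-m}$ in place of $\left(m^2/(n-m)^3\right)^{n-m}e^{3(n-m)}$, and passing from the former to the latter would additionally require $n-m\geq m$, which is not assumed. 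So the proposal has a genuine gap at the crux step; replacing your final manipulation by the exact identity and the $\log x\leq x-1$ bound above gives the paper's argument verbatim and produces the stated constant $\lambda_2(c)=\lambda_1(c)(2\pi)^{-3/2}e^{-5/12}$.
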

\begin{proof}
En partant de la minoration du second point du théorème \ref{t7} pour $L_{c,m,n}$ et en estimant chacun des factoriels qui y figurent en se servant de la double inégalité bien connue:
\[k^{k}e^{-k}\sqrt{2\pi k}\leq k!\leq k^{k}e^{-k}\sqrt{2\pi k} e^{\frac{1}{12k}}~~~~(\forall k\in\mathbb{N^*})\]
(que l'on peut trouver dans \cite[Problem 1.15]{kon}), on obtient:
\[L_{c,m,n} \geq \lambda_1(c) (2 \pi)^{- 3/2} \cdot \frac{n m}{(n - m)^{3 / 2}} \cdot \left(\frac{n}{m}\right)^{2 n} \cdot \left(\frac{m^2}{(n - m)^3}\right)^{n - m} e^{n - m} \cdot e^{- \frac{1}{6 m} - \frac{1}{4 (n - m)}}.\]
Par suite, comme $e^{- \frac{1}{6 m} - \frac{1}{4 (n - m)}} \geq e^{- \frac{1}{6} - \frac{1}{4}} = e^{- \frac{5}{12}}$ et $\left(\frac{n}{m}\right)^{2 n} = e^{- 2 n \log(\frac{m}{n})} \geq e^{- 2 n (\frac{m}{n} - 1)} = e^{2 (n - m)}$, on en déduit que:
\[L_{c , m , n} \geq \lambda_1(c) (2 \pi)^{- 3/2} e^{- 5 / 12} \cdot \frac{n m}{(n - m)^{3 / 2}} \left(\frac{m^2}{(n - m)^3}\right)^{n - m} e^{3 (n - m)} ,\]
comme il fallait le prouver.
\end{proof}

Dans le contexte du corollaire \ref{t9}, en supposant que $(n-m)$ est d'un ordre de grandeur $n^\alpha$ pour $n$ assez grand (où $0<\alpha<1$), alors la partie dominante de la minoration \eqref{jan3} de $L_{c,m,n}$ est $\left(\frac{m^2}{(n-m)^3}\right)^{n-m}$, qui est d'ordre de grandeur $n^{(2-3\alpha)n^{\alpha}}$. Ainsi, pour avoir une estimation optimale, nous devons prendre $\alpha$ inférieurement proche de $\frac{2}{3}$ (une étude de la fonction $\alpha\longmapsto (2-3\alpha)n^{\alpha}$ montre que la meilleure valeur de $\alpha$ est $\alpha=\frac{2}{3}-\frac{1}{\log n}$). Un résultat concret spécifiant ce raisonnement heuristique est donné par le théorème suivant:
 
\begin{thm}\label{c5}
Soient $c,m,n\in\mathbb{N^*}$ tels que $m\leq n - \frac{1}{2}n^{2/3}$. Alors, on a:
\[L_{c,m,n}\geq \lambda_3(c)\cdot\left(n-\frac{1}{2}n^{2/3}\right)\cdot\left(2e^{3}\right)^{\left\lfloor \frac{1}{2}n^{2/3}\right\rfloor},\]
où $\lambda_3(c):=\frac{e^{-\frac{2\pi^2}{3} c -\frac{5}{12}}}{\pi^{3/2} c}$.
\end{thm}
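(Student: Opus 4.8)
The strategy is to specialize Corollary~\ref{t9} to the particular relation $m \leq n - \frac{1}{2}n^{2/3}$ and to exhibit that, under this hypothesis, the dominant factor $\left(\frac{m^2}{(n-m)^3}\right)^{n-m}e^{3(n-m)}$ appearing in \eqref{jan3} is bounded below by $\left(2e^3\right)^{\lfloor \frac12 n^{2/3}\rfloor}$, up to the polynomial factor $\frac{nm}{(n-m)^{3/2}}$ which will absorb the remaining terms. First I would dispose of the trivial edge case $m = n$ separately (where the left side is $n^2+c$ and the claimed bound with $\lfloor \frac12 n^{2/3}\rfloor$ can only improve matters for small $n$, handled by direct computation if needed), so that we may assume $m < n$ and apply Corollary~\ref{t9}.

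The heart of the argument is the following monotonicity/comparison observation: among all integers $m$ with $1 \leq m \leq n - \frac12 n^{2/3}$, the right-hand side of \eqref{jan3} — viewed, for fixed $n$, as a function of $j := n-m$ — should be analyzed. Set $j_0 := \lfloor \frac12 n^{2/3}\rfloor$, so the hypothesis says $j \geq \frac12 n^{2/3} \geq j_0$, equivalently $m \leq n - j_0$. The plan is to show that $\frac{m^2}{(n-m)^3} \geq 2$ exactly when $n - m \leq \left(\frac{m^2}{2}\right)^{1/3}$; since $m \geq \frac12 n^{2/3}$ implies... more carefully, since $n - m \leq n$ and $m$ is comparably large, one checks $(n-m)^3 \leq \frac{m^2}{2}$ holds on the relevant range, giving $\frac{m^2}{(n-m)^3} \geq 2$. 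Consequently each of the $j = n-m$ factors contributes at least $2e^3$, so
\[
\left(\frac{m^2}{(n-m)^3}\right)^{n-m} e^{3(n-m)} \;\geq\; \left(2e^3\right)^{n-m} \;\geq\; \left(2e^3\right)^{j_0},
\]
the last step because $2e^3 > 1$ and $n - m \geq j_0$. The remaining prefactor $\lambda_2(c)\cdot\frac{nm}{(n-m)^{3/2}}$ then needs to be bounded below by $\lambda_3(c)\cdot\left(n - \frac12 n^{2/3}\right)$; since $m \geq n - \frac12 n^{2/3}$ we have $nm \geq n\left(n-\frac12 n^{2/3}\right)$, and $(n-m)^{3/2} \leq n^{3/2}$, so $\frac{nm}{(n-m)^{3/2}} \geq \frac{1}{n^{1/2}}\left(n - \frac12 n^{2/3}\right) \geq \ldots$; here I must be careful that the factor $n^{-1/2}$ does not spoil the bound, and indeed the definitions $\lambda_2(c) = \frac{e^{-\frac{2\pi^2}{3}c - \frac{5}{12}}}{(2\pi)^{3/2}c}$ versus $\lambda_3(c) = \frac{e^{-\frac{2\pi^2}{3}c-\frac{5}{12}}}{\pi^{3/2}c}$ are arranged so that $\lambda_3(c) = 2^{3/2}\lambda_2(c)$, which should compensate. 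The precise bookkeeping of these constants and of the exact power of $n$ is where I expect to need to track the inequality $\frac{m^2}{(n-m)^3} \geq 2$ tightly.

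The main obstacle is the verification that $(n-m)^3 \leq \frac{m^2}{2}$ genuinely holds under $m \leq n - \frac12 n^{2/3}$ together with $m \geq 1$ — it is not automatic, since for very small $m$ (say $m$ of order $1$ while $n$ is large) the hypothesis $m \leq n - \frac12 n^{2/3}$ is satisfied but $\frac{m^2}{(n-m)^3}$ is tiny, not $\geq 2$. This means the naive "each factor is $\geq 2e^3$" step fails in general, and the argument must instead be rescued: one should use the freedom that $L_{c,m,n}$ is nondecreasing in $m$ (adding terms can only enlarge the lcm) to reduce to the single worst-needed value $m = n - j_0$ (or the largest $m$ allowed), and only there verify $\frac{m^2}{(n-m)^3} = \frac{(n-j_0)^2}{j_0^3} \geq 2$, which for $j_0 \approx \frac12 n^{2/3}$ gives $\frac{(n - \frac12 n^{2/3})^2}{\frac18 n^2} \approx 8 \geq 2$ for $n$ large, with small $n$ checked by hand. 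So the corrected plan is: (i) monotonicity in $m$ reduces to $m = \max\{1, \lfloor n - \frac12 n^{2/3}\rfloor\}$; (ii) apply Corollary~\ref{t9} at that value with $j = n - m \leq j_0 + O(1)$ but also $j \geq$ roughly $j_0$; (iii) verify $\frac{m^2}{j^3} \geq 2$ there for $n$ beyond an explicit threshold; (iv) reassemble the constants using $\lambda_3 = 2^{3/2}\lambda_2$ and the polynomial estimates; (v) settle finitely many small $n$ by direct computation. Step (iii), the arithmetic of ensuring $\frac{m^2}{j^3}\geq 2$ cleanly with the floor functions, is the delicate point.
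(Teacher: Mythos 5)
Your corrected plan (steps (i)--(v) at the end) is essentially the paper's proof: by monotonicity of $L_{c,m,n}$ in $m$ one reduces to the single value $m_n:=n-\left\lfloor \frac{1}{2}n^{2/3}\right\rfloor\geq m$, applies Corollary \ref{t9} there with $n-m_n=\left\lfloor \frac{1}{2}n^{2/3}\right\rfloor$, and checks that the base satisfies $\frac{m_n^2}{(n-m_n)^3}\geq 8\left(1-\frac{1}{2n^{1/3}}\right)^2\geq 2$; the paper obtains this last inequality for all $n\geq 1$ from $1-\frac{1}{2n^{1/3}}\geq\frac{1}{2}$, so no large-$n$ threshold is needed and only $n<3$ is checked directly. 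You also correctly diagnose that the naive ``each factor is $\geq 2e^3$ for every admissible $m$'' argument fails for small $m$ and that monotonicity is the rescue.

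One piece of your bookkeeping does not close as written: bounding $(n-m)^{3/2}\leq n^{3/2}$ leaves a stray factor $n^{-1/2}$ in the prefactor, and the constant relation $\lambda_3(c)=2^{3/2}\lambda_2(c)$ cannot absorb a quantity tending to $0$. The fix is automatic once you work at $m=m_n$: there $n-m_n=\left\lfloor\frac{1}{2}n^{2/3}\right\rfloor\leq\frac{1}{2}n^{2/3}$, hence $(n-m_n)^{3/2}\leq\left(\frac{1}{2}n^{2/3}\right)^{3/2}=2^{-3/2}n$ and
$\frac{nm_n}{(n-m_n)^{3/2}}\geq 2^{3/2}m_n\geq 2^{3/2}\left(n-\frac{1}{2}n^{2/3}\right)$,
which is precisely where the factor $2^{3/2}$ linking $\lambda_2$ to $\lambda_3$ is consumed. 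With that replacement your argument coincides with the one in the text.
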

\begin{proof}
Un simple calcul montre que le résultat du théorème est vrai pour $n<3$. Supposons pour la suite que $n\geq 3$ et posons $m_n:=n-\left\lfloor \frac{1}{2}n^{2/3}\right\rfloor < n$; donc $m \leq m_n$. D'après le corollaire \ref{t9}, on a:
\begin{align*}
L_{c,m_n,n}&\geq \lambda_2(c)\frac{n\left(n-\left\lfloor \frac{1}{2}n^{2/3}\right\rfloor\right)}{\left\lfloor \frac{1}{2}n^{2/3}\right\rfloor^{3/2}}\left( \frac{\left(n-\left\lfloor \frac{1}{2}n^{2/3}\right\rfloor\right)^2}{\left\lfloor \frac{1}{2}n^{2/3}\right\rfloor^3}\right)^{\left\lfloor \frac{1}{2}n^{2/3}\right\rfloor} e^{3\left\lfloor \frac{1}{2}n^{2/3}\right\rfloor}\\&\geq \lambda_2(c)\frac{n\left(n-\frac{1}{2}n^{2/3}\right)}{\left(\frac{1}{2}n^{2/3}\right)^{3/2}}\left(\frac{\left(n-\frac{1}{2}n^{2/3}\right)^2}{\left(\frac{1}{2}n^{2/3}\right)^3}\right)^{\left\lfloor \frac{1}{2}n^{2/3}\right\rfloor}e^{3\left\lfloor \frac{1}{2}n^{2/3}\right\rfloor}\\&=2^{3/2}\lambda_2(c)\left(n-\frac{1}{2}n^{2/3}\right)\left[8\left(1-\frac{1}{2n^{1/3}}\right)^2\right]^{\left\lfloor \frac{1}{2}n^{2/3}\right\rfloor}e^{3\left\lfloor \frac{1}{2}n^{2/3}\right\rfloor}.
\end{align*}
Comme $1-\frac{1}{2n^{1/3}}\geq \frac{1}{2}$ (car: $n\geq 1$), on en déduit que:
\[L_{c,m_n,n}\geq 2^{3/2}\lambda_2(c)\left(n-\frac{1}{2}n^{2/3}\right)\left(2e^3\right)^{\left\lfloor \frac{1}{2}n^{2/3}\right\rfloor}.\] 
Le résultat requis découle du fait que $L_{c,m,n}\geq L_{c,m_n,n}$ (car: $m\leq m_n$).
\end{proof}

Par une autre façon, nous tirons du corollaire \ref{t9} le théorème suivant, qui complète (d'une certaine manière) le théorème \ref{c5} ci-dessus.

\begin{thm}\label{c6++}
Soient $c,m,n\in\mathbb{N^*}$ tels que $n-\frac{1}{2}n^{2/3} \leq m \leq n$. Alors, on a:
\[L_{c,m,n}\geq \lambda_2(c)\cdot ne^{3(n-m)},\]
où $\lambda_2(c)$ est déjà défini dans le corollaire \ref{t9}. 
\end{thm}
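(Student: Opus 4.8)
The plan is to reduce everything to Corollary~\ref{t9}, which already covers the range $m<n$, and to dispatch the endpoint $m=n$ by a direct (trivial) estimate. First I would handle $m=n$: here $L_{c,n,n}=n^2+c\geq n$, whereas $\lambda_2(c)=e^{-\frac{2\pi^2}{3}c-\frac{5}{12}}/\big((2\pi)^{3/2}c\big)<1$ (since $(2\pi)^{3/2}c>1$ and the exponential factor is $<1$); consequently $L_{c,n,n}\geq n>\lambda_2(c)\cdot n=\lambda_2(c)\cdot n\,e^{3(n-m)}$, so the inequality holds in this case.

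Assume now $m<n$ and put $k:=n-m$, so that $k\geq 1$ and, by the hypothesis $n-\tfrac12 n^{2/3}\leq m$, also $k\leq\tfrac12 n^{2/3}$. Corollary~\ref{t9} gives
\[
L_{c,m,n}\geq \lambda_2(c)\cdot\frac{nm}{k^{3/2}}\left(\frac{m^2}{k^3}\right)^{k}e^{3k},
\]
so it suffices to prove that $\dfrac{m}{k^{3/2}}\left(\dfrac{m^2}{k^3}\right)^{k}\geq 1$, i.e. that $m^{2k+1}\geq k^{3k+3/2}$. I expect this to collapse to the single inequality $m\geq k^{3/2}$: once that is known, raising both sides to the power $2k+1$ yields $m^{2k+1}\geq\big(k^{3/2}\big)^{2k+1}=k^{3k+3/2}$, which is exactly what is needed, and then the displayed chain of inequalities gives $L_{c,m,n}\geq\lambda_2(c)\cdot n\,e^{3(n-m)}$.

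It remains to verify $m\geq k^{3/2}$. From $k\leq\tfrac12 n^{2/3}$ I get $2k\leq n^{2/3}$, hence $n\geq(2k)^{3/2}=2\sqrt2\,k^{3/2}$, and therefore
\[
m=n-k\geq 2\sqrt2\,k^{3/2}-k.
\]
Since $k$ is a positive integer we have $k\leq k^{3/2}$, so $m\geq 2\sqrt2\,k^{3/2}-k^{3/2}=(2\sqrt2-1)k^{3/2}\geq k^{3/2}$ because $2\sqrt2-1>1$. This completes the plan.

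As for difficulty: there is essentially no real obstacle here, the argument being pure bookkeeping built on Corollary~\ref{t9}. The only two points that require a moment's attention are that Corollary~\ref{t9} is stated only for $m<n$, so the case $m=n$ must be separated out as above, and that the integrality $k\geq 1$ is precisely what makes the elementary bound $k\leq k^{3/2}$, and hence the final comparison $m\geq k^{3/2}$, go through.
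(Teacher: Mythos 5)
Your proof is correct and follows essentially the same route as the paper: both handle $m=n$ trivially, then reduce to Corollary \ref{t9} and show that the factor $\frac{m}{(n-m)^{3/2}}\left(\frac{m^2}{(n-m)^3}\right)^{n-m}$ is at least $1$, which comes down to the same key inequality $m^2\geq (n-m)^3$ (the paper verifies it via the monotonicity of $x\mapsto x^2-(n-x)^3$ evaluated at $x=n-\frac{1}{2}n^{2/3}$, you via $n\geq(2k)^{3/2}$ and $k\leq k^{3/2}$ — a purely cosmetic difference).
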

\begin{proof}
Le résultat du théorème est trivial pour $m=n$. Supposons pour la suite que $m<n$; donc $n \geq 2$. Maintenant, soit $f:\left[0,n\right]\longrightarrow \mathbb{R}$ la fonction définie par $f(x)=x^2-(n-x)^3$ $(\forall x\in [0,n])$. Il est clair que $f$ est strictement croissante. Par suite, on a:
\begin{align*}
f\left(n-\frac{1}{2}n^{2/3}\right)&=\left(n-\frac{1}{2}n^{2/3}\right)^2-\left(\frac{1}{2}n^{2/3}\right)^3\\&=n^2-n^{5/3}+\frac{1}{4}n^{4/3}-\frac{1}{8}n^2\\&=\frac{7}{8}n^2-n^{5/3}+\frac{1}{4}n^{4/3}.
\end{align*}
Puisque $n^2\geq \frac{8}{7}n^{5/3}$ (car $n\geq 2$), il s'ensuit que $f\left(n-\frac{1}{2}n^{2/3}\right)\geq \frac{1}{4}n^{4/3}>0$. Par suite, la croissance de $f$ assure que $f(m)>0$ (car $m\geq n-\frac{1}{2}n^{2/3}$ par hypothèse). D'où $\frac{m^2}{(n-m)^3}>1$ et $\frac{m}{(n-m)^{3/2}}>1$. En combinant cela avec \eqref{jan3}, on en déduit que:
\[L_{c,m,n}\geq \lambda_2(c)\cdot ne^{3(n-m)},\]
comme il fallait le prouver. Ce qui complète cette démonstration.  
\end{proof}

\subsection{Comparaison avec la minoration de Oon}

Dans la minoration de Oon (c'est-à-dire le théorème \ref{oon'}), le nombre de termes figurant dans le plus petit commun multiple 
\[L_{c,m,n}=\ppcm(m^2+c,(m+1)^2+c,\dots,n^2+c)\]
est strictement plus grand que $n/2$; alors que lorsque nous mettons ensemble nos théorèmes \ref{c5} et \ref{c6++}, cette contrainte est éliminée. Cependant, si la condition d'application du théorème de Oon est remplie, nous obtenons alors une minoration pour $L_{c ,m,n}$ plus forte que celles de nos théorèmes. Ceci dit, notre résultat clé est plutôt le point \ref{p1t7} du théorème \ref{t7} qui fournit un diviseur rationnel et non trivial de $L_{c,m,n}$. Ici, nous avons exploité ce résultat clé de manière naïve. Il est probable qu'une ``procédure plus intelligente" donnerait de meilleurs résultats.

\bigskip

\begin{center}
\includegraphics[scale=1]{fini}
\end{center}

\chapter{Identités et estimations concernant le $\ppcm$ de suites à forte divisibilité}\label{ch3} 

Il est à noter que les résultats de ce chapitre sont publiés dans \cite{Bousfarcras}.

\section{Introduction}

L'étude des propriétés arithmétiques des coefficients binomiaux est un sujet tr\`es ancien et fascinant. \`A titre d'exemple, il y a plus d'un siècle que Sylvester \cite{syl} prouvait que pour tous $n,k\in\mathbb{N^*}$, tels que $n\geq 2k$, le coefficient binomial $\binom{n}{k}$ possède au moins un diviseur premier strictement supérieur à $k$. Assez récemment, Farhi \cite{Farhi} a montré l'identité $\ppcm\left\lbrace \binom{n}{0},\binom{n}{1},\dots,\binom{n}{n}\right\rbrace=\frac{\ppcm\left(1,2,\dots,n,n+1\right)}{n+1}$ $(\forall n\in\mathbb{N})$, que Guo \cite{Victor} a généralisé aux coefficients $q$-binomiaux. Dans ce chapitre, nous présentons d'abord une démonstration de cette identité; ensuite nous démontrons une identité plus générale relative aux suites à forte divisibilité. Cette identité générale englobe à la fois les identités de Farhi et Guo qui en deviennent des cas particuliers (voir le théorème \ref{Ri1} et la remarque \ref{rmq1}). Nous en déduisons par suite deux autres identités également intéressantes (voir les corollaires \ref{sour} et \ref{sour1}). Comme application, nous utilisons nos identités pour établir des estimations effectives et non triviales du $\ppcm$ des termes consécutifs de certaines suites de Lucas (voir le théorème \ref{aR4}). L'efficacité de nos estimations effectives est garantie par les estimations asymptotiques obtenues par Matiyasevich et Guy \cite{Mat} et Kiss et Matyas \cite{kiss} dans le même contexte.

\section{Une identité concernant le $\ppcm$ des coefficients binomiaux usuels}

Le théorème suivant est le résultat principal de l'article \cite{Farhi}.

\begin{thm}[Farhi \cite{Farhi}]\label{Fip}
Pour tout entier naturel $n$, on a:
\begin{equation}\label{idf}
\ppcm\left\lbrace \binom{n}{0},\binom{n}{1},\dots,\binom{n}{n} \right\rbrace =\frac{\ppcm\left(1,2,\dots,n,n+1\right)}{n+1}.
\end{equation}
\end{thm}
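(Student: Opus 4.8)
Le plan est d'établir l'identité \eqref{idf} en comparant, pour chaque nombre premier $p$, les valuations $p$-adiques des deux membres : il suffit de prouver les deux divisibilités $L_n \mid \frac{d_{n+1}}{n+1}$ et $\frac{d_{n+1}}{n+1} \mid L_n$, où l'on pose $d_{n+1} := \ppcm(1,2,\dots,n+1)$ et $L_n := \ppcm\left\{\binom{n}{0},\binom{n}{1},\dots,\binom{n}{n}\right\}$. L'ingrédient commun aux deux étapes sera l'identité binomiale élémentaire $k\binom{N}{k} = N\binom{N-1}{k-1}$.

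\textbf{Première étape (majoration de $L_n$).} En appliquant l'identité précédente avec $N = n+1$ et $k = \ell+1$, on obtient $(n+1)\binom{n}{\ell} = (\ell+1)\binom{n+1}{\ell+1}$ pour tout $\ell \in \{0,1,\dots,n\}$. La relation \eqref{601} (établie dans la section consacrée à la minoration de $\ppcm(1,2,\dots,n)$) assure que $(\ell+1)\binom{n+1}{\ell+1}$ divise $d_{n+1}$ ; donc $(n+1)\binom{n}{\ell}$ divise $d_{n+1}$ pour chaque $\ell$, d'où $\ppcm\left\{(n+1)\binom{n}{\ell} : 0\leq\ell\leq n\right\} = (n+1)L_n$ divise $d_{n+1}$. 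Comme $(n+1)$ divise $d_{n+1}$, on en déduit que $L_n$ divise $\frac{d_{n+1}}{n+1}$.

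\textbf{Deuxième étape (minoration de $L_n$).} Il faut montrer que $v_p(L_n) \geq v_p(d_{n+1}) - v_p(n+1)$ pour tout nombre premier $p$. Posons $a := v_p(d_{n+1})$, qui est le plus grand entier tel que $p^a \leq n+1$ ; alors $0 \leq p^a - 1 \leq n$, de sorte que $\binom{n}{p^a-1}$ figure bien parmi les coefficients dont on prend le $\ppcm$. L'identité binomiale, appliquée cette fois avec $N = n+1$ et $k = p^a$, donne $\binom{n}{p^a-1} = \frac{p^a}{n+1}\binom{n+1}{p^a}$, d'où
\[
v_p\!\left(\binom{n}{p^a-1}\right) = a - v_p(n+1) + v_p\!\left(\binom{n+1}{p^a}\right) \geq a - v_p(n+1),
\]
puisque $\binom{n+1}{p^a}$ est un entier strictement positif. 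Par suite $v_p(L_n) \geq v_p\!\left(\binom{n}{p^a-1}\right) \geq v_p(d_{n+1}) - v_p(n+1)$.

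\textbf{Conclusion.} En réunissant les deux étapes, $v_p(L_n) = v_p(d_{n+1}) - v_p(n+1) = v_p\!\left(\frac{d_{n+1}}{n+1}\right)$ pour tout nombre premier $p$, ce qui établit \eqref{idf}. Je ne vois pas ici d'obstacle véritable : le seul pas demandant un peu de flair est le choix de l'indice $\ell = p^a - 1$ dans la minoration, qui transforme, via l'identité $k\binom{N}{k} = N\binom{N-1}{k-1}$, la simple positivité de $\binom{n+1}{p^a}$ en l'inégalité de valuations recherchée ; tout le reste se réduit à des manipulations de valuations $p$-adiques et à l'emploi de la relation \eqref{601}.
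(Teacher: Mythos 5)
Votre démonstration est correcte, mais elle suit une route réellement différente de celle du texte. La preuve du chapitre (celle de Farhi \cite{Farhi}) passe par le théorème de Kummer (théorème \ref{kum}) et le lemme \ref{llmm}, qui déterminent explicitement, pour chaque premier $p$, l'indice $k=p^N-1$ où $\vartheta_p\bigl(\binom{n}{k}\bigr)$ atteint son maximum ainsi que la valeur exacte de ce maximum en fonction des chiffres de $n$ en base $p$; l'identité \eqref{idf} s'obtient ensuite en comparant cette valeur à $\vartheta_p\bigl(\ppcm(1,\dots,n+1)\bigr)-\vartheta_p(n+1)$. Vous contournez entièrement ce calcul digit-par-digit: l'identité $(\ell+1)\binom{n+1}{\ell+1}=(n+1)\binom{n}{\ell}$ combinée à la relation \eqref{601} donne directement la divisibilité $(n+1)L_n\mid\ppcm(1,\dots,n+1)$, et pour l'inégalité inverse vous n'avez besoin que d'une minoration de la valuation en un seul indice bien choisi, $\ell=p^a-1$ avec $a=\vartheta_p\bigl(\ppcm(1,\dots,n+1)\bigr)$, où la positivité de $\vartheta_p\bigl(\binom{n+1}{p^a}\bigr)$ suffit. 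Les deux étapes sont vérifiées (en particulier $p^a-1\leq n$ par définition de $a$, donc l'indice est licite, et le cas $n=p^{N+1}-1$ est couvert puisque la majoration force alors $\vartheta_p(L_n)=0$). Ce que chaque approche apporte: la vôtre est plus économe et s'appuie sur un résultat déjà établi dans le texte (la relation \eqref{601}, issue de la section sur la minoration de Nair), tandis que celle du texte fournit une information plus fine — la localisation exacte du maximum de $\vartheta_p\bigl(\binom{n}{k}\bigr)$ et sa valeur — qui a un intérêt propre au-delà de l'identité \eqref{idf}.
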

\noindent La preuve présentée dans \cite{Farhi} utilise le théorème de Kummer qui donne la valuation $p$-adique des coefficients binomiaux. Plus précisément, on a:

\begin{thm}[Kummer]\label{kum}
Soient $k,n\in\mathbb{N}$ et $p$ un nombre premier. Alors, la valuation $p$-adique du nombre $\binom{n}{k}$ est égale au nombre d'emprunts effectués lorsque l'on soustrait $k$ de $n$ suivant le système de numération de base $p$.
\end{thm}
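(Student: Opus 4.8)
The plan is to reduce Kummer's theorem to a digit-sum computation via Legendre's formula, and then to identify the resulting digit-sum expression with the carry count through a position-by-position analysis of base-$p$ addition. Throughout, for a nonnegative integer $t$ with base-$p$ expansion $t=\sum_{j\geq 0}a_j(t)p^j$ (where $0\leq a_j(t)<p$), write $s_p(t):=\sum_{j\geq 0}a_j(t)$ for its digit sum in base $p$.

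First I would establish the closed form $\vartheta_p(t!)=\frac{t-s_p(t)}{p-1}$. Starting from Legendre's formula $\vartheta_p(t!)=\sum_{i\geq 1}\lfloor t/p^i\rfloor$ (recalled in the introduction), I substitute $\lfloor t/p^i\rfloor=\sum_{j\geq i}a_j(t)p^{j-i}$ and interchange the two finite summations:
$$\sum_{i\geq 1}\lfloor t/p^i\rfloor=\sum_{j\geq 1}a_j(t)\sum_{i=1}^{j}p^{j-i}=\sum_{j\geq 0}a_j(t)\,\frac{p^j-1}{p-1}=\frac{t-s_p(t)}{p-1},$$
where the extension to $j\geq 0$ is harmless since the $j=0$ term vanishes. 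Applying this three times to $\vartheta_p\binom{n}{k}=\vartheta_p(n!)-\vartheta_p(k!)-\vartheta_p((n-k)!)$ makes the terms linear in $t$ cancel, yielding
$$\vartheta_p\binom{n}{k}=\frac{s_p(k)+s_p(n-k)-s_p(n)}{p-1}.$$

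The heart of the argument—and the step I expect to be the main obstacle—is to show that $\frac{s_p(k)+s_p(m)-s_p(k+m)}{p-1}$ equals the number of carries produced when adding $k$ and $m$ in base $p$, where I set $m:=n-k$. I would run the schoolbook addition of $k$ and $m$: putting $c_0:=0$ and defining the carries recursively by $a_j(k)+a_j(m)+c_j=a_j(k+m)+p\,c_{j+1}$ at each position $j$, I first note by induction that $c_{j+1}\in\{0,1\}$, since $a_j(k)+a_j(m)+c_j\leq 2(p-1)+1<2p$. Summing these relations over all $j\geq 0$ and writing $C:=\sum_{j\geq 1}c_j$ for the total number of carries, the carry terms on the two sides (using $c_0=0$) combine to give $s_p(k)+s_p(m)+C=s_p(k+m)+pC$, hence $s_p(k)+s_p(m)-s_p(k+m)=(p-1)C$. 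Combined with the previous display and $k+m=n$, this gives $\vartheta_p\binom{n}{k}=C$.

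Finally I would observe that the number of carries in the base-$p$ addition $k+(n-k)=n$ coincides with the number of borrows in the base-$p$ subtraction of $k$ from $n$: a borrow is forced out of position $j$ of the subtraction exactly when a carry is generated out of position $j$ of the corresponding addition, both recording the inequality $a_j(k)+a_j(m)+c_j\geq p$. This matches the statement verbatim and completes the proof. The only delicate points are bookkeeping ones—confirming that the carries are binary and that all the sums over $j$ are finite—so no genuine difficulty is expected beyond the carry-counting identity.
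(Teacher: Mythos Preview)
Your proof is correct and follows essentially the same route as the paper's: both derive $\vartheta_p\binom{n}{k}=\frac{s_p(k)+s_p(n-k)-s_p(n)}{p-1}$ from Legendre's formula and then identify this quantity with the carry/borrow count via a position-by-position recursion. The only cosmetic difference is that the paper works directly with the borrow recursion for the subtraction $n-k$, whereas you set up the dual carry recursion for the addition $k+(n-k)$ and translate to borrows at the end; the two recursions are the same identity rearranged, so no new idea is involved.
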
 
\begin{proof}
Considérons d'abord les représentations en base $p$ des nombres $n$, $k$ et $n-k$:
\begin{align*}
n&=a_{r}p^{r}+a_{r-1}p^{r-1}+\dots +a_{1}p+a_{0},\\ k&=b_{r}p^{r}+b_{r-1}p^{r-1}+\dots +b_{1}p+b_{0},\\n-k&=c_{r}p^{r}+c_{r-1}p^{r-1}+\dots +c_{1}p+c_{0},
\end{align*} 
où $r\in\mathbb{N}$ et $a_0,a_1,\dots,a_r,b_0,b_1,\dots,b_r,c_0,c_1,\dots,c_r\in\left\lbrace 0,1,\dots,p-1\right\rbrace$. Les emprunts requis $\left(\gamma_{i}\right)_{0\leq i\leq r}$ pour soustraire $k$ de $n$ suivant le système de base $p$ sont alors donnés par:
\[\gamma_{0}:=\begin{cases} 1 &\text{si}~a_0<b_0 \\ 0 &\text{sinon}\end{cases},\]
et pour tout $i\in\left\lbrace 1,2,\dots,r\right\rbrace$:
\[\gamma_{i}:=\begin{cases} 1 &\text{si}~a_i-b_i-\gamma_{i-1}<0 \\ 0 &\text{sinon}\end{cases}.\]
Le nombre d'emprunts non nuls (c'est-à-dire de véritables emprunts) est alors égal à $\gamma_0+\gamma_1+\dots+\gamma_r$. On vérifie facilement (à partir de la définition des $\gamma_i$) que l'on a: $\gamma_r=0$, $c_r=a_r-b_r-\gamma_{r-1}$, $c_0=a_0-b_0+p\gamma_0$ et $c_i=a_i-b_i-\gamma_{i-1}+p\gamma_i$ $(\forall i\in\left\lbrace 1,2,\dots,r-1\right\rbrace)$. D'autre part, en désignant par $s_{p}(\ell)$ ($\ell\in\mathbb{N}$) la somme des chiffres de $\ell$ dans sa représentation en base $p$, on obtient (en vertu de la formule de Legendre):
\begin{align*}
\vartheta_{p}\left(\binom{n}{k}\right)&=\vartheta_{p}\left(n!\right)-\vartheta_{p}\left(k!\right)-\vartheta_{p}\left((n-k)!\right)\\&=\frac{n-s_{p}(n)}{p-1}-\frac{k-s_{p}(k)}{p-1}-\frac{(n-k)-s_{p}(n-k)}{p-1}\\&= \frac{s_{p}(k)+s_{p}(n-k)-s_{p}(n)}{p-1} .
\end{align*}
D'où:
\begin{align*}
\vartheta_{p}\left(\binom{n}{k}\right)&=\frac{\left(b_0+c_0-a_0\right)+\left(b_1+c_1-a_1\right)+\dots+\left(b_r+c_r-a_r\right)}{p-1}\\&=\frac{p\gamma_0+\left(p\gamma_1-\gamma_0\right)+\dots +\left(p\gamma_{r-1}-\gamma_{r-2}\right)-\gamma_{r-1}}{p-1}\\&=\frac{(p-1)\gamma_0+(p-1)\gamma_1+\dots +(p-1)\gamma_{r-1}}{p-1}\\&=\gamma_0+\gamma_1+\dots +\gamma_{r-1}.
\end{align*}
Ce qui confirme le résultat requis et complète cette démonstration.  
\end{proof}

\begin{lemme}[Farhi \cite{Farhi}]\label{llmm}
Soient $n\in\mathbb{N}$ et $p$ un nombre premier. Supposons que $n=\sum_{i=0}^{N}c_ip^{i}$ est la représentation de $n$ dans le système de numération de base $p$, avec $N\in\mathbb{N}$, $c_i\in\lbrace 0,\dots,p-1\rbrace$ $(\forall i\in\left\lbrace 0,1,\dots,N\right\rbrace)$ et $c_{N}\neq 0$. Alors, on a:
\[\max_{0\leq k\leq n}\vartheta_{p}\left(\binom{n}{k}\right)=\vartheta_{p}\left(\binom{n}{p^{N}-1}\right)=\begin{cases} 0 &\text{si}~n=p^{N+1}-1 \\ N-\min\lbrace i;~c_i\neq p-1\rbrace &\text{sinon}\end{cases}.\]
\end{lemme}
\begin{proof}
Nous distinguons les deux cas suivants:\\
$\bullet$\underline{\textbf{1\textsuperscript{er} cas:}} (si $n=p^{N+1}-1$). Dans ce cas, on a $c_{i}=p-1$ pour tout $i\in\lbrace 0,1,\dots,N\rbrace$. Par conséquent, la soustraction de tout $k\in\lbrace 0,1,\dots,n\rbrace$ de $n$ suivant le système de numération de base $p$ ne nécessite aucun emprunt. Il s'ensuit (en vertu du théorème \ref{kum}) que: $\vartheta_{p}\left(\binom{n}{k}\right)=0$ $(\forall k\in\left\lbrace 0,1,\dots,n\right\rbrace)$. D'où l'on a:
\[\max_{0\leq k\leq n}\vartheta_{p}\left(\binom{n}{k}\right)=\vartheta_{p}\left(\binom{n}{p^{N}-1}\right)=0,\]
comme il fallait le prouver.\\
$\bullet$\underline{\textbf{2\textsuperscript{nd} cas:}} (si $n\neq p^{N+1}-1$). Dans ce cas, il existe au moins un $i\in\left\lbrace 0,1,\dots,N\right\rbrace$ tel que $c_i\neq p-1$. Posons $i_{0}:=\min\lbrace i;~c_i\neq p-1\rbrace$. Nous allons montrer que $\vartheta_{p}\left(\binom{n}{k}\right)\leq N-i_{0}$ $(\forall k\in\left\lbrace 0,1,\dots,n\right\rbrace)$ et que $\vartheta_{p}\left(\binom{n}{p^{N}-1}\right)=N-i_{0}$, ce qui conclura au résultat requis. Fixons $k\in\lbrace 0,1,\dots,n\rbrace$. Par définition de $i_0$, on a: $c_0=c_1=\dots =c_{i_0-1}=p-1$. Donc lorsqu'on soustrait $k$ de $n$ suivant le système de base $p$, les premières $i_0$ soustractions chiffre-par-chiffre ne nécessitent aucun emprunt. Ce qui montre que le nombre d'emprunts requis dans cette soustraction est au plus égal à $N-i_{0}$. D'où l'on a (en vertu du théorème \ref{kum}):
\[\vartheta_{p}\left(\binom{n}{k}\right)\leq N-i_{0}~~~~(\forall k\in\left\lbrace 0,1,\dots,n\right\rbrace).\]
Par ailleurs, puisque $c_{i_0}<p-1$, alors lorsqu'on soustrait $p^{N}-1=\sum_{i=0}^{N-1}(p-1)p^{i}$ de $n$ suivant le système de base $p$, chacune des soustractions chiffre-par-chiffre du rang $i_0$ au rang $N-1$ nécessite un emprunt. Ce qui entraîne (d'après le théorème \ref{kum}) que:
\[\vartheta_{p}\left(\binom{n}{p^{N}-1}\right)=N-i_{0}\]
et complète cette démonstration.
\end{proof}

\begin{proof}[Démonstration du théorème \ref{Fip}]
Pour $n=0$, l'identité \eqref{idf} est triviale. Supposons pour la suite que $n\geq 1$ et désignons respectivement par $A_n$ et $B_n$ les membres de gauche et de droite de \eqref{idf}. Nous allons montrer que $\vartheta_p\left(A_n\right)=\vartheta_p\left(B_n\right)$ pour tout nombre premier $p$, ce qui conclura à l'identité \eqref{idf}. Fixons donc un nombre premier $p$ et soit $\sum_{i=0}^{N}c_{i}p^{i}$ la représentation de $n$ suivant le système de base $p$ (où $N\in\mathbb{N}$, $c_{i}\in\lbrace 0,1,\dots,p-1\rbrace$ et $c_{N}\neq 0$). D'après le lemme \ref{llmm}, on a:
\begin{equation}\label{ffffff1}
\vartheta_{p}\left(A_n\right)=\begin{cases} 0 &\text{si}~n=p^{N+1}-1 \\ N-\min\lbrace i;~c_i\neq p-1\rbrace &\text{sinon}\end{cases}.
\end{equation}
D'autre part, puisque $\vartheta_{p}\left(\ppcm\left(1,2,\dots,n,n+1\right)\right)$ est le plus grand exposant $\alpha\in\mathbb{N}$ tel que $p^{\alpha}\leq n+1$, alors:
\begin{equation}\label{ffffff2}
\vartheta_{p}\left(\ppcm\left(1,2,\dots,n,n+1\right)\right)=\begin{cases} N+1 &\text{si}~n=p^{N+1}-1 \\ N &\text{sinon}\end{cases}.
\end{equation}
De plus, on constate que lorsque $n\neq p^{N+1}-1$, on a: $(n+1)=\left(c_{i_{0}}+1\right)p^{i_{0}}+c_{i_{0}+1}p^{i_{0}+1}+\dots +c_{N}p^{N}$ (avec $i_{0}:=\min\lbrace i;~c_i\neq p-1\rbrace$). D'où:
\begin{equation}\label{ffffff3}
\vartheta_{p}\left(n+1\right)=\begin{cases} N+1 &\text{si}~n=p^{N+1}-1 \\ \min\lbrace i;~c_i\neq p-1\rbrace &\text{sinon}\end{cases}.
\end{equation}
En soustrayant \eqref{ffffff3} de \eqref{ffffff2} et en comparant ensuite avec \eqref{ffffff1}, on obtient que $\vartheta_p\left(A_n\right)=\vartheta_p\left(B_n\right)$, comme il fallait le prouver. Ce qui complète cette démonstration. 
\end{proof}

\section{Quelques propriétés de suites à forte divisibilité}
On rappelle qu'une suite d'entiers strictement positifs $\boldsymbol{a}=\left(a_n\right)_{n\geq 1}$ est dite \textit{\`a forte divisibilit\'e} lorsqu'elle v\'erifie la propri\'et\'e:
\[\pgcd\left(a_n,a_m\right)=a_{\pgcd\left(n,m\right)}~~~~(\forall n,m\in\mathbb{N^*}).\]
D'après \textsection\ref{for}, une telle suite $\boldsymbol{a}$ lui correspond une unique suite d'entiers strictement positifs $\left(u_{n}\right)_{n\geq 1}$ telle que:
\begin{equation}\label{kimmm}
a_{n}=\prod_{d\mid n}u_d~~~~(\forall n\geq 1).
\end{equation}
Le théorème suivant traite la correspondance inverse. Plus précisément, il établit une condition nécessaire et suffisante sur une suite d'entiers strictement positifs $\left(u_n\right)_{n\geq 1}$ pour que la suite $\left(\prod_{d\mid n}u_{d}\right)_{n\geq 1}$ soit à forte divisibilité.

\begin{thm}[Bliss et al. \cite{Bliss}]\label{divseq}
Soient $\left(u_{n}\right)_{n\geq 1}$ une suite d'entiers strictement positifs et $\left(a_{n}\right)_{n\geq 1}$ la suite de terme général: $a_{n}=\prod_{d\mid n}u_{d}~~(\forall n\in \mathbb{N^*})$. Alors, les propriétés suivantes sont équivalentes:
\begin{enumerate}
\item La suite $\left(a_{n}\right)_{n\geq 1}$ est à forte divisibilité. 
\item Pour tous $n,m\in \mathbb{N^*}$ tels que $n\nmid m$ et $m\nmid n$, on a: $\pgcd\left(u_{n},u_{m}\right)=1$.\label{cond2}
\end{enumerate}  
\end{thm}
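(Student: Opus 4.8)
La stratégie est de remarquer que les deux propriétés sont, chacune, une conjonction de conditions locales (une pour chaque nombre premier), ce qui ramène l'énoncé à une question combinatoire élémentaire sur les ensembles de diviseurs. Fixons un nombre premier $p$ et posons $\beta_n := \vartheta_p(u_n)$ ($\forall n\geq 1$); alors $\vartheta_p(a_n) = \sum_{d\mid n}\beta_d$ pour tout $n$. Comme deux entiers strictement positifs sont égaux si et seulement si leurs valuations $p$-adiques coïncident pour tout $p$, la propriété (1) équivaut à: pour tout nombre premier $p$ (avec $\beta_n = \vartheta_p(u_n)$) et tous $n,m\in\mathbb{N^*}$,
\[\min\Big(\sum_{d\mid n}\beta_d,\ \sum_{d\mid m}\beta_d\Big) = \sum_{d\mid\pgcd(n,m)}\beta_d, \qquad (\star)\]
tandis que la propriété (2) équivaut à: pour tout nombre premier $p$ et tous $n,m$ non comparables pour la divisibilité, $\min(\beta_n,\beta_m) = 0$. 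Il suffit donc d'établir, pour une suite arbitraire $(\beta_n)_{n\geq 1}$ d'entiers $\geq 0$, l'équivalence entre « $(\star)$ pour tous $n,m$ » et « $\min(\beta_n,\beta_m)=0$ pour tous $n,m$ non comparables ».

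Le point clé est l'identité $\{\,d : d\mid\pgcd(n,m)\,\} = \{\,d : d\mid n\,\}\cap\{\,d : d\mid m\,\}$. En notant $\mathcal{D}(k)$ l'ensemble des diviseurs de $k$ et $S(k):=\sum_{d\in\mathcal{D}(k)}\beta_d$, on a $S(\pgcd(n,m)) \leq \min(S(n),S(m))$ puisque les $\beta_d$ sont $\geq 0$; donc $(\star)$ équivaut à $\sum_{d\in\mathcal{D}(n)\setminus\mathcal{D}(m)}\beta_d = 0$ ou $\sum_{d\in\mathcal{D}(m)\setminus\mathcal{D}(n)}\beta_d = 0$. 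Comme les $\beta_d$ sont positifs, cela se reformule ainsi: \emph{pour tous $n,m$, on a $\beta_d = 0$ pour tout diviseur $d$ de $n$ ne divisant pas $m$, ou bien $\beta_d = 0$ pour tout diviseur $d$ de $m$ ne divisant pas $n$.}

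Pour l'implication (2) $\Rightarrow$ (1), je raisonnerais par l'absurde: si $(\star)$ est en défaut pour un couple $(n,m)$, il existe $d_1\mid n$ avec $d_1\nmid m$ et $\beta_{d_1}>0$, ainsi que $d_2\mid m$ avec $d_2\nmid n$ et $\beta_{d_2}>0$; alors $d_1\nmid d_2$ (sinon $d_1\mid m$) et $d_2\nmid d_1$ (sinon $d_2\mid n$), donc $d_1$ et $d_2$ sont non comparables bien que $\min(\beta_{d_1},\beta_{d_2})>0$, ce qui contredit la condition locale issue de (2). Pour (1) $\Rightarrow$ (2), étant donné $n,m$ non comparables, on applique $(\star)$ au couple $(n,m)$ lui-même: puisque $n\in\mathcal{D}(n)\setminus\mathcal{D}(m)$ (car $n\nmid m$) et $m\in\mathcal{D}(m)\setminus\mathcal{D}(n)$ (car $m\nmid n$), la reformulation ci-dessus donne $\beta_n = 0$ ou $\beta_m = 0$, soit $\min(\beta_n,\beta_m)=0$. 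En réappliquant cette équivalence à la suite $n\mapsto\vartheta_p(u_n)$ pour chaque nombre premier $p$, on conclut.

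Il n'y a pas de véritable obstacle: la seule étape demandant un peu de soin est la réduction « premier par premier » alliée à l'identité $\mathcal{D}(\pgcd(n,m)) = \mathcal{D}(n)\cap\mathcal{D}(m)$, qui convertit un énoncé sur des pgcd en un énoncé élémentaire sur des sommes de poids positifs indexées par des ensembles de diviseurs; une fois cette reformulation acquise, les deux implications sont immédiates.
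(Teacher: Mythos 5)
Votre d\'emonstration est correcte, et elle emprunte un chemin diff\'erent de celui du texte. Le texte travaille directement au niveau des entiers: il factorise $a_n=\bigl(\prod_{d\mid \delta}u_d\bigr)\bigl(\prod_{d\mid n,\,d\nmid\delta}u_d\bigr)$ avec $\delta=\pgcd(n,m)$, utilise l'identit\'e $\pgcd(AB,AC)=A\,\pgcd(B,C)$ pour isoler le facteur $a_\delta$, puis traite s\'epar\'ement le cas o\`u $n$ et $m$ sont comparables et celui o\`u ils ne le sont pas. Vous, au contraire, lin\'earisez tout par les valuations $p$-adiques: en posant $\beta_d=\vartheta_p(u_d)$ et en exploitant $\mathcal{D}(\pgcd(n,m))=\mathcal{D}(n)\cap\mathcal{D}(m)$, la condition de forte divisibilit\'e devient l'annulation de l'une des deux sommes $\sum_{d\in\mathcal{D}(n)\setminus\mathcal{D}(m)}\beta_d$ ou $\sum_{d\in\mathcal{D}(m)\setminus\mathcal{D}(n)}\beta_d$, ce qui rend les deux implications imm\'ediates (pour $(1)\Rightarrow(2)$ on observe que $n$ et $m$ eux-m\^emes appartiennent \`a ces diff\'erences d'ensembles; pour $(2)\Rightarrow(1)$ on retrouve l'argument combinatoire commun aux deux preuves, \`a savoir que $d_1\mid n$, $d_1\nmid m$, $d_2\mid m$, $d_2\nmid n$ forcent $d_1$ et $d_2$ \`a \^etre non comparables). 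Votre reformulation a l'avantage d'unifier les deux cas du texte en un seul \'enonc\'e sur des poids positifs et de dispenser de toute manipulation de $\pgcd$ de produits; la preuve du texte a l'avantage de rester enti\`erement au niveau des entiers sans passer par la d\'ecomposition premier par premier. Les deux sont compl\`etes et rigoureuses.
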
  
\begin{proof}~\\
\textbullet{} $\left(\Rightarrow\right)$: Supposons que la suite $\left(a_{n}\right)_{n\geq 1}$ est à forte divisibilité. Soient $m,n\in\mathbb{N^*}$ tels que $n\nmid m$ et $m\nmid n$ et posons $\delta:=\pgcd\left(n,m\right)$. Il existe donc $n',m'\in\mathbb{N^*}$ tels que $n=\delta n'$, $m=\delta m'$ et $\pgcd\left(n',m'\right)=1$. Puisque $n\nmid m$ et $m\nmid n$ alors $n',m'\neq 1$. Par suite, on a (puisque $\left(a_{n}\right)_{n\geq 1}$ est à forte divisibilité):
\[\pgcd\left(\prod_{d\mid n, d\nmid\delta}u_{d},\prod_{d\mid m, d\nmid\delta}u_{d}\right)\prod_{d\mid \delta}u_{d}=\pgcd\left(\prod_{d\mid n}u_{d},\prod_{d\mid m}u_{d}\right)=\pgcd\left(a_{n},a_{m}\right)=a_{\delta}=\prod_{d\mid\delta}u_{d}.\]
Ce qui entraîne que:
\[\pgcd\left(\prod_{d\mid n, d\nmid\delta}u_{d},\prod_{d\mid m, d\nmid\delta}u_{d}\right)=1.\]
Comme on a de toute évidence: $u_{n}\mid\prod_{d\mid n, d\nmid\delta}u_{d}$ et $u_{m}\mid\prod_{d\mid m, d\nmid\delta}u_{d}$, il en découle à fortiori que: $\pgcd\left(u_{n},u_{m}\right)=1$, comme il fallait le prouver.\\ 
\textbullet{} $\left(\Leftarrow\right)$: Inversement, supposons que pour tous $n,m\in\mathbb{N^*}$ tels que $n\nmid m$ et $m\nmid n$, on a: $\pgcd\left(u_{n},u_{m}\right)=1$ et montrons que $\pgcd(a_n,a_m)=a_{\pgcd(n,m)}$ $(\forall n,m\in\mathbb{N^*})$. Fixons $n,m\in\mathbb{N^*}$ et posons $\delta:=\pgcd\left(n,m\right)$. Il existe donc $n',m'\in\mathbb{N^*}$ tels que $n=\delta n'$, $m=\delta m'$ et $\pgcd\left(n',m'\right)=1$. Nous distinguons les deux cas suivants:\\
\underline{\textbf{1\textsuperscript{er} cas:}} (si $n'=1$ ou $m'=1$). Dans ce cas, on a visiblement $n=\delta$ ou $m=\delta$. Sans perte de généralité, supposons que $n=\delta$ (le cas où $m=\delta$ se traite de la même façon). Puisque tout diviseur de $\delta$ est également un diviseur de $\delta m'$, alors $\prod_{d\mid\delta}u_d$ divise $\prod_{d\mid\delta m'}u_d$. On a par conséquent:
\[\pgcd\left(a_{n},a_{m}\right)=\pgcd\left(a_{\delta},a_{\delta m'}\right)=\pgcd\left(\prod_{d\mid\delta}u_d,\prod_{d\mid\delta m'}u_d\right)=\prod_{d\mid\delta}u_{d}=a_{\delta}=a_{\pgcd\left(n,m\right)},\]
comme il fallait le prouver.\\
\underline{\textbf{2\textsuperscript{nd} cas:}} (si $n',m'\neq 1$). Dans ce cas, on a $n\nmid m$ et $m\nmid n$, donc $\pgcd\left(u_{n},u_{m}\right)=1$ (par hypothèse). De plus, on a:
\begin{equation}\label{stdiv}
\pgcd\left(a_n,a_m\right)=\pgcd\left(\prod_{d\mid n, d\nmid\delta}u_{d},\prod_{d\mid m, d\nmid\delta}u_{d}\right)a_{\delta}.
\end{equation}
Maintenant, on constate que si $d_1,d_2\in\mathbb{N^*}$ vérifient: $(d_1\mid n, d_1\nmid\delta)$ et $(d_2\mid m, d_2\nmid\delta)$, alors $\pgcd\left(d_{1},d_{2}\right)$ divise $\delta$; ce qui fait que: $\pgcd\left(d_{1},d_{2}\right)\notin\lbrace d_{1},d_{2}\rbrace$ et donc $d_{1}\nmid d_{2}$ et $d_{2}\nmid d_{1}$; d'où (par hypothèse): $\pgcd\left(u_{d_1},u_{d_2}\right)=1$. Il découle de ce fait que:
\[\pgcd\left(\prod_{d\mid n, d\nmid\delta}u_{d},\prod_{d\mid m, d\nmid\delta}u_{d}\right)=1.\]
En substituant cette dernière dans \eqref{stdiv}, on obtient l'égalité requise:
\[\pgcd\left(a_n,a_m\right)=a_{\delta}=a_{\pgcd\left(n,m\right)}.\]
Ce qui complète cette démonstration. 
\end{proof}

Nowicki \cite{Nowicki} a développé la propriété \ref{cond2} du théorème \ref{divseq} de Bliss et al. et a obtenu une autre quelque part plus pratique. On a le théorème suivant:

\begin{thm}[Nowicki \cite{Nowicki}]\label{nowi}
Soient $\left(a_{n}\right)_{n\geq 1}$ une suite d'entiers strictement positifs et $\left(c_n\right)_{n\geq 1}$ la suite d'entiers définie par: $c_1=a_1$ et
\[c_n:=\frac{\ppcm\left(a_{1},\dots,a_{n}\right)}{\ppcm\left(a_{1},\dots,a_{n-1}\right)}~~~~(\forall n\geq 2).\]
Alors $\left(a_{n}\right)_{n}$ est à forte divisibilité si et seulement si l'on a: 
\[a_{n}=\prod_{d\mid n}c_d~~~~(\forall n\geq 1).\]
\end{thm}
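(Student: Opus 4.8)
L'id�e directrice est de relier la suite $(c_n)_{n\geq 1}$ de Nowicki � la suite $(u_n)_{n\geq 1}$ du th�or�me \ref{divseq}, dans le cas o� $(a_n)_n$ est � forte divisibilit�, puis d'utiliser la caract�risation \ref{cond2} pour le sens direct. D'abord, je supposerais que $(a_n)_n$ est � forte divisibilit�; alors, d'apr�s \textsection\ref{for} (lemme \ref{cat}), on a $\ppcm(a_1,\dots,a_n)=\prod_{m=1}^n u_m$ pour la suite $(u_n)_n$ v�rifiant $a_n=\prod_{d\mid n}u_d$. Il en d�coule imm�diatement que, pour $n\geq 2$,
\[c_n=\frac{\prod_{m=1}^n u_m}{\prod_{m=1}^{n-1}u_m}=u_n,\]
et de m�me $c_1=\ppcm(a_1)=u_1$. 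Donc $c_n=u_n$ pour tout $n$, d'o� $a_n=\prod_{d\mid n}u_d=\prod_{d\mid n}c_d$ pour tout $n\geq 1$, ce qui est l'identit� requise. Ce sens ne pr�sente aucune difficult� une fois le lemme \ref{cat} invoqu�.

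Pour la r�ciproque, je supposerais que $a_n=\prod_{d\mid n}c_d$ pour tout $n\geq 1$, o� $(c_n)_n$ est d�finie par les quotients de $\ppcm$ ci-dessus. Il faut montrer que $(a_n)_n$ est � forte divisibilit�. Le plan est d'appliquer le th�or�me \ref{divseq} avec $u_n:=c_n$: il suffit donc de v�rifier la condition \ref{cond2}, � savoir que $\pgcd(c_n,c_m)=1$ d�s que $n\nmid m$ et $m\nmid n$. On observe d'abord que par construction $\prod_{m=1}^n c_m=\ppcm(a_1,\dots,a_n)$ (t�lescopage), et que $c_n=\ppcm(a_1,\dots,a_n)/\ppcm(a_1,\dots,a_{n-1})$ divise $a_n$ (puisque $\ppcm(a_1,\dots,a_n)=\ppcm(\ppcm(a_1,\dots,a_{n-1}),a_n)$ divise $\ppcm(a_1,\dots,a_{n-1})\cdot a_n$). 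Ensuite, pour un nombre premier $p$ fix�, je regarderais la valuation $\vartheta_p(c_n)=\vartheta_p\big(\ppcm(a_1,\dots,a_n)\big)-\vartheta_p\big(\ppcm(a_1,\dots,a_{n-1})\big)=\max_{i\leq n}\vartheta_p(a_i)-\max_{i\leq n-1}\vartheta_p(a_i)$, qui est strictement positive exactement lorsque $\vartheta_p(a_n)>\vartheta_p(a_i)$ pour tout $i<n$, c'est-�-dire lorsque $n$ est ``champion pour $p$'' au sens de la d�finition de \textsection\ref{for} (en rempla�ant la condition sur $\vartheta_p(a_1)$). Par la remarque \ref{rmq1s} appliqu�e directement � la suite $(a_n)_n$ --- qui n'utilise que le fait que $\pgcd(a_{m_1},a_{m_2})\mid a_{\pgcd(m_1,m_2)}$... non, attention : la remarque \ref{rmq1s} suppose la forte divisibilit�, donc ici il faut �tre prudent.

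C'est l� le point d�licat, et ce sera l'obstacle principal : dans la r�ciproque, on ne dispose PAS encore de la forte divisibilit�, donc on ne peut pas utiliser directement que deux champions pour $p$ se divisent l'un l'autre. Il faut extraire cette propri�t� uniquement de l'hypoth�se $a_n=\prod_{d\mid n}c_d$. Mon plan serait donc : partant de $a_n=\prod_{d\mid n}c_d$, fixer $p$, et noter $S_p:=\{n\geq 1:\vartheta_p(c_n)>0\}$; alors $\vartheta_p(a_n)=\sum_{d\mid n,\,d\in S_p}\vartheta_p(c_d)$. Si je montre que $S_p$ est totalement ordonn� par la divisibilit� (une cha�ne $n_1\mid n_2\mid n_3\mid\cdots$), alors pour $n\nmid m$ et $m\nmid n$ on ne peut avoir simultan�ment $n\in S_p$ et $m\in S_p$, d'o� $\pgcd(c_n,c_m)=1$ pour tout $p$, donc $\pgcd(c_n,c_m)=1$, et \ref{divseq} conclut. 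Pour prouver que $S_p$ est une cha�ne, je raisonnerais par l'absurde : si $n,m\in S_p$ sont incomparables, je poserais $\delta=\pgcd(n,m)$, $\ell=\ppcm(n,m)$, et je comparerais $\vartheta_p(a_\ell)$ avec $\vartheta_p(a_n)+\vartheta_p(a_m)-\vartheta_p(a_\delta)=\sum_{d\mid n,d\in S_p}+\sum_{d\mid m,d\in S_p}-\sum_{d\mid\delta,d\in S_p}\vartheta_p(c_d)$; comme tout $d$ avec $d\mid n$ ou $d\mid m$ divise $\ell$, et que les contributions des $d\mid\delta$ sont compt�es deux fois dans la somme crois�e, on obtient $\vartheta_p(a_\ell)\geq \vartheta_p(a_n)+\vartheta_p(a_m)-\vartheta_p(a_\delta)$, avec in�galit� stricte ou au moins une structure exploitable ; mais il faut confronter ceci � la d�finition m�me de $c_\ell$ via le $\ppcm$, qui force $\vartheta_p\big(\ppcm(a_1,\dots,a_\ell)\big)=\max_{i\leq\ell}\vartheta_p(a_i)$ et donc un contr�le sur la fa�on dont les valuations peuvent cro�tre. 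Je m'attends � devoir manipuler soigneusement ces sommes index�es par les diviseurs appartenant � $S_p$ pour aboutir � une contradiction ; l'argument combinatoire exact reliant ``cha�ne dans $S_p$'' et ``structure de $\ppcm$'' est le c�ur technique et je le traiterais en explicitant le plus petit �l�ment de $S_p$ non divis� par un autre, pour d�river une incoh�rence dans une valuation $\vartheta_p(a_k)$ avec $k$ bien choisi.
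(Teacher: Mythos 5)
Your forward direction is correct and is in fact a legitimate shortcut compared with the paper: invoking Lemma \ref{cat} gives $\ppcm(a_1,\dots,a_n)=\prod_{m=1}^{n}u_m$ at once, hence $c_n=u_n$ for all $n$ and the required identity $a_n=\prod_{d\mid n}c_d$. (The paper re-derives this product formula by an induction that uses Theorem \ref{divseq}, so nothing is lost by citing Lemma \ref{cat} instead.)

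The converse, however, contains a genuine gap. You correctly reduce the problem to verifying condition \ref{cond2} of Theorem \ref{divseq}, namely $\pgcd(c_n,c_m)=1$ whenever $n\nmid m$ and $m\nmid n$, but you do not prove it: the key step (that $S_p:=\{n:\vartheta_p(c_n)>0\}$ is a chain for divisibility) is explicitly left as a programme, and the route you sketch through $\ell=\ppcm(n,m)$ and $\delta=\pgcd(n,m)$ does not by itself yield a contradiction. The point you are missing is that no valuation analysis is needed. From the telescoping identity $\prod_{i=1}^{m}c_i=\ppcm(a_1,\dots,a_m)$ and the hypothesis $a_m=\prod_{d\mid m}c_d$, setting $U:=\prod_{d\mid m,\ d<m}c_d$ and $V:=\prod_{d\nmid m,\ d<m}c_d$ one gets
\[UVc_m=\ppcm\left(a_1,\dots,a_m\right)=\ppcm\left(\ppcm\left(a_1,\dots,a_{m-1}\right),a_m\right)=\ppcm\left(UV,Uc_m\right)=U\cdot\ppcm\left(V,c_m\right),\]
hence $Vc_m=\ppcm(V,c_m)$, i.e. $\pgcd(V,c_m)=1$; since $c_d$ divides $V$ for every $d<m$ with $d\nmid m$, this is exactly the required coprimality, and Theorem \ref{divseq} concludes. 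Your valuation strategy can in fact be completed (take $m$ minimal such that some element of $S_p\cap[1,m)$ does not divide $m$; the inequality $\vartheta_p(c_m)\leq\vartheta_p(a_m)-\vartheta_p(a_n)$ then forces the existence of some $d\in S_p$ with $d\mid m$, $d<m$, $d\nmid n$, which contradicts minimality), but as written your proof of the converse is incomplete.
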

\begin{proof}~\\
\textbullet{} $\left(\Rightarrow\right)$: Supposons que $\left(a_{n}\right)_{n\geq 1}$ est à forte divisibilité. En vertu de \eqref{kimmm}, il existe une unique suite d'entiers strictement positifs $\left(u_{n}\right)_{n\geq 1}$ vérifiant: $a_{n}=\prod_{d\mid n}u_d$ $(\forall n\geq 1)$. Considérons la suite $\left(e_{n}\right)_{n\geq 1}$ donnée par $e_{1}=1$ et $e_{n+1}=\ppcm\left(e_n,a_n\right)$ $(\forall n\in\mathbb{N^*})$. Nous procédons par récurrence pour montrer que:
\begin{equation}\label{b1b2}
e_{n+1}=u_{1}u_{2}\cdots u_{n}~~~~(\forall n\in\mathbb{N^*}),
\end{equation}
ce qui entraînera que $\left(u_n\right)_n$ est identique à $\left(c_n\right)_n$ et conclura au résultat requis. Pour $n=1$, le résultat est trivial; en effet, on a: $e_2=\ppcm\left(e_1,a_1\right)=a_1=u_1$. Supposons pour la suite que $n\geq 2$ et que $e_n=u_{1}u_{2}\cdots u_{n-1}$. Il s'ensuit que:
\[e_{n+1}=\ppcm\left(e_n,a_n\right)=\ppcm\left(\prod_{k=1}^{n-1}u_k,\prod_{d\mid n}u_d\right)=\ppcm\left(A\cdot B,A\cdot u_n\right)=A\cdot\ppcm\left(B,u_n\right),\]
avec $A:=\prod_{d\mid n,d<n}u_d$ et $B:=\prod_{d\nmid n,d<n}u_d$. Par ailleurs, on a (en vertu du théorème \ref{divseq}): $\pgcd\left(u_d,u_n\right)=1$ pour tout $1\leq d<n$ tel que $d\nmid n$. On a par conséquent: $\pgcd\left(B,u_n\right)=1$. Ce qui entraîne que:
\[e_{n+1}=ABu_n=\left(\prod_{k=1}^{n-1}u_k\right)\cdot u_n=u_{1}u_{2}\cdots u_n,\]
comme il fallait le prouver.\\
\textbullet{} $\left(\Leftarrow\right)$: Inversement, supposons que $a_n=\prod_{d\mid n}c_d$ $(\forall n\geq 1)$. Fixons $m\in\mathbb{N^*}$ et posons:
\[U:=\prod_{d\mid n, d<m}c_d,~~V:=\prod_{d\nmid n, d<m}c_d.\]
On a:
\begin{align*}
UV c_m &=\prod_{i=1}^{m}c_i=\ppcm\left(a_1,\dots,a_m\right)=\ppcm\left(\ppcm\left(a_1,\dots,a_{m-1}\right),a_m\right)\\&=\ppcm\left(\prod_{i=1}^{m-1}c_i,\prod_{d\mid m}c_d\right)=\ppcm\left(U\cdot V,U\cdot c_m\right)=U\cdot\ppcm\left(V,c_m\right).
\end{align*} 
Ce qui entraîne que $V\cdot c_m=\ppcm\left(V,c_m\right)$, donc $\pgcd\left(V,c_m\right)=1$. Par conséquent, on a pour tout $1\leq d<m$ tel que $d\nmid m$: $\pgcd\left(c_d,c_m\right)=1$; cela revient à dire que pour tous $n,m\in\mathbb{N^*}$ tels que $n\nmid m$ et $m\nmid n$, on a: $\pgcd\left(c_m,c_n\right)=1$. Ce qui montre (en vertu du théorème \ref{divseq}) que $\left(a_n\right)_{n\geq 1}$ est à forte divisibilité. Le théorème est ainsi démontré.
\end{proof}

\section{Identités concernant le $\ppcm$ de suites à forte divisibilité}

Pour généraliser l'identité \eqref{idf}, nous allons d'abord définir les coefficients binomiaux associés à une suite à forte divisibilité. Étant donné une telle suite ${\boldsymbol{a}}:=\left(a_n\right)_{n\geq 1}$, pour tout entier naturel $n$, on désigne par $[n]_{\boldsymbol{a}}!$ l'entier strictement positif défini par:
\[[n]_{\boldsymbol{a}}!:=a_1a_2\cdots a_n,\]
(en convenant que $[0]_{\boldsymbol{a}}!=1$). Pour $n,k\in\mathbb{N}$, avec $n\geq k$, on désigne par $\binom{n}{k}_{\boldsymbol{a}}$ le nombre rationnel strictement positif défini par:
\[\binom{n}{k}_{\boldsymbol{a}}:=\frac{a_{n}a_{n-1}\cdots a_{n-k+1}}{a_{1}a_{2}\dots a_{k}}=\frac{[n]_{\boldsymbol{a}}!}{[k]_{\boldsymbol{a}}![n-k]_{\boldsymbol{a}}!}.\]  
Ces nombres sont appelés les coefficients $\boldsymbol{a}$-binomiaux. Les coefficients binomiaux usuels s'obtiennent en prenant simplement $a_n=n$ ($\forall n\geq 1$). D'après la définition, on vérifie facilement que les coefficients $\boldsymbol{a}$-binomiaux vérifient les identités suivantes:  
\begin{equation}\label{b1}
\binom{n}{k}_{\boldsymbol{a}}=\binom{n}{n-k}_{\boldsymbol{a}}~~~~\left(\forall n,k\in\mathbb{N}, n\geq k\right)
\end{equation} 
\begin{equation}\label{b2}
a_k\binom{n+1}{k}_{\boldsymbol{a}}=a_{n+1}\binom{n}{k-1}_{\boldsymbol{a}}~~~~\left(\forall n,k\in\mathbb{N}, 1\leq k\leq n+1\right)
\end{equation} ~\\[-10mm]
\begin{equation}\label{b3}
\binom{n}{k}_{\boldsymbol{a}}\binom{k}{l}_{\boldsymbol{a}}=\binom{n}{l}_{\boldsymbol{a}}\binom{n-l}{k-l}_{\boldsymbol{a}}~~~~\left(\forall n,k,l\in\mathbb{N}, l\leq k\leq n\right).
\end{equation} 

\bigskip

En utilisant la représentation \eqref{kimmm} pour le cas particulier $a_n=q^n-1$ (où $q\geq 2$ est un nombre entier), Knuth et Wilf ont montré une formule importante pour les coefficients binomiaux de Gauss (voir \cite[Equation (10)]{Knuth}). En fait, cette formule peut être facilement généralisée pour toute suite à forte divisibilité, comme le montre la proposition suivante:

\begin{prop}[Knuth \cite{Knuth}]\label{knuth}
Soient $\left(u_n\right)_{n\geq 1}$ une suite d'entiers strictement positifs et $\left(a_n\right)_{n\geq 1}$ la suite de terme général:
\[a_n=\prod_{d\mid n}u_d~~~~\left(\forall n\in\mathbb{N^*}\right).\]
Alors, pour tous $n,k\in\mathbb{N}$ tels que $n\geq k$, on a:
\[\binom{n}{k}_{\boldsymbol{a}}=\prod_{d}u_d,\]
où le produit porte sur les entiers strictement positifs $d\leq n$ tels que:
\[\left\lfloor \frac{k}{d}\right\rfloor +\left\lfloor \frac{n-k}{d}\right\rfloor <\left\lfloor \frac{n}{d}\right\rfloor.\]
En particulier, les coefficients $\boldsymbol{a}$-binomiaux $\binom{n}{k}_{\boldsymbol{a}}$ $\left(n,k\in\mathbb{N},~n\geq 1\right)$ sont tous des entiers strictement positifs. 
\end{prop}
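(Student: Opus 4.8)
The plan is to prove the identity by a direct computation with the floor function; remarkably, the strong divisibility property plays no role here, only the defining relation $a_m=\prod_{d\mid m}u_d$ is used.

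First I would express the $\boldsymbol{a}$-factorial in terms of the $u_d$. Writing
\[
[n]_{\boldsymbol{a}}!=\prod_{m=1}^{n}a_m=\prod_{m=1}^{n}\prod_{d\mid m}u_d
\]
and observing that, for each fixed $d\in\{1,\dots,n\}$, the integers $m\in\{1,\dots,n\}$ that are multiples of $d$ number exactly $\lfloor n/d\rfloor$, one obtains, after interchanging the two products,
\[
[n]_{\boldsymbol{a}}!=\prod_{d=1}^{n}u_d^{\lfloor n/d\rfloor}.
\]

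Next I would substitute this into the definition $\binom{n}{k}_{\boldsymbol{a}}=[n]_{\boldsymbol{a}}!\big/\bigl([k]_{\boldsymbol{a}}!\,[n-k]_{\boldsymbol{a}}!\bigr)$. Since $\lfloor k/d\rfloor=0$ for $k<d\le n$ and $\lfloor(n-k)/d\rfloor=0$ for $n-k<d\le n$, the factorials $[k]_{\boldsymbol{a}}!$ and $[n-k]_{\boldsymbol{a}}!$ may equally be written as products over $d\in\{1,\dots,n\}$, whence
\[
\binom{n}{k}_{\boldsymbol{a}}=\prod_{d=1}^{n}u_d^{\;\lfloor n/d\rfloor-\lfloor k/d\rfloor-\lfloor(n-k)/d\rfloor}.
\]
The one point that is not pure bookkeeping is the elementary inequality $0\le\lfloor x+y\rfloor-\lfloor x\rfloor-\lfloor y\rfloor\le 1$, valid for all real $x,y$ (indeed the middle quantity equals $\lfloor\{x\}+\{y\}\rfloor$ and $0\le\{x\}+\{y\}<2$). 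Applied with $x=k/d$ and $y=(n-k)/d$, it shows that each exponent $\lfloor n/d\rfloor-\lfloor k/d\rfloor-\lfloor(n-k)/d\rfloor$ equals $1$ when $\lfloor k/d\rfloor+\lfloor(n-k)/d\rfloor<\lfloor n/d\rfloor$ and $0$ otherwise.

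Hence the product above collapses to $\prod_d u_d$ taken over exactly the set of integers $d\le n$ singled out in the statement, which is the asserted identity; and since every $u_d$ is a positive integer, this product is a positive integer, yielding the final ``in particular'' claim. I do not expect any genuine obstacle: the whole argument is a short manipulation, the only spot asking for a little care being the floor-function inequality that forces the exponents to lie in $\{0,1\}$.
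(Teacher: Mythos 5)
Your argument is correct and is essentially the proof the paper gives: both reduce to the identity $\binom{n}{k}_{\boldsymbol{a}}=\prod_{d\geq 1}u_d^{\lfloor n/d\rfloor-\lfloor k/d\rfloor-\lfloor (n-k)/d\rfloor}$ by interchanging products and counting multiples of $d$, then observe that each exponent lies in $\{0,1\}$. The only cosmetic difference is that you expand the three $\boldsymbol{a}$-factorials separately while the paper computes the ratio $\prod_{n-k<m\leq n}a_m/\prod_{1\leq m\leq k}a_m$ directly, and you supply the short justification of the floor inequality that the paper leaves implicit.
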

\begin{proof}
Soient $n,k\in\mathbb{N}$ tels que $n\geq k$. On a (en vertu de \eqref{kimmm}):
\[\binom{n}{k}_{\boldsymbol{a}}:=\frac{\prod_{n-k< m\leq n}a_m}{\prod_{1\leq m\leq k}a_m}=\frac{\prod_{n-k< m\leq n}\prod_{d\mid m}u_d}{\prod_{1\leq m\leq k}\prod_{d\mid m}u_d}.\]
Comme on a:
\[\prod_{n-k< m\leq n}\prod_{d\mid m}u_d=\prod_{1\leq d\leq n}\prod_{\begin{subarray}{c} n-k< m\leq n \\ m\equiv 0\!\!\pmod d\end{subarray}}u_d=\prod_{1\leq d\leq n}{u_d}^{\left\lfloor \frac{n}{d}\right\rfloor-\left\lfloor \frac{n-k}{d}\right\rfloor}=\prod_{d\geq 1}{u_d}^{\left\lfloor \frac{n}{d}\right\rfloor-\left\lfloor \frac{n-k}{d}\right\rfloor}\]
et
\[\prod_{1\leq m\leq k}\prod_{d\mid m}u_d=\prod_{1\leq d\leq k}\prod_{\begin{subarray}{c} 1\leq m\leq k \\ m\equiv 0\!\!\pmod d\end{subarray}}u_d=\prod_{1\leq d\leq k}{u_d}^{\left\lfloor\frac{k}{d}\right\rfloor}=\prod_{d\geq 1}{u_d}^{\left\lfloor\frac{k}{d}\right\rfloor},\]
il s'ensuit que:
\[\binom{n}{k}_{\boldsymbol{a}}=\prod_{d\geq 1}{u_d}^{\left\lfloor \frac{n}{d}\right\rfloor-\left\lfloor\frac{k}{d}\right\rfloor-\left\lfloor \frac{n-k}{d}\right\rfloor}.\]
L'identité requise de la proposition découle du fait que:
\[\left\lfloor \frac{n}{d}\right\rfloor-\left\lfloor\frac{k}{d}\right\rfloor-\left\lfloor \frac{n-k}{d}\right\rfloor\in\left\lbrace 0,1\right\rbrace~~~~(\forall d\geq 1).\]
Ce qui complète cette démonstration.
\end{proof}

Notre résultat principal est le suivant:

\begin{thm}\label{Ri1}
Soit ${\boldsymbol{a}}=\left(a_n\right)_{n\geq 1}$ une suite à forte divisibilité. Alors, pour tout $n\in\mathbb{N}$, on a:
\begin{equation}\label{rr1}
\ppcm\left\lbrace \binom{n}{0}_{\boldsymbol{a}},\binom{n}{1}_{\boldsymbol{a}},\dots,\binom{n}{n}_{\boldsymbol{a}}\right\rbrace=\frac{\ppcm\left(a_1,a_2,\dots,a_n,a_{n+1}\right)}{a_{n+1}}.
\end{equation}
\end{thm}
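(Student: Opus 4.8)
The plan is to proceed $p$-adically: fix a prime $p$ and show that the $p$-adic valuations of the two sides of \eqref{rr1} coincide, which suffices since both sides are positive rationals. The key tool is the representation \eqref{kimmm}: since $\boldsymbol{a}$ is a strong divisibility sequence, there is a unique sequence $(u_n)_{n\geq 1}$ of positive integers with $a_n=\prod_{d\mid n}u_d$. Write $\alpha_d:=\vartheta_p(u_d)$ for every $d\geq 1$. Then $\vartheta_p(a_n)=\sum_{d\mid n}\alpha_d$, and by Proposition \ref{knuth} each coefficient $\binom{n}{k}_{\boldsymbol{a}}$ is a positive integer with
\[\vartheta_p\!\left(\binom{n}{k}_{\boldsymbol{a}}\right)=\sum_{d\geq 1}\alpha_d\left(\left\lfloor\tfrac{n}{d}\right\rfloor-\left\lfloor\tfrac{k}{d}\right\rfloor-\left\lfloor\tfrac{n-k}{d}\right\rfloor\right),\]
where each bracket $\varepsilon_d(n,k):=\lfloor n/d\rfloor-\lfloor k/d\rfloor-\lfloor (n-k)/d\rfloor$ lies in $\{0,1\}$.

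First I would handle the left-hand side. Its $p$-adic valuation is $\max_{0\leq k\leq n}\vartheta_p\big(\binom{n}{k}_{\boldsymbol{a}}\big)=\max_{0\leq k\leq n}\sum_{d}\alpha_d\,\varepsilon_d(n,k)$. The point is that one can choose a single $k$ that simultaneously makes $\varepsilon_d(n,k)=1$ for all $d$ for which this is achievable at all. Precisely, $\varepsilon_d(n,k)=1$ happens iff the base-$d$ addition $k+(n-k)$ carries at the units digit, i.e. iff $(k\bmod d)+((n-k)\bmod d)\geq d$; and $\varepsilon_d(n,k)$ can be $1$ for some $k$ iff $d\nmid n$. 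Taking $k=k^\ast$ to be the largest power $p^N$ with $p^N\leq n$ minus $1$ — more precisely, mimicking Lemma \ref{llmm}, take $k^\ast=p^N-1$ where $p^N$ is the top power of $p$ not exceeding $n$ — forces $\varepsilon_d(n,k^\ast)=1$ for every divisor-candidate $d$ that is a power of $p$ not dividing $n$, and for the non-prime-power $d$ the term $\alpha_d$ contributes nothing extra (one checks $\alpha_d\varepsilon_d$ is dominated by the prime-power contribution, exactly as in the classical case). The cleanest route is in fact to invoke Lemma \ref{llmm} directly after reducing: since $\binom{n}{k}_{\boldsymbol{a}}=\prod_{d}u_d^{\varepsilon_d(n,k)}$ and only $d$ that are powers of $p$ contribute to $\vartheta_p$ in a way depending on $k$, the maximum over $k$ of $\sum_{d=p^j}\alpha_{p^j}\varepsilon_{p^j}(n,k)$ is computed by the same carry analysis as for $\binom{n}{k}$, giving $\max = \sum_{j=i_0}^{N}\alpha_{p^j}$ when $n\neq p^{N+1}-1$ (with $i_0=\min\{i:c_i\neq p-1\}$ in the base-$p$ expansion of $n$) and $\max=0$ when $n=p^{N+1}-1$; one must also add the $k$-independent contributions $\sum_{d:\,d\mid n\text{ "half"},\,p\nmid d\text{-part issue}}$... — which, carefully, all cancel.

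Next I would compute the right-hand side: $\vartheta_p\!\left(\frac{\ppcm(a_1,\dots,a_{n+1})}{a_{n+1}}\right)=\max_{1\leq m\leq n+1}\vartheta_p(a_m)-\vartheta_p(a_{n+1})$. Now $\vartheta_p(a_m)=\sum_{d\mid m}\alpha_d$, and the maximum over $m\leq n+1$ is attained at $m=p^{N'}$ where $p^{N'}$ is the largest power of $p$ that is $\leq n+1$ (because $\sum_{d\mid m}\alpha_d$ grows with the divisor lattice, and $a_d\mid a_{m}$ whenever $d\mid m$, so the lcm's valuation equals $\vartheta_p(a_r)$ for $r$ the largest "$p$-champion" $\leq n+1$ in the sense of the champion machinery of \textsection\ref{for}). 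Then subtracting $\vartheta_p(a_{n+1})=\sum_{d\mid n+1}\alpha_d$ and comparing with the left-hand computation — using the formulas \eqref{ffffff2}, \eqref{ffffff3} transposed from the classical proof, replacing each "$N+1$" or "$i$" count by the corresponding sum $\sum_{j}\alpha_{p^j}$ of valuations $\alpha_{p^j}=\vartheta_p(u_{p^j})$ — yields exactly $\sum_{j=i_0}^{N}\alpha_{p^j}$ (resp. $0$ in the degenerate case $n=p^{N+1}-1$), matching the left-hand side.

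The main obstacle, and the step deserving the most care, is the bookkeeping of the $k$-\emph{independent} terms $\alpha_d$ with $d$ not a power of $p$ (these appear in $\vartheta_p(\binom{n}{k}_{\boldsymbol a})$ through divisors $d$ with $1<\varepsilon$-support but composite) and ensuring they cancel between the numerator $\ppcm(a_1,\dots,a_{n+1})$ and the denominator $a_{n+1}$ on the right, and between the "common part" of all $\binom{n}{k}_{\boldsymbol a}$ on the left — i.e. verifying that the only genuinely $k$-sensitive, non-cancelling contributions come from $d\in\{1,p,p^2,\dots\}$. Once that reduction is made rigorous (it follows from the strong-divisibility structure: $\varepsilon_d(n,k)$ for composite $d$ with a prime factor $q\neq p$ is controlled by the $q$-adic carries, which are irrelevant to $\vartheta_p$, and $\alpha_d=\vartheta_p(u_d)$ may well be $0$), the argument collapses to the classical one of Theorem \ref{Fip} with "$1$" everywhere replaced by "$\alpha_{p^j}$", and the identity follows. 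Alternatively — and this may be shorter — one can give a divisor-theoretic proof using Lemma \ref{cat} (which gives $\ppcm(a_1,\dots,a_n)=\prod_{m=1}^n u_m$) combined with Proposition \ref{knuth}: then $\frac{\ppcm(a_1,\dots,a_{n+1})}{a_{n+1}}=\frac{\prod_{m=1}^{n+1}u_m}{\prod_{d\mid n+1}u_d}=\prod_{m\leq n+1,\ m\nmid n+1}u_m$, while the lcm of the $\binom{n}{k}_{\boldsymbol a}$ is $\prod_d u_d^{\max_k\varepsilon_d(n,k)}=\prod_{d\leq n,\ d\nmid n}u_d$ — and $\{m\leq n+1: m\nmid n+1\}$ versus $\{d\leq n: d\nmid n\}$ differ in a way that, after accounting for which of these indices are $p$-champions, gives the same product; I would pursue this cleaner combinatorial formulation first and fall back on the $p$-adic one only for the champion subtleties.
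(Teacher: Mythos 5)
Your plan has two concrete problems. First, the divisibility criterion is misstated: by Lemma \ref{guos}, a $k\in\{0,\dots,n\}$ with $\lfloor k/d\rfloor+\lfloor (n-k)/d\rfloor<\lfloor n/d\rfloor$ exists if and only if $d\nmid(n+1)$, \emph{not} $d\nmid n$. With the correct criterion the relevant index set is $\{d\leq n:\ d\nmid(n+1)\}$, and $\prod_{d\leq n,\,d\nmid(n+1)}u_d=\prod_{d\leq n+1}u_d\big/\prod_{d\mid(n+1)}u_d=\ppcm(a_1,\dots,a_{n+1})/a_{n+1}$ on the nose, so the ``champion reconciliation'' between $\{d\leq n: d\nmid n\}$ and $\{m\leq n+1: m\nmid(n+1)\}$ that closes your alternative route is reconciling two sets that should never have differed; the identity your criterion actually produces, $\ppcm(a_1,\dots,a_n)/a_n$, is false. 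Second, and more seriously, the pivotal claim of your left-hand computation --- that a single $k$ makes $\varepsilon_d(n,k)=1$ for every $d$ for which this is achievable --- is false. Writing $n+1=q_dd+r_d$ with $1\leq r_d\leq d-1$, one has $\varepsilon_d(n,k)=1$ iff $k\bmod d\geq r_d$; for $n=7$ and $d\in\{3,5\}$ (both nondivisors of $8$) this requires $k\bmod 3\geq 2$ and $k\bmod 5\geq 3$, i.e.\ $k\in\{2,5\}\cap\{3,4\}=\emptyset$. The theorem survives only because, for a fixed prime $p$, the indices $d$ with $\vartheta_p(u_d)>0$ form a chain under divisibility (Theorem \ref{divseq}), so a common $k$ is needed only along a chain --- and note these $d$ are in general \emph{not} powers of $p$ (for Fibonacci and $p=2$ they are $3,6,12,\dots$), so your choice $k^\ast=p^N-1$ is aimed at the wrong set. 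That reduction to chains, plus the existence of a common $k$ along a chain, is exactly what you defer to ``bookkeeping that all cancels,'' and it is the whole difficulty of the direction $B_n\mid A_n$.

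The paper sidesteps all of this. The direction $A_n\mid B_n$ is your easy observation done correctly: by Proposition \ref{knuth} and Lemma \ref{guos}, each $\binom{n}{k}_{\boldsymbol{a}}$ divides $\prod_{d\leq n,\,d\nmid(n+1)}u_d=B_n$. For the reverse, it uses the Pascal-type identity \eqref{b2}: each $a_k$ divides $a_k\binom{n+1}{k}_{\boldsymbol{a}}=a_{n+1}\binom{n}{k-1}_{\boldsymbol{a}}$, hence $\ppcm(a_1,\dots,a_{n+1})$ divides $a_{n+1}A_n$, i.e.\ $B_n\mid A_n$ --- three lines, no carries, no champions. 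To salvage your $p$-adic route you would have to (i) replace $d\nmid n$ by $d\nmid(n+1)$ throughout and (ii) actually prove that for each prime $p$ some single $k\leq n$ realizes $\varepsilon_d(n,k)=1$ simultaneously for all $d$ in the chain $\{d\leq n:\ d\nmid(n+1),\ \vartheta_p(u_d)>0\}$.
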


Pour démontrer le théorème \ref{Ri1}, nous utiliserons le lemme de Guo \cite{Victor} suivant:

\begin{lemme}[Guo \cite{Victor}]\label{guos}
Soient $n$ et $d$ deux entiers strictement positifs tels que $n\geq d$. Alors, les deux propriétés suivantes sont équivalentes:
\begin{enumerate}
\item Il existe $k\in \left\lbrace 0,1,\dots,n\right\rbrace$ tel que: $\left\lfloor \frac{k}{d} \right\rfloor +\left\lfloor \frac{n-k}{d} \right\rfloor < \left\lfloor \frac{n}{d}\right\rfloor$. 
\item Le nombre $d$ ne divise pas $(n+1)$.
\end{enumerate}
\end{lemme}
\begin{proof}
Supposons que la première propriété du lemme est vérifiée et désignons respectivement par $a$ et $b$ les restes des divisions euclidiennes de $k$ et $(n-k)$ sur $d$. Notre hypothèse est alors équivalente à:
\[\left\lfloor \frac{a}{d} \right\rfloor +\left\lfloor \frac{b}{d}\right\rfloor < \left\lfloor \frac{a+b}{d}\right\rfloor.\]
Ce qui entraîne que: $1\leq a,b\leq d-1$ et $d\leq a+b\leq 2d-2$. Puisque $n\equiv a+b\pmod d$, alors $n+1\equiv a+b+1\not\equiv 0\pmod d$, comme il fallait le prouver.\\
Inversement, supposons que $n+1\equiv c\pmod d$ pour un certain $c\in\left\lbrace 1,2,\dots,d-1\right\rbrace$. Il existe donc un $q\in\mathbb{N^*}$ tel que $n+1=c+qd$. On a par conséquent:
\[\left\lfloor \frac{c}{d} \right\rfloor +\left\lfloor \frac{n-c}{d}\right\rfloor=\left\lfloor\frac{qd-1}{d}\right\rfloor=q-1\leq \left\lfloor \frac{n}{d}\right\rfloor-1 < \left\lfloor \frac{n}{d}\right\rfloor.\]
On conclut à la première propriété du lemme en prenant $k=c$. Ce qui complète cette démonstration.
\end{proof}

\begin{proof}[Démonstration du théorème \ref{Ri1}]
Pour $n=0$, l'identité \eqref{rr1} du théorème \ref{Ri1} est triviale. Supposons pour la suite que $n\geq 1$ et désignons respectivement par $A_n$ et $B_n$ le membre de gauche et le membre de droite de \eqref{rr1}. Nous allons montrer que $A_n$ divise $B_n$ puis que $B_n$ divise $A_n$, ce qui conclura que $A_n=B_n$. Puisque la suite $\boldsymbol{a}$ est à forte divisibilité, alors (d'après le théorème \ref{nowi}) on a pour tout entier $m\geq 1$:   
\[a_m=\prod_{d\mid m}u_d,\]
où $\left(u_{d}\right)_{d\geq 1}$ est la suite d'entiers strictement positifs définie par:
\[u_1:=a_1~~\text{et}~~u_d:=\frac{\ppcm\left(a_1,\dots,a_d\right)}{\ppcm\left(a_1,\dots,a_{d-1}\right)}~~~~\left(\forall d\geq 2\right).\]
D'autre part, de la définition de $\left(u_{d}\right)_{d}$, découle immédiatement que:
\[\ppcm\left(a_1,\dots,a_{m}\right)=\prod_{d=1}^{m}u_d~~~~\left(\forall m\geq 1\right).\]
Maintenant, pour tout $k\in\left\lbrace 0,1,\dots,n\right\rbrace$, le produit $\prod_{d}u_d=\binom{n}{k}_{\boldsymbol{a}}$ fourni par la proposition \ref{knuth} porte sur les entiers $d$ qui vérifient tous (d'après le lemme \ref{guos}): $1\leq d\leq n$ et $d\nmid (n+1)$. Cela implique que le produit:
\[\prod_{\begin{subarray}{c}1\leq d\leq n \\ d\nmid (n+1) \end{subarray}}u_d=\frac{\displaystyle\prod_{1\leq d\leq n+1}u_d}{\displaystyle\prod_{d\mid (n+1)}u_d}=\frac{\ppcm\left(a_1,\dots,a_{n},a_{n+1}\right)}{a_{n+1}}=B_n\]
est un multiple de chacun des nombres $\binom{n}{k}_{\boldsymbol{a}}$ $\left(0\leq k\leq n\right)$. D'où $B_n$ est un multiple de:
\[\ppcm\left\lbrace \binom{n}{0}_{\boldsymbol{a}},\binom{n}{1}_{\boldsymbol{a}},\dots,\binom{n}{n}_{\boldsymbol{a}}\right\rbrace=A_n.\]
Ce qui montre que $A_n\mid B_n$. Inversement, il est immédiat que:
\[\ppcm\left(a_{1},a_{2},\dots,a_{n+1}\right)~\text{divise}~\ppcm\left\lbrace a_1\binom{n+1}{1}_{\boldsymbol{a}},a_2\binom{n+1}{2}_{\boldsymbol{a}},\dots,a_{n+1}\binom{n+1}{n+1}_{\boldsymbol{a}}\right\rbrace,\]
qui est égal (en vertu de \eqref{b2}) à:
\begin{align*}
\ppcm\left\lbrace a_{n+1}\binom{n}{0}_{\boldsymbol{a}},a_{n+1}\binom{n}{1}_{\boldsymbol{a}},\dots,a_{n+1}\binom{n}{n}_{\boldsymbol{a}}\right\rbrace &=a_{n+1}\ppcm\left\lbrace\binom{n}{0}_{\boldsymbol{a}},\binom{n}{1}_{\boldsymbol{a}},\dots,\binom{n}{n}_{\boldsymbol{a}}\right\rbrace \\&=a_{n+1}A_{n}.
\end{align*}
D'où l'on déduit que:
\[\frac{\ppcm\left(a_{1},\dots,a_n,a_{n+1}\right)}{a_{n+1}}=B_n~\text{divise}~A_n.\]
Ce qui complète cette démonstration.
\end{proof}

Du théorème \ref{Ri1}, nous tirons les deux corollaires suivants:

\begin{coll}\label{sour}
Soit ${\boldsymbol{a}}=\left(a_n\right)_{n\geq 1}$ une suite à forte divisibilité. Alors, pour tout entier strictement positif $n$, on a:
\[\ppcm\left(a_1,a_2,\dots,a_n\right)=\ppcm\left\lbrace a_1\binom{n}{1}_{\boldsymbol{a}},\dots,a_n\binom{n}{n}_{\boldsymbol{a}}\right\rbrace.\] 
\end{coll}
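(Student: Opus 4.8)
The plan is to derive the identity by combining Theorem \ref{Ri1} with the basic symmetry identity \eqref{b2} for $\boldsymbol{a}$-binomial coefficients, exactly in the spirit of the second half of the proof of Theorem \ref{Ri1}. First I would observe that, by \eqref{b2} applied with $n$ replaced by $n-1$ (so that $a_k\binom{n}{k}_{\boldsymbol{a}}=a_n\binom{n-1}{k-1}_{\boldsymbol{a}}$ for $1\le k\le n$), the right-hand side can be rewritten as
\[
\ppcm\left\lbrace a_1\binom{n}{1}_{\boldsymbol{a}},\dots,a_n\binom{n}{n}_{\boldsymbol{a}}\right\rbrace
=\ppcm\left\lbrace a_n\binom{n-1}{0}_{\boldsymbol{a}},\dots,a_n\binom{n-1}{n-1}_{\boldsymbol{a}}\right\rbrace
=a_n\cdot\ppcm\left\lbrace\binom{n-1}{0}_{\boldsymbol{a}},\dots,\binom{n-1}{n-1}_{\boldsymbol{a}}\right\rbrace,
\]
pulling the common factor $a_n$ out of the $\ppcm$.

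Next I would apply Theorem \ref{Ri1} with $n$ replaced by $n-1$: this gives
\[
\ppcm\left\lbrace\binom{n-1}{0}_{\boldsymbol{a}},\binom{n-1}{1}_{\boldsymbol{a}},\dots,\binom{n-1}{n-1}_{\boldsymbol{a}}\right\rbrace
=\frac{\ppcm\left(a_1,a_2,\dots,a_{n-1},a_n\right)}{a_n}.
\]
Multiplying both sides by $a_n$ and substituting into the display from the first paragraph yields precisely $\ppcm\left\lbrace a_1\binom{n}{1}_{\boldsymbol{a}},\dots,a_n\binom{n}{n}_{\boldsymbol{a}}\right\rbrace=\ppcm\left(a_1,\dots,a_n\right)$, which is the assertion. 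One small point to record is that Theorem \ref{Ri1} is stated for $n\in\mathbb{N}$ including $0$, so the application at index $n-1$ is legitimate for every $n\ge 1$; and in the case $n=1$ the identity reduces to the trivial $\ppcm(a_1)=a_1\binom{1}{1}_{\boldsymbol{a}}=a_1$.

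I do not expect any genuine obstacle here: the corollary is a formal consequence of Theorem \ref{Ri1} and \eqref{b2}, so the only thing to be careful about is the index shift (writing $n$ for $n+1$ in \eqref{b2}) and the fact that $\ppcm\{a_n x_1,\dots,a_n x_n\}=a_n\ppcm\{x_1,\dots,x_n\}$ for positive rationals $x_i$ with $a_nx_i\in\mathbb{Z}$, which is immediate from the prime-by-prime characterization of the $\ppcm$. The mild "subtlety", if any, is simply making sure that the $\boldsymbol{a}$-binomials involved are integers — but this is guaranteed by Proposition \ref{knuth} — so that the $\ppcm$ expressions make sense as written.
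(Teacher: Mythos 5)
Your proposal is correct and follows exactly the paper's own argument: rewrite each $a_k\binom{n}{k}_{\boldsymbol{a}}$ as $a_n\binom{n-1}{k-1}_{\boldsymbol{a}}$ via \eqref{b2}, pull out the common factor $a_n$, and apply Theorem \ref{Ri1} at index $n-1$. The index-shift bookkeeping and the integrality remark are both handled correctly.
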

\begin{proof}
Pour tout entier strictement positif $n$, on a d'après la formule \eqref{b2}:
\begin{align*}
\ppcm\left\lbrace a_1\binom{n}{1}_{\boldsymbol{a}},\dots,a_{n}\binom{n}{n}_{\boldsymbol{a}}\right\rbrace &=\ppcm\left\lbrace a_{n}\binom{n-1}{0}_{\boldsymbol{a}},a_{n}\binom{n-1}{1}_{\boldsymbol{a}},\dots,a_{n}\binom{n-1}{n-1}_{\boldsymbol{a}}\right\rbrace \\ &=a_{n}\ppcm\left\lbrace \binom{n-1}{0}_{\boldsymbol{a}},\binom{n-1}{1}_{\boldsymbol{a}},\dots,\binom{n-1}{n-1}_{\boldsymbol{a}}\right\rbrace \\&=\ppcm\left(a_{1},\dots,a_{n}\right)~~~~\text{(d'après le théorème \ref{Ri1}),}
\end{align*}
comme il fallait le prouver.
\end{proof}

\begin{coll}\label{sour1}
Soit ${\boldsymbol{a}}=\left(a_n\right)_{n\geq 1}$ une suite à forte divisibilité. Alors, pour tout entier strictement positif $n$, on a:
\[\ppcm\left(a_1,a_2,\dots,a_n\right)=\pgcd\left\lbrace \binom{n}{k}_{\boldsymbol{a}}\ppcm\left(a_1,\dots,a_k\right);~n/2\leq k\leq n \right\rbrace.\]
\end{coll}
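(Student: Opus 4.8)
The plan is to establish the equality $\ppcm(a_1,\dots,a_n)=\pgcd\bigl\{\binom{n}{k}_{\boldsymbol{a}}\ppcm(a_1,\dots,a_k):n/2\le k\le n\bigr\}$ by proving the two divisibilities. Throughout write $L_m:=\ppcm(a_1,\dots,a_m)$ and, for $n/2\le k\le n$, $N_k:=\binom{n}{k}_{\boldsymbol{a}}L_k$, so that the right-hand side is $\pgcd\{N_k:n/2\le k\le n\}$. The inclusion $\pgcd\{N_k\}\mid L_n$ is immediate: the value $k=n$ is admissible, and since $\binom{n}{n}_{\boldsymbol{a}}=1$ we have $N_n=L_n$, whence the gcd divides $L_n$.

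For the reverse inclusion I would show that $L_n\mid N_k$ for every single $k$ with $n/2\le k\le n$. Since $\boldsymbol{a}$ is strongly divisible, \eqref{kimmm} furnishes the unique sequence $(u_d)_{d\ge 1}$ of positive integers with $a_m=\prod_{d\mid m}u_d$ (integrality and positivity being Lemma \ref{pourc11}); by Lemma \ref{cat} (equivalently Theorem \ref{nowi}) one has $L_m=\prod_{d=1}^{m}u_d$, and by Proposition \ref{knuth} one has $\binom{n}{k}_{\boldsymbol{a}}=\prod_{d\in E_k}u_d$, where $E_k:=\{\,d\in\{1,\dots,n\}:\lfloor k/d\rfloor+\lfloor(n-k)/d\rfloor<\lfloor n/d\rfloor\,\}$. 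Hence
\[ N_k \;=\; \Bigl(\prod_{d\in E_k}u_d\Bigr)\Bigl(\prod_{d=1}^{k}u_d\Bigr), \]
so the exponent of each $u_d$ with $1\le d\le n$ in this product equals the number of the two conditions ``$d\in E_k$'' and ``$d\le k$'' that hold.

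The heart of the matter is the claim that, for $n/2\le k\le n$, every $d\in\{1,\dots,n\}$ satisfies at least one of those conditions, i.e.\ $\{1,\dots,n\}\subseteq E_k\cup\{1,\dots,k\}$. Granting it, each exponent above is $\ge 1$, so $N_k/L_n=N_k/\prod_{d=1}^{n}u_d$ is a product of positive integers, i.e.\ $L_n\mid N_k$; taking this over all admissible $k$ gives $L_n\mid\pgcd\{N_k\}$ and hence the desired equality. To prove the claim, suppose $k<d\le n$. Then $d>k\ge n/2$ forces $\lfloor n/d\rfloor=1$; also $\lfloor k/d\rfloor=0$ because $0\le k<d$, and $\lfloor(n-k)/d\rfloor=0$ because $0\le n-k\le n/2<d$. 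Therefore $\lfloor k/d\rfloor+\lfloor(n-k)/d\rfloor=0<1=\lfloor n/d\rfloor$, so $d\in E_k$. I do not anticipate a genuine obstacle beyond keeping the $u_d$-bookkeeping straight; the only point worth an explicit check is the borderline case $d=n$ with $k<n$, where $u_n$ enters only through ``$d\in E_k$'', which the inequality just proved indeed secures.
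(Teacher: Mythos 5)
Your proof is correct, but it takes a genuinely different route from the paper's. The paper proves the nontrivial divisibility $\ppcm\left(a_1,\dots,a_n\right)\mid\pgcd\{\cdots\}$ by an identity-based argument: it combines the relation $\binom{n}{\ell}_{\boldsymbol{a}}\binom{n-\ell}{k-\ell}_{\boldsymbol{a}}=\binom{n}{k}_{\boldsymbol{a}}\binom{k}{\ell}_{\boldsymbol{a}}$ (formula \eqref{b3}) with the symmetry \eqref{b1} to show that $a_\ell\binom{n}{\ell}_{\boldsymbol{a}}$ divides $\binom{n}{k}_{\boldsymbol{a}}\ppcm\left(a_1,\dots,a_k\right)$ for every $1\le\ell\le n$ and every $n/2\le k\le n$, invoking the already-proved corollaire \ref{sour} twice and an elementary lemma (lemme \ref{ccc1}) to convert the elementwise divisibilities into ``$\ppcm$ divides $\pgcd$''. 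You instead go straight to the $u_d$-factorization: $L_m=\prod_{d=1}^{m}u_d$ (lemme \ref{cat} / th\'eor\`eme \ref{nowi}) and $\binom{n}{k}_{\boldsymbol{a}}=\prod_{d\in E_k}u_d$ (proposition \ref{knuth}), and you observe that for $k\ge n/2$ the set $E_k\cup\{1,\dots,k\}$ covers all of $\{1,\dots,n\}$ — the verification that $d>k$ implies $d\in E_k$ (since then $\lfloor n/d\rfloor=1$ while the two other floors vanish) is exactly right and is where the hypothesis $k\ge n/2$ is consumed. Your argument is shorter, bypasses corollaire \ref{sour} entirely, and makes transparent why the threshold $n/2$ is the natural one; the paper's version has the merit of staying at the level of the $\boldsymbol{a}$-binomial identities and of recycling the machinery already set up for th\'eor\`eme \ref{Ri1}. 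Both ultimately rest on the same structure theorem for suites \`a forte divisibilit\'e.
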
 
Pour présenter une preuve plus propre de ce corollaire, nous faisons intervenir le lemme élémentaire suivant:

\begin{lemme}\label{ccc1}
Soient $n,m\in\mathbb{N^*}$ et $a_{1},\dots,a_{n}$, $b_{1},\dots,b_{m}$ des entiers strictement positifs. Alors, la propriété affirmant que: $a_i$ divise $b_j$ pour tous $i,j$ tels que $1\leq i\leq n$ et $1\leq j\leq m$ est équivalente à la propriété affirmant que $\ppcm\left(a_1,\dots,a_n\right)$ divise $\pgcd\left(b_1,\dots,b_m\right)$.
\end{lemme}

\begin{proof}[Démonstration du corollaire \ref{sour1}]
Soit $n\in\mathbb{N^*}$ fixé. Pour $k,\ell\in\mathbb{N}$ tels que $n/2\leq k\leq n$ et $1\leq \ell\leq k$, on a visiblement $a_{\ell}\binom{n}{\ell}_{\boldsymbol{a}}$ divise $a_{\ell}\binom{n}{\ell}_{\boldsymbol{a}}\binom{n-\ell}{k-\ell}_{\boldsymbol{a}}$, qui est égale (en vertu de \eqref{b3}) à $a_{\ell}\binom{n}{k}_{\boldsymbol{a}}\binom{k}{\ell}_{\boldsymbol{a}}$. Mais ce dernier nombre divise clairement le nombre: 
\[\binom{n}{k}_{\boldsymbol{a}}\ppcm\left\lbrace a_{i}\binom{k}{i}_{\boldsymbol{a}};~i=1,\dots,k\right\rbrace,\]
qui est égale (en vertu du corollaire \ref{sour}) à $\binom{n}{k}_{\boldsymbol{a}}\ppcm\left(a_{1},\dots,a_{k}\right)$. Par conséquent, pour tous $k,\ell\in\mathbb{N}$, tels que $n/2\leq k\leq n$ et $1\leq \ell\leq k$, on a:
\begin{equation}\label{ccc2}
a_{\ell}\binom{n}{\ell}_{\boldsymbol{a}}~\text{divise}~\binom{n}{k}_{\boldsymbol{a}}\ppcm\left(a_{1},\dots,a_{k}\right).
\end{equation}
Nous affirmons que \eqref{ccc2} reste vraie pour $n/2\leq k\leq n$ et $k<\ell\leq n$. En effet, si $k$ et $\ell$ sont des entiers tels que $n/2\leq k\leq n$ et $k< \ell\leq n$, alors on a $1\leq n-\ell+1\leq n-k\leq k$ et $a_{n-\ell+1}\binom{n}{n-\ell+1}_{\boldsymbol{a}}=a_{\ell}\binom{n}{\ell}_{\boldsymbol{a}}$. L'application de \eqref{ccc2} pour $\ell'=n-\ell+1$, au lieu de $\ell$, confirme donc notre affirmation. Ainsi, \eqref{ccc2} est vraie pour tous $k,\ell\in\mathbb{N}$ tels que $n/2\leq k\leq n$ et $1\leq \ell\leq n$. Par suite, en appliquant le lemme \ref{ccc1} pour toutes les relations de divisibilité \eqref{ccc2}, où $1\leq \ell\leq n$ et $n/2\leq k\leq n$, on en déduit que:
\[\ppcm\left\lbrace a_{\ell}\binom{n}{\ell}_{\boldsymbol{a}};~\ell=1,\dots,n\right\rbrace~\text{divise}~\pgcd\left\lbrace \binom{n}{k}_{\boldsymbol{a}}\ppcm\left(a_{1},\dots,a_{k}\right);~n/2\leq k\leq n\right\rbrace;\]
ce qui est équivalent (en vertu du corollaire \ref{sour}) à:
\[\ppcm\left(a_{1},a_{2},\dots,a_{n}\right)~\text{divise}~\pgcd\left\lbrace\binom{n}{k}_{\boldsymbol{a}}\ppcm\left(a_{1},\dots,a_{k}\right);~n/2\leq k\leq n\right\rbrace.\] 
L'identité du corollaire \ref{sour1} se déduit en observant que:
\[\ppcm\left(a_{1},\dots,a_{n}\right)=\binom{n}{n}_{\boldsymbol{a}}\ppcm\left(a_{1},\dots,a_{n}\right)\in\left\lbrace\binom{n}{k}_{\boldsymbol{a}}\ppcm\left(a_{1},\dots,a_{k}\right);~n/2\leq k\leq n\right\rbrace.\]
Ce qui complète la démonstration du corollaire \ref{sour1}.
\end{proof}

\begin{rmq}\label{rmq1}
En prenant dans le théorème \ref{Ri1} $a_n=n$ $(\forall n\geq 1)$, on obtient l'identité de Farhi \cite{Farhi} (déjà démontrée dans le théorème \ref{Fip}). Par ailleurs, on démontre aisément que le théorème \ref{Ri1}, le corollaire \ref{sour} et le corollaire \ref{sour1} restent valables dans tout anneau factoriel $\mathcal{A}$, pris à la place de $\mathbb{Z}$ (nous renvoyons le lecteur à l'article de Bliss et al. \cite{Bliss} pour la définition et les propriétés des suites à forte divisibilité dans un anneau factoriel). Si l'on prend par exemple $\mathcal{A}=\mathbb{Z}[q]$ et $\boldsymbol{a}=\left(a_n\right)_{n\geq 1}$ la suite polynomiale de $\mathbb{Z}[q]$ d\'efinie par $a_n=[n]_{q}:=\frac{q^n-1}{q-1}$, on obtient l'identit\'e de Guo \cite{Victor} selon laquelle on a pour tout $n\in\mathbb{N}$:
\[\ppcm\left\lbrace\binom{n}{0}_{q},\binom{n}{1}_{q},\dots,\binom{n}{n}_{q}\right\rbrace=\frac{\ppcm\left([1]_q,[2]_q,\dots,[n]_q,[n+1]_q\right)}{[n+1]_{q}},\]
o\`u $[k]_{q}$ et $\binom{n}{k}_{q}$ $(0\leq k\leq n)$ sont les notations standards du $q$-calcul; c'est-à-dire $[k]_{q}:=\frac{q^k-1}{q-1}$ et $\binom{n}{k}_{q}:=\frac{[n]_q[n-1]_q\cdots [n-k+1]_q}{[1]_q[2]_q\cdots [k]_q}$.
\end{rmq}

\section{Estimations du $\ppcm$ de suites de Lucas}

Un important exemple de suites à forte divisibilité nous est fourni par une classe spéciale de suites de Lucas. Plus précisément, en prenant $P$ et $Q$ des entiers non nuls et premiers entre eux et en désignant par $U\left(P,Q\right)$ leur suite de Lucas associée, c'est-à-dire la suite d'entiers d\'efinie r\'ecursivement par: $U_0=0$, $U_1=1$ et $U_{n+2}=PU_{n+1}-QU_{n}$ $(\forall n\in\mathbb{N})$, on montre que la suite $\left|U\left(P,Q\right)\right|$ est \`a forte divisibilit\'e (voir par exemple \cite[p. 9]{div}). Particuli\`erement, la suite de tous les entiers positifs (qu'on obtient en prenant $\left(P,Q\right)=(2,1)$) et la suite de Fibonacci usuelle (qu'on obtient en prenant $\left(P,Q\right)=(1,-1)$) sont des suites \`a forte divisibilit\'e. Par ailleurs, si $P^2-4Q>0$, alors en désignant par $\alpha$ et $\beta$ les deux racines (distinctes) de l'équation quadratique: $X^2-PX+Q=0$, on montre que pour tout entier strictement positif $n$, on a: 
\begin{equation}\label{sa}
U_n=\frac{\alpha^n-\beta^n}{\alpha-\beta}.
\end{equation}
Pour en savoir plus sur le sujet des suites de Lucas, le lecteur est invit\'e \`a consulter le livre de Honsberger \cite{H}. Dans cette section, nous appliquons les corollaires \ref{sour} et \ref{sour1} pour établir des estimations effectives et non triviales du plus petit commun multiple des termes consécutifs de certaines suites de Lucas. On a le théorème suivant:

\begin{thm}\label{aR4}
Soient $P$ et $Q$ deux entiers non nuls et premiers entre eux, tels que $\Delta:=P^2-4Q>0$, et soit $U\left(P,Q\right)$ la suite de Lucas qui leur est associée. Alors, pour tout $n\in\mathbb{N^*}$, on a:
\begin{equation}\label{sh}
\left|\alpha\right|^{\frac{n^2}{4}-\frac{n}{2}-1}\leq\ppcm\left(U_1,U_2,\dots,U_n\right)\leq \left|\alpha\right|^{\frac{n^2}{3}+\frac{7n}{3}-\frac{8}{3}},
\end{equation}
où $\alpha$ est la racine la plus grande en valeur absolue de l'équation $X^2-PX+Q=0$.
\end{thm}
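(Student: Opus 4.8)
The plan is to apply the two identities of Corollaries~\ref{sour} and~\ref{sour1} to the sequence $a_k:=|U_k|$, which is a strong divisibility sequence (as recalled just above, see \cite[p.~9]{div}), together with elementary two‑sided bounds on $|U_k|$ in terms of $|\alpha|$. The bounds I will use are: for every $k\geq 1$,
\[
|\alpha|^{k-2}\ \leq\ |U_k|\ \leq\ |\alpha|^{k}.
\]
Both follow from \eqref{sa}, i.e. from $U_k=\sum_{i=0}^{k-1}\alpha^{k-1-i}\beta^i$, and the elementary observation that $|\alpha|-|\beta|\geq 1$: indeed $|\alpha|-|\beta|$ equals $|\alpha-\beta|=\sqrt{\Delta}\geq 1$ when $Q=\alpha\beta>0$, and equals $|\alpha+\beta|=|P|\geq 1$ when $Q<0$. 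The upper bound comes from $|U_k|\leq\sum_{i=0}^{k-1}|\alpha|^{k-1-i}|\beta|^{i}<|\alpha|^{k-1}/(1-|\beta|/|\alpha|)=|\alpha|^{k}/(|\alpha|-|\beta|)\leq|\alpha|^{k}$; for the lower bound one isolates the dominant term $\alpha^{k-1}$ when $\alpha,\beta$ have the same sign (which even gives $|U_k|\geq|\alpha|^{k-1}$), and when $\beta<0$ (wlog $\alpha>0$) one bounds the resulting alternating sum by its first two terms, $|U_k|\geq\alpha^{k-1}-\alpha^{k-2}|\beta|=\alpha^{k-2}(\alpha-|\beta|)=\alpha^{k-2}P\geq\alpha^{k-2}$.

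For the lower bound in \eqref{sh} I would use Corollary~\ref{sour}: since the $\ppcm$ of a finite set dominates each of its members, $\ppcm(U_1,\dots,U_n)\geq a_k\binom{n}{k}_{\boldsymbol{a}}$ for every $k\in\{1,\dots,n\}$, and from the definition of the $\boldsymbol{a}$‑binomial coefficients, $a_k\binom{n}{k}_{\boldsymbol{a}}=\dfrac{|U_{n-k+1}|\,|U_{n-k+2}|\cdots|U_n|}{|U_1|\,|U_2|\cdots|U_{k-1}|}$. Taking $k=\lfloor n/2\rfloor$, bounding the numerator from below and the denominator from above by the estimate above, a short computation of the arithmetic sums in the exponent yields $\ppcm(U_1,\dots,U_n)\geq|\alpha|^{\,n^2/4-n/2}$ for even $n$ (and $\geq|\alpha|^{\,n^2/4-n/2+1/4}$ for odd $n\geq 3$), which already beats the claimed $|\alpha|^{\,n^2/4-n/2-1}$; the cases $n\leq 2$ are checked by hand.

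For the upper bound I would iterate Corollary~\ref{sour1}: choosing $k=\lceil n/2\rceil\in[n/2,n]$ gives that $\ppcm(a_1,\dots,a_n)$ divides $\binom{n}{\lceil n/2\rceil}_{\boldsymbol{a}}\,\ppcm(a_1,\dots,a_{\lceil n/2\rceil})$, and halving repeatedly down to $\ppcm(a_1)=1$ shows
\[
\ppcm(U_1,\dots,U_n)\ \leq\ \prod_{j\geq 0}\binom{n_j}{\lceil n_j/2\rceil}_{\boldsymbol{a}},\qquad n_0=n,\quad n_{j+1}=\bigl\lceil n_j/2\bigr\rceil .
\]
Each factor equals $\dfrac{[n_j]_{\boldsymbol{a}}!}{[\lceil n_j/2\rceil]_{\boldsymbol{a}}!\,[\lfloor n_j/2\rfloor]_{\boldsymbol{a}}!}$; bounding $[m]_{\boldsymbol{a}}!=\prod_{k\leq m}|U_k|$ above by $|\alpha|^{m(m+1)/2}$ and the two denominator factorials below via $|U_k|\geq|\alpha|^{k-2}$ gives $\binom{n_j}{\lceil n_j/2\rceil}_{\boldsymbol{a}}\leq|\alpha|^{\,n_j^2/4+O(n_j)}$. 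Since $n/2^{j}\leq n_j\leq (n-1)/2^{j}+1$, the series $\sum_j n_j^2/4$ is essentially geometric with sum $n^2/3+O(n)$ and $\sum_j n_j=O(n)$, whence $\ppcm(U_1,\dots,U_n)\leq|\alpha|^{\,n^2/3+O(n)}$; carrying the bookkeeping through — with the sharper per‑level estimates indicated below and a finite direct check of the last few (small) $n_j$ — produces exactly the exponent $n^2/3+7n/3-8/3$.

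The main obstacle is pinning down the precise linear terms $-n/2-1$ and $7n/3-8/3$ uniformly in $P$ and $Q$: the crude estimate $|\alpha|^{k-2}\leq|U_k|\leq|\alpha|^{k}$ has a multiplicative gap of order $|\alpha|^{2m}$ over a product of $m$ consecutive terms, so to reach the sharp constants one must refine the factorial estimate — for instance by exploiting that $\prod_{k=1}^{m}\bigl|1-(\beta/\alpha)^k\bigr|$ stays between two fixed positive numbers, or by separating the cases $Q>0$ (where $|U_k|\geq|\alpha|^{k-1}$) and $Q<0$ — and then handle the bounded number of small indices at the tail of the halving recursion individually, while also verifying \eqref{sh} directly for small $n$. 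The quadratic main terms $n^2/4$ and $n^2/3$, by contrast, fall out cleanly from the geometric summation, and the fact that these are the correct orders of magnitude is confirmed by the asymptotics $\log\ppcm(U_1,\dots,U_n)\sim\frac{3}{\pi^2}n^2\log|\alpha|$ of Matiyasevich--Guy and Kiss--Mátyás, consistent with $\tfrac14<\tfrac{3}{\pi^2}<\tfrac13$.
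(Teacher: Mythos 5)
Your strategy coincides with the paper's: the same key estimate $\left|\alpha\right|^{k-2}\leq\left|U_k\right|\leq\left|\alpha\right|^{k}$ (Lemme \ref{terrr}, proved by the same observation $|\alpha|-|\beta|\in\{|P|,\sqrt{\Delta}\}\geq 1$), Corollaire \ref{sour} for the minoration and the halving furnished by Corollaire \ref{sour1} for the majoration. The lower-bound half of your argument is complete and correct: your direct evaluation of $a_k\binom{n}{k}_{\boldsymbol{a}}$ at $k=\lfloor n/2\rfloor$ gives the exponent $k(n-k-1)\geq \frac{n^2}{4}-\frac{n}{2}$, which is even slightly better than what \eqref{sh} asks for.

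The gap is in the upper bound, and it is precisely where you locate "the main obstacle" --- but your proposed remedies point at the wrong place. Unrolling Corollaire \ref{sour1} into the product $\prod_{j}\binom{n_j}{\lceil n_j/2\rceil}_{\boldsymbol{a}}$ and summing the per-level exponents $\frac{n_j^2}{4}+n_j-1$ via $n_j\leq (n-1)2^{-j}+1$ yields $\frac{n^2}{3}+\frac{7n}{3}-\frac{8}{3}+O(\log n)$: the $+1$ roundings in $n_{j+1}=\lceil n_j/2\rceil$ contribute a constant per level, hence an unbounded (logarithmic) excess over the claimed constant. No refinement of the bounds on $\left|U_k\right|$ (via $\prod_k|1-(\beta/\alpha)^k|$ or a case split on the sign of $Q$) is needed to fix this; the crude bounds suffice. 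What is needed is to organize the halving as a telescoping, i.e.\ an induction on $n$ against the target $f(n):=\frac{n^2}{3}+\frac{7n}{3}-\frac{8}{3}$: one checks $f(m)+(m^2+2m-1)\leq f(2m)$ (slack $\frac{m}{3}+1$) and, using the symmetry $\binom{2m+1}{m+1}_{\boldsymbol{U}}=\binom{2m+1}{m}_{\boldsymbol{U}}$ to bound the binomial exponent by $m^2+3m-1$ rather than $m^2+3m+1$, $f(m+1)+(m^2+3m-1)\leq f(2m+1)$ (slack $1$). The rounding error is then absorbed at each step instead of being summed at the end; this is exactly how the paper concludes.
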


Pour prouver le théorème \ref{aR4}, nous aurons besoin du lemme élémentaire suivant:

\begin{lemme}\label{terrr}
Dans la situation du théorème \ref{aR4}, on a pour tout entier strictement positif $n$:
\[\left|\alpha\right|^{n-2}\leq\left|U_n\right|\leq\left|\alpha\right|^{n}.\] 
\end{lemme}
\begin{proof}
Désignons par $\beta$ la seconde racine de l'équation $X^2-PX+Q=0$; donc $|\beta|<|\alpha|$. On a $|\alpha|-|\beta|\in\left\lbrace \alpha-\beta,\beta-\alpha,\alpha+\beta,-\alpha-\beta\right\rbrace$. Puisque $\alpha-\beta=\pm\sqrt{\Delta}$, $\alpha+\beta=P$ et $|\alpha|-|\beta|>0$, cela entraîne que $|\alpha|-|\beta|\in\left\lbrace \left|P\right|,\sqrt{\Delta}\right\rbrace$. Comme $P\in\mathbb{Z}^*$ et $\Delta\in\mathbb{Z}^{*}_{+}$, on en déduit que:
\begin{equation}\label{supa1}
|\alpha|-|\beta|\geq 1.
\end{equation}
En utilisant les formules \eqref{sa} et \eqref{supa1}, on obtient que pour tout $n\in\mathbb{N^*}$:
\begin{align*}
\left|U_n\right|&=\left|\frac{\alpha^n-\beta^n}{\alpha-\beta}\right|=\left|\beta^{n-1}\sum_{k=0}^{n-1}\left(\frac{\alpha}{\beta}\right)^{k}\right| \\ &\leq\left|\beta\right|^{n-1}\sum_{k=0}^{n-1}\left|\frac{\alpha}{\beta}\right|^{k}\\&=\frac{|\alpha|^n-|\beta|^n}{|\alpha|-|\beta|}\\&\leq|\alpha|^n-|\beta|^n\leq |\alpha|^n.
\end{align*}
D'autre part, on a pour tout entier $n\geq 2$:
\begin{align*}
\left|U_n\right|&=\left|\frac{\alpha^n-\beta^n}{\alpha-\beta}\right|=\frac{\left|\alpha^n-\beta^n\right|}{\left|\alpha-\beta\right|}\geq \frac{\left|\left|\alpha^n\right|-\left|\beta^n\right|\right|}{\left|\alpha-\beta\right|}=\frac{\left|\alpha\right|^n-\left|\beta\right|^n}{\left|\alpha-\beta\right|}\\&=\frac{\left(|\alpha|-|\beta|\right)\left(|\alpha|^{n-1}+|\alpha|^{n-2}\cdot|\beta|+\dots+|\alpha|\cdot|\beta|^{n-2}+|\beta|^{n-1}\right)}{\left|\alpha-\beta\right|}\\&\geq \frac{\left(|\alpha|-|\beta|\right)\left(|\alpha|^{n-1}+|\alpha|^{n-2}\cdot|\beta|\right)}{\left|\alpha-\beta\right|}\\&=\left|\alpha+\beta\right|\cdot |\alpha|^{n-2}\\&=\left|P\right|\cdot |\alpha|^{n-2}\geq |\alpha|^{n-2}.
\end{align*}
En remarquant que $\left|U_n\right|\geq |\alpha|^{n-2}$ est aussi vraie pour $n=1$ (puisque $U_1=1$ et $|\alpha|\geq |\alpha|-|\beta|\geq 1$), on en déduit que pour tout entier strictement positif $n$, on a:
\[|\alpha|^{n-2}\leq \left|U_n\right|\leq |\alpha|^{n},\]
comme il fallait le prouver. Le lemme est ainsi démontré.
\end{proof}

\begin{proof}[Démonstration du théorème \ref{aR4}]
Désignons par $\beta$ la seconde racine de l'équation $X^2-PX+Q=0$; donc $|\beta|<|\alpha|$. En appliquant l'estimation du lemme \ref{terrr} pour $n\geq 2$ et en replaçant $U_1$ par $1$, nous en déduisons immédiatement que pour tous $n,k\in\mathbb{N^*}$ tels que $n\geq k$, on a: 
\begin{equation}\label{R42}
\left|\alpha\right|^{k\left(n-k-2\right)+1}\leq \left|\binom{n}{k}_{\boldsymbol{U}}\right|\leq \left|\alpha\right|^{k(n-k+2)-1}.
\end{equation}
Montrons d'abord l'inégalité de gauche de \eqref{sh}. Pour $n=1$, cette inégalité est triviale. Ensuite, en utilisant successivement le corollaire \ref{sour}, le lemme \ref{terrr} puis \eqref{R42}, on obtient pour tout entier $n\geq 2$:    
\begin{align*}
\ppcm\left(U_1,U_{2},\dots,U_{n}\right)&=\ppcm\left\lbrace U_{1}\binom{n}{1}_{\boldsymbol{U}},U_{2}\binom{n}{2}_{\boldsymbol{U}},\dots,U_{n}\binom{n}{n}_{\boldsymbol{U}}\right\rbrace\\&\geq \max_{1\leq k\leq n}\left\lbrace \left|U_{k}\right|\cdot\binom{n}{k}_{\boldsymbol{U}}\right\rbrace \\&\geq \max_{1\leq k\leq n}|\alpha|^{k(n-k-1)-1}\\&\geq |\alpha|^{\left\lfloor \frac{n}{2}\right\rfloor\left(n-\left\lfloor \frac{n}{2}\right\rfloor-1\right)-1}\\&\geq\left|\alpha\right|^{n^2/4 -n/2-1},
\end{align*}
comme il fallait le prouver. L'inégalité de gauche de \eqref{sh} est prouvée. Montrons maintenant l'inégalité de droite de \eqref{sh}; c'est-à-dire que $\ppcm\left(U_1,U_{2},\dots,U_{n}\right)\leq \left|\alpha\right|^{\frac{n^2}{3}+\frac{7n}{3}-\frac{8}{3}}$ $(\forall n\geq 1)$. Pour ce faire, on procède par récurrence sur $n$. Pour $n=1$, cette inégalité est triviale. Pour $m\geq 1$, supposons que l'inégalité précédente est vraie pour tout entier strictement positif $n<2m$ et montrons qu'elle reste vraie pour $n=2m$ et pour $n=2m+1$. En utilisant successivement le corollaire \ref{sour1}, l'hypothèse de récurrence et \eqref{R42}, on obtient:
\begin{align*}
\ppcm\left(U_1,U_{2},\dots,U_{2m}\right)&\leq \ppcm\left(U_1,U_{2},\dots,U_{m}\right)\cdot\left|\binom{2m}{m}_{\boldsymbol{U}}\right|\\&\leq |\alpha|^{\frac{m^2}{3}+\frac{7m}{3}-\frac{8}{3}}\cdot |\alpha|^{m^2+2m-1}\\&=|\alpha|^{\frac{4m^2}{3}+\frac{13m}{3}-\frac{11}{3}}\\&\leq |\alpha|^{\frac{\left(2m\right)^2}{3}+\frac{7\left(2m\right)}{3}-\frac{8}{3}},      
\end{align*}
comme il fallait le prouver. De même, on a:
\begin{align*}
\ppcm\left(U_1,U_{2},\dots,U_{2m+1}\right)&\leq \ppcm\left(U_1,U_{2},\dots,U_{m+1}\right)\cdot\left|\binom{2m+1}{m+1}_{\boldsymbol{U}}\right|\\&=\ppcm\left(U_1,U_{2},\dots,U_{m+1}\right)\cdot\left|\binom{2m+1}{m}_{\boldsymbol{U}}\right|\\&\leq |\alpha|^{\frac{(m+1)^2}{3}+\frac{7(m+1)}{3}-\frac{8}{3}}\cdot |\alpha|^{m^2+3m-1}\\&=|\alpha|^{\frac{4m^2}{3}+6m-1}\\&\leq |\alpha|^{\frac{4m^2}{3}+6m}=|\alpha|^{\frac{(2m+1)^2}{3}+\frac{7(2m+1)}{3}-\frac{8}{3}}, 
\end{align*}
comme il fallait le prouver. Ce qui achève cette récurrence et confirme que l'inégalité de droite de \eqref{sh} est vraie pour tout entier $n\geq 1$. La preuve du théorème est complète.
\end{proof}

\begin{rmqs}\label{rmq2}~
\begin{enumerate}
\item Dans le contexte du théorème \ref{aR4}, si $P$ et $Q$ sont de signes particuliers (par exemple $P>0$, $Q<0$) alors l'estimation \eqref{sh} peut être légèrement améliorée. Par exemple, pour le cas de la suite de Fibonacci usuelle $\left(F_n\right)_{n\in\mathbb{N}}$ (définie récursi\-vement par: $F_0=0$, $F_1=1$ et $F_{n+2}=F_{n}+F_{n+1}$, $\forall n\in\mathbb{N}$), on montre que l'on a pour tout entier $n\geq 1$:
\begin{equation}\label{fib}
\Phi^{\frac{n^2}{4}-\frac{9}{4}}\leq \ppcm\left(F_1,F_2,\dots,F_n\right)\leq \Phi^{\frac{n^2}{3}+\frac{4n}{3}},
\end{equation}   
où $\Phi$ désigne le nombre d'or $(\Phi:=\frac{1+\sqrt{5}}{2})$. La qualité de l'estimation \eqref{fib} peut être appréciée à partir du célèbre résultat de Matiyasevich et Guy \cite{Mat} qui énonce que:
\[\lim_{n\longrightarrow +\infty}\frac{\log \ppcm\left(F_1,F_2,\dots,F_n\right)}{n^2\log \Phi}=\frac{3}{\pi^2}.\]
En effet, ce résultat implique que si $\lambda_1,\mu_1,\eta_1,\lambda_2,\mu_2,\eta_2\in\mathbb{R}$ vérifient:
\[\Phi^{\lambda_1n^2+\mu_1n+\eta_1}\leq\ppcm\left(F_1,F_2,\dots,F_n\right)\leq \Phi^{\lambda_2n^2+\mu_2n+\eta_2}~~~~\left(\forall n\geq 1\right),\] 
alors, on a nécessairement $\lambda_1\leq \frac{3}{\pi^2}$ et $\lambda_2\geq \frac{3}{\pi^2}$. Puisque \eqref{fib} correspond à $\lambda_1=\frac{1}{4}=0,25$ et $\lambda_2=\frac{1}{3}=0,33\dots$ et que $\frac{3}{\pi^2}=0,303\dots$, nous voyons bien que notre estimation \eqref{fib} est assez précise.\\
Plus généralement, l'estimation du théorème \ref{aR4} peut être appréciée à partir du résultat de Kiss et Matyas \cite{kiss}, généralisant celui de Matiyasevich et Guy \cite{Mat}.
\item Notons aussi que la méthode suivie ici peut être adaptée pour estimer le $\ppcm$ de toute suite à forte divisibilité d'ordre connu, comme celles d'ordre exponentiel.
\end{enumerate}
\end{rmqs}

\bigskip

\begin{center}
\includegraphics[scale=1]{fini}
\end{center}

\chapter{Majorations non triviales du $\ppcm$ d'une suite arithmétique}\label{ch4}

\section{Introduction}

Dans ce chapitre, nous \'etablissons des majorations non triviales du plus petit commun multiple de termes cons\'ecutifs d'une progression arithm\'etique finie. Comme cons\'equence, nous obtenons d'une part un encadrement effectif et un \'equivalent \`a l'infini de:
\[M(n):=\frac{1}{\varphi(n)}\sum_{\substack{1\leq\ell\leq n \\ \pgcd(\ell,n)=1}}\frac{1}{\ell}~~~~(\forall n\in\mathbb{N^*}),\]
où $\varphi$ désigne la fonction indicatrice d'Euler; et d'autre part une majoration non triviale de la fonction de Chebyshev $\theta\left(x;k,\ell\right)$ sous certaines conditions sur $x , k$ et $\ell$. Bien que des majorations de $\theta\left(x;k,\ell\right)$ existent dans la litt\'erature math\'ematique, il convient de noter que celles-ci sont toutes obtenues de fa\c con non \'el\'ementaire (c'est-\`a-dire en utilisant de l'analyse complexe). L'int\'er\^et ici est justement d'en donner des estimations par le biais de l'analyse réelle.

Étant donné $n\in\mathbb{N^*}$, on désigne par $\omega(n)$ le nombre de facteurs premiers distincts de $n$. Étant donnés $n,a,b\in\mathbb{N^*}$, on d\'efinit $L_{a,b,n}:=\ppcm\left(a,a+b,\dots,a+nb\right)$. Afin d'all\'eger certains \'enonc\'es, on pose: $c_1:=41,30142$, $c_2:=12,30641$, $c_3:=1,25507$, $c_4:=3,35609$, $c_5:=1,38402$, $c_6:=1,57681$ et $c_7:=2,1284$.


\section{\'Enonc\'es des r\'esultats}

\begin{thm}\label{9}  
Soient $a$ et $b$ deux entiers strictement positifs tels que $b\geq 2$ et $\pgcd(a,b) = 1$. Alors, pour tout entier $n\geq b+1$, on a:
\begin{equation}
\ppcm\left(a,a+b,\dots,a+nb\right)\leq \left(c_1\cdot b\log b\right)^{n+\left\lfloor \frac{a}{b}\right\rfloor}.
\end{equation}
\end{thm}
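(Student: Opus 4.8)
The strategy is to bound $L_{a,b,n} = \ppcm(a, a+b, \dots, a+nb)$ by exhibiting a rational multiple of it whose numerator and denominator we can control, in the spirit of Farhi's Lemma \ref{lF1} (equivalently the fundamental Lemma \ref{lf'} with $\mathcal A = \mathbb Z$). Writing $u_k = a + kb$ for $k = 0, 1, \dots, n$, the lemma gives that $u_0 u_1 \cdots u_n$ divides $L_{a,b,n} \cdot \ppcm\{\prod_{i \neq j}(u_i - u_j) : 0 \le j \le n\}$. Since $u_i - u_j = (i-j)b$, each such product equals $\pm b^n j!(n-j)!$, so the inner $\ppcm$ divides $b^n n!$ — more precisely it equals $b^n n!$. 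Hence $L_{a,b,n}$ is a multiple of the rational number $\dfrac{u_0 u_1 \cdots u_n}{b^n n!}$, and in fact (using $\pgcd(a,b)=1$, so $b$ is coprime to every $u_k$, by Gauss's lemma as in Lemma \ref{xyxy}) a multiple of $\dfrac{u_0 u_1 \cdots u_n}{n!}$. This is a \emph{lower} bound mechanism, so it is the wrong direction; the real point is to turn the divisibility around.

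The key observation for an \emph{upper} bound is that $\dfrac{u_0 u_1 \cdots u_n}{n!} = \dfrac{a(a+b)\cdots(a+nb)}{n!\, b^n} \cdot b^n = \binom{n + a/b}{n} b^n$ when $b \mid a$, and in general is comparable to $b^n \binom{n + \lfloor a/b\rfloor}{n}$ up to a bounded factor. So $L_{a,b,n}$ divides a fixed explicit multiple $D$ of $\frac{u_0\cdots u_n}{n!}$, but $D$ itself is far too large. Instead I would combine the multiple-of relation with a \emph{second} divisor coming from prime-counting: every prime power $p^\nu$ dividing $L_{a,b,n}$ satisfies $p^\nu \le a + nb$, and the primes dividing $L_{a,b,n}$ are either bounded (divisors of $b$ or small primes up to roughly $b$) or lie in residue classes mod $b$ and are $\le (a+nb)/m'$ for the appropriate representative $m'$, exactly as in the proof of the Bateman et al. theorem reproduced in the excerpt. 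Summing $\log$ over these primes and invoking the \emph{effective} Chebyshev-type bound $\theta(x; m, b) \le$ (something like) $\frac{c_3\, x}{\varphi(b)}$ — available in effective form, e.g. via the $\pi(x) \le c_3 x/\log x$ bound of Hanson's corollary applied in each class, or a Brun–Titchmarsh-type inequality made explicit — yields $\log L_{a,b,n} \le \frac{b}{\varphi(b)}\bigl(\sum_{\pgcd(m,b)=1, m\le b} \frac 1m\bigr)(nb) + (\text{error})$. The arithmetic-mean/harmonic identity $\frac{b}{\varphi(b)}\sum_{m} \frac 1m \ll \log b$ (this is where $M(n)$ and its estimate enter) then converts the main term into $\ll n b \log b$, and exponentiating gives the claimed $(c_1 b\log b)^{n + \lfloor a/b\rfloor}$, the $\lfloor a/b\rfloor$ in the exponent absorbing the contribution of the largest term $a + nb$ and the initial segment $a, a+b, \dots$

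The main obstacle, and the place where all the constants $c_1, \dots, c_7$ get pinned down, is making the prime-counting step \emph{fully effective and uniform in $b$}: one needs an explicit upper bound for $\sum_{p \le x,\ p \equiv m (b)} \log p$ that is valid for \emph{all} $x$ and \emph{all} moduli $b \ge 2$ simultaneously, with an error term one can control when $n \ge b+1$ (the hypothesis $n \ge b+1$ is precisely what guarantees $x = a+nb$ is large compared to $b$, so that the Brun–Titchmarsh range is non-vacuous and the error is a bounded multiple of the main term). I would handle this by splitting primes into (i) those $\le$ roughly $c b \log b$ for a suitable $c$, bounded crudely by $\theta(cb\log b) \le c_?\, b\log b$ via Hanson's Corollary \ref{chebfin}, and (ii) larger primes, necessarily in at most $\varphi(b)$ admissible residue classes, each contributing $\le \frac{c_3 x}{\varphi(b)}$ by the elementary sieve bound; carefully adding a factor for the bounded primes dividing $b$ itself (at most $\omega(b) \le \log b/\log 2$ of them, each with exponent $\le \log(a+nb)/\log 2$) and for prime-power (not prime) contributions, which cost only $\sqrt{a+nb}\log(a+nb) = o(nb\log b)$. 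Assembling these pieces with explicit constants and verifying the small cases by direct computation (as is done throughout the thesis) should close the argument; the bookkeeping is routine but the choice of the split point and the verification that the resulting constant can be taken as $c_1 = 41.30142$ is the delicate part.
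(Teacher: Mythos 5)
Your proposal diverges from the paper's proof and, as written, has two genuine gaps. First, the bookkeeping of the main term is off by a factor of $b$: you write $\log L_{a,b,n}\leq \frac{b}{\varphi(b)}\bigl(\sum_{m}\frac{1}{m}\bigr)(nb)+\text{erreur}$ and then claim that a main term of order $nb\log b$ exponentiates to $\left(c_1 b\log b\right)^{n}$; but $e^{nb\log b}=b^{nb}$, which is vastly larger than $(b\log b)^n=e^{n(\log b+\log\log b)}$. The correct Bateman-type main term is $\bigl(\frac{b}{\varphi(b)}\sum_m\frac1m\bigr)\cdot n\approx n\log b$, and note that the estimate $\frac{b}{\varphi(b)}\sum_m\frac1m\leq\log b+\log\log b+\log c_1$ is the corollaire \ref{B} of the paper, which is \emph{deduced from} this theorem -- using it as an input would require an independent elementary proof to avoid circularity. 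Second, and more seriously, the prime-counting step you rely on is not available by the means you cite: Hanson's bound (th\'eor\`eme \ref{2}) controls $\pi(x)$, not $\pi(x;b,m)$ in an individual residue class; the elementary substitute is a Brun--Titchmarsh inequality, which is nowhere in the paper's toolkit, and in the range forced by the hypothesis $n\geq b+1$ the quantities $x/m'$ can be as small as roughly $n\approx b$, where $\log(x/(m'b))$ is close to $0$ and the error term swamps the main term. Your proposed split at $cb\log b$ also leaves the primes in $(cb\log b,\,n]$ unaccounted for: such a prime can divide many terms of the progression, so neither the \og at most one term\fg{} argument nor a clean $\theta$-bound applies to it.

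The paper's proof avoids prime counting in arithmetic progressions entirely. It splits the factorization of $L_{a,b,n}$ at $p=n$. For $p\leq n$ (lemme \ref{6}): each prime power $p^{\vartheta_p(L_{a,b,n})}$ is at most $a+nb\leq n^2$ (when $a<b\leq n-1$), and there are at most $\pi(n)\leq c_3 n/\log n$ such primes, so this block is at most $n^{2\pi(n)}\leq e^{2c_3 n}={c_2}^n$ -- this is where Hanson's bound is actually used, and it cleanly absorbs the higher prime powers that your $\sqrt{a+nb}\log(a+nb)$ correction does not control when $b$ is close to $n$. For $p>n$ (lemme \ref{5}): a $p$-adic valuation count shows that $\prod_{p>n}p^{\vartheta_p(L_{a,b,n})}$ \emph{divides} $\frac{a(a+b)\cdots(a+nb)}{n!}\prod_{p\mid b}p^{\vartheta_p(n!)}$ -- this is precisely the Farhi-type quantity you dismissed as going in the wrong direction, turned into an upper bound by restricting to large primes, each of which divides at most one term of the progression. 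One then bounds $\frac{a(a+b)\cdots(a+nb)}{n!}\leq(a+nb)\,b^n$ for $a<b$, and $\prod_{p\mid b}p^{\vartheta_p(n!)}\leq(c_4\log b)^n$ via Legendre's formula and the elementary estimate $\prod_{p\mid b}p^{1/p}\leq c_6\log b$ (lemme \ref{7}, which uses Rosser--Schoenfeld and Robin's bound on $\omega(b)$). Finally the exponent $n+\lfloor a/b\rfloor$ comes from the reduction $a\mapsto a'=a-\lfloor a/b\rfloor b$, under which $L_{a,b,n}$ divides $L_{a',b,\,n+\lfloor a/b\rfloor}$ -- a step your sketch does not supply. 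If you want to salvage your route, you would have to import an explicit Brun--Titchmarsh theorem and control its error uniformly for $x$ as small as $b^2$; the paper's decomposition is both more elementary and the one that actually pins down $c_1$.
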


\begin{thm}\label{10}
Soient $a$ un entier strictement positif et $b$ un nombre premier tels que $a < b$. Alors, pour tout entier $n\geq b+1$, On a:
\begin{equation}
\ppcm\left(a,a+b,\dots,a+nb\right)\leq \left(c_2\cdot b^{\frac{b}{b-1}}\right)^n.
\end{equation} 
\end{thm}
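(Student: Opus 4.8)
\textbf{Plan de d\'emonstration du Th\'eor\`eme \ref{10}.} L'id\'ee g\'en\'erale est d'adapter la preuve du Th\'eor\`eme \ref{9} au cas particulier o\`u $b$ est premier, ce qui permet de gagner \`a la fois sur la constante et sur la forme de la majoration (exposant $\frac{b}{b-1}$ au lieu d'une puissance $b\log b$). Je commencerais par rappeler que tout facteur premier $p$ de $L_{a,b,n}=\ppcm(a,a+b,\dots,a+nb)$ divise l'un des termes $a+jb$ ($0\le j\le n$); en \'ecrivant $L_{a,b,n}=\prod_p p^{\alpha_p}$ avec $p^{\alpha_p}\le a+nb$, on s\'epare la contribution des ``petits'' facteurs premiers de celle des ``grands'' (ceux qui n'apparaissent qu'\`a la puissance $1$). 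Comme $b$ est premier et $a<b$, on a $\pgcd(a,b)=1$, donc $b$ lui-m\^eme ne divise aucun terme $a+jb$ et est exclu. Pour $p\neq b$, parmi $n+1$ termes cons\'ecutifs de la progression, le nombre de ceux divisibles par $p^k$ est au plus $\lceil (n+1)/p^k\rceil$, ce qui borne $\sum_{j}\vartheta_p(a+jb)$ par une expression du type $(n+1)\sum_{k\ge1}p^{-k}+O(\log_p(a+nb))$.

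Ensuite je ferais intervenir l'identit\'e arithm\'etique centrale reliant $L_{a,b,n}$ au produit $\prod_{j=0}^n(a+jb)$ ou, plus efficacement, j'exploiterais directement une majoration de $\theta(a+nb)$ ou de $\psi(a+nb)$ via le Corollaire \ref{chebfin} de Hanson ($\psi(x)\le x\log 3$) et le Th\'eor\`eme \ref{thh}. Le point cl\'e est d'\'ecrire $\log L_{a,b,n}=\sum_{p}\alpha_p\log p$ et de regrouper: d'une part $\sum_p \log p$ sur les $p$ divisant $L_{a,b,n}$, qui est $\theta$ de $a+nb$ restreint \`a une progression, d'autre part les contributions d'exposants sup\'erieurs. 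Puisque $b$ est premier, le fait que $a+jb\equiv a\pmod b$ pour tout $j$ simplifie l'analyse modulo les puissances de $b$ (il n'y en a aucune \`a consid\'erer). La majoration attendue $\left(c_2\, b^{b/(b-1)}\right)^n$ sugg\`ere que l'on obtient un terme principal $\frac{b}{b-1}n\log b$ provenant pr\'ecis\'ement de ce que, en moyenne, chaque terme $a+jb$ contribue comme s'il \'etait ``de taille $b$'' multipli\'e par un facteur $\sum_{k\ge1}1/b^k\cdot$(quelque chose), la s\'erie g\'eom\'etrique $\sum_{k\ge1} b^{-k}=\frac{1}{b-1}$ faisant appara\^itre le facteur $\frac{1}{b-1}$.

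Concr\`etement, les \'etapes seraient: (i) \'etablir $L_{a,b,n}\mid \frac{\prod_{j=0}^n(a+jb)}{(\text{facteur correctif})}$ en utilisant le Lemme fondamental \ref{lf'} (ou le Lemme \ref{lF1}) avec $u_j=a+jb$, ce qui donne $\prod_{j\ne i}(u_i-u_j)=(-1)^i\, i!\,(n-i)!\,b^n$, donc un diviseur explicite $\frac{\prod_{j=0}^n(a+jb)}{n!\,b^n}$ apr\`es simplification (le facteur $b^n$ \'etant premier avec chaque $a+jb$ gr\^ace au lemme de Gauss et \`a $\pgcd(a,b)=1$); (ii) majorer $\prod_{j=0}^n(a+jb)\le (a+nb)^{n+1}$ de fa\c con grossi\`ere puis l'affiner, ou mieux utiliser $\prod_{j=0}^n(a+jb)=b^{n+1}\prod_{j=0}^n(j+a/b)$ et reconna\^itre un quotient de fonctions Gamma; (iii) minorer $n!$ par Stirling; (iv) rassembler les estimations et optimiser les constantes num\'eriques, en traitant \`a part les petites valeurs de $n$ (disons $n<$ un seuil calcul\'e) \`a l'aide d'un logiciel de calcul comme dans les preuves pr\'ec\'edentes. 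La difficult\'e principale sera l'\'etape d'optimisation quantitative: il faut extraire le bon exposant $\frac{b}{b-1}$ (et non simplement $\frac{b}{b-1}+o(1)$) tout en gardant une constante $c_2$ ind\'ependante de $b$, ce qui oblige \`a contr\^oler uniform\'ement en $b$ les termes d'erreur (notamment ceux en $\sqrt{a+nb}\log(a+nb)$ provenant des facteurs premiers \`a grande puissance, et ceux venant de $\pi(a+nb)$ via Chebyshev--Hanson). C'est pr\'ecis\'ement l\`a que l'hypoth\`ese $b$ premier et $a<b$ est cruciale, car elle \'elimine les complications li\'ees aux diviseurs communs et permet d'absorber proprement ces termes d'erreur dans la constante $c_2=12{,}30641$.
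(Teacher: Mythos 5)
Votre plan contient une erreur de direction qui le rend inop\'erant tel quel. \`A l'\'etape (i), vous invoquez le lemme \ref{lF1} (ou le lemme \ref{lf'}) avec $u_j=a+jb$ pour \'etablir que $L_{a,b,n}$ \emph{divise} $\frac{\prod_{j=0}^{n}(a+jb)}{n!\,b^{n}}$. Or ce lemme fournit exactement la divisibilit\'e inverse: c'est $\frac{\prod_{j=0}^{n}(a+jb)}{n!\,b^{n}}$ (puis, via le lemme de Gauss, $\frac{\prod_{j=0}^{n}(a+jb)}{n!}$) qui divise $L_{a,b,n}$ --- c'est pr\'ecis\'ement la machinerie de \emph{minoration} du th\'eor\`eme \ref{Flcm}. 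L'\'enonc\'e que vous visez est d'ailleurs faux: pour $a=1$, $b=2$, $n=3$, on a $\ppcm(1,3,5,7)=105$ tandis que $\frac{1\cdot 3\cdot 5\cdot 7}{3!}=\frac{105}{6}$ n'est m\^eme pas entier. Aucune majoration ne peut donc sortir de cette voie, et les \'etapes (ii)--(iv) s'effondrent avec elle.

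La bonne route --- celle du chapitre --- consiste \`a exploiter r\'eellement la s\'eparation que vous \'evoquez dans votre premier paragraphe puis abandonnez: on \'ecrit $L_{a,b,n}=\bigl(\prod_{p\leq n}p^{\vartheta_p(L_{a,b,n})}\bigr)\bigl(\prod_{p>n}p^{\vartheta_p(L_{a,b,n})}\bigr)$. La partie \og petits premiers\fg{} se majore par $(a+nb)^{\pi(n)-\omega(b)}\leq n^{2\pi(n)}/(a+nb)\leq {c_2}^{n}/(a+nb)$ gr\^ace au th\'eor\`eme \ref{2} de Hanson: c'est l\`a, et seulement l\`a, qu'intervient la constante $c_2$. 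Seule la partie \og grands premiers\fg{} satisfait une divisibilit\'e du type que vous cherchez: le lemme \ref{5} montre, valuation par valuation, que $\prod_{p>n}p^{\vartheta_p(L_{a,b,n})}$ divise $\frac{a(a+b)\cdots(a+nb)\cdot b^{\vartheta_b(n!)}}{n!}$ (pour $q>n$, $q$ divise au plus un terme de la progression; pour $q\leq n$ avec $q\nmid b$, on v\'erifie $\vartheta_q\bigl(\prod_j(a+jb)\bigr)\geq\vartheta_q\bigl((n+1)!\bigr)$; le facteur $b^{\vartheta_b(n!)}$ compense les puissances de $b$ contenues dans $n!$ et absentes du num\'erateur, puisque $b$ ne divise aucun $a+jb$). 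On conclut avec $\frac{\prod_{j}(a+jb)}{n!}\leq (a+nb)\,b^{n}$ et, $b$ \'etant premier, $b^{\vartheta_b(n!)}=b^{\lfloor n/b\rfloor+\lfloor n/b^2\rfloor+\cdots}\leq b^{n/(b-1)}$: c'est de l\`a --- des puissances de $b$ dans $n!$, et non des valuations des termes $a+jb$, qui sont tous premiers \`a $b$ --- que provient l'exposant $\frac{b}{b-1}$; votre s\'erie g\'eom\'etrique $\sum_{k\geq 1}b^{-k}=\frac{1}{b-1}$ est donc la bonne, mais pour la mauvaise raison. Notez enfin que vous confondez par endroits $\sum_{j}\vartheta_p(a+jb)$ (valuation du produit) et $\max_{j}\vartheta_p(a+jb)$ (valuation du $\ppcm$).
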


\begin{coll}\label{B}
Pour tout entier $r\geq 2$, on a: 
\begin{equation}\label{B2}
\log (r+1)\leq rM(r)\leq \log r+\log\log r +\log c_1 .
\end{equation}
\end{coll}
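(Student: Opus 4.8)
The plan is to recover the bound on $rM(r)$ from Theorem \ref{9} by specializing to the arithmetic progression with first term $a=1$ and common difference $b=r$, and then letting $n\to+\infty$, using the asymptotic estimate of Bateman et al. (formula \eqref{batem}) to identify the limit of $\tfrac1n\log L_{1,r,n}$ with $rM(r)$. First I would note that since $\pgcd(1,r)=1$, formula \eqref{batem} applied with $a=1$, $b=r$ gives
\[
\log\ppcm(1+r,1+2r,\dots,1+nr)\sim_{+\infty}\left(\frac{r}{\varphi(r)}\sum_{\substack{1\le \ell\le r\\ \pgcd(\ell,r)=1}}\frac{1}{\ell}\right)n=rM(r)\,n,
\]
by the very definition of $M(r)$. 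The only subtlety is that Theorem \ref{9} concerns $L_{a,b,n}=\ppcm(a,a+b,\dots,a+nb)$ starting at $a=1$, whereas \eqref{batem} starts at $a+b$; but $\ppcm(1,1+r,\dots,1+nr)=\ppcm(1+r,\dots,1+nr)$ since the extra term is $1$, so the two quantities agree and share the same asymptotics.

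Next I would apply Theorem \ref{9} with $a=1$, $b=r$ (valid since $r\ge 2$ and $\pgcd(1,r)=1$): for every $n\ge r+1$,
\[
\ppcm(1,1+r,\dots,1+nr)\le\bigl(c_1\,r\log r\bigr)^{\,n+\lfloor 1/r\rfloor}=\bigl(c_1\,r\log r\bigr)^{n},
\]
since $\lfloor 1/r\rfloor=0$ for $r\ge 2$. Taking logarithms, dividing by $n$, and letting $n\to+\infty$, the left side tends to $rM(r)$ by the paragraph above, while the right side is constantly $\log(c_1 r\log r)=\log r+\log\log r+\log c_1$. Hence $rM(r)\le \log r+\log\log r+\log c_1$, which is the right-hand inequality of \eqref{B2}.

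For the left-hand inequality $\log(r+1)\le rM(r)$, I would argue directly, without the asymptotic machinery: it suffices to produce a good lower bound on $\log L_{1,r,n}$ and again divide by $n$. Here one can invoke Theorem \ref{Flcm} of Farhi (the lower bound for the $\ppcm$ of an arithmetic progression), applied to the progression $u_k=1+kr$ with first term $u_0=1$ and ratio $r$, which are coprime: it gives $\ppcm(u_0,u_1,\dots,u_n)\ge u_0(1+r)^{\,n-1}=(r+1)^{n-1}$ for all $n$. Taking logarithms, dividing by $n$, and letting $n\to+\infty$ yields $rM(r)=\lim_n\tfrac1n\log L_{1,r,n}\ge\log(r+1)$, as required. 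The main obstacle is purely bookkeeping: one must be careful that the $\ppcm$ appearing in Theorem \ref{9}, in \eqref{batem}, and in Farhi's bound are literally the same object (modulo the harmless initial term $1$), and that the asymptotic \eqref{batem} legitimately identifies the limit with $rM(r)$ via the definition of $M$; once this is checked, both inequalities follow by the elementary "take $\log$, divide by $n$, pass to the limit" device.
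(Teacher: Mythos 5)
Votre démonstration est correcte et suit essentiellement la même démarche que celle de la thèse: minoration de $L_{1,r,n}$ par l'inégalité \eqref{ff1} de Farhi, majoration par le théorème \ref{9} appliqué à $(a,b)=(1,r)$, puis passage à la limite en identifiant $\lim_n \frac{1}{n}\log L_{1,r,n}$ avec $rM(r)$ grâce à l'estimation \eqref{batem} de Bateman et al. Votre remarque de comptabilité sur le terme initial égal à $1$ (qui ne change pas le $\ppcm$) est juste et rend l'argument complet.
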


\noindent Le corollaire suivant est imm\'ediat. 
\begin{coll}\label{E}
On a:
\begin{equation}\label{B1}
M(r)\sim_{+\infty}\frac{\log r}{r}.
\end{equation}
\end{coll}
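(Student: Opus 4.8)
The final statement is Corollaire \ref{E}: $M(r)\sim_{+\infty}\frac{\log r}{r}$, and the excerpt explicitly says "Le corollaire suivant est immédiat," so the proof should be short.

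The plan is to deduce the asymptotic equivalence directly from the effective two-sided bound of Corollaire \ref{B}. That corollary gives, for every integer $r\geq 2$,
\[
\log(r+1)\leq rM(r)\leq \log r+\log\log r+\log c_1.
\]
First I would divide all three terms by $\log r$ (which is positive for $r\geq 2$), obtaining
\[
\frac{\log(r+1)}{\log r}\leq \frac{rM(r)}{\log r}\leq 1+\frac{\log\log r}{\log r}+\frac{\log c_1}{\log r}.
\]
Then I would observe that as $r\to+\infty$ the left-hand side tends to $1$, since $\log(r+1)=\log r+\log(1+1/r)$ and $\log(1+1/r)/\log r\to 0$; and the right-hand side also tends to $1$, since both $\frac{\log\log r}{\log r}$ and $\frac{\log c_1}{\log r}$ tend to $0$. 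By the squeeze theorem, $\frac{rM(r)}{\log r}\to 1$, i.e. $rM(r)\sim_{+\infty}\log r$, which is exactly $M(r)\sim_{+\infty}\frac{\log r}{r}$.

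There is essentially no obstacle here: the entire content has already been extracted into the effective inequalities of Corollaire \ref{B}, which themselves rest on Théorème \ref{9} (the nontrivial upper bound for $L_{a,b,n}$) together with the elementary lower bound $\log(r+1)\le rM(r)$. The only minor point worth stating carefully is that Corollaire \ref{B} is phrased for integer $r$, so the limit is taken along integers; but $M$ is only defined on $\mathbb{N}^*$ anyway, so $\sim_{+\infty}$ is understood in that sense and no extra argument is needed.

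One could phrase the proof in a single displayed chain as follows:
\[
1=\lim_{r\to+\infty}\frac{\log(r+1)}{\log r}\leq \liminf_{r\to+\infty}\frac{rM(r)}{\log r}\leq \limsup_{r\to+\infty}\frac{rM(r)}{\log r}\leq \lim_{r\to+\infty}\left(1+\frac{\log\log r+\log c_1}{\log r}\right)=1,
\]
whence $\lim_{r\to+\infty}\frac{rM(r)}{\log r}=1$, which is the assertion of Corollaire \ref{E}. This completes the proof.
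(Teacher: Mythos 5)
Your proof is correct and follows exactly the route the paper intends: the paper declares Corollaire \ref{E} "immédiat" from the two-sided bound of Corollaire \ref{B}, and dividing that bound by $\log r$ and squeezing is precisely that immediate deduction. Nothing is missing.
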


\begin{coll}\label{11}
Soient $k$ un nombre premier et $\ell<k$ un entier strictement positif. Alors, pour tout nombre r\'eel $x\geq k(k+1)$, on a:
\begin{equation}
\theta(x;k,\ell)\leq  x\left(\frac{2c_3}{k}+\frac{\log k}{k-1}\right).
\end{equation} 
\end{coll}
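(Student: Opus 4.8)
The plan is to bound $\theta(x;k,\ell)$ by $\log L_{\ell,k,n}$ for a well-chosen $n$ and then apply Theorem \ref{10}.

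\emph{Setup and divisibility.} Since $k$ is prime and $1\le\ell<k$, we have $\pgcd(\ell,k)=1$. Every prime $p$ with $p\equiv\ell\pmod k$ satisfies $p\ge\ell$, hence $p=\ell+jk$ for some integer $j\ge 0$; if in addition $p\le x$, then $j\le(x-\ell)/k$, so $j\le N:=\left\lfloor(x-\ell)/k\right\rfloor$. Thus each prime $p\le x$ with $p\equiv\ell\pmod k$ is one of the terms $\ell,\ell+k,\dots,\ell+Nk$, so it divides $L_{\ell,k,N}=\ppcm(\ell,\ell+k,\dots,\ell+Nk)$; being pairwise distinct, these primes satisfy $\prod_{\substack{p\le x\\ p\equiv\ell\ (\mathrm{mod}\ k)}}p\mid L_{\ell,k,N}$, hence this product also divides $L_{\ell,k,n}$ for every $n\ge N$ (the latter lcm being a multiple of $L_{\ell,k,N}$).

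\emph{Choice of $n$.} From $x\ge k(k+1)$ and $\ell\le k-1$ one gets $(x-\ell)/k\ge(k^2+1)/k>k$, so $N\ge k$. Put $n:=\max(N,k+1)$, so that $n\ge k+1$, which is the hypothesis needed in Theorem \ref{10}. I claim $n\le x/k$ in all cases: if $N\ge k+1$ then $n=N\le(x-\ell)/k\le(x-1)/k<x/k$; if $N=k$ then $n=k+1\le x/k$, precisely because $x\ge k(k+1)$. (This boundary case is exactly why the hypothesis is stated as $x\ge k(k+1)$.)

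\emph{Applying Theorem \ref{10} and finishing.} With $a=\ell$, $b=k$ and exponent $n\ge k+1$, Theorem \ref{10} gives $L_{\ell,k,n}\le\bigl(c_2\cdot k^{k/(k-1)}\bigr)^{n}$. Taking logarithms and using $n\le x/k$,
\[
\theta(x;k,\ell)=\log\!\!\prod_{\substack{p\le x\\ p\equiv\ell\ (\mathrm{mod}\ k)}}\!\!p\ \le\ \log L_{\ell,k,n}\ \le\ n\left(\log c_2+\frac{k}{k-1}\log k\right)\ \le\ \frac{x}{k}\log c_2+\frac{x\log k}{k-1}.
\]
It remains to replace $\log c_2$ by $2c_3$: this is the numerical inequality $\log c_2\le 2c_3$, i.e.\ $c_2\le e^{2c_3}$, which holds since $c_2=12.30641$ while $e^{2c_3}=e^{2.51014}>12.3066$. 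We conclude $\theta(x;k,\ell)\le x\left(\frac{2c_3}{k}+\frac{\log k}{k-1}\right)$, as desired.

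The argument is entirely elementary; the only two points requiring care are the inequality $n\le x/k$ (handled by the hypothesis $x\ge k(k+1)$, which also disposes of the off-by-one boundary case $N=k$) and the tight constant bound $\log c_2\le 2c_3$, which is what the factor $2$ in front of $c_3$ is there to provide. I do not expect any genuine obstacle beyond keeping track of these numerical constants.
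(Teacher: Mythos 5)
Your proof is correct and follows essentially the same route as the paper: bound $\theta(x;k,\ell)$ by $\log L_{\ell,k,n}$ for a suitable $n$ with $k+1\leq n\leq x/k$, apply Theorem \ref{10}, and finish with the numerical inequality $\log c_2\leq 2c_3$. The only (immaterial) difference is your choice $n=\max\left(\left\lfloor (x-\ell)/k\right\rfloor,k+1\right)$ where the paper simply takes $m=\left\lfloor x/k\right\rfloor$, which already satisfies $m\geq k+1$ under the hypothesis $x\geq k(k+1)$.
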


\section{Pr\'eparation}

Les preuves de nos r\'esultats n\'ecessitent les r\'esultats interm\'ediaires suivants: 

\subsection{R\'esultats connus ant\'erieurement}  

\begin{thm}[Rosser et al. \cite{Rosser}]\label{1}
\noindent
\begin{enumerate}
\item Pour tout entier $n\geq 2$, on a: $\sum_{p\leq n}\frac{\log p}{p}\leq \log n$.
\item Pour tout entier $n\geq 6$, on a: $p_n\leq n\left(\log n +\log \log n\right)$.
\item La s\'erie $\sum_{p}\frac{\log p}{p(p-1)}$ converge vers le nombre $0,7553666111\dots<\log c_7$.\label{3}
\end{enumerate}
\end{thm}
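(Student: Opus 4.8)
These are classical refinements of the Chebyshev--Mertens estimates; full details belong to Rosser--Schoenfeld \cite{Rosser}, so I only describe the shape of the argument I would run, taking the three assertions in turn. For the bound $\sum_{p\le n}\frac{\log p}{p}\le\log n$ the plan is to extract the main term from Legendre's factorial formula: writing $\lfloor n/p\rfloor=n/p-\{n/p\}$ in $\log(n!)=\sum_{p\le n}\sum_{j\ge1}\lfloor n/p^{j}\rfloor\log p$ gives
\[
n\sum_{p\le n}\frac{\log p}{p}=\log(n!)+\sum_{p\le n}\{n/p\}\log p-\sum_{p\le n}\sum_{j\ge2}\lfloor n/p^{j}\rfloor\log p .
\]
I would then bound $\log(n!)\le n\log n-n+o(n)$ by a crude integral comparison (or Stirling), bound $\sum_{p\le n}\{n/p\}\log p<\theta(n)\le\psi(n)\le n\log 3$ via Hanson's Corollary \ref{chebfin}, and, in the opposite direction, bound $\sum_{p\le n}\sum_{j\ge2}\lfloor n/p^{j}\rfloor\log p\ge n\sum_{p^{j}\le n,\,j\ge2}\frac{\log p}{p^{j}}-(\psi(n)-\theta(n))$, where $\psi(n)-\theta(n)=O(\sqrt n\log n)$ and $\sum_{p^{j}\le n,\,j\ge2}\frac{\log p}{p^{j}}=\sum_{p}\frac{\log p}{p(p-1)}+O\!\bigl(\tfrac{\log n}{\sqrt n}\bigr)$. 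Dividing by $n$ and collecting, this yields $\sum_{p\le n}\frac{\log p}{p}\le\log n-\delta+o(1)$ with $\delta=1-\log 3+\sum_{p}\frac{\log p}{p(p-1)}>0$ (so the very constant of the third assertion reappears), hence the inequality for all $n$ past an explicit threshold; a finite numerical check disposes of the remaining $n$. The only delicate part is making every $O$-term explicit so as to fix that threshold.

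For $p_n\le n(\log n+\log\log n)$ I would pass to the equivalent statement $\pi\bigl(n(\log n+\log\log n)\bigr)\ge n$. Putting $x=n(\log n+\log\log n)$, an elementary computation gives $\log x\le\log n+\log\log n+\frac{\log\log n}{\log n}$, whence $\tfrac{x}{\log x}=n\bigl(1-o(1/\log n)\bigr)$ and, keeping the secondary term, $\mathrm{Li}(x)=n\bigl(1+\tfrac1{\log n}+o(\tfrac1{\log n})\bigr)$, which exceeds $n$ with room to spare. So it is enough to know $\pi(x)\ge\mathrm{Li}(x)-E(x)$ with $E(x)=o\!\bigl(\tfrac{x}{(\log x)^{2}}\bigr)$. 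This is precisely the step that escapes the elementary Chebyshev range: the Nair and Chebyshev lower bounds for $\pi$ (Corollary \ref{chch} and the Nair corollary) carry a constant below the required one, and one must invoke an explicit form of the prime number theorem with a de la Vall\'ee Poussin error term, resting on an explicit zero-free region for $\zeta$. That is the main obstacle, and it is exactly the Rosser--Schoenfeld input being imported here; small $n$ are again settled directly.

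The third assertion splits into convergence and a numerical comparison. Convergence is immediate from $\frac{\log p}{p(p-1)}\le\frac{2\log p}{p^{2}}$ together with the convergence of $\sum_{n\ge2}\frac{\log n}{n^{2}}$. For the value, I would compute the partial sum $\sum_{p\le P}\frac{\log p}{p(p-1)}$ to high precision and control the tail by $\sum_{p>P}\frac{\log p}{p(p-1)}<\sum_{n>P}\frac{\log n}{(n-1)^{2}}<\int_{P-1}^{\infty}\frac{\log(t+1)}{t^{2}}\,\mathrm dt$, an elementary estimate; taking $P$ large enough pins the sum to enough digits to read off $\sum_{p}\frac{\log p}{p(p-1)}=0,7553666\dots$ and to verify $0,7553666\dots<\log c_{7}$. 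The only care needed here is to carry rigorous error bars on both the partial sum and on $\log c_{7}$, since that margin is small.
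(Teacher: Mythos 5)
You should first be aware that the thesis does not prove this theorem: it sits under the heading \og R\'esultats connus ant\'erieurement\fg{} and is imported verbatim from Rosser--Schoenfeld \cite{Rosser}, so your proposal is not competing with an internal argument but with a citation. Judged on its own terms, your sketch for the first assertion is sound and uses only tools already available in the thesis: the identity
\[
n\sum_{p\le n}\frac{\log p}{p}=\log(n!)+\sum_{p\le n}\{n/p\}\log p-\sum_{p\le n}\sum_{j\ge2}\left\lfloor n/p^{j}\right\rfloor\log p
\]
is correct, Hanson's bound $\theta(n)\le\psi(n)\le n\log 3$ (corollaire \ref{chebfin}) controls the fractional-part sum, and the saving constant $\delta=1-\log 3+\sum_p\frac{\log p}{p(p-1)}\approx 0{,}657>0$ is exactly the right quantity; what remains is explicit bookkeeping of the $O$-terms plus a finite verification, which is routine. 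For the third assertion, convergence is trivial as you say, but note that the margin in $0{,}7553666\dots<\log c_7=\log(2{,}1284)\approx 0{,}7553705$ is only about $4\cdot 10^{-6}$, so with your crude tail bound $\int_{P-1}^{\infty}\frac{\log(t+1)}{t^2}\,\mathrm{d}t\approx\frac{\log P}{P}$ you need $P$ of order $10^{7}$ before the comparison is certified; feasible, but this is precisely the point where \og rigorous error bars\fg{} stop being a formality.

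The genuine gap is the second assertion. You reduce $p_n\le n(\log n+\log\log n)$ to an explicit bound of the form $\pi(x)\ge\mathrm{Li}(x)-E(x)$ with $E(x)=o\bigl(x/\log^2 x\bigr)$ and then state that this is \og exactly the Rosser--Schoenfeld input being imported here\fg{}. Your diagnosis is accurate --- the elementary bounds in the thesis (corollaire \ref{chch}, Nair's $\pi(n)\ge n\log 2/\log n$, th\'eor\`eme \ref{2}) all have leading constants bounded away from $1$ and cannot detect the secondary term $\log\log n$ --- but it means your argument for point 2 is not a proof: it re-derives the statement from an explicit prime number theorem with a de la Vall\'ee Poussin error term, resting on a zero-free region for $\zeta$ that is established nowhere in your write-up or in the thesis. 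Within the economy of the thesis this is acceptable, since th\'eor\`eme \ref{1} is used only as a quoted input (notably in the proof of lemme \ref{7}); but you should state plainly that point 2 is being assumed from \cite{Rosser}, not proved, rather than presenting the reduction as if it were most of an argument.
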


\begin{thm}[Robin \cite{Robin}]\label{4}
Pour tout entier $n\geq 3$, on a:
\[\omega(n)\leq c_5 \frac{\log n}{\log\log n}.\]
\end{thm}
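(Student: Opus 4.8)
Le point de d\'epart est l'observation suivante~: si $\omega(n)=k$, alors $n$ est divisible par $k$ nombres premiers distincts, donc $n$ est au moins \'egal au $k$-i\`eme primoriel $N_k:=p_1p_2\cdots p_k$, d'o\`u $\log n\geq\log N_k=\theta(p_k)$. Je commencerais par \'ecarter les cas $k\leq 3$~: la fonction $t\mapsto(\log t)/t$ \'etant major\'ee par $1/e$ sur $]1,+\infty[$, on a pour tout $n\geq 3$
\[
\omega(n)\,\frac{\log\log n}{\log n}\ \leq\ \frac{3}{e}\ <\ c_5 .
\]
Pour $k\geq 4$, on a $\log n\geq\theta(p_4)=\log 210>e$, et comme $t\mapsto(\log t)/t$ d\'ecro\^it sur $[e,+\infty[$, on en d\'eduit $\frac{\log\log n}{\log n}\leq\frac{\log\theta(p_k)}{\theta(p_k)}$. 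Tout revient alors \`a \'etablir l'in\'egalit\'e
\[
g(k)\ :=\ \frac{k\,\log\theta(p_k)}{\theta(p_k)}\ \leq\ c_5\qquad(\forall\,k\geq 4).
\]

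La deuxi\`eme \'etape serait une v\'erification num\'erique de $g(k)\leq c_5$ pour $4\leq k\leq K_0$, avec $K_0$ un entier explicite \`a d\'eterminer. Le fait d\'eterminant ici est que la suite $\bigl(g(k)\bigr)_{k\geq 4}$ atteint son maximum en $k=9$, o\`u $g(9)=\frac{9\log\log N_9}{\log N_9}=1{,}384\ldots$ (avec $N_9=223092870$)~; cette valeur co\"incide, \`a quelques $10^{-5}$ pr\`es, avec $c_5=1{,}38402$, et c'est elle qui fixe la constante de l'\'enonc\'e.

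Pour la queue $k>K_0$, j'encadrerais $\theta(p_k)$ au moyen des estimations d\'ej\`a disponibles dans l'extrait~: d'une part $\theta(p_k)\leq\psi(p_k)\leq p_k\log 3$ (corollaire \ref{chebfin}) et $p_k\leq k(\log k+\log\log k)$ pour $k\geq 6$ (th\'eor\`eme \ref{1}), ce qui majore $\log\theta(p_k)$ par $\log k+\log\log k+O(1)$~; d'autre part $k=\pi(p_k)\leq 1{,}2\,p_k/\log p_k$ (corollaire \ref{chch}) donne $p_k\geq\frac{1}{1{,}2}\,k\log k$, puis le corollaire \ref{theta} fournit $\theta(p_k)\geq A\,p_k-O(\sqrt{p_k})\geq\frac{A}{1{,}2}\,k\log k\,(1-o(1))$. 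En reportant ces bornes, l'in\'egalit\'e $g(k)\leq c_5$ se ram\`enerait \`a une in\'egalit\'e en la seule variable $k$, du type $\frac{\log k+\log\log k+O(1)}{(A/1{,}2)\log k}\leq c_5$, dont le membre de gauche est d\'ecroissant pour $k$ assez grand et tend vers $1{,}2/A\approx 1{,}30<c_5$~; il suffirait donc de le contr\^oler en $k=K_0$.

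La principale difficult\'e se concentre sur le choix de $K_0$. La marge asymptotique \'etant faible ($1{,}2/A\simeq 1{,}30$ contre $c_5\simeq 1{,}384$), le terme correctif $\frac{\log\log k}{\log k}$ d\'ecro\^it trop lentement pour que $K_0$ reste de taille gouvernable si l'on se borne aux estimations de Chebyshev ($A\simeq 0{,}921$) et de Hanson~: on obtiendrait un $K_0$ \'enorme, impossible \`a parcourir num\'eriquement. On serait donc conduit \`a invoquer des bornes explicites plus fines, de type Rosser--Schoenfeld ($\theta(x)>0{,}998\,x$ et $p_k>k(\log k+\log\log k-1)$), qui ram\`enent $K_0$ \`a quelques dizaines et rendent la v\'erification finale imm\'ediate. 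En tout \'etat de cause, le c\oe ur de l'affaire reste que $c_5$ est, \`a tr\`es peu pr\`es, \'egale \`a $\sup_k g(k)$, atteint en $k=9$, de sorte qu'aucune perte n'est tol\'erable en ce point.
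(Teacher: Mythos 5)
Le manuscrit ne d\'emontre pas ce th\'eor\`eme~: il est cit\'e tel quel d'apr\`es Robin \cite{Robin} dans la liste des \emph{r\'esultats connus ant\'erieurement} et n'est accompagn\'e d'aucune preuve. Il n'y a donc pas de d\'emonstration du texte \`a laquelle comparer la v\^otre~; je juge votre plan sur pi\`eces. Sa charpente est la bonne, et c'est pour l'essentiel celle de Robin lui-m\^eme~: la r\'eduction $\log n\geq\theta(p_k)$ lorsque $\omega(n)=k$, le passage par la d\'ecroissance de $t\mapsto(\log t)/t$ sur $\left[e,+\infty\right[$ (l\'egitime puisque $\theta(p_4)=\log 210>e$), le traitement s\'epar\'e de $k\leq 3$ via la borne $3/e<c_5$, et surtout l'identification du point critique $k=9$, o\`u $g(9)=9\log\log N_9/\log N_9=1{,}38399\ldots$ contre $c_5=1{,}38402$, sont corrects~; vos valeurs num\'eriques se v\'erifient, et vous avez raison de souligner qu'aucune perte n'est tol\'erable en ce point (la marge n'y est que de quelques $10^{-5}$, ce qui impose une \'evaluation exacte de $\log N_k$ pour les petits $k$).

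Le point qui reste r\'eellement ouvert est celui que vous signalez vous-m\^eme~: la queue $k>K_0$. Avec les seuls outils disponibles dans le texte (le th\'eor\`eme \ref{2} de Hanson, le corollaire \ref{chch} de Chebyshev et la majoration $p_n\leq n(\log n+\log\log n)$ du th\'eor\`eme \ref{1}), votre majoration de $g(k)$ tend vers $1{,}2/A\simeq 1{,}30$ mais la condition \`a v\'erifier se ram\`ene \`a $\log\log k\lesssim 0{,}063\log k$, ce qui donne un $K_0$ de l'ordre de $e^{70}$~: la v\'erification cas par cas des $4\leq k\leq K_0$ est alors hors de port\'ee et la preuve ne se referme pas. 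Il faut donc bel et bien importer des bornes explicites de qualit\'e Rosser--Schoenfeld (typiquement $\theta(p_k)\geq k\left(\log k+\log\log k-1{,}077\right)$ pour $k\geq 2$, ou $\theta(x)>x\left(1-1/\log x\right)$ pour $x\geq 41$), qui ne figurent pas dans le manuscrit, pour ramener $K_0$ \`a quelques dizaines. Avec cet ingr\'edient suppl\'ementaire votre sch\'ema devient une d\'emonstration compl\`ete~; sans lui, il reste un plan comportant un trou quantitatif r\'eel, que vous avez toutefois correctement diagnostiqu\'e et dont vous nommez le bon rem\`ede.
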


Pour mettre à l'aise le lecteur, nous lui rappelons aussi le théorème de Hanson \cite{han} suivant qui est déjà vu au \textsection\ref{prec}.
  
\begin{thm}[Hanson \cite{han}]\label{2}
Pour tout r\'eel $x > 1$, on a:
\[\pi(x)\leq c_3 \frac{x}{\log x}.\]
\end{thm}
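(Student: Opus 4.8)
The plan is to handle first the trivial range and then reduce the bound to Hanson's estimate for $\psi$. For $1<x<2$ one has $\pi(x)=0$, so the inequality $0\le c_3\,x/\log x$ holds at once (since $x>1$ forces $\log x>0$); thus it suffices to treat $x\ge 2$. For such $x$ the starting point is the partial‑summation identity \eqref{abel1}, which gives $\pi(x)=\dfrac{\theta(x)}{\log x}+\displaystyle\int_{2}^{x}\dfrac{\theta(t)}{t\log^{2}t}\,\mathrm{d}t$ and expresses $\pi$ through the Chebyshev function $\theta$. Using $\theta(t)\le\psi(t)$ for all $t\ge 1$ (a consequence of \eqref{chp}) together with Hanson's bound $\psi(t)\le t\log 3$ (Corollaire \ref{chebfin}), one gets $\pi(x)\le\dfrac{x\log 3}{\log x}+\log 3\displaystyle\int_{2}^{x}\dfrac{\mathrm{d}t}{\log^{2}t}$.

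The next step is to bound the remaining integral by \eqref{psuite}, namely $\displaystyle\int_{2}^{x}\dfrac{\mathrm{d}t}{\log^{2}t}\le\dfrac{\sqrt{x}}{\log^{2}2}+\dfrac{4x}{\log^{2}x}$. Substituting and factoring out $x/\log x$ yields
\[\pi(x)\le\frac{x}{\log x}\left(\log 3+\frac{(\log 3)\log x}{\sqrt{x}\,\log^{2}2}+\frac{4\log 3}{\log x}\right).\]
Since the parenthesis tends to $\log 3=1.0986\ldots$ as $x\to+\infty$ and $\log 3<c_3=1.25507$, there is an effectively computable $x_0$ beyond which the parenthesis stays $\le c_3$: one only has to solve the explicit inequality $\dfrac{(\log 3)\log x}{\sqrt{x}\,\log^{2}2}+\dfrac{4\log 3}{\log x}\le c_3-\log 3$, whose dominant term is $4\log 3/\log x$, giving a threshold of order $e^{28}$ (around $10^{12}$). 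For $x\ge x_0$ the theorem is then immediate.

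For $2\le x<x_0$ one checks the inequality directly. On each interval $[p_k,p_{k+1})$ the function $x\mapsto\pi(x)\log x/x=k\log x/x$ is decreasing (as $\log x/x$ decreases for $x>e$), so it is enough to verify $\pi(p)\log p/p\le c_3$ at the primes $p<x_0$, plus the harmless range $[2,e]$; the maximum is attained at $p=113$, where $\pi(113)\log 113/113=1.25506\ldots<c_3$, which is precisely why the constant $c_3$ suffices. The main obstacle is the genuinely large value of $x_0$: because the error term in the analytic step decays only like $4\log 3/\log x$, the "finite" verification reaches primes up to about $10^{12}$, so in practice one invokes tabulated values of $\pi$ (or a segmented sieve), or imports a sharper non‑elementary estimate for $\theta$ on the intermediate range — or, more simply, one just quotes the earlier corollary giving $\pi(x)\le 1.25506\,x/\log x$ for $x\ge 2$, from which the present weaker statement follows at once together with the trivial case $1<x<2$.
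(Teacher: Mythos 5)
Your proof follows essentially the same route as the paper's: Theorem \ref{2} is just a recall of the corollary established in \textsection\ref{prec}, whose proof is precisely your chain --- the Abel-summation identity \eqref{abel1}, the trivial bound $\theta(t)\leq\psi(t)$, Hanson's estimate $\psi(t)\leq t\log 3$ (Corollaire \ref{chebfin}), the integral bound \eqref{psuite}, and a finite verification for small $x$; your closing fallback of simply quoting that earlier corollary is exactly what the paper does here. One remark in your favour: your threshold analysis is more honest than the paper's. The paper claims the analytic chain closes for $x\geq 350$, but at $x=350$ the quantity $\log 3\left(1+\frac{\log x}{\sqrt{x}\,\log^{2}2}+\frac{4}{\log x}\right)$ is about $2.56$, well above $c_3$; as you correctly compute, with this crude bound on $\int_{2}^{x}\frac{\mathrm{d}t}{\log^{2}t}$ the parenthesis only falls below $c_3$ for $x$ of order $e^{28}\approx 10^{12}$, so the ``finite'' range is genuinely enormous and the verification must ultimately rest on the Rosser--Schoenfeld fact that $\max_{p}\pi(p)\log p/p$ is attained at $p=113$ with value $1.25506\ldots$ (which is where the constant comes from). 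So your write-up exposes a real weakness in the paper's own presentation rather than containing a gap of its own.
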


\subsection{Lemmes pr\'eparatifs}

\begin{lemme}\label{6}
Soient $a$ et $b$ deux entiers strictement positifs et premiers entre eux tels que $a < b$. Alors, pour tout entier $n \geq b+1$, on a:
\[\prod_{p\leq n}p^{\vartheta_p\left(L_{a,b,n}\right)}\leq \frac{{c_2}^n}{(a+nb)^{\omega(b)}}.\]
\end{lemme}
\begin{proof}
Soit $n \geq b+1$ un entier. On constate que pour tout nombre premier $p$ divisant $b$, on a $\vartheta_p\left(L_{a,b,n}\right)=0$. En effet, $p\nmid b$ entra\^{\i}ne $p\nmid a$ (puisque $\pgcd(a,b)=1$), ce qui entra\^{\i}ne que $p$ ne divise aucun terme de la suite arithm\'etique $(a+kb)_{k\in\mathbb{N}}$; d'o\`u $p\nmid L_{a , b , n}$. D'autre part, pour tout nombre premier $p$, le nombre $p^{\vartheta_p\left(L_{a,b,n}\right)}$ est la plus grande puissance de $p$ qui divise l'un au moins des nombres $a,a+b,\dots,a+nb$; ce qui entra\^ine que $p^{\vartheta_p\left(L_{a,b,n}\right)}\leq a+nb$. On a par cons\'equent:
\[\prod_{p\leq n}p^{\vartheta_p\left(L_{a,b,n}\right)}=\prod_{\begin{subarray}{c} p\leq n \\ p\nmid b \end{subarray}}p^{\vartheta_p\left(L_{a,b,n}\right)}\leq \prod_{\begin{subarray}{c} p\leq n \\ p \nmid b \end{subarray}}(a+nb)=(a+nb)^{\pi(n)-\omega(b)}.\]
Comme par ailleurs, on a: $a+nb\leq n^2$ (car $n\geq b+1>a$), il s'ensuit que:
\[\prod_{p\leq n}p^{\vartheta_p\left(L_{a,b,n}\right)}\leq \frac{n^{2\pi(n)}}{(a+nb)^{\omega(b)}}=\frac{e^{2\pi(n)\log n}}{(a+nb)^{\omega(b)}}.\] 
L'estimation requise en d\'ecoule via le th\'eor\`eme \ref{2}.
\end{proof}

\begin{lemme}\label{5}
Soient $a$ et $b$ deux entiers strictement positifs et premiers entre eux. Alors, pour tout entier naturel $n$, on a:
\begin{equation}\label{A}
\prod_{p>n}p^{\vartheta_{p}\left(L_{a,b,n}\right)}~~\text{divise}~~\frac{a\left(a+b\right)\cdots \left(a+nb\right)\cdot \prod_{\begin{subarray}{c} p\leq n \\ p\mid b\end{subarray}}p^{\vartheta_{p}\left(n!\right)}}{n!\cdot\prod_{\begin{subarray}{c} p\leq n \\ p \nmid b\end{subarray}}p^{\vartheta_{p}\left(n+1\right)}}.
\end{equation}
\end{lemme}
\begin{proof}
La relation \eqref{A} est triviale pour $n\in\{0,1\}$. Supposons pour la suite que $n\geq 2$ et d\'esignons respectivement par $A_n$ et $B_n$ les membres de gauche et de droite de \eqref{A}. Nous allons monter que $\vartheta_{q}\left(A_n\right)\leq \vartheta_{q}\left(B_n\right)$ pour tout nombre premier $q$, ce qui conclura que $A_n$ divise $B_n$. Soit $q$ un nombre premier arbitraire. Dans le cas o\`u $q$ divise $b$, on a $q\nmid a$ (puisque $\pgcd(a,b)=1$) et donc $q$ ne divise aucun terme de la suite arithm\'etique $\left(a+kb\right)_{k\in\mathbb{N}}$; ce qui entra\^ine que $q$ ne divise pas $L_{a,b,n}$ et on a par cons\'equent:
\[\vartheta_{q}\left(B_n\right)=\vartheta_q\left(\frac{\prod_{\begin{subarray}{c} p\leq n \\ p\mid b\end{subarray}}p^{\vartheta_{p}\left(n!\right)}}{n!}\right)=\vartheta_{q}\left(n!\right)-\vartheta_{q}\left(n!\right)=0=\vartheta_{q}\left(A_n\right).\]
Il nous reste donc \`a montrer l'in\'egalit\'e $\vartheta_{q}\left(A_n\right)\leq \vartheta_{q}\left(B_n\right)$ dans le cas o\`u $q$ ne divise pas $b$. Supposons pour toute la suite que $q\nmid b$ et d\'efinissons $S_{a,b,n}:=\left\lbrace a,a+b,\dots,a+nb\right\rbrace$. On distingue les deux cas suivants: \\
$\bullet$ \underline{\textbf{1\textsuperscript{er} cas:}} (si $q\leq n$). Dans ce cas, on a visiblement $\vartheta_q\left(A_n\right)=0$. On doit donc montrer que $\vartheta_q\left(B_n\right)\geq 0$. Pour tout entier strictement positif $\ell$, d\'esignons par $x_{\ell}$ l'unique solution de la congruence $a+bx\equiv 0 \pmod {q^{\ell}}$ dans l'ensemble $\{0,1,\dots,q^{\ell}-1\}$. Le nombre d'\'el\'ements de l'ensemble $S_{a,b,n}$ qui sont multiples de $q^{\ell}$ est alors \'egale au nombre d'entiers $x$ tels que $0\leq x\leq n$ et $x\equiv x_{\ell} \pmod {q^{\ell}}$; ce qui est clairement \'egale \`a $\left\lfloor \frac{n-x_{\ell}}{q^{\ell}}\right\rfloor +1$. On a par cons\'equent:
\begin{align*}
\vartheta_{q}\left(a\left(a+b\right)\cdots \left(a+nb\right)\right)&=\sum_{\ell\geq 1}\left(\left\lfloor \frac{n-x_{\ell}}{q^{\ell}}\right\rfloor +1\right)=\sum_{\ell\geq 1}\left(\left\lfloor \frac{n-x_{\ell}+q^{\ell}}{q^{\ell}}\right\rfloor\right)\\&\geq \sum_{\ell\geq 1}\left\lfloor \frac{n+1}{q^{\ell}}\right\rfloor=\vartheta_{q}\left(\left(n+1\right)!\right).
\end{align*}
D'o\`u: $\vartheta_{q}\left(B_n\right)=\vartheta_{q}\left(a\left(a+b\right)\cdots \left(a+nb\right)\right)-\vartheta_{q}((n+1)!)\geq 0$, comme il fallait le prouver.\\[1mm]
$\bullet$ \underline{\textbf{2\textsuperscript{nd} cas:}} (si $q>n$). Dans ce cas, comme la congruence $a+bx\equiv 0\pmod q$ poss\`ede une et une unique solution dans l'ensemble $\{0,1,\dots,q-1\}$ (car $\pgcd(b,q)=1$) alors elle poss\`ede au plus une solution dans l'ensemble $\{0,1,\dots,n\}$. Autrement dit, $q$ divise au plus un \'el\'ement de l'ensemble $S_{a,b,n}$. On a par cons\'equent: $\vartheta_{q}\left(A_n\right)=\vartheta_q\left(L_{a,b,n}\right)=\vartheta_{q}\left(a\left(a+b\right)\cdots \left(a+nb\right)\right)=\vartheta_{q}\left(B_n\right)$. Ce qui confirme le r\'esultat requis pour ce cas et compl\`ete cette d\'emonstration. 
\end{proof}

\begin{lemme}\label{7}
Pour tout entier $b\geq 3$, on a: $\prod_{p\mid b}{p^{1/p}}\leq c_6 \log b$.
\end{lemme}
\begin{proof}
Si $\omega(b)=1$ alors il existe un nombre premier $q_1$ et un entier strictement positif $m$ tels que $b={q_1}^m$. On a par cons\'equent: $\prod_{p\mid b}p^{1/p}={q_1}^{1/{q_1}}\leq 3^{1/3}\leq c_6\log 3\leq c_6\log b$ (car la fonction $n\mapsto n^{1/n}$ atteint son maximum sur $\mathbb{N^*}$ en $n=3$). Supposons pour la suite que $\omega(b)\geq 2$ et montrons pr\'ealablement que l'on a:
\begin{equation}\label{lem}
\sum_{p\mid b}\frac{\log p}{p}\leq \sum_{p\leq p_{\omega(b)}}\frac{\log p}{p}.
\end{equation}
Dans le cas o\`u $b$ est pair, l'in\'egalit\'e \eqref{lem} d\'ecoule directement de la d\'ecroissance de la fonction $x\mapsto\frac{\log x}{x}$ sur l'intervalle $\left[3,+\infty \right[$. Si maintenant $b$ est impair alors en d\'esignant par $q$ le plus grand facteur premier de $b$, on a (puisque $\omega(b)\geq 2$ par hypoth\`ese): $q\geq 5$. Il s'ensuit alors de la d\'ecroissance de la fonction $x\mapsto\frac{\log x}{x}$ sur l'intervalle $\left[3,+\infty \right[$ que:
\[\sum_{p\mid b}\frac{\log p}{p}=\sum_{\begin{subarray}{c} p\mid b\\ p\neq q\end{subarray}}\frac{\log p}{p}+\frac{\log q}{q}\leq \sum_{3\leq p\leq p_{\omega(b)}}\frac{\log p}{p}+\frac{\log 5}{5} \leq \sum_{p\leq p_{\omega(b)}}\frac{\log p}{p};\]
confirmant ainsi \eqref{lem} \'egalement dans le cas o\`u $b$ est impair.\\ En prenant maintenant les exponentielles des deux membres de \eqref{lem}, on a:
\begin{equation}\label{cor}
\prod_{p\mid b}p^{\frac{1}{p}}\leq \prod_{p\leq p_{\omega(b)}}p^{\frac{1}{p}}.
\end{equation}
Pour $\omega(b)\in\{2,3,4,5\}$, on v\'erifie \`a la main que $\prod_{p\leq p_{\omega(b)}}p^{1/p}\leq c_6\log \left(\prod_{p\leq p_{\omega(b)}}p\right)\leq c_6 \log b$, ce qui conclut (via \eqref{cor}) \`a l'estimation requise par le lemme. Si par contre $\omega(b)\geq 6$, on a (d'apr\`es le th\'eor\`eme \ref{1}):
\[\prod_{p\leq p_{\omega(b)}}p^{\frac{1}{p}}\leq p_{\omega(b)}\leq \omega(b)\left(\log \omega(b) +\log \log \omega(b)\right).\]
En combinant ceci avec le th\'eor\`eme \ref{4} et l'in\'egalit\'e $\omega(b)\leq \log b$ (qui elle-m\^eme d\'ecoule du th\'eor\`eme \ref{4} et du fait que $\log\log b\geq c_5$, vu que $b\geq 2\cdot 3\cdot 5\cdot 7\cdot 11\cdot 13\geq e^{e^{c_5}}$), on aboutit \`a:
\begin{align*}
\prod_{p\leq p_{\omega(b)}}p^{\frac{1}{p}}&\leq c_5 \frac{\log b}{\log \log b}\left(\log c_5+\log \log b -\log\log\log b +\log\log\log b\right)\\&= \left(c_5+\frac{c_5\log c_5}{\log\log b}\right)\log b\\&\leq \left(c_5+\frac{c_5\log c_5}{\log\log\left(2\cdot 3\cdot 5\cdot 7\cdot 11\cdot 13\right)}\right)\log b \leq c_6 \log b.
\end{align*}
Ce qui conclut encore (via \eqref{cor}) \`a l'estimation requise par le lemme. Notre d\'emonstration est compl\`ete.
\end{proof}

\begin{lemme}\label{8}
Soient $b$ et $n$ deux entiers $\geq 2$. Alors, on a: $\prod_{p\mid b}p^{\vartheta_{p}\left(n!\right)}\leq \left(c_4\log b\right)^{n}$.
\end{lemme}  
\begin{proof}
Pour $b=2$, l'estimation du lemme d\'ecoule imm\'ediatement de la formule de Legendre. En effet, on a:
\[\prod_{p\mid 2}p^{\vartheta_{p}\left(n!\right)}=2^{\vartheta_{2}\left(n!\right)}=2^{\lfloor \frac{n}{2}\rfloor+\lfloor \frac{n}{4}\rfloor+\lfloor \frac{n}{8}\rfloor+\dots}\leq 2^{\frac{n}{2}+\frac{n}{4}+\frac{n}{8}+\dots}=2^{n}\leq \left(c_4\log 2\right)^{n}.\]
Supposons maintenant que $b\geq 3$. En utilisant successivement la formule de Legendre, le lemme \ref{7} et le point \ref{3} du th\'eor\`eme \ref{1}, on obtient que:
\begin{align*}
\prod_{p\mid b}p^{\vartheta_{p}\left(n!\right)}&=\prod_{p\mid b}p^{\left\lfloor\frac{n}{p}\right\rfloor + \left\lfloor\frac{n}{p^2}\right\rfloor +\dots}\leq \prod_{p\mid b}p^{\frac{n}{p} + \frac{n}{p^2} +\dots}=\left(\prod_{p\mid b}p^{\frac{1}{p}}\right)^n \left(\prod_{p\mid b}p^{\frac{1}{p(p-1)}}\right)^n\\&\leq \left(c_6 c_7\log b\right)^{n}\leq \left(c_4\log b\right)^{n},
\end{align*}
comme il fallait le prouver. Ce qui compl\`ete notre d\'emonstration.
\end{proof}

\section{Preuves de nos r\'esultats principaux}

\begin{proof}[Démonstration du th\'eor\`eme \ref{9}]
Soit $n \geq b + 1$ un entier. Supposons d'abord que $a<b$. En utilisant successivement les lemmes \ref{6}, \ref{5} et \ref{8}, on obtient que:
\begin{align*}
L_{a,b,n}&=\left(\prod_{p\leq n}p^{\vartheta_p\left(L_{a,b,n}\right)}\right)\left(\prod_{p>n}p^{\vartheta_p\left(L_{a,b,n}\right)}\right)\\&\leq \frac{{c_2}^n}{(a+bn)^{\omega(b)}}\cdot\frac{a\left(a+b\right)\cdots \left(a+nb\right)}{n!}\cdot \prod_{\begin{subarray}{c} p\leq n \\ p\mid b\end{subarray}}p^{\vartheta_{p}\left(n!\right)}\\ &\leq {c_2}^n\frac{(a+nb)}{(a+nb)^{\omega(b)}}\cdot \frac{b(2b)(3b)\cdots \left(nb\right)}{n!}\cdot\prod_{p\mid b}p^{\vartheta_{p}\left(n!\right)}\\&\leq {c_2}^n b^n\left(c_4\log b\right)^n\leq \left(c_1\cdot b\log b\right)^n,
\end{align*}
comme il fallait le prouver. Si maintenant on a au contraire $a>b$, alors en posant $q:=\left\lfloor a/b\right\rfloor$ et $a':=a-qb<b$, on a visiblement: $L_{a,b,n}$ divise $L_{a',b,q+n}$; ce qui permet de conclure au r\'esultat requis en appliquant le premier cas au triplet $(a',b,q+n)$ au lieu de $(a,b,n)$. Notre d\'emonstration est compl\`ete.
\end{proof}

\begin{proof}[Démonstration du th\'eor\`eme \ref{10}]
Il suffit de reprendre la preuve pr\'ec\'edente du th\'eor\`eme \ref{9} (le premier cas pr\'ecis\'ement) et d'utiliser la majoration:
\[\prod_{\begin{subarray}{c} p\leq n \\ p\mid b\end{subarray}}p^{\vartheta_{p}\left(n!\right)}=b^{\vartheta_{b}\left(n!\right)}=b^{\left\lfloor\frac{n}{b}\right\rfloor + \left\lfloor\frac{n}{b^2}\right\rfloor +\dots}\leq b^{\frac{n}{b-1}}\]
au lieu de celle du lemme \ref{8}.
\end{proof}

\begin{proof}[Démonstration du corollaire \ref{B}]
\'Etant donn\'e $r$ un entier $\geq 2$ et $n$ un entier $\geq r+1$, on a en utilisant l'in\'egalit\'e \eqref{ff1} puis le th\'eor\`eme \ref{9}:
\[\frac{n-1}{n}\log (r+1)\leq \frac{\log L_{1,r,n}}{n}\leq \log r+\log\log r +\log c_1.\]
Le r\'esultat requis en d\'ecoule par passage \`a la limite lorsque $n$ tend vers l'infini, tout en se servant de l'estimation \eqref{batem}.
\end{proof}

\begin{proof}[Preuve du corollaire \ref{11}]
Soient $x\geq k(k+1)$ et $m:=\left\lfloor \frac{x}{k}\right\rfloor$. On a clairement:
\[\theta(x;k,\ell)\leq \log\ppcm\left(\ell,\ell+k,\dots,\ell+mk\right)=\log L_{\ell,k,m}.\]
Par ailleurs, puisque $m\geq k+1$ (car $x\geq k(k+1)$), on a en vertu du th\'eor\`eme \ref{10}:
\[\log L_{\ell,k,m}\leq m\left(\log c_2+\frac{k\log k}{k-1}\right)\leq x\left(\frac{2c_3}{k}+\frac{\log k}{k-1}\right).\]
Ce qui conclut au r\'esultat requis.
\end{proof}

\bigskip

\begin{center}
\includegraphics[scale=1]{fini}
\end{center}

\chapter{Un encadrement effectif du $\ppcm$ de la suite $(n^2+1)_n$}\label{ch5}

\section{Introduction}
Dans ce chapitre, il est question d'estimer les nombres entiers:
\[L_n:=\ppcm\left(1^2+1,2^2+1,\dots,n^2+1\right)~~~~(n\in\mathbb{N^*}).\]
On se propose d'une part d'améliorer la minoration de Oon \cite{oon} (pour le cas $c=1$), qui énonce que $L_n\geq 2^n$ $(\forall n\in\mathbb{N^*})$ et d'autre part d'établir une première majoration effective et non triviale de $L_n$. Pour ce faire, nous adaptons la méthode du chapitre \ref{ch4} pour la suite $\left(n^2+1\right)_n$ à la place des suites arithmétiques. Étant donné un entier strictement positif $n$, on désigne par $Q_n$ l'entier strictement positif défini par:
\[Q_n:=\left(1^2+1\right)\left(2^2+1\right)\cdots\left(n^2+1\right).\]
Pour $i\in\{1,3\}$, on définit:
\[\mathcal{P}_{4,i}:=\left\lbrace p~\text{premier};~p\equiv i\pmod 4\right\rbrace .\] 
Afin d'alléger certains énoncés, nous posons: $\alpha_1:=0,7993$, $\alpha_2:=10,3624$, $\alpha_3:=3,9497$, $\beta_1:=0,6722$, $\beta_2:=0,5981$, $\beta_3:=0,281$, \pagebreak

\noindent $c_1:=\frac{1}{2}-\frac{0,4}{3\log 10}+\int_{1}^{10^3}\frac{\theta\left(t;4,3\right)}{t^2}\mathrm{d}t-\frac{3}{2}\log 10+0,4\log\log (10^3)=0,1608548666\dots$, $c_2:=\left(\frac{1}{2}+\frac{0,4}{3\log 10}\right) = 0,5579\dots$, $c_3:=2,1284$, $c_4:=\left(1+\frac{5}{6\log 10}\right) = 1,3619\dots$ et $c_5:=e^{\frac{c_4}{6\cdot 10^3\log 10}} = 1,0001\dots$. Le théorème principal de ce chapitre est le suivant:

\begin{thm}\label{16}
Pour tout entier $n\geq 2$, on a:
\begin{equation}\label{16-1}
\left(\alpha_1\sqrt{n}\left(\log n\right)^{-0,4}\right)^n \leq \ppcm\left(1^2+1,2^2+1,\dots,n^2+1\right)\leq \alpha_2\left(\alpha_3n\left(\log n\right)^{0,8}\right)^n.
\end{equation}
\end{thm}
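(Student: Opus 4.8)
The plan is to adapt the method of Chapter~\ref{ch4} to the polynomial sequence $\left(n^2+1\right)_n$, separating the prime-power divisors of $L_n$ according to whether the prime $p$ is $\leq n$ or $>n$. For the upper bound, first I would show that for each prime $p$, the contribution $p^{\vartheta_p(L_n)}$ is at most $n^2+1$, since $p^{\vartheta_p(L_n)}$ must divide some $k^2+1$ with $1\leq k\leq n$. Since only primes $p\equiv 1\pmod 4$ (together with $p=2$) can divide a term $k^2+1$, the product $\prod_{p\leq n}p^{\vartheta_p(L_n)}$ is bounded by $(n^2+1)^{\#\{p\leq n:\ p\in\mathcal P_{4,1}\}}$, and Hanson's bound (Theorem~\ref{2}) controls that exponent by $c_3\,n/\log n$ — this yields a factor of the shape $\left(c_5\,n\right)^{n}$ after taking logarithms. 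For the primes $p>n$, I would establish a divisibility lemma in the spirit of Lemma~\ref{5}: counting, for each $\ell\geq 1$, the number of $k\in\{1,\dots,n\}$ with $p^{\ell}\mid k^2+1$ (which is governed by the number of roots of $X^2+1$ modulo $p^{\ell}$, hence at most $2$ for $p\equiv 1\pmod 4$), one shows that $\prod_{p>n}p^{\vartheta_p(L_n)}$ divides an explicit rational built from $Q_n=\prod_{k=1}^n(k^2+1)$ divided by a product of factorials coming from the "expected" multiplicities. Then $Q_n\leq \prod_{k=1}^n 2k^2 = 2^n (n!)^2$, and Stirling gives $Q_n\leq \left(c\,n^2/e^2\right)^n$ up to polynomial factors; combining with the small-prime estimate and absorbing constants into $\alpha_2,\alpha_3$ produces the claimed bound $\alpha_2\left(\alpha_3 n(\log n)^{0.8}\right)^n$.

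For the lower bound, I would reuse the algebraic machinery already developed in Chapter~\ref{ch2}: by the fundamental Lemma~\ref{lf'} applied in $\mathbb{Z}[i]$ (or directly by Theorem~\ref{oon'} / formula \eqref{ffff5}), $L_n=L_{1,1,n}\geq m\binom{n}{m}$ for every $m\leq n$, and more precisely one has access to the sharper divisor from point~\ref{p1t7} of Theorem~\ref{t7} with $c=1$, $m=1$: namely $L_n$ is a multiple of $\frac{\prod_{k=1}^n(k^2+1)}{(n-1)!\prod_{k=1}^{n-1}(k^2+4)}$. Estimating this from below exactly as in Corollary~\ref{t9} (bounding $\prod_{k=1}^{n-1}(k^2+4)\leq (n-1)!^2 e^{2\pi^2/3}$ and applying Stirling to the remaining factorials) gives a bound of the form $\lambda\cdot n^{2}/\text{(lower-order)}$ raised to a power close to $n$, which is genuinely of order $n^{n}$ — too strong. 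To reach the stated $\left(\alpha_1\sqrt n(\log n)^{-0.4}\right)^n$, which grows only like $n^{n/2}$ up to logarithmic corrections, I would instead feed into the argument the refined Chebyshev-type information: $\log L_n \geq \theta\!\left(n^2+1;4,1\right)+\log(\text{lower terms})$, using that every prime $p\leq n^2+1$ with $p\equiv 1\pmod 4$ divides some $k^2+1$ with $k\leq n$ (take $k$ the smaller square root of $-1$ mod $p$), so such primes all divide $L_n$. The prime number theorem for the progression $4k+1$, made effective via the bounds on $\theta(x;4,1)$ whose constants $\alpha_i,\beta_i,c_i$ are fixed in the preamble, then gives $\log L_n \geq \tfrac{1}{2}n^2\cdot(\text{something})$-type lower bound; reconciling with the $\sqrt n$-scale claimed suggests that in fact the operative lower bound in \eqref{16-1} comes from the elementary $L_n\geq m\binom{n}{m}$ with $m=\lceil n/2\rceil$ refined by the effective $\theta(x;4,3)$ estimate controlling how many primes are "wasted".

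Concretely I would organize the lower-bound proof as: (i) show $L_n$ is divisible by $\prod_{p\leq n^2+1,\ p\in\mathcal P_{4,1}}p^{\,e_p}$ where $e_p\geq 1$, hence $\log L_n\geq \theta(n^2+1;4,1)+\sum_{\ell\geq 2}(\cdots)$; (ii) invoke the effective lower bound on $\theta(x;4,1)$ — equivalently use $\theta(x;4,1)=\tfrac{x}{2}-\theta(x;4,3)+O(\log x)$ together with the effective upper bound on $\theta(x;4,3)$ whose integral form, involving $c_1,c_2,c_3$, appears in the preamble; (iii) convert $\theta(n^2+1;4,1)\gtrsim \tfrac{n^2}{2}$ into a statement about $L_n$, then divide through by the contribution of prime powers that over-count (bounded using Lemma~\ref{6}-type arguments with $\mathcal P_{4,1}$), losing the polynomial-in-$n$ and $\log$-power factors; (iv) finally small values $n<$ some explicit bound are checked by direct computation. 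The main obstacle, as I see it, is twofold: first, setting up the correct effective estimates for $\theta(x;4,1)$ and $\theta(x;4,3)$ with fully explicit constants — this is exactly where the ad hoc constants $\alpha_1,\alpha_2,\alpha_3,\beta_1,\beta_2,\beta_3,c_1,\dots,c_5$ of the preamble are engineered, and getting the algebra to close without a computer-assisted check of a finite range is delicate; and second, correctly bounding the "waste" — the power of each $p$ beyond the first, and primes dividing $L_n$ but not of the form needed — so that the exponent of $n$ in the final answer is exactly the $(\log n)^{\pm 0.8}$ that Chebyshev's explicit error terms force, rather than a cruder power. Once those effective prime-counting inputs are in place, the bookkeeping is routine: split at $n$, bound each factor, apply Stirling, and absorb constants.
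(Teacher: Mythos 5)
Your upper-bound sketch follows essentially the paper's route (split the primes at $n$, bound the small-prime part by $(n^2+1)^{\pi(n;4,1)}$, and control the large-prime part by a divisibility lemma \`a la lemme \ref{5} together with $Q_n/(n!)^2=\prod_k(1+1/k^2)=O(1)$); the one thing left vague is where the dominant factor $\left(n(\log n)^{0,8}\right)^n$ actually comes from. It does not come from $Q_n$ itself but from the correction factor $\prod_{p\in\mathcal{P}_{4,3}\cap[1,n]}p^{2\vartheta_p(n!)}$ that must be put \emph{back into the numerator} of $M_n$ (lemme \ref{13}), because the primes $p\equiv 3\pmod 4$ divide $(n!)^2$ but divide no $k^2+1$; this product is then estimated by lemme \ref{15} via the effective bounds of Bennett et al.\ on $\theta(x;4,3)$, which is exactly where the exponent $0,8=2\times 0,4$ is born.

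The lower bound, however, rests on a false claim. You assert that every prime $p\le n^2+1$ with $p\equiv 1\pmod 4$ divides some $k^2+1$ with $k\le n$, "taking $k$ the smaller square root of $-1$ mod $p$". That smaller root is only guaranteed to be $\le (p-1)/2$, which can greatly exceed $n$: for $n=8$ and $p=61\le 65=n^2+1$, the roots of $x^2\equiv -1\pmod{61}$ are $11$ and $50$, both $>8$, so $61\nmid L_8$. Worse, if your claim were true it would give $\log L_n\ge\theta(n^2+1;4,1)\sim n^2/2$, i.e.\ $L_n\ge e^{cn^2}$, flatly contradicting the upper bound $e^{O(n\log n)}$ you are simultaneously proving; counting which primes $\equiv 1\pmod 4$ actually have a root of $-1$ below $n$ is precisely the hard equidistribution problem behind Cilleruelo's theorem, and it is not what the paper does. (Your preliminary remark that the chapter-\ref{ch2} divisor with $c=1$, $m=1$ is "of order $n^n$ --- too strong" is also a miscalculation: $\prod_{k=1}^n(k^2+1)\big/\bigl((n-1)!\prod_{k=1}^{n-1}(k^2+4)\bigr)\approx (n!)^2/((n-1)!)^3$ is of order $e^n/n!$, i.e.\ uselessly small.) The mechanism the paper actually uses is lemme \ref{12}: $L_n^2$ is a multiple of the explicit rational $D_n=2^{\lfloor n/2\rfloor+1}\,Q_n\,(n!)^{-2}\prod_{p\in\mathcal{P}_{4,3}\cap[1,n]}p^{2\vartheta_p(n!)}$. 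Here $Q_n/(n!)^2\ge 1$ contributes nothing; the whole gain is that the primes $\equiv 3\pmod 4$ in the denominator $(n!)^2$ are "wasted" (they cannot divide $Q_n$) and may be restored, and $\prod_{p\in\mathcal{P}_{4,3}\cap[1,n]}p^{\vartheta_p(n!)}\ge\bigl(\beta_1\sqrt{n}(\log n)^{-0,4}\bigr)^n$ by lemme \ref{15}, whence $L_n\gtrsim n^{n/2}(\log n)^{-0,4n}$. Without this idea --- or some substitute of equal strength --- your lower-bound argument does not close.
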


\section{Préparation}
La preuve du théorème \ref{16} nécessite les résultats intermédiaires suivants:  
\subsection{Résultats antérieurs}

\begin{thm}[Bennett et al. \cite{theta}]\label{14}
Pour tout nombre réel $x\geq 10^3$, on a:
\[\left|\theta\left(x;4,3\right)-\frac{x}{2}\right|\leq 0,4\frac{x}{\log x}~~\text{et}~~\pi\left(x;4,1\right)\leq \frac{x}{2\log x}\left(1+\frac{5}{2\log x}\right).\]
\end{thm}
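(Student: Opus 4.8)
The plan is to derive both estimates from sharp explicit bounds on the Chebyshev function $\psi(x;4,a):=\sum_{p^k\le x,\ p^k\equiv a\,(4)}\log p$ in the two residue classes $a\in\{1,3\}$ modulo $4$, and then to descend from $\psi$ to $\theta$ and finally to $\pi$ by elementary steps. The starting point is the orthogonality of the Dirichlet characters modulo $4$: since $\varphi(4)=2$ there are exactly two characters, the principal character $\chi_0$ and the (real, odd) non-principal character $\chi$, with $\chi(1)=1$ and $\chi(3)=-1$. Writing $\psi(x,\chi):=\sum_{n\le x}\chi(n)\Lambda(n)$, one has
\[
\psi(x;4,a)=\tfrac12\bigl(\psi(x,\chi_0)+\overline{\chi}(a)\,\psi(x,\chi)\bigr),
\]
so that $\psi(x;4,1)-\psi(x;4,3)=\psi(x,\chi)$ while $\psi(x;4,1)+\psi(x;4,3)=\psi(x,\chi_0)=\psi(x)-\lfloor\log_2 x\rfloor\log 2$. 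Thus everything reduces to controlling $\psi(x)$, for which the explicit Chebyshev-type bounds of Chapter \ref{ch1} already suffice, and the single twisted sum $\psi(x,\chi)$.

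For the non-principal character I would invoke the explicit (Riemann--von Mangoldt) formula
\[
\psi(x,\chi)=-\sum_{\rho}\frac{x^{\rho}}{\rho}+O(\log x),
\]
the sum running over the non-trivial zeros $\rho$ of $L(s,\chi)$. All the quantitative content lies in bounding $\bigl|\sum_\rho x^\rho/\rho\bigr|$. I would split the zeros at a height $H$: for $|\Im\rho|\le H$ I would rely on a large-scale numerical verification that $L(s,\chi)$ satisfies the Riemann Hypothesis up to height $H$, so that each such $\rho$ contributes at most $x^{1/2}/|\rho|$; for $|\Im\rho|>H$ I would combine an explicit zero-free region of the shape $\sigma>1-1/(R\log(|t|/4))$ with an explicit zero-counting estimate $N(T,\chi)=\frac{T}{\pi}\log\frac{T}{2\pi e}+O(\log T)$, both with numerically explicit constants, to show the tail is negligible. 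Recombining the two classes yields $|\psi(x;4,3)-x/2|\le(0.4-o(1))\,x/\log x$, where the constant $0.4$ is forced by optimising $H$ and $R$ against the numerical data.

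It then remains to pass to the two stated quantities. First, $\theta(x;4,3)=\psi(x;4,3)-\sum_{k\ge2}\theta(x^{1/k};4,3)$, and the prime-power correction is $O(\sqrt{x})$, hence absorbed into the $0.4\,x/\log x$ term once $x\ge10^3$; this gives the first inequality. Second, for $\pi(x;4,1)$ I would apply Abel summation to $\theta(\,\cdot\,;4,1)$, exactly as in the proof of Theorem \ref{dcheb},
\[
\pi(x;4,1)=\frac{\theta(x;4,1)}{\log x}+\int_{2}^{x}\frac{\theta(t;4,1)}{t\log^2 t}\,\mathrm{d}t,
\]
insert the upper bound $\theta(t;4,1)\le\tfrac12 t+(\text{error})$ coming from the same machinery applied to $a=1$, and evaluate the integral; the main term produces $x/(2\log x)$, and estimating $\int_2^x \mathrm{d}t/\log^2 t$ contributes the secondary factor $1+5/(2\log x)$.

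The hard part will be the two steps of the second paragraph: the whole estimate rests on an actual numerical certification of the Riemann Hypothesis for $L(s,\chi)$ (modulus $4$) up to a concrete height, together with explicit versions of the zero-free region and of the Riemann--von Mangoldt counting formula whose constants are tight enough to keep the final constants at precisely $0.4$ and $5/2$. Sharpening these explicit constants — rather than the routine analytic manipulations, or the elementary passages $\psi\to\theta\to\pi$ — is where the genuine effort lies, which is exactly why this result is quoted from \cite{theta} rather than reproved here.
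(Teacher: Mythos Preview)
The paper provides no proof of this theorem: it is listed under ``R\'esultats ant\'erieurs'' and simply cited from Bennett et al.\ \cite{theta}, so there is nothing to compare your attempt against. You correctly recognise this in your final sentence. Your sketch (orthogonality of characters mod $4$, explicit formula for $\psi(x,\chi)$, numerical verification of GRH up to some height combined with an explicit zero-free region and zero-counting estimate, then the passage $\psi\to\theta\to\pi$) is indeed the general shape of how such explicit bounds are obtained in the analytic-number-theory literature, and in particular in \cite{theta}; but the thesis makes no attempt to reproduce any of this and treats the result as a black box.
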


\begin{thm}[classique]\label{Hensel}
Soient $\ell\in\mathbb{N^*}$, $n\in\mathbb{Z}$ et $p$ un nombre premier impair ne divisant pas $n$. Alors, la congruence $x^2\equiv n\pmod{p^\ell}$ possède exactement $1+\left(\frac{n}{p}\right)$ solutions, où $\left(\frac{\cdot}{\cdot}\right)$ représente le symbole de Legendre.
\end{thm}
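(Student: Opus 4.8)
The plan is to prove the statement by induction on the exponent $\ell$, using the standard Hensel lifting mechanism applied to the polynomial $f(X):=X^2-n$. The target count $1+\left(\frac{n}{p}\right)$ equals $2$ when $n$ is a quadratic residue modulo $p$ and $0$ otherwise; the essential content is that this number is independent of $\ell$, so the induction must show that passing from the modulus $p^\ell$ to the modulus $p^{\ell+1}$ neither creates nor destroys solutions.

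For the base case $\ell=1$, the number of solutions of $x^2\equiv n\pmod p$ in $\{0,1,\dots,p-1\}$ is exactly $1+\left(\frac{n}{p}\right)$: this is the elementary characterisation of the Legendre symbol in the case $p\nmid n$. Indeed, either $n$ is a square modulo $p$, in which case the two roots $\pm x_0$ are distinct (they would coincide only if $p\mid 2x_0$, impossible since $p$ is odd and $p\nmid x_0$), or $n$ is a non-residue and there are no solutions.

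For the inductive step I would fix a solution $x_0$ modulo $p^\ell$, so that $f(x_0)=p^\ell m$ for some $m\in\mathbb{Z}$, and search for the lifts $x=x_0+tp^\ell$ with $t\in\{0,1,\dots,p-1\}$ satisfying $f(x)\equiv 0\pmod{p^{\ell+1}}$. Expanding gives
\[f(x_0+tp^\ell)=x_0^2-n+2x_0tp^\ell+t^2p^{2\ell}.\]
Since $\ell\geq 1$ yields $2\ell\geq \ell+1$, the last term vanishes modulo $p^{\ell+1}$, and the congruence reduces to $p^\ell(m+2x_0t)\equiv 0\pmod{p^{\ell+1}}$, that is, to the linear congruence $m+2x_0t\equiv 0\pmod p$. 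The crucial observation is that the coefficient $2x_0$ is invertible modulo $p$: indeed $p$ is odd, so $p\nmid 2$, and $x_0^2\equiv n\pmod p$ together with $p\nmid n$ forces $p\nmid x_0$. Hence this linear congruence has a unique solution $t\pmod p$, so each solution modulo $p^\ell$ lifts to exactly one solution modulo $p^{\ell+1}$. As distinct residues modulo $p^\ell$ give distinct residues modulo $p^{\ell+1}$, and every solution modulo $p^{\ell+1}$ reduces to a solution modulo $p^\ell$, the two solution sets are in bijection and the count is preserved.

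I do not expect a serious obstacle, since the argument is essentially routine; the only two points that must be isolated with care are the invertibility of the derivative $f'(x_0)=2x_0$ modulo $p$ (which is precisely where the hypotheses ``$p$ odd'' and ``$p\nmid n$'' enter, and without which the lift need not be unique) and the inequality $2\ell\geq \ell+1$ that annihilates the quadratic remainder. Together these guarantee the uniqueness of the lift, and the induction then delivers the constant value $1+\left(\frac{n}{p}\right)$ for every $\ell\geq 1$.
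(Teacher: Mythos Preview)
Your proof is correct: the induction on $\ell$ with the explicit Hensel lift, the verification that $f'(x_0)=2x_0$ is a unit modulo $p$ (using $p$ odd and $p\nmid n$), and the bijection between solution sets modulo $p^\ell$ and modulo $p^{\ell+1}$ are all carried out cleanly. Note that the paper does not actually supply its own proof of this classical statement; it simply refers the reader to Hua \cite[Theorem 5.1]{Hua}, where essentially the same Hensel-lifting argument appears.
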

\noindent La preuve du théorème \ref{Hensel} peut être trouvée dans le livre de Hua \cite[Theorem 5.1]{Hua}.

\subsection{Lemmes préparatifs}

\begin{lemme}\label{12}
Soit $n$ un entier strictement positif. Alors, le nombre entier ${L_n}^2$ est multiple du nombre rationnel
\[D_n:=\frac{{2}^{\left\lfloor n/2\right\rfloor +1}Q_n}{{n!}^{2}}\prod_{p\in{\mathcal{P}_{4,3}}\cap\left[1,n\right]}p^{2\vartheta_{p}\left(n!\right)}.\] 
\end{lemme}
\begin{proof}
Nous allons montrer que $\vartheta_p\left(D_n\right)\leq \vartheta_p\left(L_{n}^{2}\right)$ pour tout nombre premier $p$, ce qui conclura au résultat requis. Pour $p=2$, on constate que l'on a pour tout entier naturel $k$:
\[\vartheta_{2}\left(k^2+1\right)=\begin{cases}1~~\text{si}~k~{est~impair}\\0~~\text{sinon} \end{cases};\]
ce qui entraîne que:
\[\vartheta_2\left(Q_n\right)=\#\left\lbrace i\in\mathbb{N^*};~i\leq n~,~i~\text{impair}\right\rbrace=\left\lfloor\frac{n-1}{2}\right\rfloor+1.\]
On a par conséquent:
\begin{align*}
\vartheta_2\left(D_n\right)&=\left\lfloor \frac{n}{2}\right\rfloor +1+\vartheta_2\left(Q_n\right)-2\vartheta_2(n!)\\&=\left\lfloor\frac{n-1}{2}\right\rfloor +\left\lfloor\frac{n}{2}\right\rfloor +2 - 2\vartheta_2\left(n!\right)\\&\leq \left\lfloor\frac{n-1}{2}\right\rfloor -\left\lfloor\frac{n}{2}\right\rfloor+2 \leq 2 = \vartheta_2\left(L_{n}^{2}\right),
\end{align*}
comme il fallait le prouver. Prenons pour la suite $p$ un nombre premier $>2$ et montrons que l'on a $\vartheta_p\left(D_n\right)\leq \vartheta_p\left(L_{n}^{2}\right)$. Si $p\in{\mathcal{P}_{4,3}}$ alors, d'après le théorème \ref{Hensel}, l'équation $x^2\equiv -1\pmod p$ n'a pas de solution (car: $\left(\frac{-1}{p}\right)=(-1)^{\frac{p-1}{2}}=-1$). Ce qui entraîne que $\vartheta_{p}\left(D_n\right)=2\vartheta_p\left(n!\right)-\vartheta_p\left({n!}^{2}\right)=0=\vartheta_p\left(L_n\right)=\vartheta_p\left(L_{n}^{2}\right)$ et confirme l'inégalité requise. Si au contraire $p\in{\mathcal{P}_{4,1}}$, on conclura par distinction de cas. Posons préalablement $S_n:=\left\lbrace 1^2+1,2^2+1,\dots,n^2+1\right\rbrace$.\\
\textbullet{} \underline{\textbf{1\textsuperscript{er} cas:}} (si $p>2n$).\\
Nous affirmons d'abord que $p$ divise au plus un seul élément de l'ensemble $S_n$. Procédons par l'absurde en supposant que $p$ divise à la fois $\left(i^2+1\right)$ et $\left(j^2+1\right)$ pour certains $i,j\in\mathbb{N^*}$, tels que $i<j\leq n$. Cela entraîne que $p$ divise $(j^2+1)-(i^2+1)=(j-i)(j+i)$, donc $p$ divise l'un des nombres $(j-i)$ ou $(j+i)$, d'où $p\leq (i+j)<2n$. Ce qui contredit le fait que $p>2n$ et confirme notre affirmation. D'après cette affirmation, on a: $\vartheta_p\left(D_n\right)=\vartheta_p\left(Q_n\right)=\vartheta_p\left(L_n\right)\leq \vartheta_p\left(L_{n}^{2}\right)$, comme il fallait le prouver.\\
\textbullet{} \underline{\textbf{2\textsuperscript{ème} cas:}} (si $n<p<2n$).\\
Dans ce cas, on a clairement $p^2>n^2+1$, donc $p^2$ ne divise aucun élément de l'ensemble $S_n$. Puisque le nombre de solutions de l'équation $x^2\equiv -1\pmod p$ est égal à $2$ (en vertu du théorème \ref{Hensel}), alors $p$ divise au plus deux éléments de l'ensemble $S_n$. D'où, $\vartheta_{p}\left(D_n\right)=\vartheta_p\left(Q_n\right)\leq 2\vartheta_p\left(L_n\right)=\vartheta_p\left(L_{n}^{2}\right)$, comme il fallait le prouver. \\
\textbullet{} \underline{\textbf{3\textsuperscript{ème} cas:}} (si $p\leq n$).\\
En posant $\nu_k:=\left\lfloor\frac{n}{p^{k}}\right\rfloor$ $(\forall k\in\mathbb{N^*})$, il s'ensuit (en vertu du théorème \ref{Hensel}) que: \pagebreak
\begin{align*}
\vartheta_p\left(Q_n\right)&=\sum_{k=1}^{\vartheta_p\left(L_n\right)}\#\left\lbrace i\in\mathbb{N^*};~i\leq n,~p^k\mid i^2+1\right\rbrace\\&\leq\sum_{k=1}^{\vartheta_p\left(L_n\right)}\left[\left(\sum_{\ell=0}^{\nu_k-1}\#\left\lbrace i\in\mathbb{N^*};~\ell p^k+1\leq i\leq (\ell+1)p^k,~p^k\mid i^2+1\right\rbrace\right)+2\right]\\&\leq 2\sum_{k=1}^{\vartheta_p\left(L_n\right)}\left(\left\lfloor \frac{n}{p^k}\right\rfloor+1\right)\leq 2\vartheta_p\left(n!\right)+2\vartheta_p\left(L_n\right).
\end{align*}
D'où l'on a: $\vartheta_p\left(D_n\right)=\vartheta_p\left(Q_n\right)-2\vartheta_p\left(n!\right)\leq 2\vartheta_p\left(L_n\right)=\vartheta_p\left(L_{n}^{2}\right)$, comme il fallait le prouver. Ce qui complète cette démonstration.  
\end{proof}

\begin{lemme}\label{13}
Pour tout entier $n\geq 2$, on a:
\[G_n:=\prod_{p>n}p^{\vartheta_p\left(L_n\right)}~\text{divise}~M_n:=\frac{2^{2\vartheta_{2}\left(n!\right)-\left\lfloor (n-1)/2\right\rfloor -1}Q_n}{{n!}^{2}}\prod_{p\in{\mathcal{P}_{4,3}}\cap\left[1,n\right]}p^{2\vartheta_{p}\left(n!\right)}.\]
\end{lemme}
\begin{proof}[Démonstration]
Soit $p$ un nombre premier. On vérifie facilement que: $\vartheta_p\left(G_n\right)=\vartheta_p\left(M_n\right)=0$ pour $p\in{\mathcal{P}_{4,3}}\cup\{2\}$. Supposons maintenant que $p\in{\mathcal{P}_{4,1}}\cap\left[1,n\right]$ et posons $\nu_k:=\left\lfloor\frac{n}{p^k}\right\rfloor$ $(\forall k\in\mathbb{N^*})$. D'après le théorème \ref{Hensel}, on a:
\begin{align*}
\vartheta_p\left(Q_n\right)&=\sum_{k\geq 1}\#\left\lbrace i\in\mathbb{N^*};~i\leq n~\text{et}~p^k\mid i^2+1\right\rbrace\\&\geq\sum_{k=1}^{\vartheta_p\left(L_n\right)}\sum_{\ell=0}^{\nu_k-1}\#\left\lbrace i\in\mathbb{N^*};~\ell p^k+1\leq i\leq (\ell+1)p^k,~p^k\mid i^2+1\right\rbrace\\&= 2\sum_{k\geq 1}\left\lfloor \frac{n}{p^k}\right\rfloor = 2\vartheta_p\left(n!\right).
\end{align*}
Ce qui montre que $\vartheta_p\left(M_n\right)=\vartheta_p\left(Q_n\right)-2\vartheta_p\left(n!\right)\geq \vartheta_p\left(G_n\right)=0$. Supposons enfin que $p>n$. En raisonnant comme dans la démonstration du lemme \ref{12} (en distinguant les cas $n<p<2n$ et $p>2n$), on montre que $\vartheta_p\left(G_n\right)=\vartheta_p\left(L_n\right)\leq \vartheta_p\left(M_n\right)$. Dans tous les cas, on a bien $\vartheta_p\left(G_n\right)\leq \vartheta_p\left(M_n\right)$; ce qui conclut au résultat requis et complète cette démonstration.
\end{proof}

\begin{lemme}\label{15}
Pour tout entier $n\geq 10^3$, on a:
\begin{equation}\label{15-p}
\left(\beta_1\sqrt{n}\left(\log n\right)^{-0,4}\right)^n\leq\prod_{p\in{\mathcal{P}_{4,3}}\cap\left[1,n\right]}p^{\vartheta_p\left(n!\right)}\leq \left(\beta_2\sqrt{n}\left(\log n\right)^{0,4}\right)^n.
\end{equation}
\end{lemme}
\begin{proof}
Montrons d'abord l'inégalité de gauche de \eqref{15-p}. En vertu de la formule sommatoire d'Abel et du théorème \ref{14}, on a:
\begin{align*}
\sum_{p\in{\mathcal{P}_{4,3}}\cap\left[1,n\right]}\frac{\log p}{p}&=\frac{\theta\left(n;4,3\right)}{n}+\int_{1}^{n}\frac{\theta\left(t;4,3\right)}{t^2}\mathrm{d}t \\ &\geq \frac{1}{2}-\frac{0,4}{3\log 10}+\int_{1}^{10^3}\frac{\theta\left(t;4,3\right)}{t^2}\mathrm{d}t+\frac{1}{2}\int_{10^3}^{n}\frac{1}{t}\mathrm{d}t -0,4\int_{10^3}^{n}\frac{1}{t\log t}\mathrm{d}t\\&\geq \frac{1}{2}\log n -0,4\log\log n +c_1.
\end{align*}
D'autre part, d'après le lemme \ref{14}, on a pour tout entier $n\geq 10^3$:
\[\sum_{p\in{\mathcal{P}_{4,3}}\cap\left[1,n\right]}\log p=\theta\left(n;4,3\right)\leq \frac{n}{2}+0,4\frac{n}{\log n}\leq c_{2}n.\]
Nous déduisons alors de ce qui précède que:
\begin{align*}
\prod_{p\in{\mathcal{P}_{4,3}}\cap\left[1,n\right]}p^{\vartheta_p\left(n!\right)}&\geq \prod_{p\in{\mathcal{P}_{4,3}}\cap\left[1,n\right]}p^{\frac{n}{p}-1}\geq \left(\frac{e^{(c_1-c_2)}\sqrt{n}}{\left(\log n\right)^{0,4}}\right)^{n}\geq\left(\frac{\beta_1\sqrt{n}}{\left(\log n\right)^{0,4}}\right)^{n},
\end{align*}
comme il fallait le prouver. Pour l'inégalité de droite de \eqref{15-p}, on montre de la même façon que pour tout entier $n\geq 10^3$, on a:
\[\prod_{p\in{\mathcal{P}_{4,3}}\cap\left[1,n\right]}p^{\frac{1}{p}}\leq \beta_3\sqrt{n}\left(\log n\right)^{0,4}.\]
Par suite, en utilisant l'estimation $\prod_{p}p^{1/p(p-1)}\leq c_3$ (qui découle du troisième point du théorème \ref{1}), on obtient:
\begin{align*}
\prod_{p\in{\mathcal{P}_{4,3}}\cap\left[1,n\right]}p^{\vartheta_p\left(n!\right)}&\leq \prod_{p\in{\mathcal{P}_{4,3}}\cap\left[1,n\right]}p^{\frac{n}{p}+\frac{n}{p(p-1)}}\\&\leq\left(\prod_{p\in{\mathcal{P}_{4,3}}\cap\left[1,n\right]}p^{\frac{n}{p}}\right)\left(\prod_{p}p^{\frac{n}{p(p-1)}}\right)\\&\leq \left(c_3\beta_3\sqrt{n}\left(\log n\right)^{0,4}\right)^n\\&\leq \left(\beta_2\sqrt{n}\left(\log n\right)^{0,4}\right)^n,
\end{align*}
comme il fallait le prouver. Notre démonstration est complète.
\end{proof}

\section{Preuve de notre résultat principal}

\begin{proof}[Démonstration du théorème \ref{16}]
En se servant d'un logiciel de calcul (Maple ou Mathematica par exemple), on vérifie que \eqref{16-1} est valable pour tout entier $2\leq n<10^3$. Supposons pour la suite que $n\geq 10^3$. En utilisant successivement le lemme \ref{12} et le lemme \ref{15}, on obtient:
\[L_n\geq {\sqrt{2}}^{\left\lfloor n/2\right\rfloor+1}\sqrt{\frac{Q_n}{{n!}^{2}}}\left(\frac{\beta_1\sqrt{n}}{\left(\log n\right)^{0,4}}\right)^n \geq \left(\frac{\sqrt[4]{2}\beta_1\sqrt{n}}{\left(\log n\right)^{0,4}}\right)^n\geq \left(\frac{\alpha_1\sqrt{n}}{\left(\log n\right)^{0,4}}\right)^n.\]
Ce qui conclut à la minoration de \eqref{16-1}. Pour montrer la majoration de \eqref{16-1}, nous écrivons d'abord:
\begin{equation}\label{16-2}
L_n=\prod_{p}p^{\vartheta_p\left(L_n\right)}=\left(\prod_{p\leq n}p^{\vartheta_p\left(L_n\right)}\right)\left(\prod_{p>n}p^{\vartheta_p\left(L_n\right)}\right).
\end{equation} 
D'une part, on a (en vertu du théorème \ref{14}):
\[\prod_{p\leq n}p^{\vartheta_p\left(L_n\right)}=2\prod_{p\in{\mathcal{P}_{4,3}}\cap\left[1,n\right]}p^{\vartheta_p\left(L_n\right)}\leq 2\left(n^2+1\right)^{\pi\left(n;4,1\right)}\leq  2\left(n^2+1\right)^{\frac{n}{2\log n}\left(1+\frac{5}{6\log 10}\right)}\leq 2c_5e^{c_4 n}.\]
D'autre part, d'après les lemmes \ref{13} et \ref{15}, on a:
\[\prod_{p>n}p^{\vartheta_p\left(L_n\right)}\leq M_n\leq e^{\pi^2/6}\left({\beta_2}^{2} 2^{3/2}n\left(\log n\right)^{0,8}\right)^n.\]
En reportant ces estimations dans \eqref{16-2}, on aboutit à:
\[L_n\leq 2c_5e^{\pi^2/6}\left({\beta_2}^{2} 2^{3/2}e^{c_4}n\left(\log n\right)^{0,8}\right)^n\leq \alpha_2\left(\alpha_3n\left(\log n\right)^{0,8}\right)^n,\]
comme il fallait le prouver. Le théorème est ainsi démontré.
\end{proof} 

\bigskip

\begin{center}
\includegraphics[scale=1]{fini}
\end{center}

\chapter*{Conclusion générale}

\addcontentsline{toc}{chapter}{Conclusion générale}

Le sujet initialement fixé de cette thèse consistait à établir des encadrements effectifs du plus petit commun multiple de termes consécutifs de certaines suites d'entiers. 

En utilisant des arguments d'algèbre commutative et d'analyse complexe, nous avons obtenu un diviseur rationnel non trivial du nombre entier: 
\[L_{c,m,n}:=\ppcm\left(m^2+c,(m+1)^2+c,\dots,n^2+c\right),\]
avec $n,m,c\in\mathbb{N^*}$ et $m\leq n$. Comme corollaire, nous en avons déduit des minorations effectives et non triviales pour $L_{c , m , n}$, et cela sans la contrainte ``$1\leq m\leq \left\lceil\frac{n}{2}\right\rceil$'' (qui figure dans le résultat de Oon \cite{oon}). Dans une autre direction, nous avons généralisé quelques identités dues à Farhi \cite{far}, Nair \cite{nair} et Guo \cite{Victor} aux suites à forte divisibilité. Nous avons appliqué ces identités ensuite pour estimer le $\ppcm$ de certaines suites de Lucas. En fait, la méthode utilisée est même applicable pour estimer le $\ppcm$ de toute suite à forte divisibilité d'ordre connu, comme celles d'ordre exponentiel. Par ailleurs, nous avons développé une toute première méthode permettant d'obtenir une majoration effective du $\ppcm$ d'une progression arithmétique finie. Comme conséquence, nous en avons déduit une estimation d'une certaine moyenne harmonique forte intéressante ainsi qu'une estimation de l'expression $\theta(x;k,\ell)$ de Chebyshev. Enfin, nous avons réussi à adapter notre méthode pour la suite quadratique particulière ${(n^2 + 1)}_{n}$ et obtenir ainsi un encadrement pour son $\ppcm$, qui est presque optimal. En particulier, les minorations antérieures de Farhi \cite{far} et de Oon \cite{oon} pour le $\ppcm$ de la même suite sont de beaucoup améliorées.


\chapter*{Perspectives}

\addcontentsline{toc}{chapter}{Perspectives}

Bien évidemment, il reste plusieurs problèmes que nous n'avons pas traités dans cette thèse, mais qui font partie (plus au moins) de sa thématique. En voici quelques-uns qui pourront faire l'objet de nos recherches postérieures:
\begin{enumerate}  
\item Prouver la conjecture de Cilleruelo (voir la conjecture \ref{cocil}) qui énonce que pour tout polynôme irréductible $f\in\mathbb{Z}[X]$, de degré $\geq 3$, le logarithme du plus petit commun multiple des termes consécutifs de la suite $\left(f(n)\right)_n$ est de même ordre que le logarithme du produit de ces termes (lorsque $n$ voisine l'infini).
\item Améliorer notre majoration du $\ppcm$ d'une suite arithmétique et étendre notre méthode à d'autres suites.
\item Améliorer la minoration ``uniforme'' de Hong et al. \cite{hong2} du $\ppcm$ de certaines suites polynomiales (ce qui semble réalisable par des techniques d'algèbre polynomiale).
\item \'Etablir des majorations non triviales du $\ppcm$ d'une suite polynomiale.  
\item \textbf{Difficile!} Tenter d'établir un analogue du théorème des nombres premiers (sous sa version $\log\ppcm(1 , 2 , \dots , n) \sim_{+ \infty} n$) pour d'autres suites d'entiers strictement positifs.
\end{enumerate}

Pour envoyer \begin{titlepage}
\begin{center}
\textbf{\large \underline{Résumé}}
\end{center}

Cette thèse consiste à étudier des estimations effectives du plus petit commun multiple de certaines suites d'entiers. Nous nous focalisons notamment sur une certaine classe de suites quadratiques, ainsi que les progressions arithmétiques et les suites à forte divisibilité. Premièrement, nous avons utilisé des méthodes d'algèbre commutative et d'analyse complexe pour établir de nouvelles minorations non triviales du $\ppcm$ de certaines suites quadratiques. Ensuite, une étude plus profonde des propriétés arithmétiques de suites à forte divisibilité nous a permis d'obtenir trois identités intéressantes concernant le $\ppcm$ de ces suites, ce qui généralise certaines identités antérieures de B. Farhi (2009) et M. Nair (1982). Nous en avons déduit par suite des estimations assez précises du $\ppcm$ d'une suite de Fibonacci généralisée (ce que l'on appelle les suites de Lucas). Nous avons également développé une première méthode permettant d'effectiviser un résultat asymptotique de P. Bateman (2002) sur le $\ppcm$ d'une progression arithmétique. Vers la fin, nous avons constaté que cette dernière méthode peut être adaptée pour encadrer le $\ppcm$ de la suite $(n^2+1)_n$, ce qui nous a permis en particulier d'améliorer les minorations de B. Farhi (2005) et S. M. Oon (2013). La thèse comprend aussi une présentation générale de quelques résultats de littérature. 
\vskip 0.25cm
\noindent \textbf{Mots-clés:} Plus petit commun multiple, plus grand diviseur commun, suite à divisibilité, suite à forte divisibilité, suite de Lucas, suite de Fibonacci, progression arithmétique, suite quadratique, suite polynomiale, théorèmes de Chebychev, théorème des nombres premiers, répartition des nombres premiers.
\end{titlepage}

\begin{titlepage}
\begin{center}   
\textbf{\large \underline{Abstract}}
\end{center}

This thesis is devoted to studying estimates of the least common multiple of some integer sequences. Our study focuses on effective bounding of the $\lcm$ of some class of quadratic sequences, as well as arithmetic progressions and strong divisibility sequences. First, we have used methods of commutative algebra and complex analysis to establish new nontrivial lower bounds for the $\lcm$ of some quadratic sequences. Next, a more profound study of the arithmetic properties of strong divisibility sequences allowed us to obtain three interesting identities involving the $\lcm$ of these sequences, which generalizes some previous identities of B. Farhi (2009) and M. Nair (1982); as consequences, we have deduced a precise estimates for the $\lcm$ of generalized Fibonacci sequence (the so-called Lucas sequences). We have also developed a method that provides an effective version to the asymptotic result of P. Bateman (2002) concerning the $\lcm$ of an arithmetic progression. Finally, we found that the latter method can be adapted to estimate the $\lcm$ of the sequence $(n^2+1)_n$, which allowed us in particular to improve the lower bounds of B. Farhi (2005) and S. M. Oon (2013). The thesis also includes a general presentation of some literature results.
\vskip 0.25cm
\noindent\textbf{Keywords:} Least common multiple, greatest common divisor, divisibility sequences, strong divisibility sequences, Lucas sequences, Fibonacci sequence, arithmetic progressions, quadratic sequences, polynomial sequences, Chebychev theorems, prime number theorem, distribution of prime numbers.
\end{titlepage}

\begin{titlepage}

\begin{center}
\novocalize
{\large \textbf{\underline{\<mulaxxa.s>}}}
\end{center}
\begin{arabtex}
\novocalize
tndrj ha_dih al-'u.trU.hT fy 'i.tAr drAsT tqdyrAt al--m.dA`f al--m^strk al-'a.s.gr lb`.d mt--tAlyAt Al-'a`dAd Al.tby`yT. rkznA drAstnA xu.sU.s"aN" `lY Al.h.sr Alf``Al l--lm.dA`f al--m^strk Al-'a.s.gr lfi'T mn al--mt--tAlyAt Altrby`yT, biAl-'i.dAfT 'ilY al--mt--tAlyAt al-.hsAbyT wa al--mt--tAlyAt _dAt qwT fy qAblyT al-qsmT. bda'nA bAstxdAm .trq mstmdT mn Aljbr Altbdyly w Alt.hlyl al--mrkb l-'iyjAd .hdwd mn Al'adnY jdydT w .gyr tAfhT l--lm.dA`f al--m^strk Al-'a.s.gr lb`.d al--mt--tAlyAt Altrby`yT. 'intqlnA b`dhA 'ilY drAsT `mIqT l--lxwA.s Al'artmA.tqyT l--lmt--tAlyAt _dAt qwT fy qAblyT AlqsmT, 'amkntnA mn 'iyjAd _tlA_t mt.tAbqAt jd hAmT t--t`lq bi-al--m.dA`f al--m^strk Al-'a.s.gr l--l.hdwd Almt`AqbT mn h_dh Almt--tAlyAt. w `lY wjh AldqT, flqd t.h.slnA `lY t`mymAt li--mut.tAbqAt m`rwfT sAbqA, 'awrdhA kl mn {\bf nyr}\LR{(1982)} w {\bf fr.hy}\LR{(2005)}. w trtbt `lY h_dh al--mt.tAbqAt tqdyrAt jdd dqyqT l--lm.dA`f al--m^strk Al-'a.s.gr li--mt--tAlyAt {\bf fybwnAt^sy} al--m`mm--mT \LR{)}'aw mA y`rf bmt--tAlyAt {\bf luwkA}\LR{(}. mn jhT 'axrY, .twrnA .tryqT jdydT 'amkntnA mn 'iyrAd .sy.gT f``AlT l--lntyjT al--muqArbT {\vocalize li--} {\bf bytmAn}\LR{(2002)} w allatI tx.s.s al--m.dA`f al--m^strk Al-'a.s.gr l--lmt--tAlyAt Al.hsAbyT. w fy Al'axyr, qmnA bt.tbyq h_dh Al.tryqT `lY al--mt--tAlyT $(n^2+1)_n$, m--mA sm.h lnA `lY wjh Alx.sw.s bt.hsyn Al.hdwd AldnyA al--m`rwfT sAbqaNA mn .trf {\bf fr.hy}\LR{(2005)} w {\bf 'uwn}\LR{(2013)} {\vocalize li--} $\lcm(1^2+1,2^2+1,\dots,n^2+1)$. h_dA w lqd qdmnA fy h_dh Al'a.trw.hT 'ay.d"aN" `ar.d"aN" ^sAmil"aN" wa mufa.s.sal"aN" li--m`.zm al-nnatA'ij al--m`rwfT sAbiq"aN" fy majAl drAstnA.   

\medskip

\noindent{\large \bf al-kalimAt al--miftA.hiyaT\LR{:}} al--m.dA`f al--m^strk Al-'a.s.gr, al-qAsim al--m^strk Al-'akbr, mt--tAlyAt _dAt qAblyT al-qismaT, mut--tAlyAt _dAt qwT fy qAblyT AlqsmT,  mt--tAlyAt {\bf lUkA}, mt--tAlyT {\bf fybwnAt^sy}, al--mut--tAlyAt al-.hisAbyT, al--mut--tAlyAt al-ttrby`yT, mt--tAlyAt k_tyrAt Al.hdwd, n.zryAt {\bf ^sibI^sAf}, n.zryT Al'a`dAd Al'awlyT, twzy` Al'a`dAd Al'awlyT.
\end{arabtex}
\end{titlepage}

\end{document}